\DeclareFontFamily{OT1}{pzc}{}
\DeclareFontShape{OT1}{pzc}{m}{it}{<-> s * [1.150] pzcmi7t}{}
\DeclareMathAlphabet{\mathpzc}{OT1}{pzc}{m}{it}
\let\mcnewpage=\newpage
\newcommand{\Trick}{
\renewcommand\newpage{%
        \if@firstcolumn
            \hrule width\linewidth height0pt
                \columnbreak
        \else
                \mcnewpage
        \fi
}
}
\theoremstyle{plain}
\newtheorem{theorem}{Theorem}[section]
\newtheorem{proposition}[theorem]{Proposition}
\newtheorem{corollary}[theorem]{Corollary}
\newtheorem{lemma}[theorem]{Lemma}
\newtheorem{maintheorem}{Theorem}
\newenvironment{mainthm}[1]
{
\begin{maintheorem}}
{\end{maintheorem}}
\theoremstyle{definition}
\newtheorem{definition}[theorem]{Definition}
\newtheorem{example}[theorem]{Example}
\newtheorem{remark}[theorem]{Remark}
\newtheorem{problem}[theorem]{Problem}
\newtheorem*{notation}{Notation}
\newcommand{\thistheoremname}{}
\newtheorem*{genericthm*}{\thistheoremname}
\newenvironment{namedthm*}[1]
  {\renewcommand{\thistheoremname}{#1}%
   \begin{genericthm*}}
  {\end{genericthm*}}
\newenvironment{figurehere}
  {\def\@captype{figure}}
  {}
\def\dis{\displaystyle}
\def\Ccal{\mathcal{C}}
\def\PSL{\mathrm{PSL}}
\def\SO{\mathrm{SO}}
\def\Crit{\mathrm{Crit}}
\def\Acrit{\mathrm{ACrit}}
\def\id{\mathrm{id}}
\def\ZZ{\mathbb{Z}}
\def\RR{\mathbb{R}}
\def\CC{\mathbb{C}}
\def\PP{\mathbb{P}}
\def\Sph{\mathbb{S}^2}
\def\DD{\mathbb{D}}
\def\diam{\mathrm{diam}}
\def\Mcal{\text{\footnotesize{$\mathpzc{M}$}}}
\def\ol#1{\overline{#1}}
\def\bm#1{\text{\boldmath$#1$}}
\def\wh#1{\widehat{#1}}
\def\vc#1{{\bm{#1}}}   
\def\pa{\partial} 
\def\1{\vc{1}}
\def\0{\vc{0}}
\def\e{\varepsilon}
\def\th{\vartheta}
\def\IM{\mathrm{Im}} 
\def\floor#1{\lfloor{#1}\rfloor}
\def\inte#1{\accentset{\circ}{#1}}
\def\Acal{\mathcal{A}}
\def\Bcal{\mathcal{B}}
\def\sys{\mathrm{sys}}
\def\Ext{\mathrm{Ext}}
\def\Ab{\mathrm{Ab}}
\def\area{\mathrm{Area}}
\def\Area{\mathrm{Area}}
\def\Ab{\mathrm{NB}}
\def\Ncal{\mathpzc{N}}
\def\MSPH{\text{\footnotesize{$\mathpzc{MSph}$}}}
\def\Rep{\text{\footnotesize{$\mathpzc{Rep}$}}}
\def\MP{\text{\footnotesize{$\mathpzc{MP}$}}}
\def\rot{\mathrm{rot}}
\def\rar{\rightarrow}
\def\lra{\longrightarrow}
\def\Alg{\mathrm{Alg}}
\def\Vor{\mathcal{V}}
\def\d{\mathpzc{d}}
\def\Kcal{\mathcal{K}}
\def\DV{D^{\Vor}}
\def\DVint{\inte{D}^{\Vor}}
\def\DVbar{\bar{D}^{\Vor}}
\def\lev{\lambda}
\def\q{\mathpzc{q}}
\def\sing{\mathrm{sing}}
\def\W{B^{\max}}
\def\Wint{\inte{B}^{\max}}
\def\P{\mathcal{P}}
\def\Q{\mathcal{Q}}
\def\R{\mathcal{R}}
\def\Ical{\mathcal{I}}
\def\BU{\mathpzc{B}}
\newcommand{\gm}[1]{}
\begin{document}

\title{Spherical surfaces with conical points:
systole inequality
and moduli spaces with many connected components}

\author[1]{Gabriele Mondello}
\affil[1]{{\small{``Sapienza'' Universit\`a di Roma, Department of Mathematics (mondello@mat.uniroma1.it)}}}

\author[2]{Dmitri Panov}
\affil[2]{{\small{King's College London, Department of Mathematics (dmitri.panov@kcl.ac.uk)}}}

\maketitle

\abstract{
\noindent
In this article we address a number of features of the moduli space 
of spherical metrics on connected, compact, orientable surfaces
with 
conical singularities of assigned angles, 
such as its non-emptiness and connectedness.
We also consider some features of
the forgetful map from the above moduli space of spherical surfaces
with conical points to the associated moduli space of pointed Riemann surfaces,
such as its properness, which follows from an explicit systole inequality
that relates metric invariants (spherical systole) and
conformal invariant (extremal systole).
}

%
%
%

\setcounter{tocdepth}{2}
\tableofcontents

\setlength{\parindent}{0pt}
\setlength{\parskip}{0.2\baselineskip}

\section{Introduction}

%
%
\gm{Abstract is reduced. Most of its former content is moved here, to this new
pre-section. In general, references to bibliography are put in round
parentheses next to the numbering of the theorem.}
The aim of the present paper is to
investigate certain topological properties
of the moduli space
$\MSPH_{g,n}(\bm{\th})$ of spherical metrics on genus $g$ surfaces
with $n$ conical singularities of angles $2\pi\cdot\bm{\th}=2\pi\cdot(\th_1,\dots,\th_n)$, and of the associated forgetful map
$F_{g,n,\bm{\th}}:\MSPH_{g,n}(\bm{\th})\rar\Mcal_{g,n}$
that sends a spherical surface to its underlying Riemann surface
of genus $g$
with $n$ marked points.

Our main results, labeled as Theorems  \ref{main:existence}-\ref{main:many}-\ref{main:systole}-\ref{main:non-existence-small}-\ref{main:properness}, 
can be summarized as follows:
\begin{itemize}
\item[(\ref{main:existence})]
$\MSPH_{g,n}(\bm{\th})$ is always non-empty 
for $g>0$, provided the obvious
Gauss-Bonnet constraint is satisfied;
\item[(\ref{main:many})]
for suitable $\bm{\th}$, the moduli space
$\MSPH_{0,3+m}(\bm{\th})$ and its image through the
forgetful map $F_{0,3+m,\bm{\th}}$
have at least $3^m$ connected components; 
\item[(\ref{main:systole})]
if $\bm{\th}$ does not belong to a certain well-understood
locally finite union of affine hyperplanes of $\RR^n$
(at which some bubbling phenomenon can occur),
spherical systole and
extremal systole  of
spherical surfaces in $\MSPH_{g,n}(\bm{\th})$
bound each other
through an explicit systole inequality;
\item[(\ref{main:non-existence-small})]
for every $(g,n)\neq (0,2),\,(0,3)$ with $n\geq 2$ 
and for every fixed conformal class
there exists an open subset of $\bm{\th}\in\RR_{>0}^n$
such that $\MSPH_{g,n}(\bm{\th})$ is non-empty but it contains no metric in the given conformal class; moreover,
the $\bm{\th}$'s occurring in such open subset 
can have arbitrarily small $\th_1$;
\item[(\ref{main:properness})]
under the same (non-bubbling) hypotheses of Theorem \ref{main:systole},
the forgetful map $F_{g,n,\bm{\th}}$ is proper.
\end{itemize}
Complete statements can be found in Subsection \ref{sec:intro-main}.\\

We remark that Theorems \ref{main:systole}-\ref{main:non-existence-small}-\ref{main:properness} are genus independent. 

On the contrary,  Theorem \ref{main:existence} only holds for positive genus and it is
false in genus zero, since the simple topology of the sphere imposes
some constraints on the possible monodromy representation of a spherical metric (see Subsection \ref{sec:genus0}). 

The content of Theorem \ref{main:many} is essentially to provide examples of moduli spaces with many connected components. Our construction works in genus zero because $\MSPH_{0,3}\left(\frac{1}{2}+m_1,\frac{1}{2}+m_2,\frac{1}{2}+m_3\right)$ consists of a single point for $m_1,m_2,m_3\in\ZZ_{\geq 0}$, and
because such a point represents a spherical surface with 
discrete (and so finite) monodromy. 
The study of such phenomena of disconnectedness in higher genus
is object of future work.\\

Theorems stated in Subsection \ref{sec:context}, which
are not labeled by letters, are extracted from previous works
of various authors.

%
%
\subsection{Setting}\label{sec:setting}

In this subsection we introduce the notion of spherical surface
with conical points, of (metric) systole and of extremal systole,
and of non-bubbling parameter. Moreover, we recall the definition of
forgetful map from the
moduli space of spherical surfaces with conical points
to the moduli space of Riemann surfaces with marked points.\\

Let $S$ be a compact, connected, oriented surface and let 
$\dot{S}$ be the punctured surface obtained from $S$ by
removing a subset $\bm{x}=(x_1,\dots,x_n)$ of $n$ marked points.

A metric of curvature $1$ on a surface is locally isometric to $\Sph$
and so it can be locally written as $d\rho^2+\sin^2(\rho)d\phi^2$ in polar coordinates $(\rho,\phi)$.
The model for a closed neighborhood of a conical point of angle $2\pi\theta$ inside
a spherical surface will be the disk
$\DD_\theta(r)=\{z=\rho e^{i\phi}\in\CC\,|\,|z|\leq r\}$ with $0<r<\pi$, endowed with the metric 
$d\rho^2+\theta^2 \sin^2(\rho)d\phi^2$.

\begin{definition}[Spherical metric]
A {\it{spherical metric on $S$ with conical singularities}}
at $\bm{x}$ of angles $2\pi\bm{\th}=2\pi(\th_1,\dots,\th_n)$
is a Riemannian metric $h$ of curvature $1$ on $\dot{S}$
such that every $x_i$ has a closed neighbourhood isometric
to $\DD_{\th_i}(r)$ for some small $r>0$.
We will call $(S,\bm{x},h)$ a {\it{spherical surface}} and each $x_i$ a {\it{conical point}}.
\end{definition}

\begin{remark}[Developing map and monodromy representation]
For every spherical surface $(S,\bm{x},h)$
we can construct
a locally isometric {\it{developing map}} $\iota:(\widetilde{\dot{S}},\tilde{h})\rar \Sph$
from the universal cover $\widetilde{\dot{S}}$ of $\dot{S}$
(endowed with the pull-back metric $\tilde{h}$), which
is equivariant with respect to
a {\it{monodromy representation}}
$\rho:\pi_1(\dot{S})\rar \SO_3(\RR)$.
The couple $(\iota,\rho)$ is well-defined up to
the action $\Q\cdot (\iota,\rho)=(\Q\circ\iota,\Q\rho\Q^{-1})$ by elements $\Q\in\SO_3(\RR)$.
\end{remark}

Each spherical surface $(S,h)$ admits a triangulation, where all edges are geodesic segments and all triangles are isometric to spherical triangles with respect to the induced metric $h$.
In other words, each spherical surface is a polyhedral space (see \cite[Section 8]{Petrunin} for a nice  introduction to the topic). In this description all conical points sit at some vertices of the triangulation. 

\begin{notation}
If $p\in S$ is any point in the spherical surface $S$ and $r>0$,
we denote by $B_p(r)$ the closed ball centered at $p$ of radius $r$,
namely the locus of points at distance at most $r$ from $p$.
\end{notation}

The diameter of a spherical surface with $n$ conical points
is at most $\pi(n+1)$ (see Lemma \ref{sphdiameter}). For this reason,
a sequence of spherical metrics degenerates if and only if 
two (or more) conical points clash or if a geodesic loop based at a conical point shrinks to a singular point.
Thus, the following quantity can be interpreted as a measure of how far the metric is from degenerating.


\begin{definition}[Spherical systole]
The {\it{systole}} $\sys(S,\bm{x})$ of a spherical surface $(S,\bm{x})$
is the supremum of all $r>0$ for which the $n$ balls $B_{x_i}(r)$ are pairwise disjoint
and each $B_{x_i}(r)$ is isometric to $\DD_{\th_i}(r)$.
\end{definition}

We recall that the datum of a conformal class of metrics on an oriented surface $S$
is equivalent to giving an almost-complex structure $J:TS\rar TS$, and so to 
giving a complex structure by Gauss's existence of isothermal coordinates.
In what follows, we will denote a Riemann surface by $(S,J)$.

The conformal counterpart to the spherical systole, 
which measures how far the underlying conformal structure on a surface is from
degenerating, is introduced in Definition
\ref{def:extremal-systole} below.
The definition and some basic properties
of the extremal length and of the conformal modulus
of a cylinder are summarized in Appendix \ref{sec:appendix}.

\begin{definition}[Extremal systole]\label{def:extremal-systole}
Let $(S,J)$ be a Riemann surface with marked points $\bm{x}$ and assume that $\dot{S}$ is not a sphere with at most $3$ punctures.
The {\it{extremal systole}} $\Ext\sys(\dot{S})$ of $\dot{S}$ is the infimum of
the extremal lengths $\Ext_\gamma(\dot{S})$, where $\gamma\subset\dot{S}$
is a simple closed essential curve.
\end{definition}

We will use the notation $\Ext\sys(\dot{S},J)$ whenever we want to emphasize the dependence on $J$.
Clearly, if $(S,\bm{x})$ is a spherical surface, then $\Ext\sys(\dot{S})$ must be understood as the extremal systole
of the underlying conformal structure.\\

Next, we introduce a quantity that depends only on the topology of $(S,\bm{x})$ and on 
the {\it{angle vector}} $\bm{\th}$.
Consider the subset $\Crit_{\bm{\th}}\subset\RR$ defined as 
\[
\Crit_{\bm{\th}}:=\left\{
\|\bm{\th}_I\|_1-\|\bm{\th}_{I^c}\|_1+2b
\ \Big|\
I\subsetneq\{1,2,\dots,n\},\ b\in \ZZ_{\geq 0}
\right\}
\]
where $\|\bm{\th}_I\|_1:=\sum_{i\in I}\th_i$ and $I^c=\{1,2,\dots,n\}\setminus I$.

\begin{definition}[Non-bubbling parameter]
The {\it{non-bubbling parameter}} of 
$(S,\bm{x},\bm{\th})$ is
\[
\Ab_{\bm{\th}}(S,\bm{x}):=d_{\RR}\Big(\chi(\dot{S}),\Crit_{\bm{\th}}\Big)
\]
where $d_\RR$ is the usual distance as subsets of $\RR$.
\end{definition}

\begin{remark}
We stress that
\[
\Ab_{\bm{\th}}(S,\bm{x})=0
\iff
\chi(\dot{S})+\sum_{i=1}^n(\pm\th_i)\in 2\ZZ_{\geq 0}
\]
for a suitable choice of the signs.
\end{remark}

We will explain in Section \ref{sec:bubbling} that
the condition $\Ab_{\bm{\th}}(S,\bm{x})\geq\e$
prevents spherical metrics on $(S,\bm{x})$ with
conical angles $2\pi\bm{\th}$ from being
geometrically close to a bouquet of bubbles
(as in Figure \ref{fig:nearly-complete-bubbling-intro}).

Another feature of the non-bubbling parameter
is that it provides an obstruction 
for spherical surfaces
to having degenerate monodromy representation in the following sense.

\begin{definition}[Coaxial representation]
A representation $\rho$ in $\SO_3(\RR)$
is {\it{coaxial}} if it takes values in a $1$-parameter subgroup of
$\SO_3(\RR)$. We say that a metric has {\it{coaxial monodromy}}
(or that it is a {\it{coaxial metric}})
if its monodromy representation is coaxial.
\end{definition}

The link between non-bubbling parameter
and coaxiality is the following.

\begin{namedthm*}{Lemma \ref{lemma:coaxial-zero-NB}}[Coaxial metrics have vanishing non-bubbling parameter]
Let $(S,\bm{x})$ be a spherical surface with conical points of angles
$2\pi\cdot\bm{\th}$. If $(S,\bm{x})$ has coaxial monodromy,
then $\Ab_{\bm{\th}}(S,\bm{x})=0$.
\end{namedthm*}

The above lemma can be proven in more than one way.
In Section \ref{sec:systole} we will derive it from Theorem \ref{main:systole}.


\begin{notation}
Since $\Ab_{\bm{\th}}(S,\bm{x})$ 
only depends on $\bm{\th}$ and on the topology of the surface $S$ of genus $g$ with $n$ marked points, 
such quantity will be also denoted by $\Ab_{\bm{\th}}(g,n)$.
\end{notation}

Finally, we  recall
the definition of moduli spaces of Riemann surfaces and of spherical surfaces.

\begin{definition}[Moduli space of Riemann surfaces]\label{modulidef}
Let $g,n\geq 0$ with $2g-2+n>0$.
The {\it{moduli space $\Mcal_{g,n}$ of Riemann surfaces}} of genus $g$ with $n$ marked points
is the space of isomorphism classes $[S,\bm{x},J]$,
where $J$ is a complex structure on the connected oriented surface $S$ of genus $g$
and $\bm{x}=(x_1,\dots,x_n)$ is a collection of $n$ distinct points on $S$.
\end{definition}

We recall that $\Mcal_{g,n}$ is a complex-analytic connected orbifold
of complex dimension $3g-3+n$.

\begin{definition}[Moduli space of spherical surfaces]\label{def:moduli-spherical}
Let $g,n\geq 0$ with $2g-2+n>0$ and let $\bm{\th}\in\RR^n_{>0}$.
The {\it{moduli space $\MSPH_{g,n}(\bm{\th})$ of spherical surfaces}} of genus $g$ with $n$ conical
singularities of angles $2\pi\cdot\bm{\th}$ is the space of isometry classes
$[S,\bm{x},h]$, where $h$ is a spherical metric on the connected oriented surface $S$
of genus $g$ with conical singularity at $x_i$ of angle $2\pi\th_i$ for $i=1,\dots,n$.
\end{definition}

In the case $\Ab_{\bm{\th}}(S,\bm{x})>0$,
it can be shown that $\MSPH_{g,n}(\bm{\th})$ is locally the quotient of
a real-analytic variety of dimension $6g-6+2n$ by a finite group
(see Section \ref{sec:moduli-spaces}).

The procedure of forgetting the spherical metric $h$
on the surface $(S,\bm{x})$ and remembering only
the underlying conformal structure $J$ determines a real-analytic 
{\it{forgetful map}}
\[
F_{g,n,\bm{\th}}:\MSPH_{g,n}(\bm{\th})\lra \Mcal_{g,n}
\]
defined as $F_{g,n,\bm{\th}}[S,\bm{x},h]:=[S,\bm{x},J]$.

\subsection{Main results}\label{sec:intro-main}

\subsubsection{Components of the moduli space of spherical surfaces}

Our first result is the existence of a spherical metric in some conformal class,
provided $g>0$ and the obvious Gauss-Bonnet constraint is satisfied.
The constraint can be stated as follows: if $S$ supports a spherical metric with $n$ conical singularities
of angles $2\pi\bm{\th}$, its area
\[
\frac{1}{2\pi}\area(S)=\chi(S,\bm{\th}):=\chi(\dot{S})+\sum_i\th_i
\]
must be positive. 

\begin{mainthm}{A}[Existence of spherical metrics in positive genus]\label{main:existence}
Let $g,n>0$ and let $\bm{\th}\in\RR^n_{>0}$ such that $\chi(S,\bm{\th})>0$.
Then the moduli space $\MSPH_{g,n}(\bm{\th})$ is non-empty.
\end{mainthm}

In \cite{mondello-panov:constraints} we proved that a conical singularity can be split
into several conical points, provided the metric is angle-deformable.
The above statement is a simple consequence of such result.

\begin{remark}
Theorem \ref{main:existence}
can be contrasted with what happens for genus $0$
surfaces (see Theorem \ref{thm:mondello-panov}
in Section \ref{sec:genus0}).
\end{remark}

At present very little seems to be known about the number of connected components
of $\MSPH_{g,n}(\bm{\th})$. In particular, we are not aware of any special case prior to this work
in which such moduli space is shown to be disconnected.

In our next result we produce examples in which $\MSPH_{g,n}(\bm{\th})$
has an arbitrarily large number of components.
Moreover each component parametrizes surfaces with quite a different conformal type.

\begin{mainthm}{B}[The moduli space of spherical surfaces can have many components]\label{main:many}
Let $m\geq 0$. Given integers $m_1,m_2,m_3\geq m$
and $\e_1,\dots,\e_m\in \left(0,\frac{1}{2m+2}\right)$, set $\bm{\th}=\left(\frac{1}{2}+m_1,\frac{1}{2}+m_2,
\frac{1}{2}+m_3,\e_1,\dots,\e_m\right)$.
Then
\begin{itemize}
\item[(a)]
the moduli space $\MSPH_{0,m+3}(\bm{\th})$ has at least $3^m$ connected components;
\item[(b)]
if $\max\{\e_j\}\leq \frac{1}{16}\exp\left(-2\pi\cdot\max\{m_i\}\right)$, then the image of
$F_{0,m+3,\bm{\th}}:\MSPH_{0,m+3}(\bm{\th})\rar\Mcal_{0,m+3}$ has at least $3^m$ connected components.
\end{itemize}
\end{mainthm}

\begin{remark}
The same statement holds for $(\th_1,\th_2,\th_3)$ ranging in
a small neighbourhood of the point $\left(\frac{1}{2}+m_1,\frac{1}{2}+m_2,
\frac{1}{2}+m_3\right)$ inside $\RR^3$. 
\end{remark}

The space of spherical polygons 
(i.e.~spherical disks with piecewise geodesic boundary)
can be thought of as a ``real'' locus of the
moduli space of spherical surfaces of genus $0$.
A disconnectedness result for certain moduli spaces of quadrilaterals
can be found in Eremenko-Gabrielov \cite{EG:curvature1}. More examples of such behaviour
can be also found in Eremenko-Gabrielov-Tarasov \cite{EGT:four}.

\subsubsection{Systole inequality and surfaces with one conical point of small angle}

By the very definition of extremal length (see Appendix \ref{sec:appendix}),
a lower bound for the spherical systole of $(S,\bm{x})$ always implies the following lower bound for its extremal systole
\[
\Ext\sys(\dot{S})\geq
\frac{\inf_{\gamma}\ell(\gamma)^2}{\Area(S)}\geq \frac{\left(2\,\sys(S,\bm{x})\right)^2}{2\pi\left(\chi(\dot{S})+\|\bm{\th}\|_1\right)}
=\frac{2\,\sys(S,\bm{x})^2}{\pi\left(\chi(\dot{S})+\|\bm{\th}\|_1\right)}.
\]
where $\gamma$ ranges over all essential simple closed curves in $\dot{S}$
and where $\|\bm{\th}\|_1:=\th_1+\dots+\th_n$.
Here we are using the inequality
$\ell(\gamma)>2\sys(S,\bm{x})$, which is proven in Lemma \ref{birkhoff}.

Conversely, our next main result provides a lower bound for $\sys(S,\bm{x})$ in terms of $\Ext\sys(\dot{S})$,
as long as the angle vector $\bm{\th}$ does not satisfy $\Ab_{\bm{\th}}(g,n)=0$.

\begin{mainthm}{C}[Systole inequality]\label{main:systole}
Let $S$ be a surface with spherical metric and conical singularities 
at $\bm{x}$ of angles $2\pi\bm{\th}$.
Assume that $\chi(\dot{S})<0$ and $\dot{S}$ is not a $3$-punctured sphere.\\
Suppose that there exists $\varepsilon\in\left(0,\frac{1}{2}\right)$
such that
\[
\Ab_{\bm{\th}}(S,\bm{x})\geq \varepsilon.
\]
Then
\[
\Ext\sys(\dot{S})\geq \frac{2\pi\|\bm{\th}\|_1}{\log(1/\e)}
\quad
\text{implies}\quad
\sys(S,\bm{x})\ge \left(\frac{\varepsilon}{4\pi\|\bm{\th}\|_1}\right)^{-3\chi(\dot S)+1}
\hspace{-1cm}.
\]
\end{mainthm}

As a first application of the above result, we are able to prove non-existence
of spherical metrics in a given conformal class with one sufficiently small angle.

\begin{mainthm}{D}[Non-existence of spherical metrics with one small angle]\label{main:non-existence-small}
Let $(S,J)$ be a Riemann surface with $n$ marked points $\bm{x}$ and assume that $\chi(\dot{S})<-1$.
Let $\bm{\hat{\th}}=(0,\th_2,\dots,\th_n)$ with $\th_2,\dots,\th_n>0$
and suppose that
\begin{itemize}
\item[(i)]
$\chi(S,\bm{\hat{\th}})> 0$
\item[(ii)]
$\Ab_{\bm{\hat{\th}}}(S,\bm{x})>0$.
\end{itemize}
Then there is a $\th_1^\star\in (0,10^{-6})$ that depends only on
$\Ext\sys(\dot{S},J)$, $\|\bm{\hat{\th}}\|_1$, $\Ab_{\bm{\hat{\th}}}(S,\bm{x})$ and $\chi(\dot{S})$
such that there exists no spherical metric on $S$ with angles $2\pi\bm{\th}$ at $\bm{x}$
and underlying conformal structure $J$ for any $\th_1<\th_1^\star$.
\end{mainthm}

%

\begin{remark}
In Theorem \ref{main:non-existence-small} it is possible to take
\[
\th_1^\star=
\frac{1}{\pi}\left(\frac{\e}{\pi(1+4\|\bm{\hat{\th}}\|_1)}\right)^{1-3\chi} 
\]
where $\chi=\chi(\dot{S})$ and
\[
\e=\min\left\{
\frac{1}{2}\Ab_{\bm{\hat{\th}}}(S,\bm{x}),
\quad
\exp\left(
\frac{-\pi(1+2\|\bm{\hat{\th}}\|_1)}{\Ext\sys(\dot{S},J)}
\right)
\right\}.
\]
\end{remark}

\begin{remark} 
Note that, by Theorem \ref{main:existence}, for any 
$g>0$, any $\bm{\hat{\th}}=(0,\th_2,\dots,\th_n)$ satisfying  $\chi(S,\bm{\hat{\th}})\geq 0$ and  for any choice of $\th_1>0$, we can construct a spherical metric on $S$ with angles $2\pi\bm{\th}=2\pi(\th_1,\th_2,\dots,\th_n)$. 
Hence, Theorem \ref{main:non-existence-small} 
provides us
an open subset of $\RR^n$
such that, for every fixed $\bm{\th}$ in such subset,
the existence of a spherical metric with conical points of angles $2\pi\bm{\th}$ 
depends on the conformal structure of the surface.
Moreover, the smaller is $\th_1$ the smaller is the subset of conformal structures for which the metric exists.
\end{remark}

Another application of Theorem \ref{main:systole} emphasizes the relation between metric and conformal invariants of a spherical surface.

\begin{mainthm}{E}[Properness of the forgetful map]\label{main:properness}
Let $g,n\geq 0$ with $2g-2+n>0$ and let $\bm{\th}\in\RR^n_{>0}$ such that
$\Ab_{\bm{\th}}(g,n)>0$.
Then the forgetful map $F_{g,n,\bm{\th}}$ is proper.
\end{mainthm}



\subsection{Context and known results}\label{sec:context}

In this section we recall some relevant results 
in the literature on spherical surfaces with conical points
that relate to our main results.\\

Consider metrics of constant curvature on a surface of genus $g$.
Up to rescaling, we can always assume that such metrics are
{\it{hyperbolic}} (curvature $-1$), {\it{spherical}} (curvature $1$) or {\it{flat}} (curvature $0$) metrics.
Moreover, the sign of the curvature
must agree with the sign of 
$\chi(S,\bm{\th})=\chi(\dot{S})+\|\bm{\th}\|_1$ by Gauss-Bonnet.

Generalizing the definition given in the Section \ref{sec:setting}, a 
hyperbolic/flat/spherical metric $h$ on $S$ of
has a conical singularity at the point $x_i\in S$ of angle $2\pi\th_i$ if
\[
h=\begin{cases}
d\rho^2+\th_i^2 \rho^2 d\phi^2 & \text{in the flat case}\\
d\rho^2+\th_i^2 \sinh^2(\rho) d\phi^2 & \text{in the hyperbolic case}\\
d\rho^2+\th_i^2 \sin^2(\rho) d\phi^2 & \text{in the spherical case}
\end{cases}
\]
with respect to local polar coordinates $(\rho,\phi)$ centered at $x_i$.

Here is one of the fundamental problems in the theory.

\begin{problem}[Spherical metrics in a fixed conformal class]\label{problem:main}
Fix $\bm{\th}=(\th_1,\dots,\th_n)\in\RR^n_{>0}$
and a Riemann surface $(S,J)$ of genus $g$ with $n$ marked points $\bm{x}$.
Study the set of $J$-conformal spherical metrics on
$S$ with conical singularities at $\bm{x}$ of angles $2\pi\cdot\bm{\th}$, namely
the fiber $F_{g,n,\bm{\th}}^{-1}[S,\bm{x},J]$ of
the forgetful map $F_{g,n,\bm{\th}}:\MSPH_{g,n}(\bm{\th})\rar\Mcal_{g,n}$.
\end{problem}

From this point of view, the existence of a spherical metric 
in each conformal class
can be rephrased in terms of surjectivity of $F_{g,n,\bm{\th}}$.
As an example, such surjectivity is verified whenever
it is possible to define a degree of $F_{g,n,\bm{\th}}$
and this degree is nonzero.

\subsubsection{The unpunctured case}
Consider first the case $n=0$.
In genus $0$ it is well-known that there is a $\PSL_2(\CC)/\SO_3(\RR)$ family
of conformally equivalent spherical metrics in each conformal class.
In genus $1$, Riemann surfaces are isomorphic to flat tori $\CC/\Lambda$, which
thus admit a unique flat metric up to rescaling.
For genus greater than $1$ the celebrated uniformization
theorem by Koebe \cite{koebe:I} \cite{koebe:II} and Poincar\'e \cite{poincare:uniformization}
states that there exists a unique hyperbolic in each
conformal class.\\

Since the situation in the unpunctured case is clear, we will assume now on that $n>0$.

\subsubsection{Existence of conformal metrics of constant curvature in the subcritical case}

First of all, we wish to recall the following important existence result by Troyanov,
which we only state in the case of constant curvature.

\begin{theorem}[Existence of conformal metrics of constant $K$ in the subcritical case \cite{troyanov:conical}]\label{thm:troyanov}
Let $(S,J)$ be a compact connected Riemann surface, $\bm{x}=(x_1,\dots,x_n)$
be a subset of $n\geq 0$ distinct points of $S$ and let $\bm{\th}=(\th_1,\dots,\th_n)\in\RR^n_{>0}$.
Assume that
\[
\chi(S,\bm{\th})<\tau(S,\bm{\th}):=2\cdot\min\{\th_1,\,\th_2,\dots,\,\th_n,\,1\}.
\]
Then there exists a conformal metric on $(S,J)$ with constant curvature
and conical singularities at $\bm{x}$ of angle $2\pi\cdot\bm{\th}$. Moreover,
if $\chi(S,\bm{\th})\leq 0$, then such metric is unique up to rescaling.
\end{theorem}

%

%
\gm{This paragraph ``The approach taken....'' is added.}
The approach taken by Troyanov is analytic. He fixes a background metric $h_0$
on $S$ in the given conformal class and with the prescribed conical behaviour at $\bm{x}$
and he looks for a conformal factor
$u:\dot{S}\rar \RR$ such that $h=e^{2u}h_0$ has wished curvature function $K:\dot{S}\rar\RR$
(in our case, the constant function $K=1$).
This translates into a Liouville equation $\Delta_{h_0}u=K e^{2u}-K_{h_0}$
and in turn, following an idea already contained in \cite{berger:riemannian}, into a variational problem.
More precisely, Troyanov consider the functionals $\mathcal{F},\mathcal{G}:\ol{W}^{1,2}(S,h_0)\rar\RR$
on the space $\ol{W}^{1,2}(S,h_0)$ of $W^{1,2}$ functions on $(S,h_0)$ with 
zero mean defined 
as $\mathcal{F}(u):=\int_S \left(\|du\|_{h_0}^2+K_{h_0}u\right)dA_{h_0}$
and $\mathcal{G}(u):=\int_S K e^{2u}dA_{h_0}$.
The key step to show the existence of a solution is to
prove that $\mathcal{F}$ is coercive
on each level $\mathcal{G}^{-1}(c)$. 
Under the subcritical hypothesis $\chi(S,\bm{\th})<\tau(S,\bm{\th})$,
this is achieved through the Moser-Trudinger inequality \cite{moser:sharp}.

The uniqueness result for $\chi(S,\bm{\th})\leq 0$
relies on an application of the maximum principle
and it had been previously proven by Troyanov himself \cite{troyanov:euclidean} in a different
way for $\chi(S,\bm{\th})=0$ and by McOwen \cite{mcowen:conical} for $\chi(S,\bm{\th})<0$.
As a consequence, 
the analogue of Problem \ref{problem:main}
for metrics of constant curvature is
trivially solved
for $\chi(S,\bm{\th})\leq 0$.
\\

In what follows we will consider only the case $\chi(S,\bm{\th})>0$.




\subsubsection{Bubbling phenomenon in positive curvature}\label{sec:bubbling} 

\gm{This section has been expanded.}

The purpose of this subsection is to
discuss a typical phenomenon of positive curvature:
the existence of degenerating sequences of metrics, with bounded (or even fixed) underlying
conformal structures. \\

A local example of such behaviour can be obtained
by fixing $\theta>0$ and
by considering
for every integer $m\geq 1$
the metric 
$\left(\frac{2m\theta |z|^{\theta-1}|dz|}{1+m^2|z|^{2\theta}} \right)^2
=f_m^*\left(\frac{2|dw|}{1+|w|^2}\right)^2$
on the complex plane $\Pi=\CC$ obtained by pulling back
the standard spherical metric $\left(\frac{2|dw|}{1+|w|^2}\right)^2$
through the (possibly multi-valued) map $f_m:\Pi\rar\CC$ defined as
$f_m(z)=mz^\theta$. The above metrics 
on $\Pi$ are all conformally equivalent
to one another and their area concentrates at the (possibly conical) point $z=0$
as $m\rar +\infty$.

It is known since \cite{brezis-merle:blowup} that, if a sequence of conformal spherical metrics 
$(h_m)$ on a fixed Riemann surface $(S,J)$ is 
not bounded, then (up to subsequences)
the area of $h_m$ concentrates at finitely many points of the surface
and it goes to zero elsewhere as $m\rar\infty$.

This phenomenon can be prevented by requiring that the non-bubbling 
parameter $\Ab_{\bm{\th}}(S,\bm{x})$ remains strictly positive, as in 
the following result by Bartolucci-Tarantello,
which we only state in the constant curvature case.


\begin{theorem}[Compactness of the space of spherical metrics in a 
conformal class {\cite{bartolucci-tarantello:compactness}}]\label{thm:BT}
Given a Riemann surface $(S,J)$ of genus $g$ with $n$ marked points $\bm{x}$
(with $2g-2+n>0$) 
and an angle vector $\bm{\th}\in (1,+\infty)^n$
such that $\Ab_{\bm{\th}}(g,n)>0$,
the space of conformal spherical metrics on $S$
with conical singularities at $\bm{x}$ of angles $2\pi\bm{\th}$ is compact.
\end{theorem}

The key point of the above result is to show that, if a sequence of spherical metrics
$(h_m)$ on $S$ is blowing up at the point $p$, then the
area that concentrates at $p$ is
exactly $4\pi\th_i$ if $p=x_i$ and it is $4\pi$ if $p$ is not a conical point.
This is achieved essentially by comparing $h_m$ to a model metric near the blow-up points.\\

The above result is implied and extended to all possible values of $\bm{\th}$
by Theorem \ref{main:systole}, which can be in fact
seen as a geometric and quantitative counterpart to Theorem \ref{thm:BT}.
The properness of the forgetful map (Theorem \ref{main:properness}) is also a qualitative consequence of
Theorem \ref{main:systole}.\\

\gm{``let us first consider'' is changed into ``let us consider''.}
In order to understand the geometric meaning of the assumption $\Ab_{\bm{\th}}(g,n)>0$, let us consider a special class of spherical surfaces,
which we call {\it{almost bubbling}} (see Figure 
\ref{fig:nearly-complete-bubbling-intro}
and Definition \ref{def:bubbling}).

\begin{center}
\begin{figurehere}
\psfrag{S'}{$\textcolor{Purple}{S^c}$}
\psfrag{S}{$\textcolor{blue}{S}$}
\psfrag{S4}{$\textcolor{blue}{\!\!\BU^1_4}$}
\psfrag{S2}{$\textcolor{blue}{\BU^1_2}$}
\psfrag{B1}{$\textcolor{blue}{\BU^0_1}$}
\psfrag{x2}{$\textcolor{Sepia}{x_2}$}
\psfrag{x4}{$\textcolor{Sepia}{x_4}$}
\includegraphics[width=0.4\textwidth]{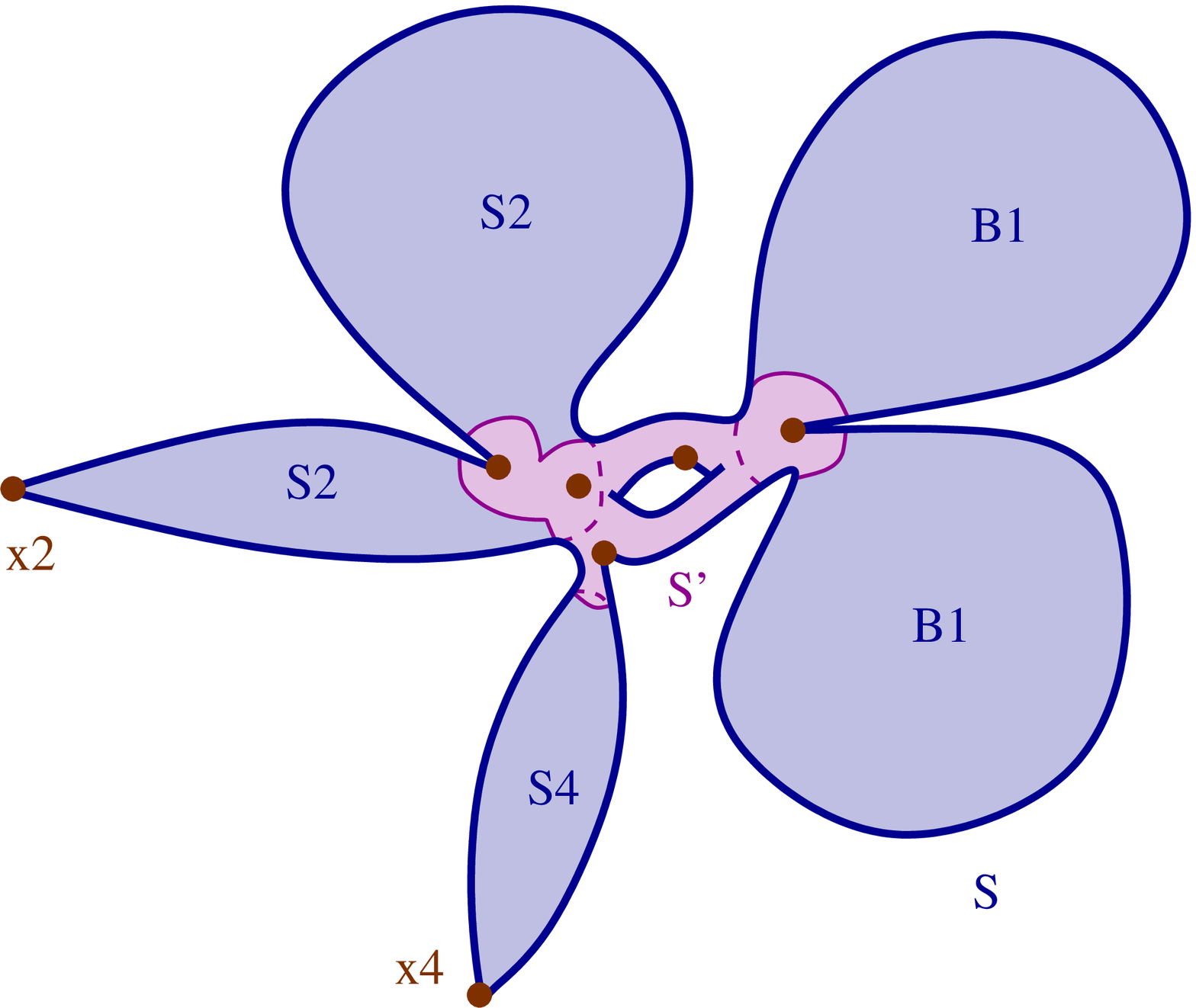}
\caption{{\small An example of almost bubbling spherical surface.}}\label{fig:nearly-complete-bubbling-intro}
\end{figurehere}
\end{center}

%

Informally speaking, a spherical surface $S$ is almost bubbling
if there exists
a subset $I\subseteq\{1,2,\dots,n\}$ such that
$S$ can be partitioned into the disjoint union of
\begin{itemize}
\item
finitely many disks $\BU^0_j$ with no conical points,
\item
a disk $\BU^1_i$ 
with exactly
one conical point $x_i$ for every $i\in I$, and
\item
a connected subsurface $S^c$ (which will be called the {\it{core}}),
\end{itemize}
so that the core has small area
and the disks (which will be also called {\it{bubbles}})
have short boundary compared to their size.
In particular, we will say that $S$ is
{\it{$\e$-bubbling}} if the area of $S^c$ and the lengths
of the boundaries of the disks can be estimated in terms of $\e$ in a precise way (see Section \ref{sec:almost-bubbling} for a formal definition).

In Section \ref{sec:systole}
we will construct a partition as above by taking
$S^c$ equal to the locus of points that sit at distance at most $r$
from $\bm{x}\setminus\{x_i\,|\,i\in I\}$
for some $r<\pi/2$: under appropriate hypotheses,
the surface $S$ will be almost bubbling with core given by such $S^c$ ({\it{Voronoi core}})
and the boundary of $S^c$ will be piecewise smooth of constant extrinsic curvature.

A feature of $\e$-bubbling surfaces 
with conical singularities of angles $2\pi\bm{\th}$
is that 
they have non-bubbling parameter smaller than $\e$
(Theorem \ref{thm:almost-bubbling}) or,
equivalently, that
the value $\frac{1}{2\pi}\Area(S)$ is
at distance smaller than $\e$ from the subset
\[
\Acrit_{\bm{\th}}=\left\{2b+2\sum_{i\in I}\th_i\ \big|\ b\in\ZZ_{\geq 0},\ I\subseteq\{1,2,\dots,n\}\right\}
\]
inside $\RR_{\geq 0}$.

The above assertion 
depends on the fact that each bubble $\BU^0_j$ has area approximately $4\pi b^0_j$ and
each bubble $\BU^1_i$ has area approximately $2\pi(1+\th_i)+4\pi b^1_i$
for certain integers $b^0_j,b^1_i\geq 0$.
A quantitative estimate in terms of $\e$
of the area of the bubbles $\BU^0_j$ and $\BU^1_i$
is obtained in Theorem \ref{diskstructure}.
%
%
Though logically independent,
such calculation of ours
may remind the area estimate near a bubbling point 
for the metrics $h_m$ performed in \cite{bartolucci-tarantello:compactness}.

%
The proof of Theorem \ref{main:systole} 
essentially shows that, if the 
extremal systole remains bounded from below,
the only way that a sequence of spherical metrics can degenerate is by
getting $\e$-bubbling with smaller and smaller $\e$.

To sum up, the hypothesis $\Ab_{\bm{\th}}(g,n)\geq \e$ prevents $\e$-bubbling
and so it prevents degenerations at all in a fixed conformal class.

\subsubsection{Existence of conformal spherical metrics in positive genus}

Theorem \ref{thm:troyanov}
implies that the forgetful map $F_{g,n,\bm{\th}}$ is surjective
in the spherical case if $\chi(S,\bm{\th})<\tau(S,\bm{\th})$ (subcritical case).
In the variational formulation employed by Troyanov, the above numerical condition on the angles
is exploited to show that the functional he considers is proper.

%
%

As seen in Section \ref{sec:bubbling},
a diverging sequence of spherical metrics in a fixed conformal class
gets $\e$-bubbling for smaller and smaller $\e$.
By Theorem \ref{diskstructure}, the area of 
a $\BU^0_j$ or a $\BU^1_i$ in an $\e$-bubbling surface is
at least $2\pi\cdot (\tau(S,\bm{\th})-\e)$ and so
this is also a lower bound for the total area of an $\e$-bubbling surface.
%
Hence, the subcritical hypothesis is equivalent to asking 
that there is not enough room for $\e$-bubbling if $\e$ is too small.

\gm{``The first general existence result...'' is changed to ``The following general existence
result''.}
The following general existence result
for $\tau(S,\bm{\th})>\chi(S,\bm{\th})$ (supercritical case) is due to Bartolucci-De Marchis-Malchiodi.

\begin{theorem}[Existence of supercritical spherical metrics in positive genus \cite{BDMM:supercritical}]\label{thm:BDMM}
Let $(S,J)$ be a compact connected Riemann surface of genus $g$, 
let $\bm{x}=\{x_1,\dots,x_n\}$
be a subset of $n\geq 0$ distinct points of $S$ and let $\bm{\th}=(\th_1,\dots,\th_n)\in\RR^n_{>0}$.
Assume that
\begin{itemize}
\item[(i)]
$g>0$ and $\th_i\geq 1$ for all $i$
\item[(ii)]
$\chi(S,\bm{\th})>\tau(S,\bm{\th})$
\item[(iii)]
$\Ab_{\bm{\th}}(S,\bm{x})>0$.
\end{itemize}
Then there exists at least one conformal spherical 
metric on $(S,J)$ with
conical singularities at $\bm{x}$ of angles $2\pi\cdot\bm{\th}$.
\end{theorem}


\gm{Here I have modified the paragraph to cite the lower bound for the number of solutions
that Bartolucci-De Marchis-Malchiodi have found.}
Both Theorem \ref{thm:troyanov} in the spherical case and Theorem \ref{thm:BDMM}
do not provide any uniqueness result for the metric.
On the contrary, an inspection of \cite{BDMM:supercritical}
shows that, if counted with the appropriate multiplicities,
there should be at least 
$\dis\binom{g-1+\floor{\frac{1}{2}\chi(S,\bm{\th})}}{g-1}$
spherical metrics in the given conformal class.
The strategy is again
to consider a variational formulation of a Liouville-type equation, 
but now critical points of the functional are
found by examining the topology of the sublevels:
hypothesis (i) is used to ensure that the topology indeed changes.

 

\begin{remark}\label{rmk:tori}
By contrast with Theorem \ref{thm:BDMM},
Lin-Wang \cite{lin-wang:elliptic} prove that
the existence of a spherical metric with one conical point
of angle $6\pi$ in a prescribed conformal class
on a torus depends on the conformal class.
Note though that, in this case, $\Ab_{\bm{\th}}(S,\bm{x})=0$
and $\chi(S,\bm{\th})=\tau(S,\bm{\th})$.
It also follows from Theorem \ref{main:existence} and Theorem \ref{main:non-existence-small} that,
on a given pointed Riemann surface $(S,J,\bm{x})$,
for suitable $\bm{\th}$ (with small $\th_1$), existence of a spherical metric depends on the
conformal class (see Subsection \ref{sec:small}).
\end{remark}

A major progress in the enumeration of solutions of such Liouville-type equations
was achieved by Chen-Lin \cite{chen-lin:topological}, who exhibited
a recursive formula for the number of solutions (counted with signed multiplicity)
intended as a Schauder degree of an endomorphism of a Sobolev space.
We do not recall the full statement of such a result for brevity but
we emphasize that, in particular, it implies that 
the forgetful map $F_{g,n,\bm{\th}}$ is surjective whenever such degree is non-zero.
In view of Remark \ref{rmk:tori} though, Chen-Lin's result
does not always detect which moduli spaces $\MSPH_{g,n}(\bm{\th})$
are non-empty.

\subsubsection{Non-emptyness and connectedness for moduli spaces of spherical surfaces of genus zero}\label{sec:genus0}

If $n\leq 3$, Riemann surfaces of genus $0$ with $n$ marked points are all isomorphic to each other. 
In this case, spherical surfaces of genus $0$ with $n$ conical singularities
were classified by Troyanov \cite{troyanov:bigons} for $n=1$ (the round sphere) and $n=2$
(the rugby ball and the cyclic cover of $\Sph$ branched at two points) and by Eremenko
\cite{eremenko:three} for $n=3$.
Moreover, the results in 
\cite{eremenko:three} give a complete
answer to Problem \ref{problem:main} for $(g,n)=(0,3)$.

In the case of genus $0$ and $n\geq 4$, 
explicit inequalities in $\RR^n$ describing
the set of all $n$-uples of vectors $\bm{\th}$
for which a spherical metric with angles $2\pi\bm{\th}$ and non-coaxial monodromy
exists were determined by Mondello-Panov, and they
can be phrased as follows.

\begin{theorem}[Monodromy constraints for spherical surfaces of 
genus zero \cite{mondello-panov:constraints}]\label{thm:mondello-panov}
Let $n\geq 3$ and let $\bm{\th}\in \RR_{>0}^n$ an angle vector such that
the Gauss-Bonnet constraint $2-n+\|\bm{\th}\|_1>0$ is satisfied.
Then
\[
\MSPH_{0,n}(\bm{\th})\ 
\begin{cases}
\text{is empty} & \text{if $d_1(\bm{\th}-\bm{1},\ZZ^n_o)<1$}\\
\text{contains non-coaxial metrics} & \text{iff $d_1(\bm{\th}-\bm{1},\ZZ^n_o)>1$}
\end{cases}
\]
where $d_1(\bm{\th}-\bm{1},\ZZ^n_o)$ is the standard $L^1$-distance in $\RR^n$
between the vector $\bm{\th}-\bm{1}=(\th_1-1,\dots,\th_n-1)$ and the subset
$\ZZ^n_o=\{\bm{p}\in\ZZ^n\,|\,\text{$p_1+\dots+p_n$ is odd}\}$.
\end{theorem}


\begin{remark}\label{rmk:existence-splitting}
Inspecting the proof of the case $d_1(\bm{\th}-\bm{1},\ZZ^n_o)>1$ in \cite{mondello-panov:constraints},
one can realize that all metrics for $n\geq 5$ are constructed starting from
metrics with fewer singularities by splitting some conical points.
Moreover, one can notice that the choice of which point to split is harder if the angles are small.
In any case, the metrics obtained through such splitting procedure have a
very small spherical systole (and so they can be thought of being ``very close to degenerating'').
\end{remark}

%
\gm{Citation of Kapovich's paper inserted.}
After \cite{mondello-panov:constraints},
the case $d_1(\bm{\th}-\bm{1},\ZZ^n_o)=1$ was analyzed by Dey, Kapovich and Eremenko.
Under this hypothesis,  Dey \cite{dey:coaxial} showed that $\MSPH_{0,n}(\bm{\th})$ is empty if all $\th_i$'s are non-integral, and
Kapovich \cite{kapovich:branched} found
a simple criterion to determine
for which $\bm{\th}\in\ZZ^n_{\geq 0}$
the moduli space $\MSPH_{0,n}(\bm{\th})$ is non-empty.
Finally, Eremenko \cite{eremenko:coaxial} determined 
all the values of $\bm{\th}$
for which a spherical metric in genus $0$ with coaxial monodromy exists.
Altogether the above results completely determine when $\MSPH_{0,n}(\bm{\th})$ is non-empty.


\begin{remark}\label{rmk:genus0-convex}
The case of $g=0$ and all $\th_i<1$
is rather special.
The constraints in \cite{mondello-panov:constraints}
for the existence of some spherical metric 
with non-coaxial monodromy
are equivalent to $0<\chi(S,\bm{\th})<\tau(S,\bm{\th})$
and they had already been proven necessary by Luo-Tian \cite{luo-tian:spherical}.
Being in the subcritical case,
Theorem \ref{thm:troyanov} provides a much stronger conclusion than
\cite{mondello-panov:constraints}
by granting existence in each conformal class.
Furthermore, uniqueness also holds
and it was proven by Luo-Tian \cite{luo-tian:spherical}.
\end{remark}

The connectedness problem for $\MSPH_{g,n}(\bm{\th})$ has a simple answer
when $g=0$ and $\bm{\th}$ is integral.
A corollary of Liu-Osserman's proof of the connectedness
of the Hurwitz scheme in genus $0$ is the following.

\begin{theorem}[Connectedness of Hurwitz spaces \cite{liu-osserman:irreducibility}]\label{thm:liu-osserman}
For every $\bm{\th}\in\ZZ^n_{>0}$ the moduli space
$\MSPH_{0,n}(\bm{\th})$ is smooth and connected.
\end{theorem}

The above result must be contrasted with our Theorem \ref{main:many}, in which we exhibit
smooth moduli spaces $\MSPH_{0,3+m}(\bm{\th})$ that 
have at least $3^m$ connected components.

\begin{remark}
Determining the behavior of the forgetful map is an non-obvious problem
even in genus $0$. For instance,
Eremenko \cite{eremenko:coaxial} noticed that
Lin-Wang's results in Remark \ref{rmk:tori}
imply that $F_{0,4,\bm{\th}}$ is not surjective
for $\bm{\th}=\left(\frac{1}{2},\frac{1}{2},\frac{1}{2},\frac{3}{2}\right)$.
%
%
\end{remark}

We briefly mention that the case of integral angles had already been analyzed
by Goldberg and Scherbak. In particular,
Goldberg \cite{goldberg:catalan} showed that, for $\th_1=\dots=\th_n=2$,
the forgetful map $F_{0,n,\bm{\th}}$ is surjective and she computed its degree,
and Scherbak \cite{scherbak:critical} settled the case of a general $\bm{\th}\in\ZZ^n_{>0}$. 

More recently, Eremenko-Tarasov analyzed the case of surfaces of genus $0$ with exactly three non-integral angles
and proved the following result.

\begin{theorem}[Spheres with three non-integral angles {\cite[Theorem 2.5]{eremenko-tarasov:three-fuchsian}}]
Let $\bm{\th}=(\th_1,\dots,\th_n)$ be an angle vector such that
$\th_4=\dots=\th_n\in\ZZ_{>0}$ and $d_1(\bm{\th}-\bm{1},\ZZ^n_o)>1$.
Then the forgetful map $F_{0,n,\bm{\th}}$ is surjective with finite fibers
of cardinality at most $\th_4\cdot\th_5\cdots\th_n$.
Moreover, this upper bound is attained for generic $(\th_1,\th_2,\th_3)$
at the generic point of $\Mcal_{0,n}$.
\end{theorem}

\subsubsection{Spherical surfaces with one small angle}\label{sec:small}

We recall that in positive genus Theorem \ref{thm:BDMM} 
ensures the existence of a spherical metric in {\it{every}} conformal class,
provided $\Ab_{\bm{\th}}(g,n)>0$ and all $\th_i\geq 1$.
The non-existence result in Theorem \ref{main:non-existence-small}
shows that it is not possible to strengthen the statement of Theorem \ref{main:existence}
so to claim existence of a metric in every conformal class for every $\bm{\th}$ that satisfies
the Gauss-Bonnet constraint.

The rather delicate nature of existence of spherical metrics with conical singularities of small angles
can in turn be compared with the content of \cite{carlotto:solvability},
in which Carlotto studies the solvability of a singular Liouville differential equation.
In positive genus our non-existence result does not logically overlap with Carlotto's, 
since Carlotto's setting requires $\bm{\th}\in(0,1)^n$
and so solutions to his differential equation are not conformal factors of a spherical metric
(with respect to a given background).

\begin{remark}
The hypotheses on $\bm{\th}$ in Theorem \ref{main:non-existence-small}
are never satisfied in the subcritical case, which is coherent with the existence result in
Theorem \ref{thm:troyanov}.
\end{remark}

Another equivalent way of rephrasing Theorem \ref{main:non-existence-small} is the following.

\begin{mainthm}{\ref{main:non-existence-small}'}[Image of the forgetful map for small $\th_1$]\label{main:non-existence-small'}
Let $g\geq 0$ and $n>1$ such that $2g-2+n>1$ and let
$\bm{\hat{\th}}=(0,\th_2,\dots,\th_n)$ with $\th_2,\dots,\th_n>0$
such that $\Ab_{\bm{\hat{\th}}}(S,\bm{x})>0$.
For every compact subset $K\subset \Mcal_{g,n}$ there exists $\th_1^\star(K)>0$
such that the image of $F_{g,n,\bm{\th}}$ avoids $K$ for all
$\bm{\th}=(\th_1,\th_2,\dots,\th_n)$ with $0<\th_1<\th_1^\star(K)$.
\end{mainthm}

Note that the behavior of the forgetful map for $\bm{\th}$ as in Theorem \ref{main:non-existence-small'}
is completely different than the behavior of $F_{0,n,\bm{\th}}$ with $\bm{\th}\in\ZZ^n_{>0}$,
since $F_{0,n,\bm{\th}}$ has dense image as discussed above.

\subsection{Ideas of the proofs of Theorem \ref{main:systole} and 
of Theorems \ref{main:non-existence-small} and \ref{main:properness}}

The proof of the systole inequality
(Theorem \ref{main:systole}) breaks into two parts.
First we show that an almost bubbling surface has small non-bubbling parameter,
then we prove that a spherical surface
with small systole
and large extremal systole must be almost bubbling.
%

The former statement essentially relies on estimating the area of the ``bubbles'',
namely spherical disks with at most one conical point and short boundary.
For the latter statement we explicitly construct a decomposition
of the surface into a core and a collection of bubbles by means of the Voronoi function.

\subsubsection{Area estimate for the bubbles} 

The first step in the proof of
Theorem \ref{main:systole}
is the following.

\begin{namedthm*}{Theorem \ref{thm:almost-bubbling}}
If the spherical surface $(S,\bm{x})$ is $\e$-bubbling, then
$\Ab_{\bm{\th}}(S,\bm{x})<\e$.
\end{namedthm*}

Such a theorem completely relies on the following two
estimates.

\begin{namedthm*}{Corollary \ref{shorthindisk}}
A spherical disk $\BU^0$ without conical points
with $\ell(\pa\BU^0)<2\pi$ satisfies
\[
\frac{1}{2\pi}|\Area(\BU^0)-4\pi b^0|<(\ell(\pa\BU^0)/2\pi)^2
\]
for some $b^0\in\ZZ_{\geq 0}$.
\end{namedthm*}

\begin{namedthm*}{Theorem \ref{diskstructure}}
A spherical disk $\BU^1$ with a conical point $x$ of angle $2\pi\th$
and $\ell(\pa\BU^1)/d(x,\pa\BU^1)<1/2$ satisfies 
\[
\frac{1}{2\pi}|\Area(\BU^1)-4\pi(b^1+\th)|<\ell(\pa\BU^1)/d(x,\pa\BU^1)
\]
for some $b^1\in\ZZ_{\geq 0}$.
\end{namedthm*}

In order to calculate the area of $\BU^0$,
we consider its developing map to $\Sph$.
By taking care of the local degrees of the developing map, we
reduce the problem to the classical isoperimetric inequality $\ell(\pa\Omega)^2>2\pi\cdot\Area(\Omega)$
for domains $\Omega\subset\Sph$.
For disks with one conical point
we proceed in a similar way, as we consider the disk obtained by
cutting
$\BU^1$ along a geodesic that joins $x$ to $\pa\BU^1$.
In this case though, a further estimate for the area
of isosceles triangles in $\Sph$ (Section \ref{sec:isosceles})
is needed: this explains the different nature of the estimate for 
$\BU^1$, which is linear in $\ell(\pa\BU^1)$.



\subsubsection{The Voronoi function}

Before proceeding with the proof of Theorem \ref{main:systole},
we investigate some features of the {\it{Voronoi function}} $\Vor:S\rar\RR_{\geq 0}$ associated to a spherical surface $(S,\bm{x})$,
which assigns to a point its distance from $\bm{x}$.
The Voronoi function will be central in many of our constructions,
where it will often play the role of a ``topological Morse-Bott function''.

We completely classify the types of critical points
of $\Vor$ (Theorem \ref{locmaxint}) and we
prove a number of results that will be useful
in particular
in the proofs of Theorem \ref{main:many} and Theorem \ref{main:systole},
among which we highlight the following:
\begin{itemize}
\item
the lowest positive critical value of $\Vor$ is the systole of $S$ (Proposition \ref{critnumber});
\item
the maximum value of $\Vor$ is at least $\sqrt{2\chi(S,\bm{\th})\|\bm{\th}\|_1^{-1}}$
(Lemma \ref{lengthoflevel});
\item
saddle critical points of $\Vor$ are midpoints of special geodesic arcs and loops
(see Section \ref{saddlegeosec});
such geodesics provide
a cellular {\it{Morse-Delaunay decomposition}} of $S$ (Proposition \ref{prop:Voronoi-cellular-decomposition});
\item
$\Vor$ has at most $-3\chi(\dot{S})$ saddle critical values (Proposition \ref{critnumber});
\item
area and perimeter of sublevel sets of $\Vor$ are bounded from above 
(Lemma \ref{lengthoflevel}) by
$\ell(\pa\Vor^{-1}[0,r])\leq 2\pi r\|\bm{\th}\|_1$ and $\Area(\Vor^{-1}[0,r])\leq \pi r^2\|\bm{\th}\|_1$;
\item
if $\Vor$ has no critical values in the interval $(r',r'')$, each connected component
$C$ of $\Vor^{-1}(r',r'')$ is a {\it{(Voronoi) cylinder}} (Definition \ref{defVoronoi} and Corollary \ref{cor:Vor})
with conformal modulus $M(C)>\frac{\log(r''/r')}{2\pi\|\bm{\th}\|_1}$ (Lemma \ref{cylinderslength}).
\end{itemize}

\subsubsection{Detecting $\e$-bubbling surfaces}

The second essential step in Theorem \ref{main:systole} is to decompose
the surface $S$ into a core $S^c$ and a collections of disks $\BU^0_j$ and $\BU^1_i$
that satisfy the condition of $\e$-bubbling.
In fact, we want to obtain our $S^c$ as a {\it{Voronoi core}}, namely
of a component of a sublevel set $\Vor^{-1}([0,r'])$ 
(see Definition \ref{def:Voronoi-core}).

In order for such construction to work,
the value $r'$ must belong to the interval $(\sys(S,\bm{x}),\max\,\Vor)$,
because
$S^c$ must contain a geodesic that realizes the systole.
In addition,
all components of $\Vor^{-1}(r')$ that bound $S^c$ must be non-essential.
In fact, such two conditions are also sufficient (Lemma \ref{lemma:subcore}).

Now, the non-essentiality of the components in $\Vor^{-1}(r')$
is achieved by showing that each of them bounds a
Voronoi cylinder of large modulus (namely, larger than
$1/\Ext\sys(\dot{S})$).
More precisely, we find a non-critical 
$r''<\frac{\pi}{2}$ in the interval $(r',\max\,\Vor)$ in such a way that
$[r',r'']$ does not contain any saddle value 
and $|\log(r'/r'')|$ is small enough
compared to $1/\Ext\sys(\dot{S})$.
In such a situation each component of $\Vor^{-1}([r',r''])$
is either a disk with no conical points or a Voronoi
cylinder, and the non-essentiality then follows from the modulus estimate for such cylinders (Corollary \ref{cor:non-essential}).

%

After establishing that $S^c$ is a Voronoi core,
we observe that
the sum of the values
$\lambda(\BU^0_j)$ is bounded above in terms of $r'$
and the sum of the values $\lambda(\BU^1_i)$ in terms of $r'/r''$. 
Being a component of a sublevel of $\Vor$, 
the area of
$S^c$ is also easily estimated in terms of $r'$.


Finally, we must check that
it is possible to find regular values $r',r''<\frac{\pi}{2}$ with
$0<\sys(S,\bm{x})<r'<r''<\max\,\Vor$
such that 
\begin{itemize}
\item
$r'$ is small
and
$r'/r''$ is small
\item
the interval $(r',r'')$ does not contain saddle values.
\end{itemize}
Assuming that the systole is small enough (of order $\e^{-3\chi(\dot{S})}$), 
this is just a consequence of the pigeonhole principle because
the maximum number of saddle values is the topological constant $-3\chi(\dot{S})$.

More precisely, the exact values of $r',r''$ we will take 
are $r''=\delta$ and $r'=\frac{\e\delta}{4\pi\|\bm{\th}\|_1}$
for a suitable $\delta \in (\sys(S,\bm{x}),\max\, \Vor)$.

\subsubsection{Non-existence for small $\th_1$ and properness of the forgetful map}

The non-existence result for spherical metrics with small $\th_1$
(Theorem \ref{main:non-existence-small}) follows from Theorem \ref{main:systole} after
noticing 
that $\sys(S,\bm{x})\leq \pi\th_1$
for every spherical surface $(S,\bm{x})$ in $\MSPH_{g,n}(\bm{\th})$
(Lemma \ref{anglesysbound}).

On the other hand,
it is well-known that the level supsets of the continuous functions
$\sys:\MSPH_{g,n}(\bm{\th})\rar\RR_+$ and 
$\Ext\sys:\Mcal_{g,n}\rar\RR_+$ are compact
(see Lemma \ref{lemma:systole-function} and Lemma \ref{lemma:ext-systole-function}).
Thus, Theorem \ref{main:properness} is also a direct consequence of
Theorem \ref{main:systole}.


\subsection{Ideas of the proofs of Theorem \ref{main:existence}
and of Theorem \ref{main:many}}

The proof of Theorem \ref{main:existence}, namely
the non-emptiness of $\MSPH_{g,n}(\bm{\th})$ for $g>0$
and $\chi(S,\bm{\th})>0$, is a rather simple
application of some results contained in \cite{mondello-panov:constraints}.
First we produce one special metric with a single conical point  by
identifying the sides of a spherical bigon in a suitable way (Lemma \ref{lemma:existence-one}). This already settles
the case $n=1$.
The result for $n>1$ can then be inductively achieved 
by splitting conical points in a controlled way
(Proposition \ref{prop:splitting}).\\

The construction of many connected components
(Theorem \ref{main:many}) in certain moduli spaces 
$\MSPH_{0,n}(\bm{\th})$ is more elaborate.
We will see that, similarly to what happens in Theorem \ref{main:non-existence-small}, the presence of conical points with small angles imposes strong constraints on the metric
and the conformal structure.

The spherical surfaces $(S,h)$ we consider
have genus $0$ and $n=3+m$ conical points
$x_1,x_2,x_3,y_1,\dots,y_m$ with angles $2\pi\cdot(\th_1,\dots,\th_{3+m})$,
where $\th_1,\th_2,\th_3$ are close to $\frac{1}{2}+\ZZ_{\geq 0}$
and $\th_4,\dots,\th_{3+m}$ are of order $\e$
(see Figure \ref{fig:disconnected}).

The proof of (a) relies on two main observations.
First, the monodromy gives some control on the lengths
of geodesics between conical points (Section \ref{sec:holonomy0}).
Second, the monodromy representation of the above $\dot{S}$
is informally speaking an ``$\e$-deformation'' of the monodromy of
a sphere
with $3$ conical points of angles odd multiples of $\pi$,
which is completely understood.
More precisely, 
Proposition \ref{gap} shows that
\begin{itemize}
\item
the distance between $x_j,x_l$ is at least $\pi(\frac{1}{2}-m\e)$;
\item
every $y_i$ is {\it{tied}} to $x_{\kappa(i)}$, namely
$x_{\kappa(i)}$ is the conical point closest to $y_i$,
whereas all the other conical points are at distance
strictly greater than $d(y_i,x_{\kappa(i)})$.
\end{itemize}
This second property
allows us to construct a continuous map $\Kcal:\MSPH_{0,3+m}(\bm{\th})\rar \{1,2,3\}^m$
that sends a surface to the function $\kappa:\{1,2,\dots,m\}\rar\{1,2,3\}$.
Surjectivity of $\Kcal$ is easily proven: for each $\kappa$
we produce a spherical surface $(S,h)$ with $\Kcal(S,h)=\kappa$
by a standard splitting procedure
(see Proposition \ref{prop:splitting})
starting from a sphere with $3$ conical points 
of angles odd multiples of $\pi$.
This ensures that $\MSPH_{0,3+m}(\bm{\th})$ has at least $3^m$ components.

In order to prove (b),
a deeper analysis of such surfaces is needed.
In particular,
Proposition \ref{goeodesicloop} asserts that smooth geodesic loops $\gamma$ based at some $x_j$
of length $\ell(\gamma)<\pi$ are exactly of two types:
\begin{itemize}
\item
curves $\gamma$ with $\ell(\gamma)\leq 2\pi m\e$
such that
a component of $S\setminus\gamma$ does not contain any $x_l$;
\item
curves $\gamma$ with $\ell(\gamma)\geq \frac{\pi}{2}-2\pi m\e$
such that
both components of $S\setminus\gamma$ contain some $x_l$.
\end{itemize}
In Section \ref{sec:separating-cylinders}
the described gap property allows us 
to construct for each $j=1,2,3$ a Voronoi cylinder
$C_j\subset S$ with large modulus such that a component of $S\setminus C_j$ exactly contains
$x_j$ and all the points $y_i$ tied to $x_j$.
The existence of such conformally long cylinders $C_j$
permits us to conclude that components
of $\MSPH_{0,3+m}(\bm{\th})$ corresponding to distinct functions $\kappa$
are mapped to disjoint subsets of $\Mcal_{0,3+m}$ by the forgetful map.

\subsection{Content of the paper}

Section \ref{sec:existence} contains the proof of Theorem \ref{main:existence}, which is rather elementary and does not require much technology.
In Section \ref{sec:preliminaries} we introduce the notions of injectivity radius and systole
and we prove some basic properties of theirs. 
In Section \ref{sec:voronoi} we analyze the geometry of the Voronoi function, the types and number
of its critical points and values, and the induced Voronoi and Delaunay cellular decompositions.
In Section \ref{sec:voronoi2} we give some basic estimates of the modulus of a Voronoi cylinder and of the area 
and the perimeter of a sublevel set for the Voronoi function.
In Section \ref{sec:main} we present two applications of the systole inequality.
Assuming Theorem \ref{main:systole}, we prove the non-existence result of metrics with small $\th_1$ (Theorem \ref{main:non-existence-small}) and the properness of the
forgetful map (Theorem \ref{main:properness}).
Section \ref{sec:disconnectedness} contains the proof of Theorem \ref{main:many}.
Section \ref{sec:disks-one} is dedicated to estimating the area of a disk with one conical singularity and short boundary. 
In Section \ref{sec:systole}, we will use
this result to calculate the area of an $\e$-bubbling surface
and we will complete the proof of Theorem \ref{main:systole}.
In Appendix \ref{sec:appendix} we collect some well-known properties of the extremal length. Moreover, we show in one example that the estimate of the extremal length
that follows from Theorem \ref{main:systole} is reasonably sharp.


\subsection{Acknowledgements}

We would like to thank Daniele Bartolucci and Misha Kapovich for fruitful discussions
and Francesca De Marchis for helpful explanations on \cite{BDMM:supercritical}.
We are particularly grateful to Alexandre Eremenko for a number of valuable remarks that allowed
us to improve the exposition. We also thank an anonymous referee for carefully reading the
paper and for useful expository suggestions.
\gm{Thanks to the referee added.}

G.M.'s research was partially supported by FIRB 2010 Italian grant ``Low-dimensional
geometry and topology'' (code {\texttt{RBFR10GHHH\_003}}) and by GNSAGA INdAM
research group. D.P. was supported by a Royal Society University Research Fellowship and partially supported by grant {\texttt{RGF{$\backslash$}EA{$\backslash$}180221}}.

%
%



%


\section{Existence of spherical metrics in positive genus}\label{sec:existence}

The purpose of this section is to illustrate a simple existence result
of a spherical metric on a surface of genus $g>0$ with conical singularities of assigned angles.
The only constaint will be given by the Gauss-Bonnet formula.
%

\subsection{Existence of spherical metrics with assigned angles}

Let us start this section with a slightly more precise
version of Theorem \ref{main:existence}.

\begin{theorem}[Existence of a spherical metric with assigned angles]\label{thm:existence}
For any $g>0$ and for any $\bm{\th}=(\th_1,\dots,\th_n)\in\RR_{>0}^n$ satisfying $\sum(\th_i-1)>2g-2$ there is a genus $g$ surface with an 
angle-deformable spherical metric with conical points $\bm{x}=(x_1,\dots,x_n)$ of angles $2\pi\cdot\bm{\th}$.
Moreover, such metric has non-coaxial monodromy, except if $g=1$ and $\bm{\th}\in\ZZ_{>0}^n$.
\end{theorem}

Here we recall that a spherical metric $h$ on a surface $S$ with conical points $\bm{x}$ of angles $2\pi\cdot\bm{\th}$
is {\it{angle-deformable}} if there exists a neighbourhood $\Ncal\subset\RR_{>0}^n$ of $\bm{\th}$
and a continuous family $\Ncal\ni \bm{\phi}\mapsto h_{\bm{\phi}}$ of spherical metrics on $\dot{S}$ such that
$h_{\bm{\phi}}=h$ and $h_{\bm{\phi}}$ has conical singularity of angle $2\pi \phi_i$ at $x_i$ for every $\bm{\phi}\in \Ncal$.\\

We start with the case of one conical point.

\begin{lemma}[Existence of a spherical metric with one conical point]\label{lemma:existence-one}
Let $S$ be a surface of genus $g>0$ with one marked point $x_0$.
For every $\th_0>2g-1$ there exists a spherical metric $h_{\th_0}$ on $S$ with conical singularity at $x_0$
of angle $2\pi\th_0$. Such $h_{\th_0}$ depends continuously on $\th_0$ and so it is angle-deformable.\\
Moreover, $h_{\th_0}$ has non-coaxial monodromy, except if $g=1$ and $\th_0\in\ZZ_{>0}$.
\end{lemma}
\begin{proof}
Let $\theta=\th_0-2g+1>0$ and let
$\Bcal$ be the bigon (unique up to isometry) with sides of length $\pi$ and angles $\pi\theta=\pi(\th_0-2g+1)$. 
Subdivide each side of $\Bcal$ into $2g$ segments of equal length $\frac{\pi}{2g}$ and glue together the sides of the obtained $4g$-gon $\Bcal'$ in a standard way. Namely,
label the cyclically ordered edges of $\Bcal'$ by
\[
\alpha_1,\beta_1,\check{\alpha}_1,\check{\beta}_1,\alpha_2,\beta_2,\check{\alpha}_2,\check{\beta}_2,\dots,
\alpha_g,\beta_g,\check{\alpha}_g,\check{\beta}_g
\]
and call $v$ the vertex of $\Bcal'$ adjacent to $a_1$ and $\check{b}_g$.
For example, the case of $g=2$ looks like Figure~\ref{fig:glued-bigon}.

\begin{center}
\begin{figurehere}
\psfrag{B}{$\textcolor{blue}{\Bcal'}$}
\psfrag{fi}{$\textcolor{red}{\!\!\pi\theta}$}
\psfrag{a1}{$\textcolor{blue}{\alpha_1}$}
\psfrag{a2}{$\textcolor{blue}{\alpha_2}$}
\psfrag{v}{$\textcolor{Brown}{v}$}
\psfrag{b1}{$\textcolor{blue}{\beta_1}$}
\psfrag{b2}{$\textcolor{blue}{\beta_2}$}
\psfrag{a1c}{$\textcolor{blue}{\check{\alpha}_1}$}
\psfrag{a2c}{$\textcolor{blue}{\check{\alpha}_2}$}
\psfrag{b1c}{$\textcolor{blue}{\check{\beta}_1}$}
\psfrag{b2c}{$\textcolor{blue}{\check{\beta}_2}$}
\includegraphics[width=0.4\textwidth]{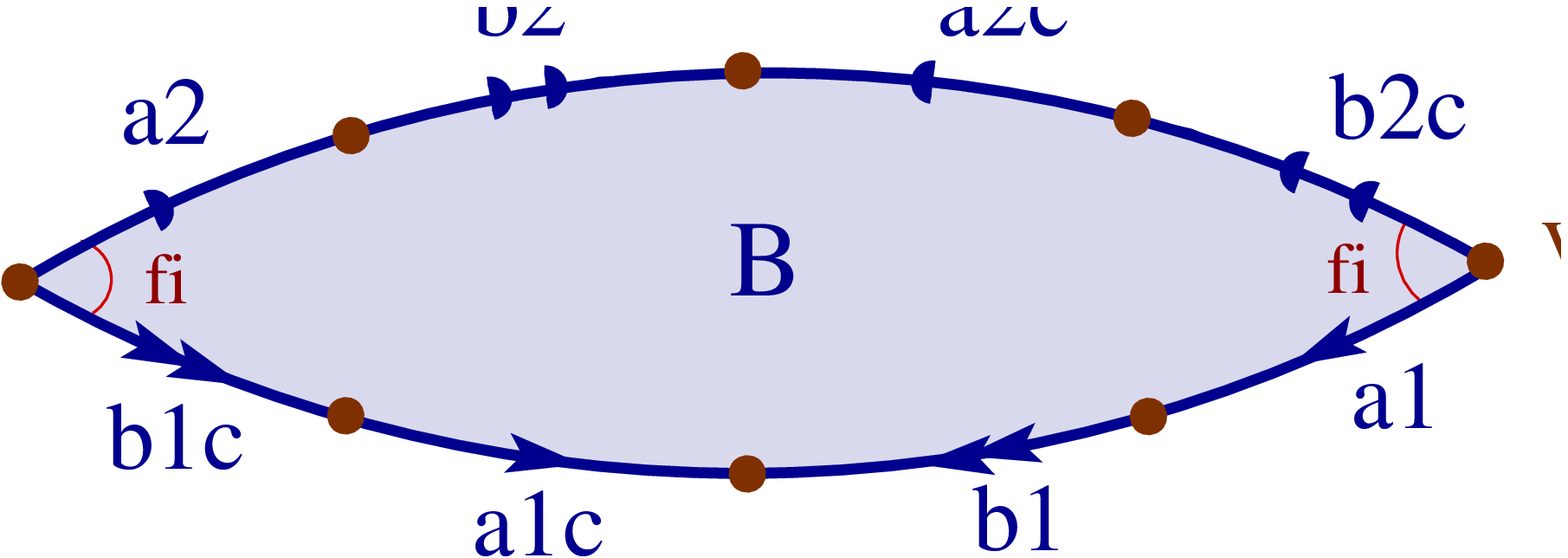}
\caption{{\small Construction of a spherical surface with $(g,n)=(2,1)$.}}\label{fig:glued-bigon}
\end{figurehere}
\end{center}

Identify $\check{\alpha}_k\sim \alpha_k^{-1}$ and $\check{\beta}_k\sim \beta_k^{-1}$
in order to obtain a surface $S$ of genus $g$ with a spherical metric as desired. 
The continuous dependence of the metric on $\th_0$ is clear.\\
As for the monodromy, if $g\geq 2$, then consider a developing map $\iota:\Bcal'\rar \Sph$. The monodromy group
contains the rotations of angle $\pi$ centered at the midpoint of $\iota(\beta_1)$ (that takes $\iota(\alpha_1)$ to $\iota(\check{\alpha}_1)$)
and at the midpoint of $\iota(\check{\alpha}_1)$ (that takes $\beta_1$ to $\check{\beta}_1$). Since the midpoints of $\iota(\beta_1)$ and $\iota(\check{\alpha}_1)$
are at distance $\frac{\pi}{2g}<\pi$, such rotations are not coaxial and so neither is the monodromy group.\\
If $g=1$ and $\th_0\notin\ZZ$, then the monodromy group
contains nontrivial rotations centered at $\iota(v)$.
On the other hand, the $1$-parameter subgroup of rotations that fix 
$\iota(v)$ cannot take $\alpha_1$ to $\check{\alpha}_1$, because $\check{\alpha}_1$
does not have $v$ as endpoint. Thus, the monodromy cannot be
coaxial.
\end{proof}

In order to split the conical point into several ones, we recall the following result.

\begin{lemma}[Splitting one conical point into $2$ conical points]
Let $\bm{\th}=(\th_1,\dots,\th_n)$ and $\bm{\th'}=(\th_1,\dots,\th_{n-2},\th_{n-1}+\th_n-1)$
and suppose that there exists a surface $S'$ of genus $g$ endowed with an angle-deformable spherical metric 
$h'$ with $n-1$ conical singularities $\bm{x'}$ of angles $2\pi\cdot\bm{\th'}$.
Then for every $\epsilon>0$ small enough there exists an angle-deformable spherical metric $h$ on a surface $S$ of genus $g$ with $n$ conical singularities
$x_1,\dots,x_n$ of angles $2\pi\cdot\bm{\th}$ such that
\begin{itemize}
\item[(a)]
$x_{n-1},x_n$ are at distance less than $\epsilon$
\item[(b)]
$|d(x_i,x_j)-d(x'_i,x'_j)|<\e$ for $1\leq i<j\leq n-2$
\item[(c)]
$|d(x_i,x_l)-d(x'_i,x'_{n-1})|<2\e$ for $1\leq i\leq n-2$ and $l=n-1,n$.
\end{itemize}
%
%
Moreover, if $h'$ has non-coaxial monodromy, so has $h$.
\end{lemma}
\begin{proof}
The existence of such an $S$ is essentially the content of Lemma 3.41 from \cite{mondello-panov:constraints} in the case of a positive merging operation.
Let us recall how the surgery in such lemma is operated.

We construct small spherical triangles $T_\epsilon$
with vertices $y_1,y_2,y_3$ and 
angles $\pi(\th_{n-1},\th_{n},\th_{n-1}+\th_n-1+\epsilon)$ and small $|\epsilon|$ as in Proposition
3.17 of \cite{mondello-panov:constraints}. 
We consider a small deformation of the metric on $S'$ that slightly moves the angle at $x'_{n-1}$
and keeps the other angles fixed. Then we remove a small neighbourhood of $x'_{n-1}$ from $S'$
and of $y_3$ from the double $DT_\epsilon$ of $T_\epsilon$ and we glue the two boundary components.

The surgery being a local procedure,
it is clear that such procedure can be performed on a surface $S'$ of any genus.

By looking at Proposition 3.17 and Figure 8 of \cite{mondello-panov:constraints}, one can check
that the triangles $T_\epsilon$ can be chosen to be as small as desired.
As a consequence, both the deformation of $S'$ and the size of 
the removed neighbourhoods can be taken as small as desired.
This ensures that $d(x_{n-1},x_n)<\epsilon$
and so properties (b) and (c) are also satisfied for $\epsilon$ small enough.

Finally, if the monodromy 
of $h'$ is non-coaxial, so is the monodromy of its
small deformations $h'_{\bm{\phi}}$. Since the monodromy group of the metric $h$ obtained
by the above surgery from some $h'_{\bm{\phi}}$ contains the monodromy group of such $h'_{\bm{\phi}}$,
it follows that $h$ is non-coaxial too for $\epsilon$ small.
\end{proof}

An iterated application of the above lemma yield the following.

\begin{proposition}[Splitting one conical point into $k$ conical points]\label{prop:splitting}
Let $\bm{\th}=(\th_1,\dots,\th_n)$ and $\bm{\th'}=(\th_1,\dots,\th_{n-k},\th_0)$
with $\th_0=\th_{n-k+1}+\dots+\th_n-k+1$
and suppose that there exists a surface $S'$ of genus $g$ endowed with an angle-deformable spherical metric 
$h'$ with $n-k+1$ conical singularities $\bm{x'}$ of angles $2\pi\cdot\bm{\th'}$.
Then for every small $\eta>0$ there exists an angle-deformable spherical metric $h$ on a surface $S$ of genus $g$ with $n$ conical singularities
$x_1,\dots,x_n$ of angles $2\pi\cdot\bm{\th}$ such that
\begin{itemize}
\item[(a)]
$x_{n-k+1},\dots,x_n$ are at distance less than $\eta$ from each other
\item[(b)]
$|d(x_i,x_j)-d(x'_i,x'_j)|<\eta$ for $1\leq i<j\leq n-k$
\item[(c)]
$|d(x_i,x_l)-d(x'_i,x'_{n-k})|<2\eta$ for $1\leq i\leq n-k$ and $n-1\leq l\leq n$.
\end{itemize}
Moreover, if $h'$ has non-coaxial monodromy, so has $h$.
\end{proposition}

\begin{remark}
The local structure of the space of spherical surfaces near
a degeneration in which some conical points coalesce
has been analyzed by
Mazzeo-Zhu \cite{mazzeo-zhu:compactified1}.
\end{remark}

%
%
%

We are now ready to prove the main result of this section.

\begin{proof}[Proof of Theorem \ref{thm:existence}]
The existence of such a wished metric and its angle-deformability follow from
Lemma \ref{lemma:existence-one} and 
Proposition \ref{prop:splitting}.
Non-coaxiality follows as well if $g\geq 2$, or if $g=1$ and $\th_0\notin\ZZ$.

Suppose now that $g=1$ and $\th_0\in\ZZ$, but $\bm{\th}\notin\ZZ^n$.
Up to rearranging the indices, we can assume that $\th_{n-1},\th_n\notin\ZZ$.
Consider the metric $h$ obtained by choosing any $\eta<\frac{\pi}{2}$.
Inspecting the proof of Lemma 3.41 from \cite{mondello-panov:constraints}, we note that
the monodromy group of $h$ contains that of a $3$-punctured spherical
surface of genus $0$ with two conical points of angles $2\pi\th_{n-1}$ and
$2\pi\th_n$ at distance less than $\frac{\pi}{2}$, which is non-coaxial.
It follows that the monodromy of $h$ is non-coaxial by Lemma 2.11 of \cite{mondello-panov:constraints}.
\end{proof}

\section{Spherical systole and conical points of small angle}\label{sec:preliminaries}

Spherical systole $\sys(S,\bm x)$ is an important characteristic of a spherical surface that measures how far the surface is from a degenerate one. A closely related notion is that of injectivity radius sand  immersion radius of a conical point on a spherical surface. 

\begin{definition}[Injectivity and immersion radius at a conical point]\label{definj} 
Let $S$ be a spherical surface with conical points $\bm{x}$.
The \emph{injectivity radius} at $x_i$ is the supremum $r_i$ of all $r>0$
such that $B_{x_i}(r)$ is isometric to the standard disk $\DD_{\th_i}(r)$.
The \emph{immersion radius} at $x_i$ is the supremum $\bar r_i$ of all $r\in (0,\pi)$
such that there is a locally isometric immersion from the standard disk $\DD_{\th_i}(r)$ to $S$
that maps $0$ to $x_i$. 
The {\it{maximal $1$-pointed ball at $x_i$}} is the locus $\W_{x_i}:=B_{x_i}(d_i)$,
where $d_i$ is the minimum distance between $x_i$ and any other conical point. 
\end{definition}

We emphasize that the ball $B_{x_i}(r)$ need not be
an embedded disk, and so $\W_{x_i}$ is not necessarily an embedded disk either.
Note also that the only conical point contained in the internal part $\Wint_{x_i}$ is $x_i$.

Moreover, it is clear that for each conical point $x_i$ we have $r_i\ge \sys(S,\bm x)$. \\

With the definition in hand we can summarise the content of this section.

In Subsection \ref{subsecessinj} we study injectivity and immersion radius. In Subsection \ref{subsecsystole} we introduce systole geodesics and show in Corollary \ref{birkhoff} that $\sys(S,\bm x)<\ell(\gamma)/2$ for any essential curve $\gamma$ in $\dot S$. 
In Subsection \ref{subsec4pi3} we focus on conical points with conical angle less than $4\pi/3$ and less than $2\pi/3$. The behaviour of spherical surface is more constrained close to such points and a number of phenomena are described in  Theorem \ref{thm:closest-points}. As a corollary we get Lemma \ref{anglesysbound} which states $\sys(S,\bm x)\le \pi\cdot \min\{\th_i\}$. This simple inequality is essential for the non-existence result of Theorem \ref{main:non-existence-small}.


%

The discussion of the spherical systole can be juxtaposed with the following simple lemma, that 
distinguishes spherical surfaces from hyperbolic and Euclidean ones.

\begin{lemma}[Diameter estimate for a spherical surface]\label{sphdiameter}
A spherical surface $S$ with $n$ conical points has diameter at most $\pi(n+1)$. Moreover, for every small $\e>0$
there exists a spherical surface with $n$ conical points
of diameter greater than $\pi(n+1-\e)$.
\end{lemma}
\begin{proof}
Let $p,q\in S$ be two points such that $d(p,q)=\mathrm{diam}(S)$.
Let $\gamma$ be a piecewise geodesic path of length $d(p,q)$ that connects $p$ and $q$. 
Since every length-minimizing geodesic arc in $\Sph$ has length at most $\pi$ and
$\gamma$ is length-minimizing, each smooth geodesic segment in $\gamma$ has length at most $\pi$ too. It follows that the number conical points hit by $\gamma$ is at least $\frac{d(p,q)}{\pi}-1$. Since $\gamma$ passes through each conical point on $S$ at most once, the first claim is proven.

\begin{center}
\begin{figurehere}
\psfrag{S}{$\textcolor{Blue}{\ol{S}'_\alpha}$}
\psfrag{S2}{$\textcolor{Blue}{\ol{\mathbb{S}}^2_\beta}$}
\psfrag{a1}{${\alpha_1}$}
\psfrag{a2}{${\alpha_2}$}
\psfrag{b1}{${\beta_1}$}
\psfrag{b2}{${\beta_2}$}
\psfrag{x1}{$\textcolor{BrickRed}{x_1}$}
\psfrag{x2}{$\textcolor{BrickRed}{x_2}$}
\psfrag{p}{$\textcolor{Brown}{p}$}
\psfrag{q}{$\textcolor{Brown}{q_-=q}$}
\psfrag{q'}{$\textcolor{Brown}{q_+}$}
\psfrag{Q}{$\textcolor{Brown}{Q_-}$}
\psfrag{Q'}{$\textcolor{Brown}{Q_+}$}
\psfrag{m}{$\textcolor{Brown}{m}$}
\psfrag{M}{$\textcolor{Brown}{M}$}
\includegraphics[width=0.9\textwidth]{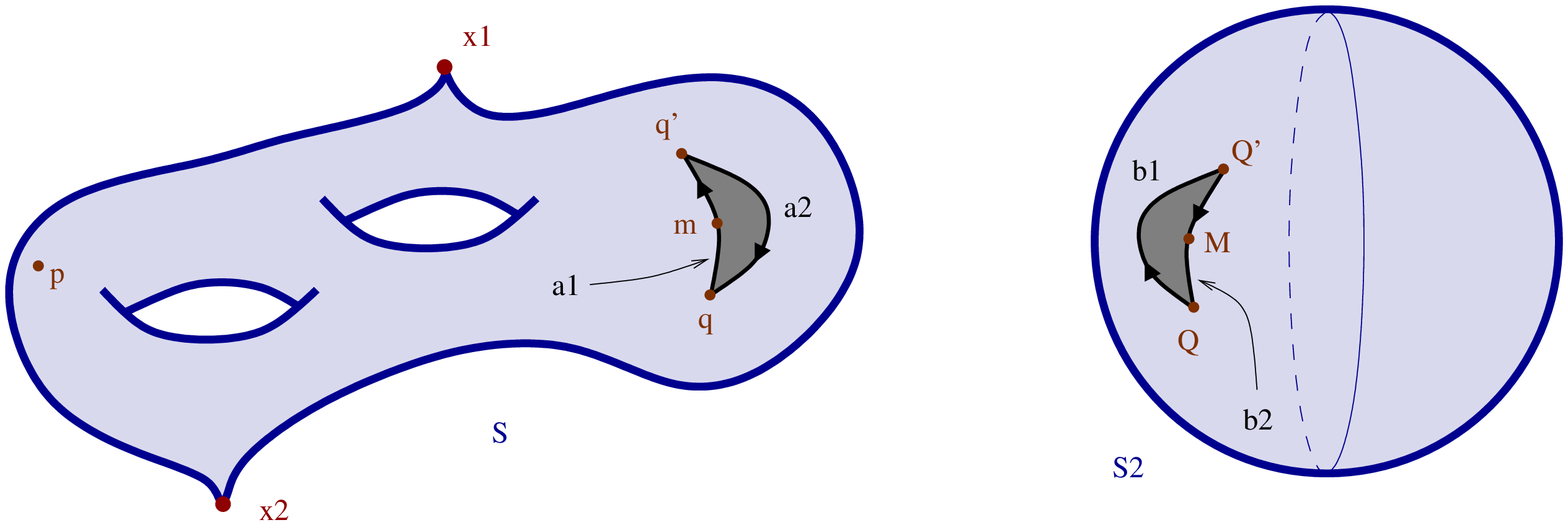}
\caption{{\small An example of the surfaces
$\ol{\mathbb{S}}^2_\beta$ and $\ol{S}'_\alpha$
(in this case, of genus $2$ with $2$ conical points).}}\label{fig:diameter}
\end{figurehere}
\end{center}

As for the second claim, we proceed by induction on $n\geq 0$.
For $n=0$, the round sphere $\Sph$ provides a surface of diameter $\pi$.
Take now $n\geq 1$, fix some $\e\in(0,\frac{1}{2})$
and suppose that there exists a surface $S'$ with $n-1$ conical points $x_1,\dots,x_{n-1}$,
genus $g$ and diameter greater than $\pi(n-\e/2)$.

Let $p,q\in S'$ be smooth points on $S'$ 
at distance at least $\pi(n-\e/2)$
and let $\alpha$ a smooth geodesic arc on $S'$ 
starting at $q=q_-$ and ending at some smooth point $q_+$, of length $\ell<\e\pi/4$.
Let $\beta$ be a geodesic arc on the round sphere $\Sph$
with endpoints $Q_-$ and $Q_+$ of length $\ell$.
Cut $S$ along $\alpha$, complete it and call $\alpha_1,\alpha_2$ the two
shores of the obtained surface $\ol{S}'_\alpha$; similarly,
cut $\Sph$ along $\beta$ and call $\beta_1,\beta_2$ the two obtained shores in $\ol{\mathbb{S}}^2_\beta$. 
Endow $\alpha_i,\beta_i$ with the induced orientations,
so that $\alpha_1$ (resp. $\beta_1$) runs from $q_-$ to $q_+$
(resp. from $Q_-$ to $Q_+$).
Call $m$ the midpoint of $\alpha_1$ and $M$ the midpoint of $\beta_2$, as in Figure \ref{fig:diameter}.

Consider now the surface $S$ obtained 
from $\ol{S}'_\alpha$ and $\ol{\mathbb{S}}^2_\beta$
by identifying $\alpha_2$ to $\beta_1$
in such a way that $q_-$ is glued to $Q_-$ and $q_+$
is glued to $Q_+$,
and by identifying
the arc $q_-m$ to the arc $MQ_+$ and the
arc $mq_+$ to the arc $Q_-M$.

Clearly, $S$ has genus $g+1$ and
$n$ conical points $x_1,\dots,x_n$,
where $x_1,\dots,x_{n-1}$ come from $S'$ and
the new conical point $x_n$ corresponds to $\{q_-,q_+,m,Q_-,Q_+,M\}$.
Moreover, it is easy to check that $S$
has diameter at least $\pi(n+1-\e)$.
\end{proof}

For every $n\geq 0$, the inductive construction
contained in the proof of the above lemma provides
surfaces of genus $g=n$ with $n$ conical points
of diameter arbitrarily close to $\pi(n+1)$.

\gm{Paragraph ``We remark...'' added.}

\subsection{Essential curves and injectivity radius}\label{subsecessinj}

%
%
%

Here we discuss properties of the injectivity and immersion radius and relate the first to essential curves.

\begin{lemma}[Geodesic arcs and loops realizing the injectivity radius]\label{lemma:inj-rad}
Let $S$ be a surface with a spherical metric and conical points $\bm{x}$.
Then the injectivity radius $r_i$ at $x_i$ can be characterized by the following two properties:
\begin{itemize}
\item
for all $r\in \big(0,r_i\big)$ the ball $B_{x_i}(r)$ is isometric to the standard disk $\DD_{\th_i}(r)$;
\item
either there is a closed geodesic loop of length $2r_i$ based at $x_i$, 
or there is a different conical point $x_j$ at distance $r_i$ from $x_i$.
\end{itemize}
\end{lemma}

The above lemma is standard and follows from Definition \ref{definj}, so we omit the proof.

\begin{definition}[Essential curves, cylinders and loops]
A piecewise smooth simple closed curve $\gamma$ inside $\dot{S}$
is {\it{essential}} if each connected component of $\dot{S}\setminus\gamma$ has negative Euler characteristic.
An {\it essential cylinder in $\dot S$} is a cylindrical subsurface $C$ of $\dot S$ that retracts by deformation onto an essential simple
closed curve.
A {\it{loop based at $x_i$}} is a piecewise smooth simple closed curve $\gamma'$ inside $\dot{S}\cup\{x_i\}$ passing through $x_i$:
such loop $\gamma'$ is {\it{essential}} if each connected component of $\dot{S}\setminus\gamma'$ has negative Euler characteristic.
\end{definition}

\begin{lemma}[Essential curves bound the injectivity radius]\label{essloopsys} 
Let $S$ be a spherical surface with conical singularities $\bm{x}$.
Suppose that there exists a simple closed curve
$\gamma\subset S$ inside $\inte{B}_{x_i}(r)$,
which is either an essential curve in $\dot{S}$ or an essential loop based at $x_i$.
Then $r_i<r$, which means that either there is a conical point $x_j$ with $d(x_i,x_j)<r$, or there is a closed geodesic loop shorter than $2r$ based at $x_i$.
\end{lemma}
\begin{proof}
By definition of $r_i$ the open neighbourhood $\inte{B}_{x_i}(r_i)$
is isometric to the open standard disk $\inte{\DD}_{\th_i}\big(r_i\big)$.
Since $\gamma$ cannot be contained in such a neighbourhood, it follows that $r_i<r$. The last statement of the lemma follows from Definition \ref{definj}.
\end{proof}

%

The next lemma evaluates the immersion radius of a conical point.

\begin{lemma}[Locally isometric immersion $\nu_i$]\label{immeradius} Let $S$ be a spherical surface and $x_i\in S$ be a conical point. 
Then the immersion radius at $x_i$ is given by $\bar r_i=\min(2r_i, d_i,\pi)$. Moreover, in case $\bar r_i<\pi$ there is a continuous map $\nu_i: \DD_{\th_i}(\bar r_i)\to S$ that is a local isometry on the interior of 
$\DD_{\th_i}(\bar r_i)$ and sends $0$ to $x_i$.
\end{lemma}
\begin{proof} It is easy to see that $\bar r_i\le \min(2r_i, d_i,\pi)$. To prove the converse, choose $r<\bar r_i$. One can check that a locally isometric immersion $\nu: \inte{\DD}_{\th_i}(r)\to S$ with $\iota(0)=x_i$  exists if and only if any locally immersed 
arc $\gamma\subset S$ with one endpoint in $x_i$ and the other endpoint in $\bm{x}$
has length at least $r$. Such a map can be defined by first sending isometrically a small neighbourhood of $0$ to a small neighbourhood of $x_i$ and then extending this map along each geodesic radius of $\inte{\DD}_{\th_i}(r)$. 

By our assumptions $r<d_i$, and so a length $r$ geodesic can not join $x_i$ with $x_j$ if $j\ne i$. Also, since $r<2r_i$ there is no locally immersed geodesic loop in $S$, based at $x_i$ with $\ell(\gamma)\le r$. Hence, a locally isometric immersion $\nu_i: \inte{\DD}_{\th_i}(r)\to S$ exists.
\end{proof}

\subsection{Spherical systole and systole geodesics}\label{subsecsystole}

%

From the definition of spherical systole it follows that on every spherical surface $(S,\bm x)$ there is geodesic of length  $2\sys(S,\bm{x})$ based at $\bm x$. We state this as a lemma but omit the proof. 


\begin{lemma}[Geodesics realizing the systole]\label{lemmarcloopsys}
Let $S$ be a surface with a spherical metric and conical points $\bm{x}$.
Then $\sys(S,\bm{x})$ is the minimum of all $r>0$ for which
at least one of the following two conditions is satisfied: 
\begin{itemize}
\item
there is a closed geodesic loop of length $2r$ based at some conical point;
\item
there is a geodesic arc of length $2r$ joining two distinct conical points.
\end{itemize}
\end{lemma}

\begin{definition}[Systole arcs and loops]
Let $S$ be a surface with a spherical metric and conical points $\bm{x}$, and let $\sigma_{\sys}$ be a geodesic arc or a loop of length  $2 \sys(S,\bm{x})$ with end points in $\bm{x}$. We call $\sigma_{\sys}$ a {\it systole geodesic}, or more specifically {\it systole arc} or {\it systole loop}.
\end{definition}

\begin{remark}[Midpoint of systole geodesics]\label{valueatmidpoint} 
Let $S$ be a surface with a spherical metric and conical points $\bm{x}$ and let $\sigma_{\sys}$ be a systole geodesic.  Let $s$ be the midpoint of $\sigma_{\sys}$. Then $d(s,\bm{x})=\sys(S,\bm{x})$.
\end{remark}

The next lemma gives us an upper bound on the value of $\sys(S,\bm{x})$.

\begin{lemma}\label{lemma:shorterthanpi/2} 
For any spherical surface
$S$ with conical singularities at $\bm{x}$ and $\chi(\dot{S})<0$, the systole satisfies $\sys(S,\bm{x})\le \frac{\pi}{2}$.
\end{lemma}
\begin{proof} Suppose by contradiction that $\sys(S,\bm{x})>\frac{\pi}{2}$ and choose $r\in (\frac{\pi}{2}, \sys(S,\bm{x}))$. By the definition of systole, points at distance at most $r$ from $\bm{x}$ form a disjoint union of standard disks $D_i$ (isometric to $\DD_{\th_i}(r)$) embedded in $S$. Note that the surface $S'=S\setminus \left(\bigcup_i D_i\right)$ has convex boundary and has no conical points. At the same time $S'\simeq \dot{S}$ and so
$\chi(S')=\chi(\dot{S})<0$. This clearly contradicts Gauss-Bonnet formula.
\end{proof}

%

Finally, we explain how the length of an essential curve $\gamma\subset\dot{S}$
bounds the systole of $(S,\bm{x})$. The statement below is essentially
a consequence of Lemma \ref{essloopsys}.

\begin{corollary}[Essential curves bound the systole]\label{birkhoff} 
Let $S$ be a spherical surface with conical points $\bm{x}$ and let $\gamma\subset \dot S$ be a piecewise-smooth essential closed curve. Then either
\begin{itemize}
\item
for some $x_i\in \bm{x}$  there is a closed geodesic loop in $S$ based at $x_i$ of length less than $\ell(\gamma)$; or
\item
there is a geodesic segment of length less than $\ell(\gamma)/2$ with endpoints in $\bm{x}$.
\end{itemize}
In any case, $\sys(S,\bm{x})<\ell(\gamma)/2$.
\end{corollary}

\begin{proof} 
Note first, that there is a curve $\gamma'$ in $\dot S$ shorter than $\gamma$ and homotopic to it. Indeed, if $\gamma$ is not locally geodesic, we can straighten some small curvy bit of it. If $\gamma$ is a geodesic, then an equidistant of it will be shorter. 

Choose now $\varepsilon$ satisfying $\varepsilon<\min\left\{\sys(S,\bm{x}), d(\gamma', \bm{x}), \frac{1}{2}\Big(\ell(\gamma)-\ell(\gamma')\Big)\right\}$. 
Denote by $S_{\varepsilon}$ the complement in $S$ to the open $\varepsilon$-neighbourhood of $\bm x$,
which thus contains $\gamma'$, and let $\gamma_{\varepsilon}$ be a shortest curve in $S_{\varepsilon}$
homotopic to $\gamma'$. Such a curve $\gamma_\e$ is composed of points lying on the boundary of $S_{\varepsilon}$, and of geodesic pieces in $\inte{S}_{\varepsilon}$ with endpoints at $\partial S_{\varepsilon}$. Since $\gamma_{\varepsilon}$ is shortest in its homotopy class, it has to intersect $\partial S_{\varepsilon}$, i.e. there exists a conical point $x_i$ and
an $x_{i,\varepsilon}\in \gamma_{\varepsilon}\cap \pa S_\e$ with $d(x_i,x_{i,\varepsilon})=\varepsilon$. 

Since $\ell(\gamma_{\varepsilon})<\ell(\gamma')<\ell(\gamma)-2\varepsilon$, we see that $\gamma_{\varepsilon}$ is contained the open $\frac{1}{2}\ell(\gamma)$-neighbourhood of $x_i$. Now we can apply Lemma \ref{essloopsys}
to the essential curve $\gamma_\e\subset \dot{S}$ to either get a geodesic loop of length less than $\ell(\gamma)$ based at $x_i$ or get conical point $x_j$ with $d(x_i,x_j)<\ell(\gamma)/2$. This proves the main claim of the corollary. The inequality $\sys(S,\bm{x})<\ell(\gamma)/2$ follows now from Lemma \ref{lemmarcloopsys}. 
\end{proof}

\subsection{Neighbourhood of conical points with angles $\le 4\pi/3$ and $\le 2\pi/{3}$}\label{subsec4pi3}

In this section we study neighbourhoods of conical points with conical angle at most $4\pi/3$. We prove that the immersion radius of such points is equal to the distance to a closest conical point. We show as well that the closest conical point to a conical point with angle less than $2\pi/{3}$ must have conical angle larger than $2\pi/{3}$. This is summarised in Theorem \ref{thm:closest-points}.


\begin{theorem}[Couples of conical points closest to each other]\label{thm:closest-points}
Let $S$ be a spherical surface with $\chi(\dot S)<0$.
Suppose that $\th_i\le \frac{2}{3}$, and let $x_j$ be the conical point closest to $x_i$. Then the following hold.
\begin{itemize}
\item[(a)]
We have $d(x_i,x_j)<2r_i$. In particular, the immersion radius $\bar r_i$ of $x_i$ is equal to $d(x_i,x_j)$.
\item[(b)]
If $x_i$ is the conical point closest to $x_j$, then $\th_i+\th_j>\frac{2}{3}$. 
\item[(c)]
If $\th_i\le \frac{1}{3}$, then $\th_j>\frac{1}{3}$.
\end{itemize}
\end{theorem}

Before going into the proof of Theorem \ref{thm:closest-points} we give an important application.

\begin{lemma}[Systole bound in terms of the smallest angle]\label{anglesysbound} 
For any spherical surface $S$ with $\chi(\dot S)<0$ for any $i$ we have $\sys(S,\bm{x})\le \pi\th_i$.
\end{lemma}
\begin{proof} 
Assume without loss of generality that $\th_1=\min\{\th_k\}$ and let us prove $\sys(S)\le \pi\th_1$. 
Consider first the situation when for some $i$ we have $d(x_1,x_i)=r_1$.
Fix $\varepsilon>0$ and let $x_{j,\varepsilon}$ be the point on a geodesic segment $x_1x_j$ at distance $\varepsilon$
from $x_j$. Let $\gamma_{j,\varepsilon}$ be a loop based at $x_j$, composed of the segment $x_j x_{j,\varepsilon}$, the circle $\pa B_{x_1}(r_1-\varepsilon)$ and the segment $x_{j,\varepsilon}x_j$. Since $\chi(\dot S)<0$, the loop $\gamma_{j,\varepsilon}$ is essential and clearly
$$\ell(\gamma_{j,\varepsilon})=2\pi\th_1\sin(r_1-\varepsilon)+2\varepsilon\le 2(\pi\th_1+\varepsilon).$$
Hence, applying Lemma \ref{essloopsys} to $(x_j,\gamma_{j,\varepsilon})$ we get $\sys(S,\bm{x})\le \pi\th_1$.

Suppose now that there is no conical point at distance $r_1$ from $x_1$.
Then there must be a closed geodesic loop $\gamma_1$ in $S$ based at $x_1$ of length $2r_1$. 
In this case, by Theorem \ref{thm:closest-points}(a) we have $\th_i>\frac{2}{3}$. 
At the same time, the systole is at most $\frac{\pi}{2}$ by Lemma \ref{lemma:shorterthanpi/2}.
Hence, $\sys(S)\le \frac{\pi}{2}<\pi\th_1$ as required. 
\end{proof}

The remainder of the section is devoted to the proof of Theorem \ref{thm:closest-points}.
Instead of proving the statements (a), (b) and (c) of this theorem in one go, we will split it into
Lemma \ref{closeconepoint}, Lemma \ref{twopoints120} and Corollary \ref{closestcone} correspondingly.

Let us first make some basic observation of spherical trigonometry.

\begin{lemma}\label{rightangletr} 
Let $OPQ$ be a convex spherical triangle with angles $\wh{O},\wh{P},\wh{Q}$.
\begin{enumerate}
\item [(a)]
Suppose that $r=|O P|<\frac{\pi}{2}$,  $\wh{O}=\pi\th<\frac{\pi}{2}$, and $\wh{Q}=\frac{\pi}{2}$.\\
Then
$|O P|=\arctan\left(\frac{\tan(|O Q|)}{\cos(\pi \th)}\right)$ and $OP$ is the largest side of the triangle $OPQ$.
\item[(b)]
Suppose that $\wh{O}\ge \wh{P}$ and $\wh{O}+\wh{P}\le \frac{2}{3}\pi$.\\
Then $\wh{Q}>\wh{P}$ and so $|OP|>|OQ|$.
\item[(c)]
Suppose that  $\wh{O},\wh{P}\le \frac{1}{3}\pi$.\\
Then $\wh{Q}>\wh{P}$ and so $|OP|>|OQ|$.
\end{enumerate} 
\end{lemma}
\begin{proof}
Recall that in a convex spherical triangle the side opposite to a larger angle has larger length.

(a) The first claim is one of Napier's rules for right-angled spherical triangles. 
The second claim follows since $\wh{Q}>\max\{\wh{O},\, \wh{P}\}$.

(b) Two angle inequalities imply $\wh{P}\le \frac{1}{3}\pi$.
Since $\wh{O}+\wh{P}+\wh{Q}>\pi$, we deduce that $\wh{Q}>\frac{1}{3}\pi\ge \wh{P}$.

(c) Since $\wh{O}+\wh{P}+\wh{Q}>\pi$, we deduce that $\wh{Q}>\frac{1}{3}\pi\ge \wh{P}$.
\end{proof}

\begin{lemma}[Conical points close to a geodesic loop with a small angle]\label{closeconepoint} Let $S$ be a spherical surface and suppose that there is a geodesic loop $\gamma_i$ of length $2r_i$ based at $x_i$. Let $2\pi\th_i'$ and $2\pi\th_i''$ be the two angles into which $\gamma$ cuts the conical angle at $x_i$ and assume $\th_i'\le\frac{1}{3}$. Then there exists a conical point $x_k$ in $S$ with 
$$d(x_i,x_k)\le \arctan\left(\frac{\tan\big(r_i\big)}{\cos(\pi \th_i')}\right)=d_i'<2r_i.$$
In particular, if $d_i\ge 2r_i$, then $\th_i>\frac{2}{3}$.
\end{lemma}
\begin{proof} 
Note first that the second inequality $d_i'<2r_i$ is automatically satisfied since the function $t\mapsto \arctan(t)$ is increasing and concave for $t\ge 0$ and by our assumptions $\cos(\pi \th_i')\ge \frac{1}{2}$.

Assume by contradiction that for all $k\ne i$ we have $d(x_i,x_k)>d_i'$.
Since $d_i'<2r_i$, we have $d'_i<\bar{r}_i$ by Lemma \ref{immeradius}.


Consider now the spherical kite of vertices $0,w,z,w'$ embedded in $\DD_{\th_i}(\bar{r}_i)$,
with angle $2\pi\th_i'$ at $0$, right angles at $w,w'$ and sides $0w$ and $0w'$ of length $r_i$.
Such a kite is unique up to isometries and it is composed of 
two isometric right-angled triangles with vertices $0,z,w$ and $0,z,w'$, glued along the edge $0z$. 
From Lemma \ref{rightangletr}(a) it follows that $|0z|=d_i'>|0w|=|0w'|$,
which shows that indeed the kite can be isometrically embedded inside $\DD_{\th_i}(\bar{r}_i)$.

\begin{center}
\begin{figurehere}
\psfrag{D}{$\DD_{\th_i}(\bar{r}_i)$}
\psfrag{S}{$\textcolor{Blue}{S}$}
\psfrag{O}{$\textcolor{Sepia}{0}$}
\psfrag{A}{$\textcolor{Cyan}{z}$}
\psfrag{B}{$\textcolor{BrickRed}{w}$}
\psfrag{B'}{$\textcolor{BrickRed}{w'}$}
\psfrag{i(A)}{$\textcolor{Cyan}{\nu_i(z)}$}
\psfrag{i(B)=i(B')}{$\textcolor{BrickRed}{\nu_i(w)=\nu_i(w')}$}
\psfrag{xi}{$\textcolor{Sepia}{x_i}$}
\psfrag{i}{$\nu_i$}
\psfrag{pith'}[][][0.9]{$\textcolor{Sepia}{\pi\th'_i}$}
\includegraphics[width=0.7\textwidth]{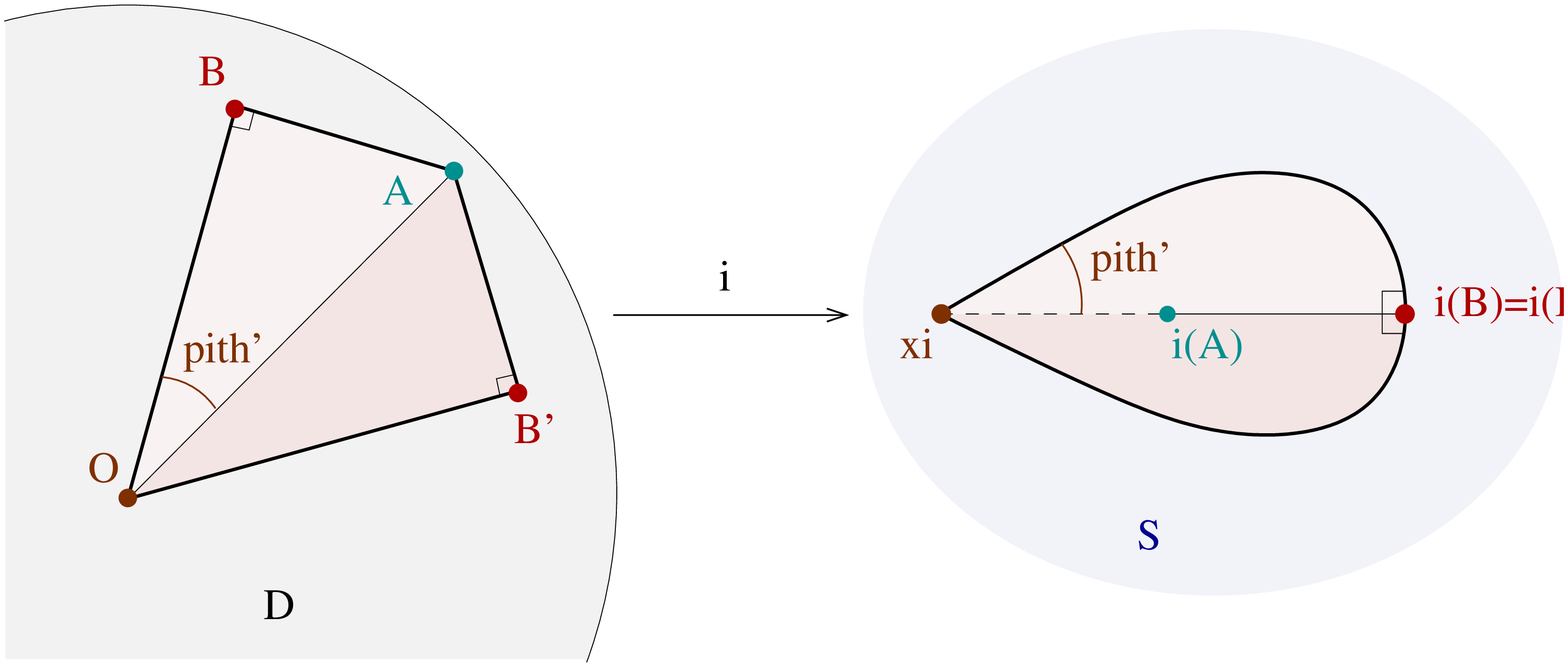}
\caption{{\small The map $\nu_i$ cannot restrict to a local isometry on the kite.}}\label{fig:point-close-a}
\end{figurehere}
\end{center}

Let $\nu_i:\DD_{\th_i}(\bar{r}_i)\to S$ be a map
that takes the origin $0$ to $x_i$ and which is a locally isometric immersion in the interior of the disk.
After precomposing $\nu_i$ with a rotation of $\DD_{\th_i}(\bar{r}_i)$, we may assume that the union of the segments $\nu_i(0w)$ and $\nu_i(0w')$ is the geodesic $\gamma_i$ and $\nu_i(w)=\nu_i(w')$ is the midpoint of $\gamma_i$.

Note finally that the images $\nu_i(zw)$ and $\nu_i(zw')$ should coincide in $S$. Hence, the map $\nu_i$ cannot be a local isometry at $z$. This contradicts our assumption that $\nu_i$ is a locally isometric immersion on $\inte{\DD}_{\th_i}(\bar{r}_i)$.

The last statement follows since we have proven that, in case $\th_i'\le\frac{1}{3}$ or $\th_i''\le\frac{1}{3}$, there exists a conical point $x_k$ with $d(x_i,x_k)<2r_i$. Hence, the inequality $d_i\ge 2r_i$ implies $\th_i',\th_i''> \frac{1}{3}$.
\end{proof}


\begin{lemma}[Existence of conical points close to an arc between $x_i,x_j$ with small $\th_i+\th_j$]\label{twopoints120} 
Let $S$ be a spherical surface with conical singularities and suppose that for some $x_i, x_j$ we have $d(x_i,x_j)\leq d\Big(x_k, \{x_i,\, x_j\}\Big)$ for all $k\ne i,j$. Then $\th_i+\th_j>\frac{2}{3}$.
\end{lemma}
\begin{proof}
Assume without loss of generality that $\th_i\ge \th_j$. If  $\th_i>\frac{2}{3}$ we have nothing to prove, so we can assume $\th_i\le \frac{2}{3}$.
 
If there is no geodesic loop based at $x_i$ of length $2r_i$, then 
$x_i$ must sit at distance $r_i$ from another conical point. Since $d(x_i,x_j)\leq d\Big(x_k, \{x_i,\, x_j\}\Big)$ for all $k\neq i,j$, 
it follows that $d(x_i,x_j)=r_i$.
If there is a geodesic loop based at $x_i$ of length $2r_i$, then
$d(x_i,x_j)<2r_i$ by Lemma \ref{closeconepoint}.
In either case, $\bar{r}_i=d_{i}=d(x_i,x_j)<2r_i$.

Consider now a map $\nu_i:\DD_{\th_i}(\bar{r}_i)\rar S$ that sends $0$ to $x_i$
and which is a local isometric immersion in the interior of the disk.
  

Denote by $z$ the point of the boundary $\pa \DD_{\th_i}(d_{i})$ such that
the radius $[0,z]$ is sent by $\nu_i$ to a geodesic segment of length $d_{i}$ that joins $x_i$ with $x_j$.
Let $-z$ the point on $\pa \DD_{\th_i}(d_{i})$ opposite to $z$,
so that the diameter $[-z,z]$ splits the cone into two isometric sectors  of angle $\pi\th_i$. 

\begin{center}
\begin{figurehere}
\psfrag{D}{$\DD_{\th_i}(d_i)$}
\psfrag{S}{$\textcolor{Blue}{S}$}
\psfrag{O}{$\textcolor{Sepia}{0}$}
\psfrag{A}{$\textcolor{Sepia}{z}$}
\psfrag{T}{$\textcolor{BrickRed}{T}$}
\psfrag{Tb}{$\textcolor{BrickRed}{\ol{T}}$}
\psfrag{B}{$\textcolor{Cyan}{w}$}
\psfrag{A'}{$\!{-z}$}
\psfrag{AB}[][][0.9]{${zw}$}
\psfrag{ABb}[][][0.9]{$\ol{zw}$}
\psfrag{i(AB)}[][][0.9]{$\nu_i(zw)$}
\psfrag{i(ABb)}[][][0.9]{$\nu_i(\ol{zw})$}
\psfrag{i(B)}{$\textcolor{Cyan}{\nu_i(w)}$}
\psfrag{xi}{$\textcolor{Sepia}{x_i}$}
\psfrag{xj}{$\textcolor{Sepia}{x_j}$}
\psfrag{i}{$\nu_i$}
\psfrag{pithj}[][][0.9]{$\textcolor{Sepia}{\pi\th_j}$}
\includegraphics[width=0.7\textwidth]{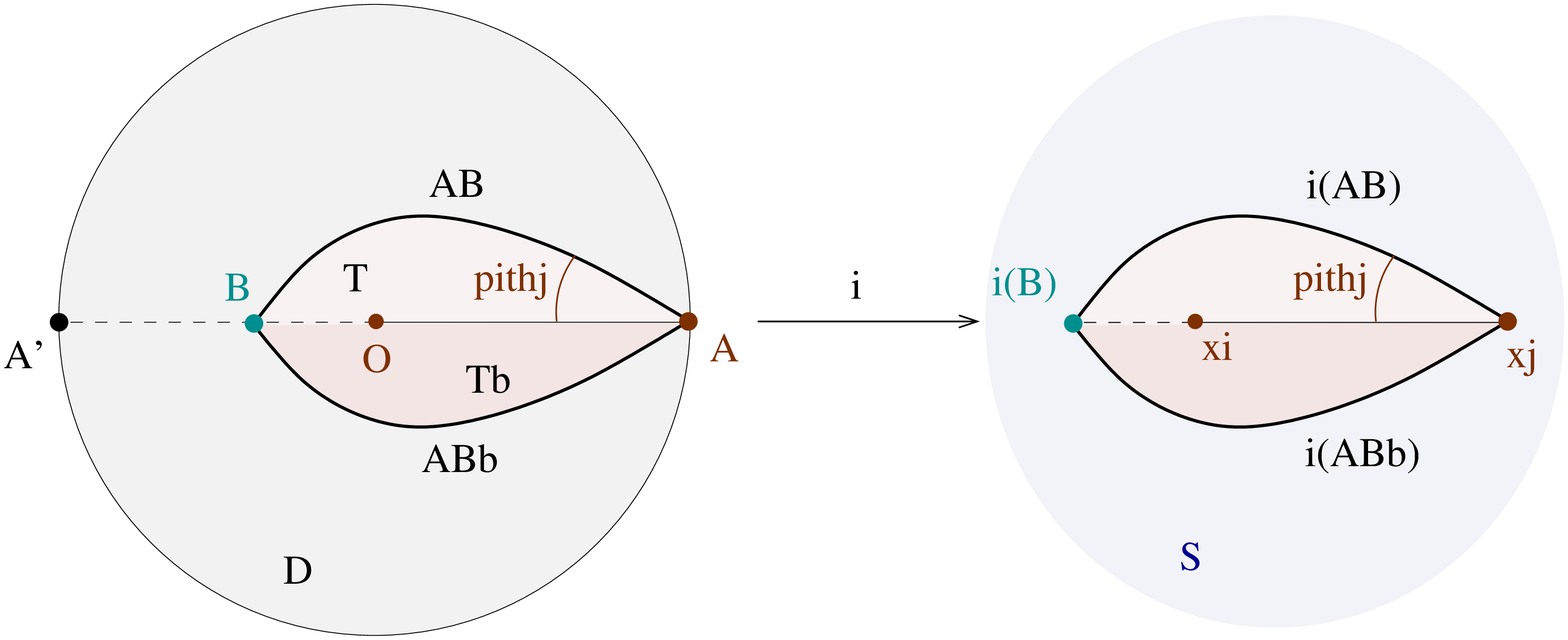}
\caption{{\small The map $\nu_i$ cannot restrict to a local isometry on $T\cup\ol{T}$.}}\label{fig:point-close-b}
\end{figurehere}
\end{center}

For every $w$ on the radius $[-z,0]$,
consider the two geodesic triangles
$T,\ol{T}\subset \DD_{\th_i}(d_{i})$
with vertices $0,z,w$ and call
$zw\subset T$ and $\ol{zw}\subset \ol{T}$ the two segments between $z$ and $w$.
Lemma \ref{rightangletr}(b) implies that we can choose $w$ so that both
triangles $T$ and $\ol{T}$  
form an angle $\pi\th_j$ at $z$. We deduce that the images of $\nu_i(zw)$ and $\nu_i(\ol{zw})$ in $S$ coincide, which contradicts to the fact that $\nu_i$ is a local isometry close to point $w$. 
\end{proof}

The following statement is a variation of the above lemma.

\begin{corollary}[Existence of a cone points of large angle close to a cone point of small angle]\label{closestcone}
Let $S$ be a spherical surface with conical singularities and suppose that $\th_i\le \frac{1}{3}$ and that $\chi(\dot S)<0$. Let $x_j$ be the conical point closest to $x_i$. Then $\th_j\ge\frac{1}{3}$.
\end{corollary}
\begin{proof} 
The proof of this statement repeats the proof of Lemma \ref{twopoints120} with the difference that instead of Lemma \ref{rightangletr}(b) one applies Lemma \ref{rightangletr}(c). As in that proof we consider the map $\nu_i:\DD_{\th_i}(\bar{r}_i)\rar S$ that is locally isometric on the interior of the disk. Using exactly the same notations and reasoning as the proof of  Lemma \ref{twopoints120} we construct a point $w$ in the interior of $\DD_{\th_i}(\bar{r}_i)$ where the map  $\nu_i$ is not a local isometry. This gives us a contradiction.
\end{proof}

We can now summarise the proof of the main result of this section.

\begin{proof}[Proof of Theorem \ref{thm:closest-points}]
Assertion (a) is equivalent to the last claim of Lemma \ref{closeconepoint}
and assertion (b) is exactly the content of Lemma \ref{twopoints120}.
Finally (c) is proven in Corollary \ref{closestcone}.
\end{proof}

%


\section{Geometry of the Voronoi function}\label{sec:voronoi}

In this section we begin our study of a central geometric object associated to a spherical surface: the Voronoi function. Here we recall its definition. 

\begin{definition}[Voronoi function and Voronoi graph] Let $S$ be a surface with a spherical metric and conical points $\bm{x}$. The {\it Voronoi function} $\Vor_S:S\rar\RR$ is defined as $\Vor_S(p):=d(p,\bm{x})$. 
The {\it Voronoi graph} $\Gamma(S)$ is locus of points $p\in\dot{S}$ at which
the distance $d(p,\bm{x})$ is realized by two or more arcs joining $p$ to $\bm{x}$.
\end{definition}

We will simply write $\Vor=\Vor_S$ and $\Gamma=\Gamma(S)$ when no ambiguity is possible.

In  Subsection \ref{vorongraphsec} we establish various elementary properties of $\Gamma$ and $\Vor$. In particular, we show that $\Gamma$ is a finite graph with at most $-3\chi(\dot S)$ geodesic edges. 

In Subsection \ref{morsetheorysec} we undertake Morse-theoretic study of the Voronoi function, this can be done even though $\Vor$ is non-smooth at $\Gamma\cup \bm{x}$. {\it Critical points} of $\Vor$ can be classified into local minima, local maxima and saddle points (see Theorem \ref{locmaxint}). An analogous study of the distance function to a finite subset of $\RR^2$ was conducted by Siersma in \cite{siersma}, however our case differs in several aspects.

In Subsection  \ref{saddlegeosec} we first derive a bound on the number of {\it critical values} of $\Vor$ in terms of $\chi(\dot S)$ (Proposition \ref{critnumber}). Then  we study saddle geodesics, namely the ``unstable submanifolds'' of saddle points of $\Vor$. We show in Proposition \ref{prop:Voronoi-cellular-decomposition} that saddle geodesics cut the surface into spherical disks and then prove a number of auxiliary results needed for Theorem \ref{main:many}.

\subsection{Voronoi graph}\label{vorongraphsec}
Here we derive some basic properties of the Voronoi graph $\Gamma$. In particular, we show that in Lemma \ref{voronoisgraph} and Corollary \ref{edgesvertices} that $\Gamma$ is a graph with at most $-3\chi(\dot S)$ geodesic edges. We prove as well the bound $\Vor<\pi$ and estimate from above the lengths of level sets of $\Vor$ (Corollary \ref{Corlengthoflevel}).
 


\begin{lemma}[Upper bound for $\Vor$]\label{Vlessthanpi}
The Voronoi function satisfies the inequality $\Vor<\pi$.
\end{lemma}
\begin{proof}
Let $O,O'$ be antipodal points in $\Sph$.
By contradiction, suppose  that there is a point $p\in S$ such that $\Vor(p)\ge \pi$. 
Then there is a locally isometric map $\Sph\setminus \{O'\}\rar \dot{S}$
that takes the origin $O$ to $p$.
It is easy to see that it extends to a continuous map $\nu:\Sph\rar S$. 
One can check that the map $\nu$ has to be a branched cover at $O'$. Hence $\nu: \Sph\to S$ is a branched cover map with at most one ramification. It follows that $\nu$ is an isometry, which contradicts $\chi(\dot{S})<0$.
\end{proof}

\begin{notation}
We use the symbol $\DD(r)$ to denote the standard disk $\DD_1(r)$.
For every $p\in \dot{S}$, let
$\nu_p:\DD(\Vor(p))\rar S$ be  a continuous map which takes the center $0$ to $p$
and which is a local isometry on the interior $\inte{\DD}(\Vor(p))$.
Such $\nu_p$ is clearly unique up to rotations of the disk.
\end{notation}

\begin{lemma}[Finitely many geodesics realize $\Vor$]\label{V_Smax} 
For any point $p\in \dot{S}$ there are finitely many smooth geodesic segments of length $\Vor(p)$ that join $p$ and $\bm{x}$.
\end{lemma}
\begin{proof}
Let $p$ be any point in $\dot{S}$.
The wished geodesic segments pull back via $\nu_p$ to radii of $\DD(\Vor(p))$
joining $0$ to a point of $\nu_p^{-1}(\bm{x})$.
It is easy to see that $\nu_p^{-1}(\bm{x})$ is a discrete and so finite subset of  $\partial\DD(\Vor(p))$, hence the proof is complete.
\end{proof}

\begin{definition}[Multiplicity of a point in the Voronoi graph]\label{longdefinition} 
The {\it{multiplicity}} $\mu_p$ of a point $p\in S$ is the number of
geodesic segments of length $\Vor(p)$ that join $p$ with $\bm{x}$.
\end{definition}

By definition, the Voronoi graph $\Gamma$ is the set of points
$p\in S$ of multiplicity greater than one.
The subset $\Gamma$ can be presented as the union of the locus
$\Gamma_0$ of points of multiplicity at least $3$
and of the locus $\Gamma_1$ of points of multiplicity exactly $2$.

\begin{notation}
Given $p$ be a point in $\Gamma$,
denote by $(z_1,z_2,\dots,z_{\mu_p})$ the
cyclically ordered subset of points in $\pa\DD(\Vor(p))$
that are mapped to $\bm{x}$ by $\nu_p$.
%
Denote by $R_j$ be the radius in $\DD(\Vor(p))$ that joins $0$ and 
the midpoint of the arc of $\pa\DD(\Vor(p))$ bounded by $z_j,z_{j+1}$
for $j\in\ZZ/m_p$, and by $R$ the union of all radii $R_j$.
\begin{center}
\begin{figurehere}
\psfrag{O}{$\textcolor{Green}{0}$}
\psfrag{y1}{$\textcolor{Sepia}{z_1}$}
\psfrag{y2}{$\textcolor{Sepia}{z_2}$}
\psfrag{y3}{$\textcolor{Sepia}{z_3}$}
\psfrag{y4}{$\textcolor{Sepia}{z_4}$}
\psfrag{r1}{$\textcolor{Green}{R_1}$}
\psfrag{r2}{$\textcolor{Green}{R_2}$}
\psfrag{r3}{$\textcolor{Green}{R_3}$}
\psfrag{r4}{$\textcolor{Green}{R_4}$}
\psfrag{D(V(p))}{$\textcolor{Blue}{\DD(\Vor(p))}$}
\psfrag{U}{$\textcolor{Blue}{U}$}
\includegraphics[width=0.35\textwidth]{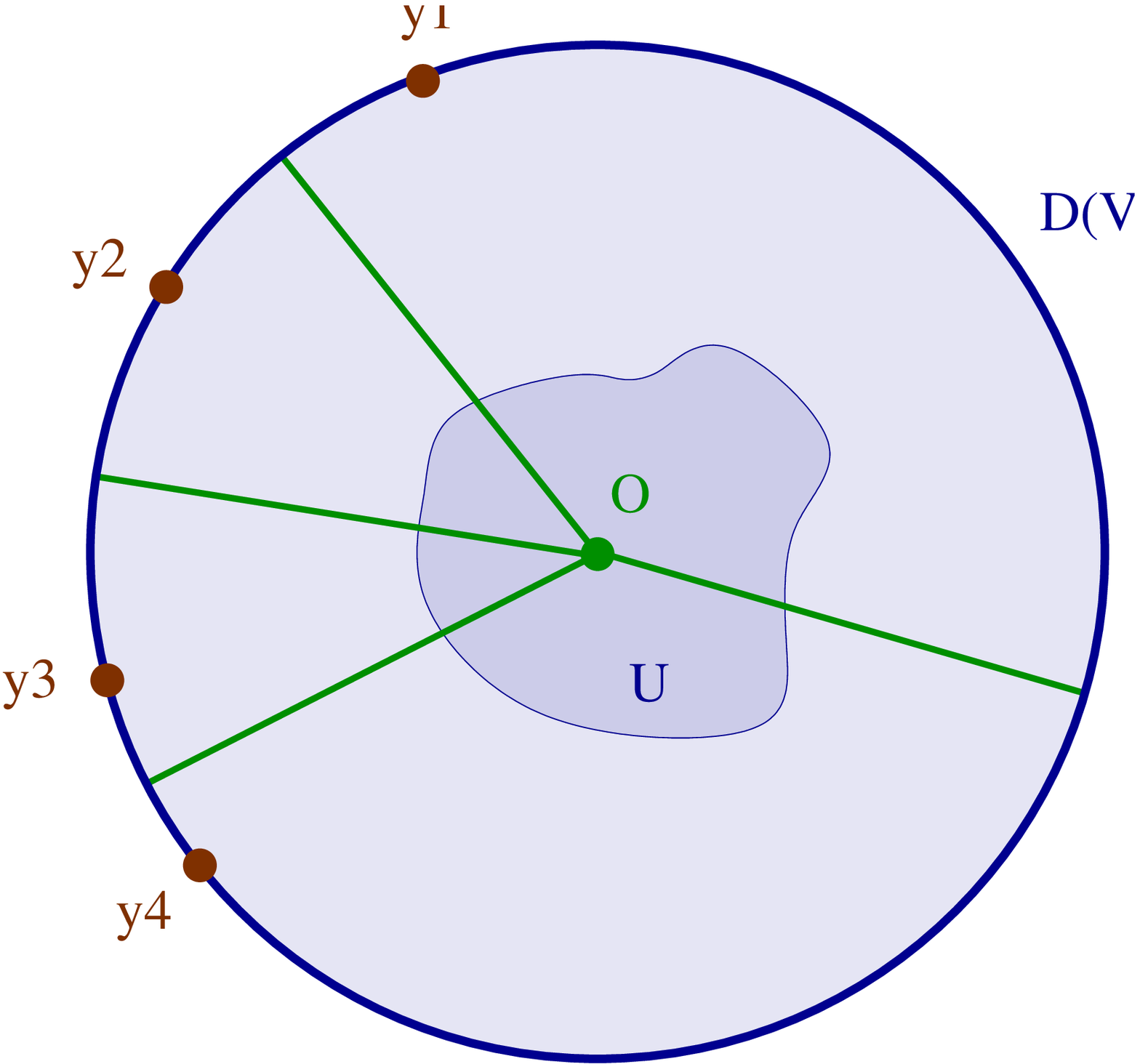}
\caption{{\small Local model of Voronoi graph near a vertex of multiplicity $4$.}}\label{fig:Voronoi-vertex}
\end{figurehere}
\end{center}
We denote by $\d_{z_j}$ the distance function $d(z_j,\cdot):\DD(\Vor(p))\rar\RR$
and by $\d_{\bm{z}}$ the minimum of all such $\d_{z_j}$.
\end{notation}

\begin{lemma}[$\Gamma$ is a finite graph with geodesic edges]\label{voronoisgraph}
The subset $\Gamma_0$ consists of finitely many points ({\it{vertices}}) and
$\Gamma_1$ is the disjoint union of finitely many locally
closed smooth geodesic segments ({\it{edges}}). Thus, $\Gamma$
is a $1$-dimensional CW complex embedded in $\dot{S}$
and the valence of each vertex coincides with its multiplicity.\\
Moreover, near a point $p\in \dot{S}$ the function $\Vor$ is locally the minimum
of $\mu_p$ smooth distance functions.
\end{lemma}
\begin{proof}
Let $p$ be a point in $\Gamma$.
%
%
Since $\Vor(p)<\pi$, all the distance functions $\d_{z_j}$ are smooth.
It is easy to see that there is a small neighbourhood $U$ of $0\in\DD(\Vor(p))$ such that
$\Vor\circ\nu_p:U\rar\RR$ coincides with $\d_{\bm{z}}$.

As a consequence, $\Vor$ is the minimum of $\mu_p$ smooth functions near $p$
and $\nu_p(U\cap R)=\nu_p(U)\cap \Gamma$.
It follows that Figure \ref{fig:Voronoi-vertex}
describes a neighbourhood of a point of $\Gamma$ inside $S$.
\end{proof}


\begin{definition}[Voronoi domains]
The {\it{open Voronoi domain}} $\DVint_{x_i}$ is the connected component of $S\setminus\Gamma$
that contains $x_i$ and the {\it{Voronoi domani}}
$\DV_{x_i}$ is the closure of $\DVint_{x_i}$ inside $S$.
We denote by 
$\DVbar_{x_i}$ the metric completion of $\DVint_{x_i}$.
%
\end{definition}

Note that $\DVbar_{x_i}$ is a topological disk and that there is a continuous surjective map
$\DVbar_{x_i}\rar \DV_{x_i}$. Thus, $\DV_{x_i}$ is a topological disk if and only if such map is a homeomorphism,
which happens if and only if there is no point $p\in\Gamma$ that can
be joined to $x_i$ by more than one geodesic of length $\Vor(p)$. 
For example, in Figure \ref{fig:Voronoi-graph} both domains
$\DV_{x_1}$ and $\DV_{x_2}$ are topological cylinders.

\begin{center}
\begin{figurehere}
\psfrag{G(S)}{$\textcolor{Green}{\Gamma}$}
\psfrag{x1}{$\textcolor{Sepia}{x_1}$}
\psfrag{x2}{$\textcolor{Sepia}{x_2}$}
\psfrag{S}{$\textcolor{blue}{S}$}
\psfrag{D1}{$\textcolor{blue}{\DV_{x_1}}$}
\psfrag{D2}{$\textcolor{blue}{\DV_{x_2}}$}
\includegraphics[width=0.4\textwidth]{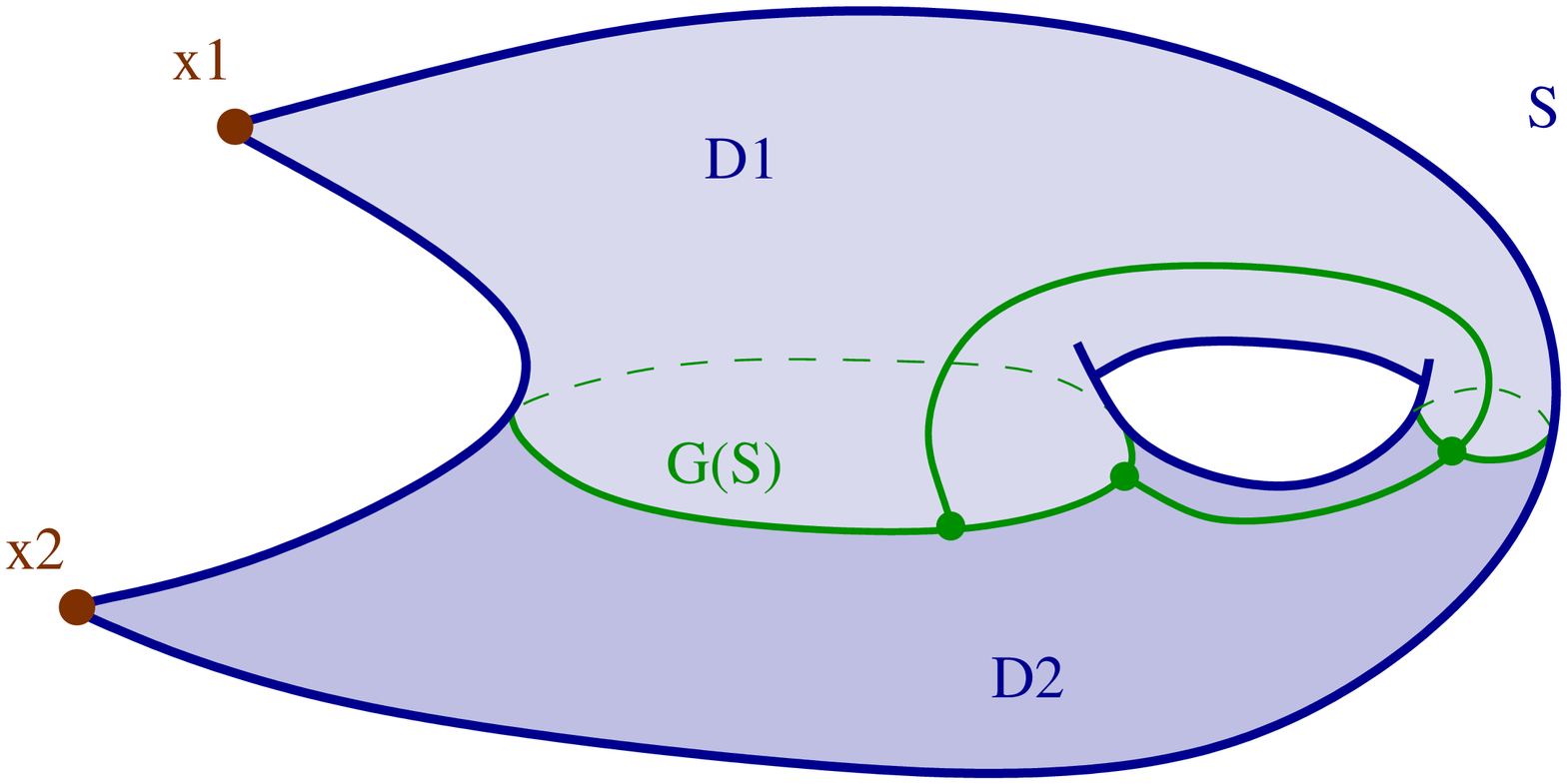}
\caption{{\small An example of Voronoi graph in the case $(g,n)=(1,2)$.}}\label{fig:Voronoi-graph}
\end{figurehere}
\end{center}

\begin{corollary}[Size of the Voronoi graph]\label{edgesvertices} 
The graph $\Gamma$ has at most $6g-6+3n$ edges and at most $4g-4+2n$ vertices. Moreover, its vertices have valence at least three.
\end{corollary}

We will denote by $|\Gamma_0|$ the number of vertices of $\Gamma$
and, by abuse of notation, by $|\Gamma_1|$ the number of edges.

\begin{proof}[Proof of Corollary \ref{edgesvertices}]
The last claim follows from Lemma \ref{voronoisgraph}. Hence,
we have $|\Gamma_0|\le \frac{2}{3}|\Gamma_1|$. 
Since $S=\Gamma_0\cup \Gamma_1\cup\left(\bigcup_i \DVint_{x_i}\right)$,
the Euler characteristic of $S$ satisfies
$|\Gamma_0|-|\Gamma_1|+n=\chi(S)=2-2g$. It easily follows that $|\Gamma_1|\le 6g-6+3n$ and $|\Gamma_0|\leq 4g-4+2n$.
\end{proof}

\begin{lemma}[Convexity of the disks $\DVbar_{x_i}$]\label{graphconvex}
Each completion $\DVbar_{x_i}$ is a convex polygon. In other words, 
$\DVbar_{x_i}$ is a disk with piecewise-geodesic boundary and
any two adjacent geodesic sides in $\DVbar_{x_i}$  form an angle strictly smaller than $\pi$. 
\end{lemma}
\begin{proof}
Fix $p\in\Gamma_0\cap \DV_{x_i}$.
Let $z_1,z_2,z_3\in\pa\DD(\Vor(p))$ be three points of $\nu_p^{-1}(\bm{x})$, which are consecutive in the natural cyclic order,
%
and let $\hat{R}_1$ be the diameter of $\DD(\Vor(p))$ obtained by prolonging $R_1$.

\begin{center}
\begin{figurehere}
\psfrag{O}{$\textcolor{Green}{0}$}
\psfrag{y1}{$\textcolor{Sepia}{z_1}$}
\psfrag{y2}{$\textcolor{Sepia}{z_2}$}
\psfrag{y3}{$\textcolor{Sepia}{z_3}$}
\psfrag{y4}{$\textcolor{Sepia}{z_4}$}
\psfrag{D(VS)}{$\textcolor{blue}{\DD(\Vor(p))}$}
\psfrag{r1}{$\textcolor{Green}{R_1}$}
\psfrag{r2}{$\textcolor{Green}{R_2}$}
\psfrag{R1}{$\textcolor{Green}{\hat{R}_1}$}
\includegraphics[width=0.35\textwidth]{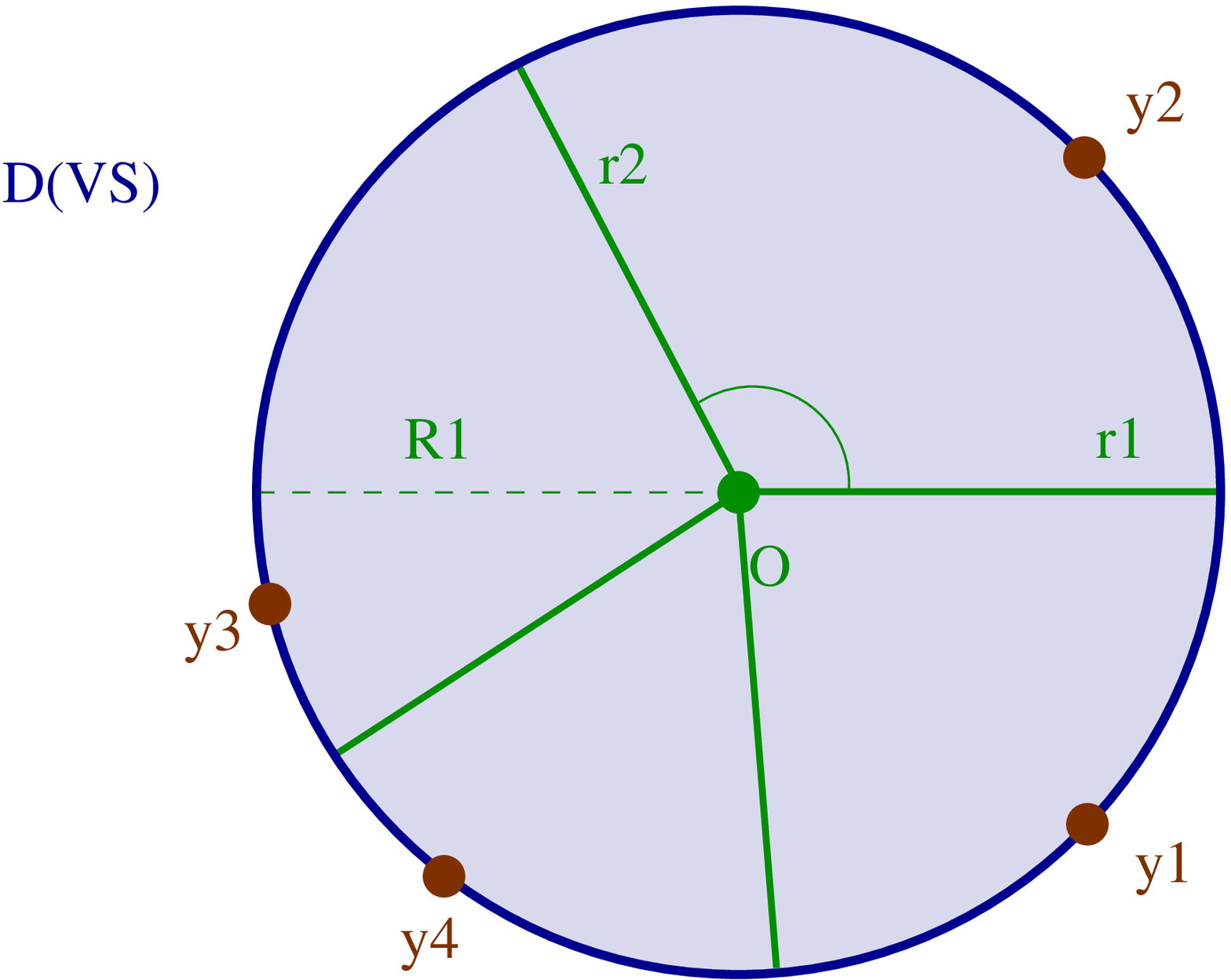}
\caption{{\small Local picture of an angle at a vertex of multiplicity $4$ of the Voronoi graph.}}\label{fig:Voronoi-angles}
\end{figurehere}
\end{center}

Since $d(w,z_1)>d(w,z_2)$ at all points $w\in R_2\setminus\{0\}$,
it follows that $R_2\setminus\{0\}$ must be contained in the component
of $\DD(\Vor(p))\setminus \hat{R}_1$ that contains $R_2$.
Hence, the component of $\DD(\Vor(p))\setminus(R_1\cup R_2)$ that contains
$z_2$ has an angle strictly smaller than $\pi$ at the origin
and the conclusion follows. Alternatively, the counter-clockwise angle at $0$ from $0z_1$ to $0z_3$ is twice the counter-clockwise angle from $R_1$ to $R_2$.
\end{proof}

The following corollary will be useful in the future.

\begin{corollary}[Upper bound for the length of level curves of $\Vor$]\label{Corlengthoflevel} 
Let $S$ be a spherical surface with conical points
of angles $2\pi\bm{\th}$. Then for every $r\in (0,\pi)$ the level set $\Vor^{-1}(r)$ is a finite union of arcs of spherical circles of radius $r$ and its length satisfies
$$
\ell(\Vor^{-1}(r))\leq
2\pi \sin(r) \|\bm{\th}\|_1\le 2\pi r\|\bm{\th}\|_1. 
$$
\end{corollary}
\begin{proof}
Since $S$ is covered by domains $\DV_{x_i}$, it is enough to prove that for each $i$ the intersection  $\Vor^{-1}(r)\cap \DV_{x_i}$ has total length at most $2\pi \sin(r)\th_i$. Clearly, this intersection is a locally isometric image of the curve in $\DVbar_{x_i}$ consisting of points at distance $r$ from $x_i$.  By Lemma \ref{graphconvex} the polygon $\DVbar_{x_i}$ is star-shaped at $0$
and so the latter curve is a union of
arcs at constant distance $r$ from $x_i$,
whose total length is at most $2\pi\sin(r)\th_i\leq 2\pi r\th_i$.
\end{proof}

\subsection{Critical points and critical values of Voronoi function}\label{morsetheorysec}

In this subsection we analyse the Voronoi function from a Morse-theoretic point of view. Even though $\Vor$ is non-smooth at $\bm x\cup \Gamma$, one can speak of regular and critical points of $\Vor$: regular points are points close to which the level sets of $\Vor$ define locally a continuous foliation. The main result of this subsection is Theorem \ref{locmaxint},
 which provides a classification of possible types of critical points of $\Vor$.

\begin{definition}[Regular and critical points of the Voronoi function]\label{def:critical}
A point $p\in S$ is called {\it regular} for $\Vor$ if
there exists a real-valued continuous function $f$ on some neighbourhood of $p$ 
such that the pair of functions $(\Vor,f)$ 
gives local continuous coordinates on $S$ at $p$. 
A point $p$ that is not regular for $\Vor$ is called {\it critical}:
such a critical point $p$ is called {\it saddle} if $\Vor$ the both subsets $\{\Vor>\Vor(p)\}$ and $\{\Vor<\Vor(p)\}$ contain $p$ in their closures.  A value $c\in \RR$ is called {\it critical} if the level set $\Vor^{-1}(c)$ contains some critical point. A value $c\in \RR$ is called {\it saddle} if the level set $\Vor^{-1}(c)$ contains a saddle point.
\end{definition}

\begin{theorem}[Classification of critical points of $\Vor$]\label{locmaxint}  
The locus of critical points of $\Vor$
is the union of $\bm{x}$ with a subset of $\Gamma$ that  
consists of some closed edges of $\Gamma$ and finitely many isolated points in $\Gamma$.
More precisely, all critical points of $\Vor$ can be classified in the following types.
\begin{itemize}
\item[(a)] 
\emph{Isolated minima} form the set $\bm{x}$ of conical points of $S$.
\item[(b] 
\emph{Isolated local maxima} are located in $\Gamma$. The value of $\Vor$ is larger than $\frac{\pi}{2}$ at isolated local maxima that occur on edges of $\Gamma$.
Such points are isolated local maxima for the restriction $\Vor|_{\Gamma}$.
\item[(c)] 
\emph{Saddle points} are contained in $\Gamma_1$ and the value of $\Vor$ at them is smaller than $\frac{\pi}{2}$. Any saddle point $p$ is a midpoint of a geodesic segment or a loop based at $\bm{x}$ of length $2\Vor(p)<\pi$. Such $p$ is an isolated local minimum
for $\Vor|_{\Gamma}$.
\item[(d)] 
\emph{Non-isolated local maxima} form a disjoint union of  closed edges of $\Gamma$ that lie in the level set $\Vor=\frac{\pi}{2}$. Such points are non-isolated local maxima for $\Vor|_{\Gamma}$.
\end{itemize}
Moreover, if $p\in\Gamma_0$ is a vertex which is not a local maximum, there is exactly one oriented edge $\vec{e}$ outgoing from $p$ such that $\Vor|_{\vec{e}}$ is increasing near $p$.
\end{theorem}

In the following sequence of lemmas we will analyse critical points of $\Vor$ according to their position in $S$ 
with respect to the Voronoi graph $\Gamma$. 

\begin{lemma}[Regularity of $\Vor$ on $\dot{S}\setminus \Gamma$]\label{regoutofGamma} 
Any point $p\in \dot{S}$ that does not belong to $\Gamma$ is regular.
\end{lemma}
\begin{proof}
Let $p\in \dot{S}\setminus \Gamma$. There exists $i$ so that $p\in \DVint_{x_i}$.
By Lemma \ref{Vlessthanpi}, points of the disk $\DVint_{x_i}$ are at distance less than $\pi$ from $x_i$. 
By Definition \ref{longdefinition} each point $p$ of 
 $\DVint_{x_i}$ is connected by a unique geodesic of length $\Vor(p)$ with $x_i$, while $d(p,x_j)>\Vor(p)$ for all $j\ne i$. Hence $\Vor$ is smooth at $p$ and has non-zero gradient at $p$, and so $p$ is regular. It follows that all critical points of $\Vor$ apart from $\bm{x}$ are contained in $\Gamma$.
\end{proof}

\begin{lemma}[Critical points for $\Vor$ on an edge of $\Gamma$]\label{lemma:two-critical}
Let $e$ be an edge of $\Gamma$ and $\inte{e}$ be its interior.
Then either of the two occurs:
\begin{itemize}
\item[(a)]
$\Vor$ is constantly equal to $\frac{\pi}{2}$ on $e$ and so all points of $e$ are critical;
\item[(b)]
$\Vor$ has isolated critical points on $e$ and attains at most two critical values on $\inte{e}$ of which at most one is a saddle critical value. A saddle critical value is always smaller than $\frac{\pi}{2}$. Moreover, each critical point  $p\in \inte{e}$ lies on a geodesic arc or a loop of length $2\Vor(p)$ based at $\bm{x}$.
\end{itemize}
\end{lemma}
\begin{proof} 
Assume for simplicity that the endpoints of $e$ are distinct and $e$ is adjacent to two distinct
Voronoi domains $\DV_{x_i}$ and $\DV_{x_j}$, the general case being very similar. Denote $y$ and $y'$ the endpoints of $e$. Then $S$ contains a spherical quadrilateral $\Lambda$, bounded by sides $x_i y$, $y x_j$, $x_j y'$, $y' x_i$, and symmetric with respect to its diagonal $e=yy'$.

Consider a developing map
$\iota:\Lambda\rar \Sph$ and
call $E$ the maximal circle that contains $\iota(e)$ and $X_i,X_j$ the two points
$X_i:=\iota(x_i)$ and $X_j:=\iota(x_j)$. 
Denote by $\d_{\bm{X}}:\Sph\rar\RR$ the distance functions from $\{X_i,X_j\}$.

It is easy to see that $\Vor\Big|_{{\Lambda}}$ coincides with
$\d_{\bm{X}}\circ\iota$ and so the two functions have the same critical values on ${\Lambda}$.

\begin{center}
\begin{figurehere}
\psfrag{Sph}{$\textcolor{Blue}{\Sph}$}
\psfrag{z}{$\vspace{0.2cm}\textcolor{Sepia}{X_i}$}
\psfrag{z'}{$\vspace{-0.2cm}\textcolor{Sepia}{X_j}$}
\psfrag{s}{$P$}
\psfrag{M}{$Q$}
\psfrag{i(e)}{$\textcolor{Green}{E}$}
\psfrag{A}{Case (a)}
\psfrag{B}{Case (b)}
\includegraphics[width=0.5\textwidth]{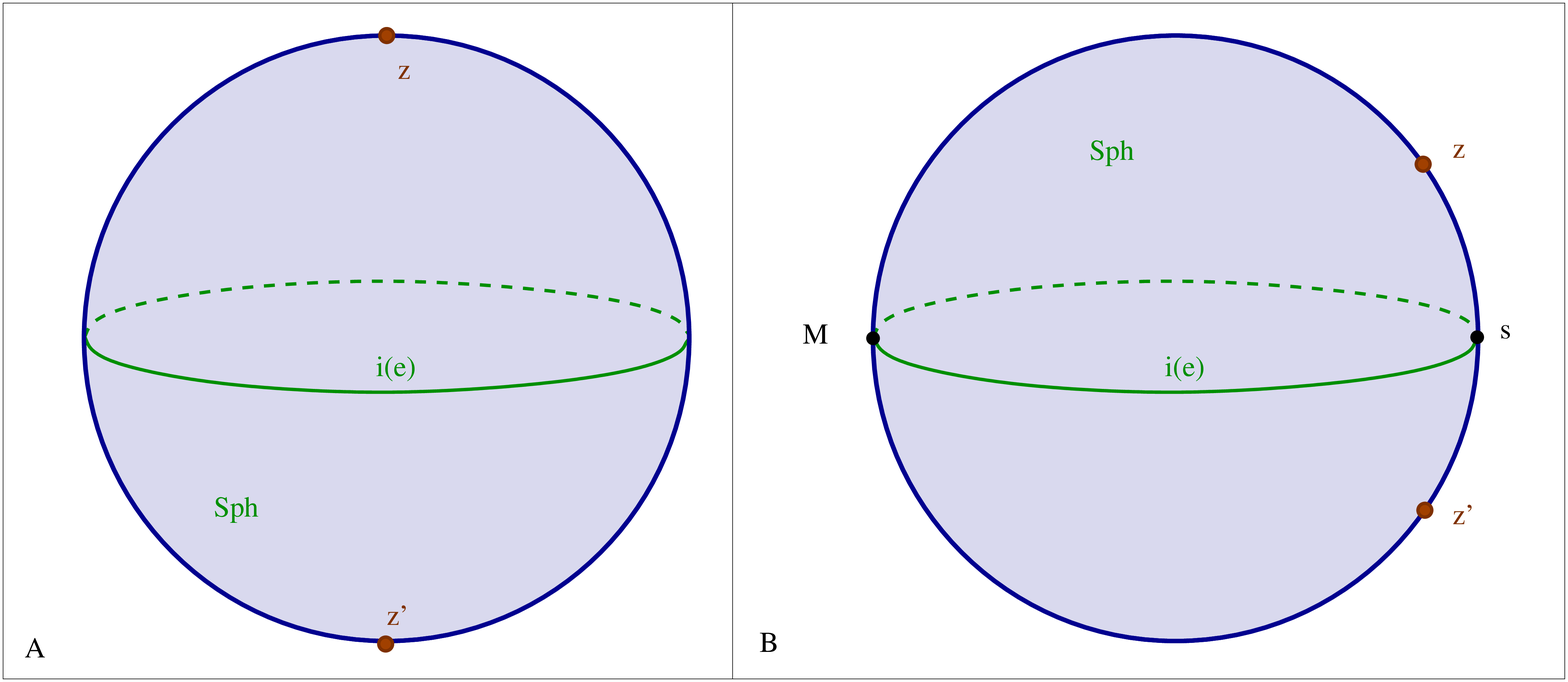}
\caption{{\small The two configurations for $X_i,X_j,E$ on $\Sph$.}}\label{fig:Voronoi-edges}
\end{figurehere}
\end{center}

The conclusion about critical values follows by noting
that the function $\d_{\bm{X}}:\Sph\rar\RR$ satisfies either of the following:
\begin{itemize}
\item[(a)]
$\d_{\bm{X}}$ is constantly equal to  $\frac{\pi}{2}$ along the maximal circle $E$;
\item[(b)]
$\d_{\bm{X}}$ is non-constant along $E$, with isolated critical points $P,Q$ 
that lie at the intersection of $E$ with the maximal circle passing through $X_i,X_j$.
The function $\d_{\bm{X}}$ attains two critical values $\d_{\bm{X}}(P)<\d_{\bm{X}}(Q)$ on $E$. 
Moreover, $\d_{\bm{X}}(P)+\d_{\bm{X}}(Q)=\pi$ and
the critical point $P$ is a saddle, whereas the
critical point $Q$ is a local maximum.
\end{itemize}
The geodesic arcs passing through the critical points on $\inte{e}$ are preimages in $\Lambda$ of two arcs of the maximal circle on $\Sph$ passing through $X_i$ and $X_j$.
\end{proof}

The following lemma will help us to analyse the behaviour of $\Vor$ at the vertices of $\Gamma$.

\begin{lemma}[Minimum function of finitely many smooth functions]\label{minsmoothfunc} Let $f_1,\ldots, f_\mu$ be smooth functions defined
on a neighbourhood of a point $P\in \Sph$ such that $f_1(P)=\ldots=f_\mu(P)$ and such that $(d f_i)_P\neq 0$ for all $i$. 
\begin{itemize}
\item[(a)]
Suppose that there is a vector $v\in T_P\Sph$ such that $(df_i)_P(v)>0$ for all $i$. Then the function $\min\{f_i\}$ is regular at $P$.
\item[(b)]
Suppose that for every non-zero vector $v\in T_P\Sph$ 
there exists $i$ such that $(d f_i)_P(v)<0$. Then the function $\min\{f_i\}$ has a local maximum at $P$.
%
%
\end{itemize}
\end{lemma}
This lemma is standard, so we only give a short proof.
\begin{proof}[Proof of Lemma \ref{minsmoothfunc}]
(a) Let $f$ be any smooth function in a neighbourhood of $P$ such that $df_P(v)=0$ and $df_P\neq 0$. Then it is not hard to check that the pair $\left(f, \min\{f_i\}\right)$ defines continuous local coordinates in a neighbourhood of $P$. Hence $\min\{f_i\}$ is regular at $P$ according to our definition.

(b) This can be proven by taking restriction of $\min\{f_i\}$ to any geodesic ray passing through $P$. 
\end{proof}

\begin{lemma}[Critical values for $\Vor$ at a vertex of $\Gamma$]\label{critvert} 
The Voronoi function $\Vor$ can have the following behaviour at a vertex $p\in\Gamma_0$ of $\Gamma$.
\begin{itemize}
\item[(a)] 
$\Vor$ is regular at $p$. 
\item[(b)]
$\Vor$  attains an isolated local maximum at $p$. 
\item[(c)]
$\Vor$ attains  a non-isolated local maximum at $p$. \\
This case occurs only if $\Vor(p)=\frac{\pi}{2}$ and $\Vor$ is identically equal to $\frac{\pi}{2}$ on an edge $e$ of $\Gamma$ adjacent to $p$. Moreover, such $e$ is not a loop and it is the unique edge incident at $p$ on which
$\Vor$ takes the constant value $\frac{\pi}{2}$.
\end{itemize}
\end{lemma}
\begin{proof}
We have seen that the map $\nu_p:\DD(\Vor(p))\rar S$
isometrically identifies a neighbourhood $U$ of the center $0\in\DD(\Vor(p))$ with a neighbourhood of $p\in S$.
It was explained in the proof of Lemma \ref{voronoisgraph} that,
up to restricting $U$, the function $\Vor$ on the neighbourhood $\nu_p(U)$ of $p$
can modelled on the function $\d_{\bm{z}}:\DD(\Vor(p))\rar\RR$, where  ${\bm z}$ is a collection of  points $(z_1,\ldots, z_{\mu_p})$ going counterclockwise along the boundary $\partial \DD (\Vor(p))$.

In order to analyse $\d_{\bm{z}}$ near $0$, we consider the following three cases (see Figure \ref{fig:Voronoi-critical}).

\begin{center}
\begin{figurehere}
\psfrag{O}{$\textcolor{Green}{0}$}
\psfrag{y1}{$\textcolor{Sepia}{z_1}$}
\psfrag{y2}{$\textcolor{Sepia}{z_2}$}
\psfrag{y3}{$\textcolor{Sepia}{z_3}$}
\psfrag{D(VS)}{$\textcolor{blue}{\DD(\Vor(p))}$}
\psfrag{A}{Case (a)}
\psfrag{B}{Case (b)}
\psfrag{C}{Case (c)}
\psfrag{b}{$\textcolor{Blue}{\zeta}$}
\includegraphics[width=0.85\textwidth]{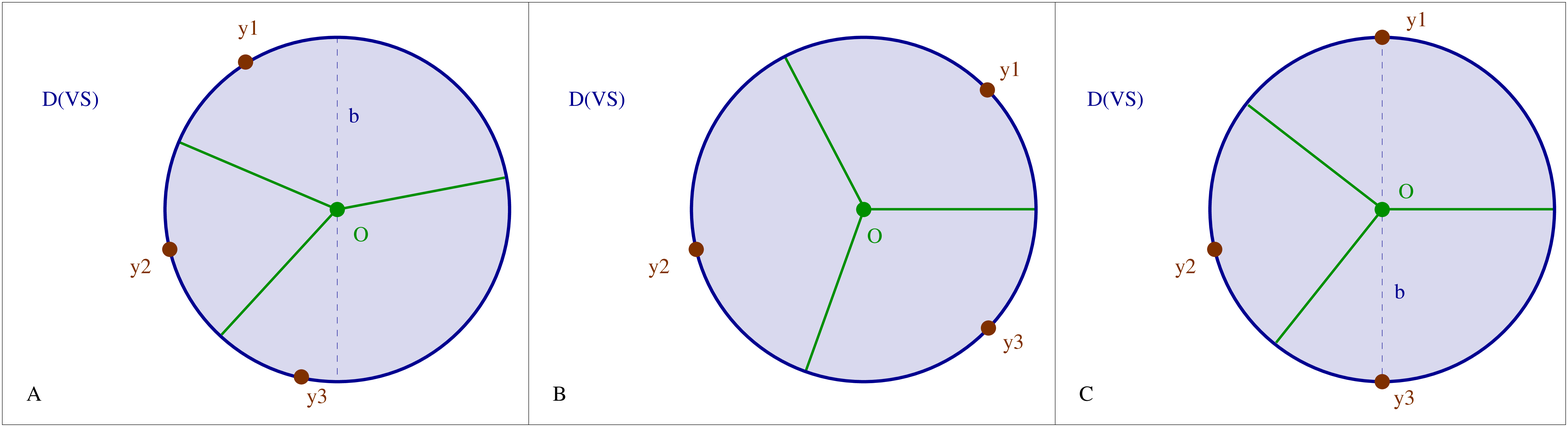}
\caption{{\small Types of critical point at a vertex of multiplicity $3$ of the Voronoi graph.}}\label{fig:Voronoi-critical}
\end{figurehere}
\end{center}

\begin{itemize}
\item[(a)]
There is a diameter $\zeta$ in  $\DD(\Vor(p))$ passing through $0$ such that $\bm z$ lies  in one connected component of $\DD(\Vor(p))\setminus \zeta$. Let us show in this case that $0$ is regular for $\d_{\bm{z}}$. 
At the point $0$ all the gradient vectors $\nabla \d_{z_j}$ are transversal $\zeta$ 
and point toward the same half of $\DD(\Vor(p))\setminus \zeta$ (namely, the half that does not contain $\bm{z}$). It follows that we are in Case (a) of Lemma \ref{minsmoothfunc} and $0$ is a regular point, which corresponds to Case (a) of this lemma.
\item[(b)]
Suppose now that for any diameter $\zeta$  passing trough $0$ there are two points $z_i$ and $z_j$ lying in its complement and separated by it. In such a case  $p$ is an isolated local maximum, since it is easy to see that we are in Case (b) of Lemma \ref{minsmoothfunc}.
\item[(c)]
The remaining situation to analyse is when (after a cyclic reordering) points $z_1$ and $z_{\mu_p}$ are opposite points on the circle $\partial \DD(\Vor(p))$, whereas $z_2,\ldots, z_{\mu_p-1}$ lie one half-circle with endpoints $z_1$, $z_{\mu_p}$. 
Call $\zeta$ the diameter of $\DD(\Vor(p))$ that joins $z_1$ and $z_{\mu_p}$ and
split again this situation into three subcases.
\begin{itemize}
\item[(c1)]
$\Vor(p)=d(0,\bm{z})<\frac{\pi}{2}$.\\
In this case $0$ is a regular point for $\d_{\bm{z}}$. 
In fact, for every $j$ the gradient vector $\nabla \d_{z_j}$ at $0$
is nonzero and it again points toward the half-disk of $\DD(\Vor(p))\setminus \zeta$ that does not contain $\bm{z}$.
So we are again in Case (a) of Lemma \ref{minsmoothfunc}.
%
\item[(c2)]
$\Vor(p)=d(0,\bm{z})>\frac{\pi}{2}$.\\
In this case the function $\min\{\d_{z_1},\, \d_{z_{\mu_p}}\}$ on $\DD(\Vor(p))$ attains its isolated global maximum at $0$. 
Clearly, the same holds for the function $\d_{\bm z}$. 
\item[(c3)]
$\Vor(p)=d(0,\bm{z})=\frac{\pi}{2}$.\\
We will show that we are in Case (c) of the current lemma. 

Let $w$ be endpoint of $R_{\mu_p}$ on $\partial \DD(\pi/2))$, which lies at distance $\pi/2$ from $z_1$ and $z_{\mu_p}$
on the arc of  $\partial \DD(\pi/2)$ going counter-clockwise from $z_{\mu_p}$ to $z_1$, and let $-w$ be the  point on $\partial \DD(\pi/2)$ opposite to $w$. The function $\min\{\d_{z_1},\, \d_{z_{\mu_p}}\}$ attains  its maximum $\pi/2$ 
on the diameter $[-w,w]$ of $\DD(\pi/2)$.
It is not hard to see that all points of the radius $[0,w]$ (resp. $[-w,0]$)
different from $0$
are at distance  larger (resp. smaller) than $\pi/2$ from $z_2,\dots,z_{\mu_p-1}$.
This proves that the level set $\{\d_{\bm{z}}=\pi/2\}$ coincides with the radius $[0,w]$. This finishes the analysis of this case and finishes the proof of the lemma. 
\end{itemize}
\end{itemize}
\end{proof}

\begin{proof}[Proof of Theorem \ref{locmaxint}] 
The first statement of this Corollary follows directly from Lemmas \ref{regoutofGamma} and \ref{lemma:two-critical}. The classification follows from  Lemmas \ref{lemma:two-critical} and \ref{critvert}. The last claim is a consequence of the classification of critical points at a vertex of $\Gamma$.
\end{proof}

\subsection{Saddle critical points and saddle geodesics}\label{saddlegeosec}
In this section basing on classification of  critical points of Voronoi function we give a bound on the number of its saddle critical values, see  Proposition  \ref{critnumber}. Additionally to this we start our study of {\it saddle geodesics} and prove in particular that they cut the surface in a union of disks, see Proposition \ref{prop:Voronoi-cellular-decomposition}.

\begin{definition}[Saddle geodesics] Let $S$ be a spherical surface and $\gamma$ be a geodesic arc or loop based at $\bm x$. We call $\gamma$ a {\it saddle geodesic} in case the midpoint $p$ of $\gamma$ is a saddle point for $\Vor$ and $\ell(\gamma)=2\Vor(p)$.
If $\gamma$ is an arc we call it a {\it saddle arc}; if it is a loop, we call it a {\it saddle loop}.
\end{definition}

By Theorem \ref{locmaxint} each saddle critical point belongs to a unique saddle geodesic.


\begin{proposition}[Number of critical values of $\Vor$]\label{critnumber} 
Let $S$ be a spherical surface with conical points $\bm{x}$ and assume that $\chi(\dot{S})<0$.
The Voronoi function $\Vor$ has the following properties.
\begin{itemize}
%
\item[(a)] 
The systole of $S$ is the minimal non-zero critical value of $\Vor$ and it is a saddle value.
\item[(b)]
The number of non-zero critical values of $\Vor$ is at most $|\Gamma_0|+2|\Gamma_1|\leq -8\chi(\dot S)$. 
\item[(c)] 
The number of saddle critical values of $\Vor$ is at most $|\Gamma_1|\leq -3\chi(\dot S)$ and all saddle values lie in the interval 
$\left(0,\frac{\pi}{2}\right)$.
\end{itemize}
\end{proposition}
\begin{proof}
%
All values in the interval $(0,\sys(S,\bm{x}))$ are regular for $\Vor$ by definition of the systole. 
At the same time, it is easy to see that the midpoint $s$ of a geodesic $\sigma_{\sys}$ 
that realizes the systole is a critical point of $\Vor$.
According to Theorem \ref{locmaxint}, the point $p$ can only be a local maximum or a saddle.
The only case in which it could be an isolated local maximum is that $\Gamma$ consists just of 
the single point $p$, 
which happens only if $S$ is the round sphere with one conical point of angle $2\pi$.
Also, $p$ can be a non-isolated local maximum only if
$\Vor$ takes constant value $\pi/2$ on $\Gamma$,
namely only if $S$ has genus $0$ with $n=2$ conical point at distance $\pi$ from each other.
Thus, both cases above are ruled out by the hypothesis $\chi(\dot{S})<0$.
As a consequence, $p$ is a saddle point and (a) is proven.

As for (b), all critical points of $\Vor$ (apart from $\bm{x}$) belong to $\Gamma$ and the function $\Vor$ can attain at most two critical values in the interior of each edge of $\Gamma$
by Lemma \ref{lemma:two-critical}.
Thus, $\Vor$ has at most $|\Gamma_0|+2|\Gamma_1|$ critical values. By Corollary \ref{edgesvertices},
the number of vertices of $\Gamma$ is at most $-2\chi(\dot S)$ and the number of edges is at most $-3\chi(\dot S)$.
It follows that
$|\Gamma_0|+2|\Gamma_1|\leq -8\chi(\dot{S})$
and so (v) is proven.

To prove (c) recall that according to Lemma \ref{critvert} critical points at vertices of $\Gamma$ have to be local maxima. Hence, Lemma \ref{critvert} together with Lemma \ref{lemma:two-critical} imply that the number of saddle critical values  of $\Vor$ is bounded by the number of edges of $\Gamma$. Moreover, according to Lemma \ref{lemma:two-critical} each saddle value is less than $\frac{\pi}{2}$.
\end{proof}

\subsubsection{Delaunay-Morse decomposition of a spherical surface}\label{sec:delaunay}

As a consequence of the previous analysis, we can produce a cellular decomposition of $S$
by applying Morse theory to the Voronoi function $\Vor$.
Note first that the flow associated to the gradient
vector field $\nabla \Vor$ on $\dot{S}\setminus\Gamma$
determines a deformation retraction $R_{\Vor}:\dot{S}\rar \Gamma$.

\begin{proposition}[Delaunay-Morse decomposition of $S$]\label{prop:Voronoi-cellular-decomposition}
Let $\Gamma^s$ be the set of saddle points and $\Gamma^m$
the set of local maxima for $\Vor$.
\begin{itemize}
\item[(a)]
The $\Gamma\setminus\Gamma^s$ is a disjoint union of open trees $\tau_l$.
The intersection of the critical locus of $\Vor$ with $\tau_l$ is either an isolated local maximum
or an edge of non-isolated local maxima.
\item[(b)]
The surface $S$ has a cell decomposition with
$0$-cells given by $\bm{x}$,
open $1$-cells
$R_{\Vor}^{-1}(\Gamma^s)$ consisting of all open saddle arcs and loops,
and one open $2$-cell $R_{\Vor}^{-1}(\tau_l)$ for every open tree $\tau_l$.
\item[(c)]
The Euler characteristic of $\dot{S}$ satisfies $\chi(\dot{S})=|\Gamma^m|-|\Gamma^s|$,
where $|\Gamma^m|$ (resp. $|\Gamma^s|$) is the number of connected components
of $\Gamma^m$ (resp. the cardinality of $\Gamma^s$).
\end{itemize}
\end{proposition}
\begin{proof}
By Theorem \ref{locmaxint} the points in $\Gamma^s$ correspond to isolated local minima
for $\Vor|_{\Gamma}$ and the points in $\Gamma^m$
correspond to local maxima for $\Vor|_{\Gamma}$.


In order to prove (a), consider
a connected component $\tau_l$ of $\Gamma\setminus\Gamma^s$.
The subgraph $\tau_l$ does not contain local minima for $\Vor|_{\Gamma}$.
Moreover, for each vertex $p$ of $\tau_l$ which is not a local maximum
there is a unique oriented edge $\vec{e}$ in $\tau_l$ outgoing from $p$
such that $\Vor$ increases along $\vec{e}$ near $p$.
It follows that the flow on $\tau_l$ induced
by the gradient vector field $\nabla \left(\Vor|_{\tau_l}\right)$ gives a deformation
retraction of $\tau_l$ onto $\tau_l\cap \Gamma^m$.
It follows that $\tau_l\cap \Gamma^m$ consists of one connected component and that
$\tau_l$ is an open tree.

Claim (b) is an easy consequence of (a) and the fact that $R_{\Vor}$ is a deformation retraction.
Also, (c) is obtained by computing $\chi(S)$ with respect to the cellular decomposition
defined in (b).
\end{proof}

\subsubsection{Additional properties of saddle critical points and geodesic}\label{sec:additional}
In this subsection we collect several results concerning saddle points and geodesics, needed for the proof of Theorem \ref{main:many}. The first lemma  gives a sufficient condition for a geodesic to be saddle.

\begin{lemma}[Geodesics far from other conical points are saddle]\label{saddlechar} Let $S$ be a spherical surface and $x_i, x_j$ be two conical points. Let $\gamma$ be a geodesic segment that joins $x_i$ with $x_j$. 
\begin{itemize}
\item[(a)]
If $\max(d(x_k,x_i),d(x_k,x_j))\ge \ell(\gamma)$
for all $k\neq i,j$, then $\gamma$ is a saddle arc.
\item[(b)]
If $\gamma$ is a geodesic loop ($i=j$) and $d_i=d(x_i,\bm x\setminus x_i)\geq\ell(\gamma)$, then $\gamma$ is a saddle loop.
%
\end{itemize}
\end{lemma}
\begin{proof} 
As for case (a), let $p$ be the midpoint of $\gamma$. To prove that this is a saddle point it is enough to show that for any $k\ne i,j$ we have $d(p,x_k)>\frac{\ell(\gamma)}{2}$. Assume the converse and let $\gamma'$ be a geodesic segment of length at most $\frac{\ell(\gamma)}{2}$ that joins $x_k$ with $p$. We can assume that $x_k$ does not belong to $\gamma$ and so $\gamma'$ and $\gamma$ meet at $p$ under non-zero angle. It is clear then that we can smooth the union of $\gamma'$ and $x_i p$ into a curve shorter than $\ell(\gamma)$ that joins $x_i$ and $x_k$. It follows $d(x_i,x_k)<\ell(\gamma)$. In the same way we prove that $d(x_j,x_k)<\ell(\gamma)$ and get a contradiction. The proof of case (b) is analogous.
\end{proof}

Next, we state a Morse-theoretic lemma that we will need to prove Theorem \ref{main:many}(b).

\begin{lemma}
[Saddle geodesics from disconnected level sets]\label{saddlepath} 
Let $S$ be a spherical surface and $c$ be a regular value of Voronoi function $\Vor$. Suppose that the level set $\Vor^{-1}(c)$ has two connected components $\lev_{1,c}, \, \lev_{2,c}$ that bound a cylinder $S'\subset S$. Assume that for points $x\in S'$ close to $\lev_{1,c}\cup\lev_{2,c}$  we have $\Vor(x)\le c$.  
Then there is a saddle point $s\in S'$ with $\Vor(s)=c'<c$  with the following properties:
\begin{itemize}
\item There is a path $\alpha\subset S'$ that joins a point  $q_{1,c}\in \lev_{1,c}$ with a point $q_{2,c}\in \lev_{1,c}$, passes through $s$ and such that $\Vor(\alpha)\ge c'$.
\item The path $\alpha$ is transversal to the saddle geodesic $\sigma_s\subset S'$ passing through $s$. In particular, in the case $\sigma_s$ is a loop, $\alpha$ separates the boundaries of $S'$ inside $S'$.
\end{itemize}
\end{lemma}

\begin{proof} 
Let $c'\in(0,c)$ be the maximum value such that points $q_{1,c}$ and $q_{2,c}$ lie in the same connected component of $\Vor^{-1}[c',\pi)\cap S'$. Then points $q_{1,c}$ and $q_{2,c}$ can be connected by a path $\alpha$ in $\Vor^{-1}[c',\pi)\cap S'$.

\begin{center}
\begin{figurehere}
\psfrag{s}{$\textcolor{red}{s}$}
\psfrag{sigma}{$\textcolor{red}{\sigma_s}$}
\psfrag{q1c}{$\textcolor{Purple}{q_{1,c}}$}
\psfrag{q2c}{$\textcolor{Purple}{q_{2,c}}$}
\psfrag{l1c}{$\textcolor{Purple}{\lambda_{1,c}}$}
\psfrag{l2c}{$\textcolor{Purple}{\lambda_{2,c}}$}
\psfrag{lc'}{$\textcolor{Purple}{\lambda_{c'}}$}
\psfrag{S}{$\textcolor{blue}{S}$}
\psfrag{S'}{$\textcolor{blue}{S'}$}
\psfrag{alpha}{$\textcolor{BrickRed}{\alpha}$}
\includegraphics[width=0.4\textwidth]{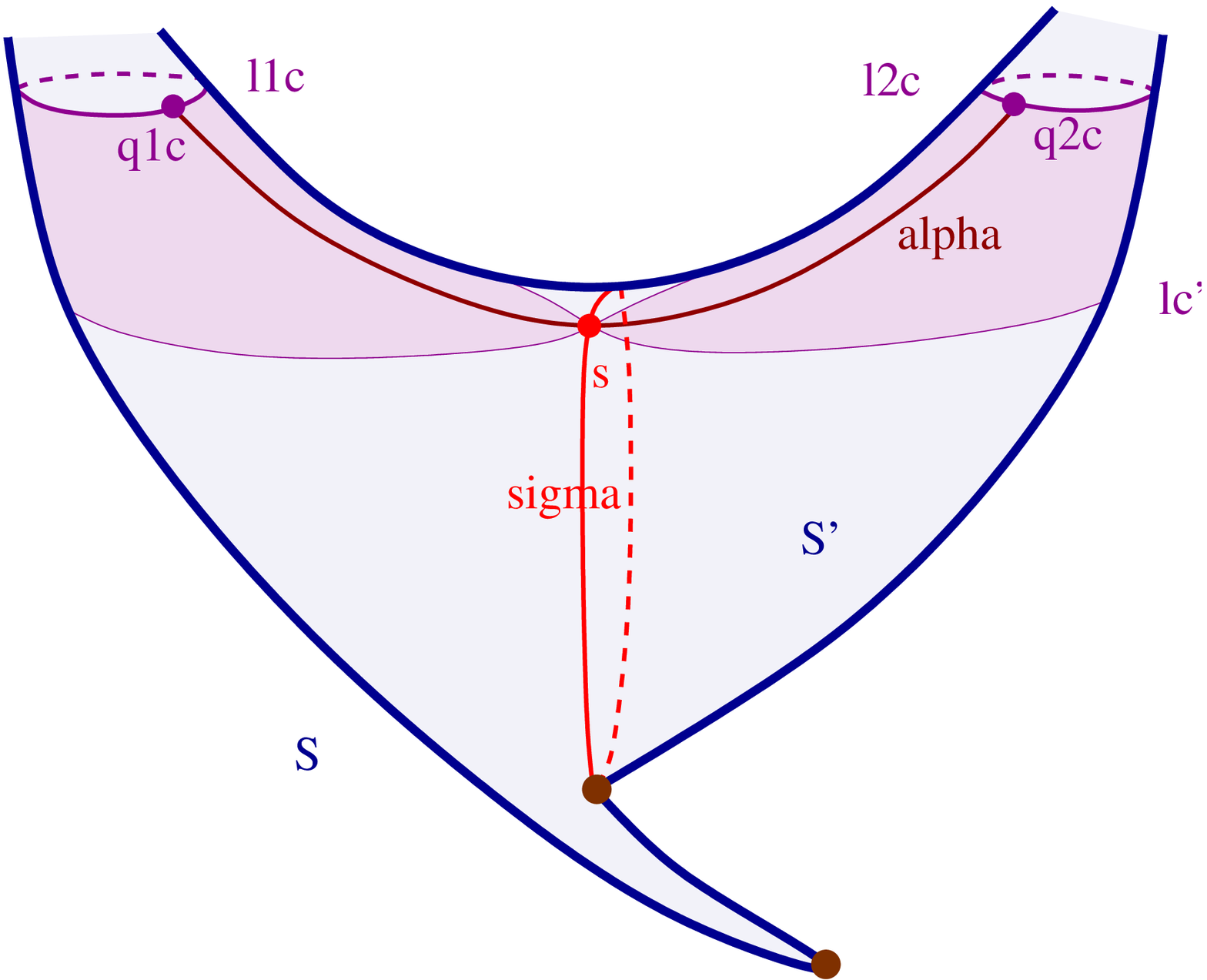}
\caption{{\small Existence of a saddle point $s$ on the level curve $\lambda_{c'}$.}}\label{fig:finding-saddle}
\end{figurehere}
\end{center}

It is not hard to see that removing local (isolated or non-isolated) maxima from $\Vor^{-1}[c',\pi)\cap S'$
does not produce more connected components. Thus,
if we remove all saddle points on the level $\Vor^{-1}(c')\cap S'$
from $\Vor^{-1}[c',\pi)\cap S'$, then $q_{1,c}$ and $q_{2,c}$ will be in two different connected components of the remaining surface. It follows, that $\alpha$ should pass through one of such saddle points $s$ and it will be transversal to a saddle arc of loop $\sigma_s$ through $s$. Clearly, if $\sigma_s$ is a loop, it has to separate $q_{1,c}$ from $q_{2,c}$ since it intersects $\alpha$ transversally at one point. 
\end{proof}

Finally, we analyse points of small conical angle and their neighbourhoods. We recall that $\W_{x_i}=B_{x_i}(d_i)$.


\begin{lemma}[$\DV_{x_i}$ and $\W_{x_i}$ at a point $x_i$ with small $\th_i$]\label{smallangleneigh} 
Let $S$ be a spherical surface with a conical point $x_i$ with $\th_i\le \frac{1}{3}$. 
Let $\sigma$ be a saddle geodesic passing through $x_i$ and $s\in \sigma$ be the corresponding saddle point. 
Let $x_j$ be a closest to $x_i$ conical point (i.e., $d(x_i,x_j)=d_i$).
Then 
\begin{itemize}
\item[(a)]
The Voronoi domain $\DV_{x_i}$ of $x_i$ belongs to the interior of $\W_{x_i}$. Consequently,  $\Vor(s)\in [\frac{d_i}{2}, d_i)$. 
\item[(b)]
For any $x_k$ different from $x_i$ and $x_j$ we have $d(x_k,x_i)>
d(x_k,x_j)$. 

Moreover, we have  $d(x_k,x_i)\ge d(x_k,x_j)+d_i-\pi \th_i$.

\item[(c)]
$\partial \W_{x_i}$ is contained inside $B_{x_j}(\pi\th_i)$.
\item[(d)]
If additionally $\th_i<\frac{1}{7}$, then $\partial \W_{x_i}$ is contained inside
$\inte{B}_{x_j}(d_i/2)$.
\end{itemize} 
\end{lemma}
\begin{proof}
(a) To prove that $\DV_{x_i}$ belongs to the interior of $\W_{x_i}$  it is sufficient to show that $\diam(\pa\W_{x_i})<d_i$.
Indeed, this would imply that for any $p\in \partial \W_{x_i}$ we have $d(p, x_j)<d_i=d(p,x_i)$, and so $\pa \W_{x_i}$ is disjoint from $\DV_{x_i}$. As a consequence, $\DV_{x_i}$ is contained in the connected
component of $S\setminus \pa\W_{x_i}$ that contains also $x_i$, which is in fact $\Wint_{x_i}$.
 
Since $\th_i\leq\frac{1}{3}$, we have $2r_i>d_i$ by Theorem \ref{thm:closest-points}(a),
and so $\bar{r}_i=d_i$. By Lemma \ref{immeradius} there is
a map $\nu_i:\DD_{\th_i}(d_i)\rar S$ that takes the origin to $x_i$ and which is a local isometry in
the interior, so that $\W_{x_i}=\nu_i(\DD_{\th_i}(d_i))$.
Let us take points $z$ and $w$ in $\partial \DD_{\th_i}(d_i)$ such that $\nu_i(z),\nu_i(w)\in\pa \W_{x_i}$. Since $\th_i\le \frac{1}{3}$, it is easy to see that the distance between $z$ and $w$ 
inside $\DD_{\th_i}(d_i)$ is strictly less than $d_i$. Since $\nu_i$ is a local isometry on $\DD_{\th_i}(d_i)$, we deduce $d(\nu_i(z),\nu_i(w))<d_i$.

Let us now prove that $\Vor(s)\in [\frac{d_i}{2}, d_i)$. Since $s\in \DV_{x_i}\subset \Wint_{x_i}$, we already know that $\Vor(s)<d_i$. To see that $\Vor(s)\ge \frac{d_i}{2}$ consider two cases. If $\sigma$ is a saddle arc that connects $x_i$ with some conical point $x_k$, then we have $\Vor(s)=\frac{1}{2}d(x_i,x_k)\ge \frac{d_i}{2}$. On the other hand, if $\sigma$ is a saddle loop, we have $\ell(\sigma)\ge 2r_i>d_i$ and so $\Vor(s)=\frac{1}{2}\ell(\sigma)>\frac{d_i}{2}$.
 
%

\begin{center}
\begin{figurehere}
\psfrag{q}{$\textcolor{BrickRed}{q}$}
\psfrag{xi}{$\textcolor{Sepia}{x_i}$}
\psfrag{xj}{$\textcolor{Sepia}{x_j}$}
\psfrag{xk}{$\textcolor{Sepia}{x_k}$}
\psfrag{S}{$\textcolor{blue}{S}$}
\psfrag{Bmax}{$\textcolor{Green}{\W_{x_i}}$}
\psfrag{eta}{$\textcolor{BrickRed}{\ \alpha}$}
\includegraphics[width=0.35\textwidth]{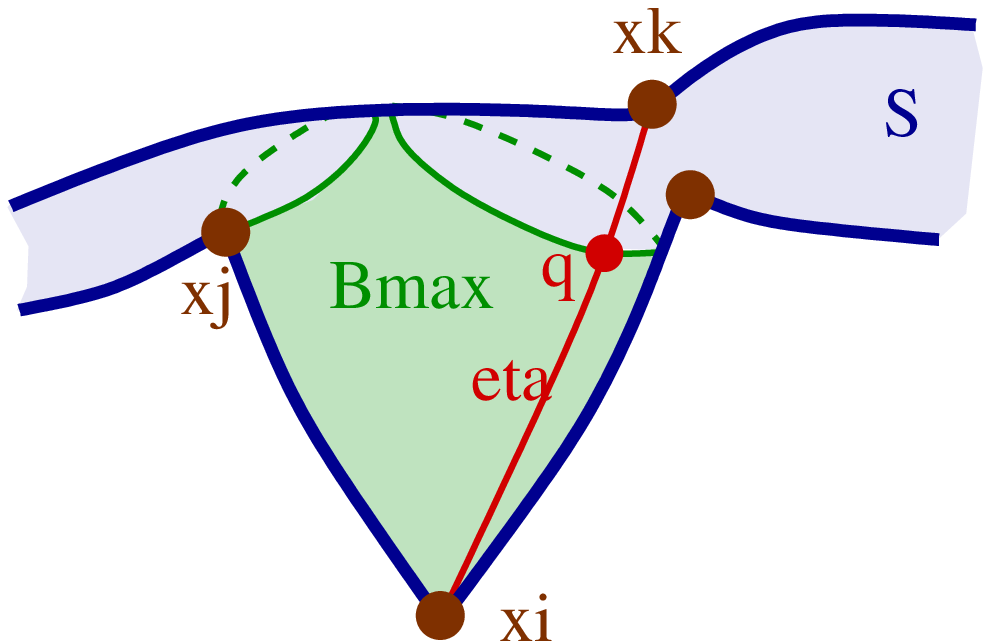}
\caption{{\small An example of maximal $1$-pointed ball $\W_{x_i}$.}}\label{fig:Bmax}
\end{figurehere}
\end{center}
%


(b) Consider a path $\alpha$ of length $d(x_i,x_k)\ge d_i$ that joins $x_k$ with $x_i$, and let $q$ be the intersection of $\alpha$ with $\partial \W_{x_i}$. Clearly, $d(x_k,q)=d(x_i,x_k)-d_i$. At the same time, as it was explained in (a), we have $d(x_j,q)<d_i$. So, applying the triangle inequality, we get the first assertion:
$$d(x_k,x_j)\le d(x_j,q)+d(x_k,q)<d(x_i,x_k).$$

Let us now prove the second inequality. 
Since $\ell(\partial \DD_{\th_i}(d_i))\le 2\pi\th_i$, 
we also have $\diam(\pa \W_{x_i})\leq \pi\th_i$ and so $d(x_j,q)\le \pi\th_i$.
Using the triangle inequality we conclude $$d(x_k,x_j)\le d(x_j,q)+d(x_k,q)\leq d(x_i,x_k)-d_i+\pi\th_i.$$

(c)  It is explained in (b) that $\diam(\partial\W_{x_i})\leq \pi\th_i$, and since the point $x_j$ belongs to $\partial \W_{x_i}$, the statement clearly holds.

(d) Again, it is not hard to see that the condition $\th_i<\frac{1}{7}$ implies that
the diameter of $\pa \DD_{\th_i}(d_i)$ is less than $\frac{d_i}{2}$. Hence,
$\diam(\pa\W_{x_i})<\frac{d_i}{2}$ and the conclusion follows.
\end{proof}


\section{Voronoi cylinders and sublevel sets of Voronoi function}\label{sec:voronoi2}
 
In this section we turn our attention to two types of subsurfaces of spherical surfaces singled out by the Voronoi function. First, we study Voronoi cylinders (see  Definition \ref{defVoronoi} and Corollary \ref{cor:Vor}) and give a lower bound on the moduli of such cylinders, see Lemma \ref{cylinderslength}. This permits us to get a hold on conformal geometry of the surface. Next, we study various properties of the connected components of sublevel surfaces of $\Vor$, 
i.e. of subsets $\{\Vor\le c\}$.

\subsection{Voronoi cylinders and their modulus}

In this subsection we study simple subsurfaces of $S$ which are well foliated by level sets
of the Voronoi function $\Vor$.

\begin{definition}[Voronoi cylinders and caps]\label{defVoronoi} 
Let $(S,\bm x)$ be a spherical surface with conical points.
A cylindrical subsurface $C\subset \dot S$ without critical points of $\Vor$ and whose boundary components are  connected components of level sets of $\Vor$ is called a {\it Voronoi cylinder}. 
A disk in $\dot S$ whose boundary
is a connected component of a level set of $\Vor$
is called a {\it Voronoi cap} if the critical points of $\Vor$
contained in it consist
either of one isolated maximum, or of a segment in the level set $\Vor^{-1}\left(\frac{\pi}{2}\right)$. 
\end{definition}

In order to extract Voronoi cylinders from spherical surfaces we start with two standard lemmas.

\begin{lemma}[Local structure of $(S,\Vor)$ near a regular level set]\label{smallcylinder} 
Let $S$ be a spherical surface and $\lev_c$ be a connected component of a level set $\Vor^{-1}(c)$ such that all points of $\lev_c$ are regular. Then for some $\varepsilon>0$ the connected component $U_{\varepsilon}$  of $\Vor^{-1}([c-\varepsilon,c+\varepsilon])$ containing $\lev_c$ is a Voronoi cylinder. 
Moreover, the map
$\Vor: U_{\varepsilon}\to [c-\varepsilon,c+\varepsilon]$ is a continuous fibration 
with fibers homeomorphic to a circle.
\end{lemma}
\begin{proof} This lemma is standard and follows from the fact that at small neighbourhood of a regular point of $\Vor$ the level sets of $\Vor$ form a continuous foliation (see also \cite{siersma}).
\end{proof}

\begin{lemma}[Local structure of $(S,\Vor)$ near a local maximum]\label{smallcap} 
Let $p$ be an isolated local maximum of $\Vor$ with $\Vor(p)=c$. Then there exists $\varepsilon$ such that the connected component $U_{\varepsilon}$ of $\Vor^{-1}([c-\varepsilon,c])$ containing $p$ is a Voronoi cap. Moreover the map $\Vor:(U_{\varepsilon}\setminus p)\to [c-\varepsilon,c)$ is a continuous fibration with fibers homeomorphic to a circle. The same statement holds if 
$p$ is replaced by an edge $e$ of $\Gamma$ such that $\Vor(e)=\frac{\pi}{2}$.
\end{lemma}
\begin{proof} 
This lemma follows from the analysis of local maxima given in Lemmas \ref{lemma:two-critical} and \ref{critvert} and from Lemma \ref{smallcylinder}.
\end{proof}

Combining these two lemmas we get the following corollary.

\begin{corollary}[Subsurfaces of type {$\Vor^{-1}([r',r''])$} without saddle points]\label{cor:Vor}
Let $S$ be a spherical surface with conical singularities and let $0<r'<r''<\pi$ be two regular values of $\Vor$. Suppose that the interval $[r',r'']$ does not contain saddle critical values of $\Vor$. Then
each connected component of $\Vor^{-1}([r',r''])$ is of the following type:
\begin{itemize}
\item 
a Voronoi cylinder bounded by a connected component of $\Vor^{-1}(r')$ 
and a connected component of $\Vor^{-1}(r'')$;
\item
a Voronoi cap whose boundary is a connected component of $\Vor^{-1}(r')$.
\end{itemize}
\end{corollary}

\begin{center}
\begin{figurehere}
\psfrag{S}{$\textcolor{Blue}{S}$}
\psfrag{xi}{$\textcolor{Sepia}{x_i}$}
\psfrag{xk}{$\textcolor{Sepia}{x_k}$}
\psfrag{xj}{$\textcolor{Sepia}{x_j}$}
\psfrag{G(S)}{$\textcolor{OliveGreen}{\Gamma}$}
\psfrag{C}{$\textcolor{Plum}{C}$}
\includegraphics[width=0.45\textwidth]{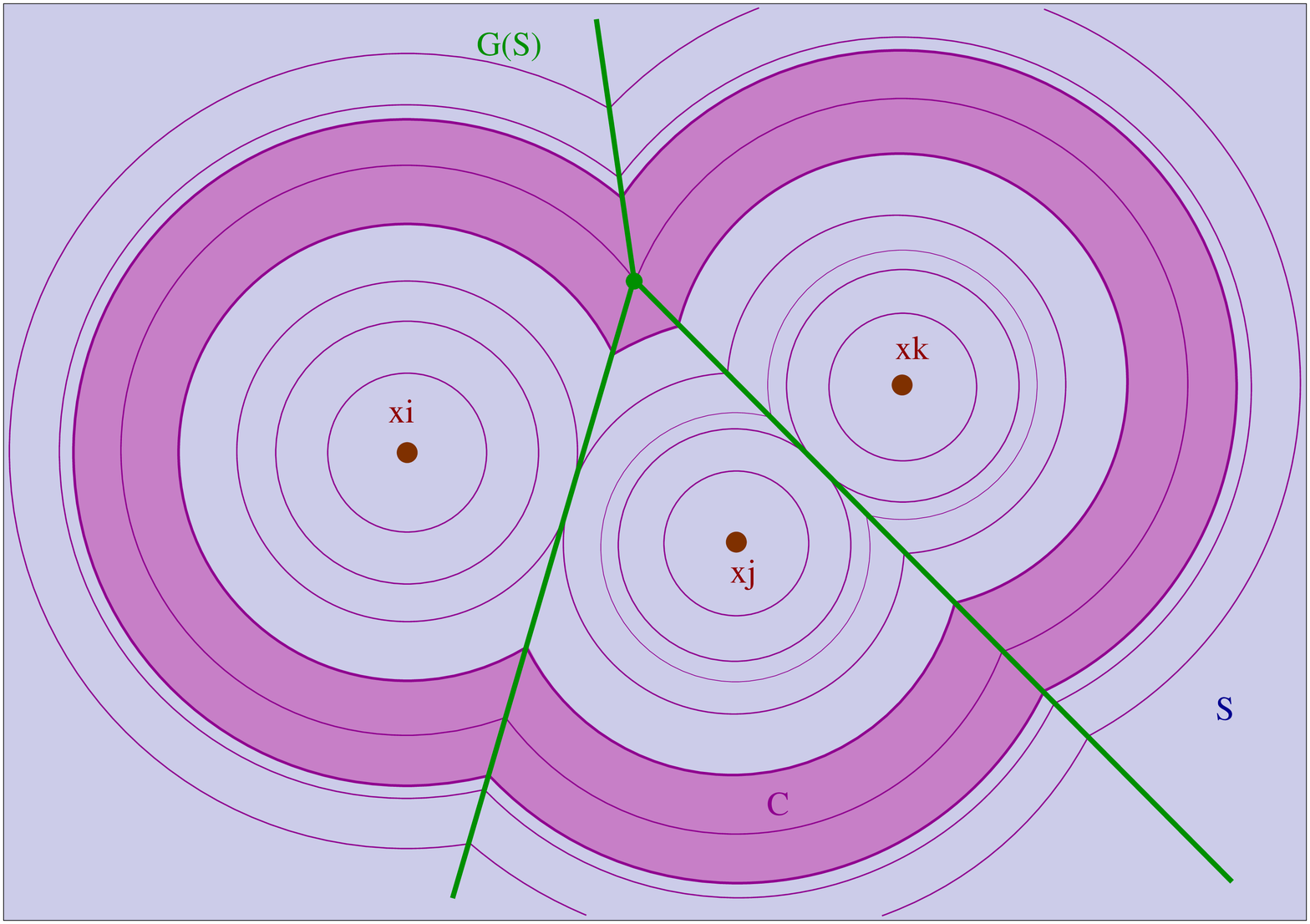}
\caption{{\small Example of level sets of $\Vor$
and of a component $C$ of $\Vor^{-1}([r',\,r''])$.}}\label{fig:Voronoi-foliation}
\end{figurehere}
\end{center}

\begin{proof} 
Consider first a connected component $C$ of $\Vor^{-1}([r',r''])$ without critical points.
It follows then from Lemma \ref{smallcylinder} that $C$ is a Voronoi cylinder and that
its boundary components should lie in the level sets $\Vor^{-1}(r')$ and $\Vor^{-1}(r'')$.

Consider now a connected component $A$ of $\Vor^{-1}([r',r''])$ that contains a critical point.
Then such a point should be a local maximum (by Theorem \ref{locmaxint} local minima are conical points of $S$). Hence the statement can be deduced from Lemma \ref{smallcap}.
%
%
\end{proof}

\begin{lemma}[Modulus of a Voronoi cylinder]\label{cylinderslength} 
Let $C$ be a Voronoi cylinder with $\Vor(C)=[r',r'']$.
For every $t\in [r',r'']$ let $\lev_t$ be the component of the level set $\Vor^{-1}(t)$ contained in $C$. Then we have 
\[
M(C)> \int_{r'}^{r''}\frac{1}{\ell(\lev_t)}dt>\frac{1}{2\pi\|\bm{\th}\|_1}\log\left(\frac{r''}{r'}\right).
\]
\end{lemma}
\begin{proof} We will first establish the left inequality. Let $[t',t'']\subset [r',r'']$ and denote by $C_{t',t''}\subset C$ the cylinder bounded by the curves $\lev_{t'}$ and $\lev_{t''}$.   
Note, that since $\Vor$ a Lipschitz function with $|\nabla \Vor|=1$ on $\dot S\setminus \Gamma$
and $\ell(\lev_t)$ is a continuous function of $t$,
we have $\area(C_{t',t''})=\int_{t'}^{t''}\ell(\lev_t)dt$ by the co-area formula. Note at the same time that, since $\Vor$ is a Voronoi function, we have $d(\lev_{t'},\lev_{t''})=t''-t'$. 
Applying Lemma \ref{modulus-height} we get
\[
M(C_{t',t''})> \frac{(t''-t')^2}{\area(C_{t',t''})}=\frac{(t''-t')^2}{\int_{t'}^{t''}\ell(\lev_t)dt}.
\]
To get the inequality, it suffice now to cut cylinder $C$ into $k$ Voronoi cylinders of width $\frac{t''-t'}{k}$, use sub-additivity of modulus (Lemma \ref{subadditivity}) and send $k$ to infinity.

The right hand side inequality clearly holds since by Corollary \ref{Corlengthoflevel} we have  $\ell(\lev_t)<2\pi\|\bm{\th}\|_1 t$.
\end{proof}

By the very definition of extremal systole,
Corollary \ref{cor:Vor}
and
Lemma \ref{cylinderslength}
provide us a tool to detect non-essential Voronoi cylinders.

\begin{corollary}[Non-essentiality of Voronoi cylinders]\label{cor:non-essential}
Assume that
\[
\Ext\sys(\dot{S})\geq \frac{2\pi\|\bm{\th}\|_1}{\log(r''/r')},
\]
for some regular values $0<r'<r''<\pi$ of $\Vor$. Then the following holds.
\begin{itemize}
\item[(i)]
A Voronoi cylinder $C$ with $\Vor(C)=[r',r'']$ is non-essential.
\item[(ii)]
If additionally there are no saddle values in $[r',r'']$,
then every component of $\Vor^{-1}([r',r''])$
is either a disk without conical points or
a non-essential cylinder.
\end{itemize}
\end{corollary}

\subsection{Area and total angle of components of sublevels}
In this section we study sublevel surfaces $\{\Vor\le c\}$ and their connected components. First we estimate their area and then give a lover bound the total conical angle. Both results are needed for our proof of systole inequality.  

\subsubsection{Area of sublevel surfaces}
\begin{lemma}[Area and perimeter of sublevel sets of $\Vor$]\label{lengthoflevel}
For every $0<r<2\pi$ the following hold.
\begin{itemize}
\item[(a)]
Let $S'$ be a connected component of $\{\Vor\leq r\}$
and let $\{x_i\,|\,i\in I\}$ with $I\subseteq\{1,2,\dots,n\}$
be the collection of conical points in $S\setminus S'$.
Then
\[
\ell(\pa S')\leq
2\pi \sin(r) \|\bm{\th}_{I^c}\|_1\le 2\pi r\|\bm{\th}_{I^c}\|_1=2\pi r\sum_{i\in I^c}\th_i
\]
where $I^c=\{1,2,\dots,n\}\setminus I$.
\item[(b)]
The area of the sublevels of $\Vor$ is bounded above by
$\Area(\Vor^{-1}(0,r))\le \pi r^2\|\bm{\th}\|_1$.\\
If $S'$ is a connected component of $\{\Vor\leq r\}$
and $\{x_i\,|\,i\in I\}$ is the collection of conical points in $S\setminus S'$,
then
\[
\Area(S')\leq
\pi r^2\|\bm{\th}_{I^c}\|_1.
\]
\end{itemize}
Moreover,
\begin{itemize}
\item[(c)]
The maximum value of $\Vor$ is bounded below by
$\displaystyle\max(\Vor)\ge \sqrt{2\left(1+\chi(\dot S)\|\bm{\th}\|_1^{-1}\right)}=\sqrt{2\chi(S,\bm{\th})\|\bm{\th}\|_1^{-1}}$.
\end{itemize}
\end{lemma}
\begin{proof} 
The proof of (a) is identical to the proof of Corollary \ref{Corlengthoflevel}, where we estimate the length of the whole level set $\Vor^{-1}(r)$. To get the bound on $\ell(\pa S')$ one needs to note additionally that $\pa S'\cap \DVint_i=\emptyset$
for all $i\in I$.

The upper bound in (b) for the area is easily obtained by noting that
\[
\Area(\Vor^{-1}(0,r))=\int_0^r \ell(\Vor^{-1}(t))dt\le \pi\|\bm{\th}\|_1 r^2
\]
and similarly for the area of $S'$.

Finally, the upper bound for $r_{\max}=\max(\Vor)$ in (c) is a consequence of
\[
\Area(S)=\Area\left(\Vor^{-1}(0,r_{\max})\right)\leq \pi\|\bm{\th}\|_1 r^2_{\max}
\]
from (b) and of Gauss-Bonnet formula $2\pi\left(\|\bm{\th}\|_1+\chi(\dot{S})\right)=\Area(S)$.
\end{proof}

\subsubsection{Total conical angle of sublevel subsurfaces}

The following result is a simple application of Theorem \ref{thm:closest-points}.

\begin{proposition}[Lower bound for the total angle in a sublevel of $\Vor$]\label{4pi3prop} 
Let $S$ be a spherical surface with conical singularities $\bm{x}$
and let $c$ be a regular value of $\Vor$. Suppose that a connected component $S'$   
of $\Vor^{-1}[0,c]$ contains a saddle critical point. Then  the sum of conical angles in $S'$ is 
larger than $\frac{4\pi}{3}$.
\end{proposition}

\begin{example} \label{example:4-3}
The bound $\frac{4\pi}{3}$ in Proposition \ref{4pi3prop} can not be improved. 
Indeed, for any small $\varepsilon>0$
surfaces of genus zero
with three conical points of
angles $\left(\frac{4\pi}{3}+4\varepsilon,\,\frac{\pi}{3}, \,\frac{\pi}{3}\right)$ or 
$\left(\frac{2\pi}{3},\,\frac{2\pi}{3}+2\varepsilon, \,\frac{2\pi}{3}+2\varepsilon\right)$
necessarily contain a connected component $S'$ of a sublevel of $\Vor$
with a saddle critical point for which the sum of cone angles is $\frac{4\pi}{3}+4\varepsilon$.

\begin{center}
\begin{figurehere}
\psfrag{T}{$\textcolor{Blue}{T}$}
\psfrag{T<t}{$\textcolor{Purple}{T'}$}
\psfrag{A}{(a)}
\psfrag{B}{(b)}
\psfrag{p}{$\textcolor{BrickRed}{s}$}
\psfrag{x1}{$\textcolor{Sepia}{x_1}$}
\psfrag{x2}{$\textcolor{Sepia}{x_2}$}
\psfrag{x3}{$\textcolor{Sepia}{x_3}$}
\psfrag{pi/6}[][][0.9]{$\textcolor{Sepia}{\pi/6}$}
\psfrag{pi/3}[][][0.9]{$\textcolor{Sepia}{\pi/3}$}
\psfrag{2pi/3}[][][0.9]{$\textcolor{Sepia}{2\pi/3+2\e}$}
\psfrag{pi/3+}[][][0.9]{$\textcolor{Sepia}{\pi/3+\e}$}
\includegraphics[width=0.7\textwidth]{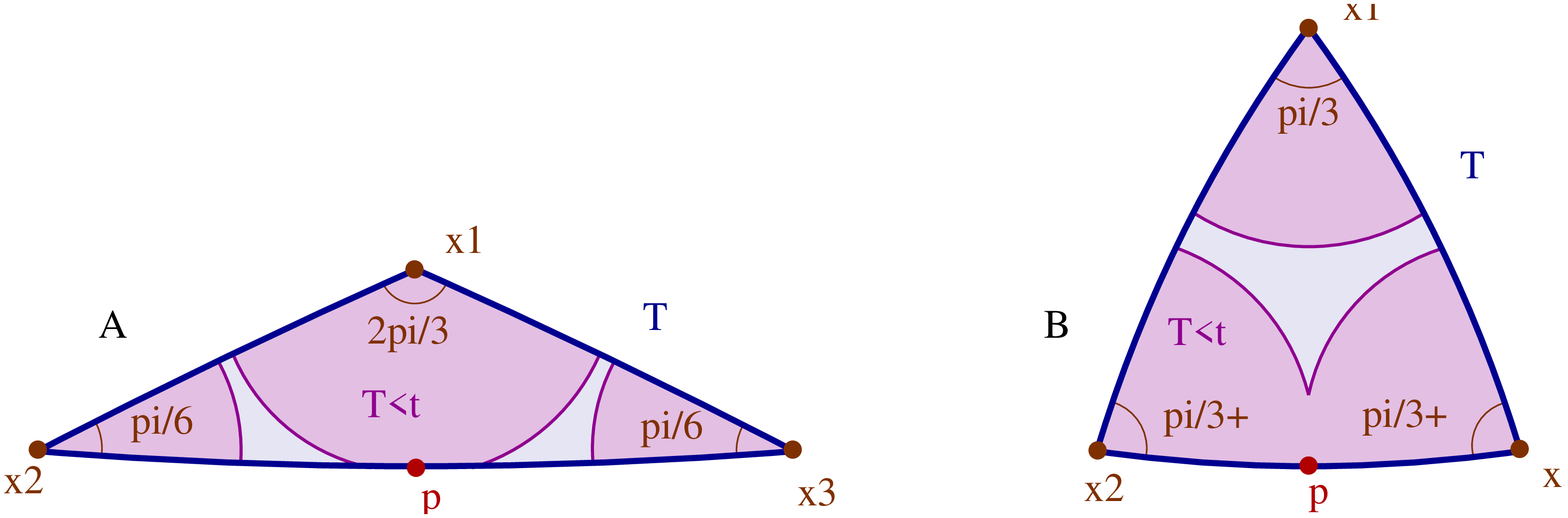}
\caption{{\small The surfaces of Example \ref{example:4-3} are obtained by doubling the triangles in the picture.}}\label{fig:examples4-3}
\end{figurehere}
\end{center}

In Figure \ref{fig:examples4-3} such surfaces are obtained by doubling the spherical
triangles $T$ in the pictures: the subsurface $S'$ is the double of $T'$. In $S'$ the point $s$ will correspond to a saddle point for $\Vor$ in both cases; however, in case (a) the saddle point $s$ will lie on a loop based at $x_1$,
whereas in case (b) it will lie on a geodesic arc joining $x_2$ and $x_3$. 
\end{example}

\begin{proof}[Proof of Proposition \ref{4pi3prop}] 
Let $s$ be a saddle critical point of $\Vor$  with the lowest value of $\Vor$ contained in the connected component $S'$. The point $s$ is the midpoint of a saddle geodesic $\sigma_s$ 
contained in $S'$. It is easy to see that, for any conical point $x_k$ which is not an endpoint of $\sigma_s$, we have
$d(x_k,\pa\sigma_s)\geq \ell(\sigma_s)$.

Now there are two cases.
\begin{itemize}
\item[(a)]
$\sigma_s$ is a saddle loop based at some conical point $x_i$.\\
Since $d(x_k,x_i)\ge \ell(\sigma_s)=2r_i$,
we conclude that $\th_i>\frac{2}{3}$ by Theorem \ref{thm:closest-points}(a).
\item[(b)]
$\sigma_s$ joins two distinct conical points $x_i,x_j$.\\
Since $d(x_k,\{x_i,\,x_j\})\ge \ell(\sigma_s)=d(x_i,x_j)$,
we conclude that $\th_i+\th_j>\frac{2}{3}$ by Theorem \ref{thm:closest-points}(b).
\end{itemize}
\end{proof}

\section{Consequences of the systole inequality}\label{sec:main}


In this section we derive two corollaries from the systole inequality (Theorem \ref{main:systole}).
First, we give an obstruction for existence of spherical metrics with
a very small angle (Theorem \ref{main:non-existence-small}), next we prove the properness of the forgetful map (Theorem \ref{main:properness}).


\subsection{Surfaces with one small conical angle}

%
%
%

The purpose of this section is to prove 
the following non-existence result for spherical metrics
in a fixed conformal class for which 
a conical point has a very small assigned angle.

\begin{mainthm}{\ref{main:non-existence-small}}[Non-existence of spherical metrics with one small angle]
Let $(S,J)$ be a Riemann surface with marked points $\bm{x}=(x_1,\dots,x_n)$
such that $\chi=\chi(\dot{S})<-1$.
Let $\bm{\hat{\th}}=(0,\th_2,\dots,\th_n)$ with
$\th_2,\dots,\th_n>0$ and suppose that
\begin{itemize}
\item[(i)]
$\chi(S,\bm{\hat{\th}})\geq 0$.
\item[(ii)]
$\Ab_{\bm{\hat{\th}}}(S,\bm{x})>0$.
\end{itemize}
Moreover, let
\[
\th_1^\star=
\frac{1}{\pi}\left( \frac{\e}{\pi(1+4\|\bm{\hat{\th}}\|_1)}
\right)^{-3\chi+1} 
\quad\text{with}\quad
\e=\min\left\{
\frac{1}{2}\Ab_{\bm{\hat{\th}}}(S,\bm{x}),\quad
\exp \left( \frac{-\pi(1+2\|\bm{\hat{\th}}\|_1)}{\Ext\sys(\dot{S},J)}\right)
\right\}.
\]
Then 
there exists no spherical metric on $S$
with angles $2\pi\bm{\th}=2\pi(\th_1,\th_2,\dots,\th_n)$ at $\bm{x}$
in the conformal class determined by $J$
for any $\th_1<\th_1^\star\in\left(0,10^{-6}\right)$. 
\end{mainthm}

Let us first comment on the assumptions of the theorem. If $\chi(S,\bm{\hat{\th}})<0$, the above statement 
would follow from Gauss-Bonnet if we set $\th_1^\star=-\chi(S,\bm{\hat{\th}})$.
Similarly, if $\dot{S}$ is a $1$-punctured torus,
it would support no spherical metrics for $\th_1\leq \th_1^\star=1$ by Gauss-Bonnet.
If $\dot{S}$ is a $3$-punctured sphere, then the statement is
again trivial: in fact, the assumption $\Ab_{\bm{\hat{\th}}}(S,\bm{x})>0$
quickly leads to the non-existence of a spherical metric for small $\th_1$
by monodromy considerations (as in \cite{eremenko:three} and
\cite{mondello-panov:constraints}). 

On the contrary, the condition $\Ab_{\bm{\hat{\th}}}(S,\bm{x})>0$
is essential, as shown in the following example.

\begin{example}
Let $\th_1\in(0,\pi/2)$ and consider a convex spherical triangle 
$T_{\th_1}$ with vertices $X_1,X_2,X_3$ and angles
$\pi\cdot\left(\th_1,\frac{1}{2},\frac{1}{2}\right)$ and let $X_4$ be the midpoint of the edge $X_2 X_3$.
Denote by $S_{\th_1}$ the spherical surface obtained by doubling $T_{\th_1}$
and mark by $x_i$ the point on $S_{\th_i}$ induced by $X_i$.
Then $(S_{\th_1})_{\th_1\in(0,\pi/2)}$ is a family of spherical metrics on
a surface of genus $0$ with $4$ conical points of angles $2\pi\cdot\left(\th_1,\frac{1}{2},\frac{1}{2},1\right)$.
It is easy to see by symmetry that all $S_{\th_1}$ are conformally equivalent to one another.
\end{example}

%

Let us now prove the above non-existence result.

\begin{proof}[Proof of Theorem \ref{main:non-existence-small}]
Since $-\chi(\dot{S})\geq 2$ and $\e\leq \frac{1}{2}$, we have $\th_1^\star\leq \frac{1}{\pi}\left(\frac{1}{2\pi}\right)^7<10^{-6}$.

In order to show that $\th_1^\star<\frac{1}{2}\Ab_{\bm{\hat{\th}}}(S,\bm{x})$,
note that $\th_1^\star\leq \frac{1}{\pi}\left(\frac{\Ab_{\bm{\hat{\th}}}(S,\bm{x})}{2\pi}\right)^7$
because $\e\leq \frac{1}{2}\Ab_{\bm{\hat{\th}}}(S,\bm{x})$.
%
%
%
%
On the other hand, $\Ab_{\bm{\hat{\th}}}(S,\bm{x})\leq 1$ by (i) and Lemma \ref{lemma:upper-bound-Ab}(i)
and so 
$\frac{1}{\pi}\left(\frac{\Ab_{\bm{\hat{\th}}}(S,\bm{x})}{2\pi}\right)^7<
\frac{1}{2}\Ab_{\bm{\hat{\th}}}(S,\bm{x})$ gives the claim.

We will complete the proof arguing by contradiction: we let $\th_1\in(0,\th_1^\star)$ and we
suppose that a spherical metric on $S$ with conical points of angles $2\pi\bm{\th}$ exists.


Certainly, $\Ab_{\bm{\th}}(S,\bm{x})\geq \Ab_{\bm{\hat{\th}}}(S,\bm{x})-\th_1
>\frac{1}{2}\Ab_{\bm{\hat{\th}}}(S,\bm{x})\geq \e$.
Moreover, Lemma \ref{anglesysbound} 
provides the systole bound $\sys(S,\bm{x})\leq \pi\th_1$
and so
\[
\sys(S,\bm{x})\leq \pi\th_1<\left( \frac{\e}{\pi(1+4\|\bm{\hat{\th}}\|_1)}
\right)^{-3\chi+1}
<\left( \frac{\e}{4\pi\|\bm{\th}\|_1}
\right)^{-3\chi+1}
\]
since $\pi(1+4\|\bm{\hat{\th}}\|_1)>4\pi\|\bm{\th}\|_1$.

By the systole inequality (Theorem \ref{main:systole}) it follows that $\Ext\sys(\dot{S},J)<\frac{2\pi\|\bm{\th}\|_1}{\log(1/\e)}$ and so 
\[
\Ext\sys(\dot{S},J)<\frac{2\pi\|\bm{\th}\|_1}{\log(1/\e)}
\leq \frac{2\pi\|\bm{\th}\|_1}{\pi(1+2\|\bm{\hat{\th}}\|_1)}\Ext\sys(\dot{S},J)<
\Ext\sys(\dot{S},J)
\]
because $\pi(1+2\|\bm{\hat{\th}}\|_1)>2\pi\|\bm{\th}\|_1$.
We have thus achieved the wished contradiction.
\end{proof}


\subsection{Properness of the forgetful map}

In this section
we prove a qualitative counterpart to the systole inequality: the forgetful map
from the moduli space of spherical surfaces to the moduli space of Riemann surfaces
is proper if the non-bubbling parameter does not vanish.

Throughout the section, fix $g,n\geq 0$ in such a way that $2g-2+n>0$.



\subsubsection{Compactness in $\Mcal_{g,n}$ and in $\MSPH_{g,n}(\bm{\th})$}\label{sec:moduli-spaces}

In this subsection we recall some basic results about the local structure of the moduli spaces $\Mcal_{g,n}$ and  $\MSPH_{g,n}(\bm{\th})$, see Definitions \ref{modulidef} and \ref{def:moduli-spherical}.
We also mention two standard criteria of compactness in such moduli spaces.\\

The following result is well-known (see for instance \cite{gac2}).

\gm{Statement of Proposition \ref{prop:Mgn} expanded. The presentation
of $\Mcal_{g,n}$ as an orbifold is included because it is used below.}
\begin{proposition}[Moduli space of Riemann surfaces]\label{prop:Mgn}
The moduli space
$\Mcal_{g,n}$ is a complex-analytic connected orbifold of (complex) dimension $3g-3+n$,
and in particular $\Mcal_{g,n}$ is the global quotient $\Mcal'_{g,n}/G'$
of a complex connected manifold $\Mcal'_{g,n}$ by a finite group $G'$
of biholomorphisms of $\Mcal'_{g,n}$.
Moreover, $\Mcal_{g,n}$ is compact if and only if $(g,n)=(0,3)$.
\end{proposition}

We also recall the standard criterion of compactness in $\Mcal_{g,n}$,
which can be easily derived from Teichm\"uller's work
(see \cite{hubbard:book1}, for instance).

\begin{lemma}[Extremal systole function]\label{lemma:ext-systole-function}
The extremal systole function $\Ext\sys:\Mcal_{g,n}\rar\RR_{>0}$ 
that sends $[S,\bm{x},J]$ to $\Ext\sys(\dot{S},J)$
is continuous
and its superlevel sets $\{\Ext\sys\geq s\}$ are compact for all $s>0$.
\end{lemma}

The following result on the local structure of $\MSPH_{g,n}(\bm{\th})$
can be directly deduced from Luo's \cite{luo:monodromy}.

\begin{proposition}[Moduli space of spherical surfaces]\label{prop:luo}
Assume that $\Ab_{\bm{\th}}(g,n)>0$
 and no $\th_i$ is integral.
Then
$\MSPH_{g,n}(\bm{\th})$ is a real-analytic orbifold
of dimension $6g-6+2n$.
%
\end{proposition}

\gm{The following sketch of an argument for Proposition \ref{prop:luo}
has been expanded 
(up to the following ``Remark'')
to deal with the orbifold structure. It is used below
to justify the terminology ``real-analytic map of orbifolds''.}
Let us briefly explain the idea behind the above proposition.
Consider the moduli space $\MP_{g,n}$ of surfaces 
of genus $g$ with $n$ marked points endowed
with a $\CC\PP^1$-structure whose Schwarzian derivative has poles of order at most $2$ at the marked points (with respect to any smooth $\CC\PP^1$-structure).
It is well-known that $\MP_{g,n}$ is a holomorphic affine bundle over $\Mcal_{g,n}$ of rank $3g-3+2n$. It follows that
$\MP'_{g,n}:=\MP_{g,n}\times_{\Mcal_{g,n}}\Mcal'_{g,n}\rar \Mcal'_{g,n}$
is a holomorphic affine bundle of the same rank, which is acted on
by a finite group of biholomorphisms isomorphic to $G'$
(where $G'$ is as in Proposition \ref{prop:Mgn}), so that
the (complex-analytic) orbifold $\MP_{g,n}\cong \MP'_{g,n}/G'$ is a global quotient.

As remarked by Luo, since 
the monodromy is non-coaxial
(which is ensured by $\Ab_{\bm{\th}}(g,n)>0$)
and the angles are non-integral, 
the set $\MSPH_{g,n}(\bm{\th})$
identifies with the locus of $\CC\PP^1$-structures inside
$\MP_{g,n}$ with monodromy in $\SO_3(\RR)$
and with quadratic residue $\frac{1}{2}(1-\th_i^2)$ of
the Schwarzian at the $i$-th conical point.
We similarly identify $\MSPH'_{g,n}(\bm{\th}):=\MSPH_{g,n}(\bm{\th})\times_{\Mcal_{g,n}}\Mcal'_{g,n}$ to the corresponding locus inside $\MP'_{g,n}$.

Now, Luo's main theorem in \cite{luo:monodromy}
ensures in particular that the map that sends
a $\CC\PP^1$-structure to its monodromy representation
gives a local biholomorphism of complex orbifolds between a neighbourhood of
a point of
$\MSPH_{g,n}(\bm{\th})$ inside $\MP_{g,n}$ and 
an open subset of the 
space $\Rep_{g,n}(\PSL_2(\CC))$ of
conjugacy classes of representations 
of the fundamental group of a surface of genus $g$ with $n$
ordered points removed inside $\PSL_2(\CC)$
(see also \cite[Corollary 1(a)]{luo:monodromy}).
%
Note that the non-coaxiality and non-integrality
of the $\th_i$'s ensure that
the subspace $\Rep_{g,n}(\SO_3(\RR))_{\bm{\th}}\subset\Rep_{g,n}(\PSL_2(\CC))$
of conjugacy classes
of representations in $\SO_3(\RR)$
that assign a rotation of angle $2\pi\th_i$ to a loop about the $i$-th puncture
is a real-analytic suborbifold of dimension $6g-6+2n$.
We conclude that $\MSPH'_{g,n}(\bm{\th})$ is a smooth
real-analytic subvariety of $\MP'_{g,n}$
and so $\MSPH_{g,n}(\bm{\th})=\MSPH'_{g,n}(\bm{\th})/G'$
is a real-analytic orbifold.

\begin{remark}
The assumption in Proposition \ref{prop:luo}
on the non-integrality of the $\th_i$'s depends on the nature of Luo's proof and can be removed. 
On the other hand, non-coaxiality of the monodromy is important and it is ensured by the condition $\Ab_{\bm{\th}}(g,n)>0$. In order to treat the case of possibly coaxial spherical metrics, one should consider spherical metrics ``up to equivalence of $\CC\PP^1$-structures'' (which is coarser than ``up to isometry''): in this case, one could
endow the space of such equivalence classes of spherical metrics with the structure of real-analytic variety.
\end{remark}


A consequence of the interpretation
of $\MSPH_{g,n}(\bm{\th})$ as a the moduli space of $(\SO_3(\RR),\Sph)$-structures
is that (locally defined) length and distance functions are continuous.
The following result on the continuity of the systole function is also rather standard. 
The compactness of the superlevel sets
depends on the fact that the curvature is constant
and the diameter is bounded (see Lemma \ref{sphdiameter}).

\begin{lemma}[Systole function]\label{lemma:systole-function}
Assume that $\Ab_{\bm{\th}}(g,n)>0$ and no $\th_i$ is integral.
The systole function $\sys:\MSPH_{g,n}(\bm{\th})\rar\RR_{>0}$ is continuous.
Moreover, its superlevel sets $\{\sys\geq s\}$ are compact for all $s>0$.
\end{lemma}

\subsubsection{The forgetful map}

We recall from the introduction that associating to a spherical metric
its underlying complex structure determines a forgetful map
$F_{g,n,\bm{\th}}:\MSPH_{g,n}(\bm{\th})\rar\Mcal_{g,n}$,
which naturally lifts to a $G'$-equivariant
$F'_{g,n,\bm{\th}}:\MSPH'_{g,n}(\bm{\th})\rar\Mcal'_{g,n}$.
%
\gm{This first paragraph and Corollary \ref{cor:forgetful} are slightly
expanded to clarify the meaning of ``real-analytic map of real-analytic orbifolds''.}

\begin{corollary}[Forgetful map]\label{cor:forgetful}
Suppose that $\Ab_{\bm{\th}}(g,n)>0$ and assume that no $\th_i$ is integral.
Then the forgetful map $F'_{g,n,\bm{\th}}$ is real-analytic. As a consequence,
$F_{g,n,\bm{\th}}$ is real-analytic too.
\end{corollary}
\begin{proof}
As discussed in Subsection \ref{sec:moduli-spaces},
$\Mcal'_{g,n}$ is a complex manifold and
$\MSPH'_{g,n}(\bm{\th})$ is a smooth real-analytic subvariety
of the complex manifold $\MP'_{g,n}$ by Proposition \ref{prop:luo}.
Thus, the $G'$-equivariant holomorphic affine bundle $\MP'_{g,n}\rar \Mcal'_{g,n}$
restricts to a $G'$-equivariant real-analytic map $\MSPH'_{g,n}(\bm{\th})\rar\Mcal'_{g,n}$,
which agrees with $F'_{g,n,\bm{\th}}$.
\end{proof}

The following consequence of the systole inequality 
allows us to extend the conclusions of Theorem \ref{thm:BT} to the case $\bm{\th}\in\RR^n_{>0}$.

\begin{mainthm}{\ref{main:properness}}[Properness of the forgetful map] 
Let $g,n\geq 0$ with $2g-2+n>0$ and
let $\bm{\th}\in\RR_{>0}^n$ such that $\Ab_{\bm{\th}}(g,n)>0$.
Then the forgetful map $F_{g,n,\bm{\th}}$
is proper.
\end{mainthm}
\begin{proof}
Consider a diverging sequence $(S_k,h_k)$
in $\MSPH_{g,n}(\bm{\th})$, namely a sequence that leaves all compact
subsets of $\MSPH_{g,n}(\bm{\th})$, and let $(S_k,J_k)=F_{g,n,\bm{\th}}(S_k,h_k)$.
By Lemma \ref{lemma:systole-function}, $\sys(S_k)\rar 0$.
Thus, for every $\e \in (0,\Ab_{\bm{\th}}(g,n))$. 
the systole of $S_k$ is smaller than $\left(\frac{\e}{4\pi\|\bm{\th}\|_1}\right)^{-3(2-2g-n)+1}$ for $k\geq k(\e)$.
Then Theorem \ref{main:systole} implies that $\Ext\sys(\dot{S}_k,J_k)\rar 0$,
and so the sequence $(S_k,J_k)$ in $\Mcal_{g,n}$ is divergent too by Lemma \ref{lemma:ext-systole-function}.
\end{proof}

In a similar fashion one can prove that 
\[
F_{g,n,\Acal^\circ}:\MSPH_{g,n}(\Acal^\circ)\lra \Mcal_{g,n}\times \Acal^\circ
\]
is proper, where $\Acal^\circ\subset\RR^n_{>0}$ is the open subset of all $\bm{\th}$
such that $\Ab_{\bm{\th}}(g,n)>0$ and
$\MSPH_{g,n}(\Acal^\circ)$ is the moduli space of spherical surfaces of genus $g$
with $n$ conical points of angles belonging to $2\pi\cdot \Acal^\circ$.

\section{Disconnectedness of the moduli space of spherical surfaces}\label{sec:disconnectedness}

The goal of this section is to provide examples of angle vectors $\bm{\th}$
for which the moduli space of spherical surfaces of genus $0$ with conical points of angles
$2\pi\bm{\th}$ is disconnected. According to our knowledge these are first examples of the type and they highlight the complexity of the moduli space of spherical surfaces.


%
%
%

\begin{mainthm}{\ref{main:many}}[Moduli spaces of spherical surfaces with many components]
Let $m\geq 1$ and $\e\in\left(0,\frac{1}{2m+2}\right)$.
Fix $\e_1,\dots,\e_m\in (0,\e]$ and integers $m_1,m_2,m_3\geq m$ and
set
$
\bm{\th}=\left(\frac{1}{2}+m_1,\frac{1}{2}+m_2,\frac{1}{2}+m_3,\e_1,\dots,\e_m\right)$.
Then
\begin{itemize}
\item[(a)]
the moduli space $\MSPH_{0,3+m}(\bm{\th})$ has at least $3^m$ connected components;
\item[(b)]
If $\e<\frac{1}{16}\exp(-2\pi \max\{m_j\})$, then the image of the forgetful  map 
$F_{0,3+m,\bm{\th}}:\MSPH_{0,3+m}(\bm{\th})\rar\Mcal_{0,3+m}$ has at least $3^m$ connected components.
\end{itemize}
\end{mainthm}

\begin{remark}
An inspection of the proof presented in the following subsections 
shows that a statement analogous to that of Theorem \ref{main:many} holds
even if the values of $m_1,m_2,m_3$ are slightly perturbed. Thus, the fact
that the moduli space of spherical surfaces might have a large number of connected components
is not exceptional: such phenomenon indeed occurs for $\bm{\th}$ ranging over a subset
of $\RR^{3+m}_+$ that has non-empty internal part.
\end{remark}

The proof of part (a) of Theorem \ref{main:many} is less involved, the reader interested in this part of the theorem can read this section up to Proposition \ref{gap} and the go directly to Subsection \ref{disconnectedproofsub}.

For convenience, from now on the conical points will be denoted by $x_1,x_2,x_3,y_1,\ldots,y_m$.
%
%

\begin{center}
\begin{figurehere}
\psfrag{S}{$\textcolor{blue}{S}$}
\psfrag{C3}{$\textcolor{Purple}{C_3}$}
\psfrag{x1}{$\textcolor{Sepia}{x_1}$}
\psfrag{x2}{$\textcolor{Sepia}{x_2}$}
\psfrag{x3}{$\textcolor{Sepia}{x_3}$}
\psfrag{y1}{$\textcolor{Sepia}{y_1}$}
\psfrag{y2}{$\textcolor{Sepia}{y_2}$}
\psfrag{y3}{$\textcolor{Sepia}{y_3}$}
\psfrag{y4}{$\textcolor{Sepia}{y_4}$}
\psfrag{a13}{$\textcolor{Sepia}{\alpha_{13}}$}
\psfrag{lc}{$\textcolor{Purple}{\lambda_c}$}
\psfrag{qhat}{$\textcolor{Purple}{\hat{q}}$}
\psfrag{qc}{$\textcolor{Purple}{q_c}$}
\includegraphics[width=0.45\textwidth]{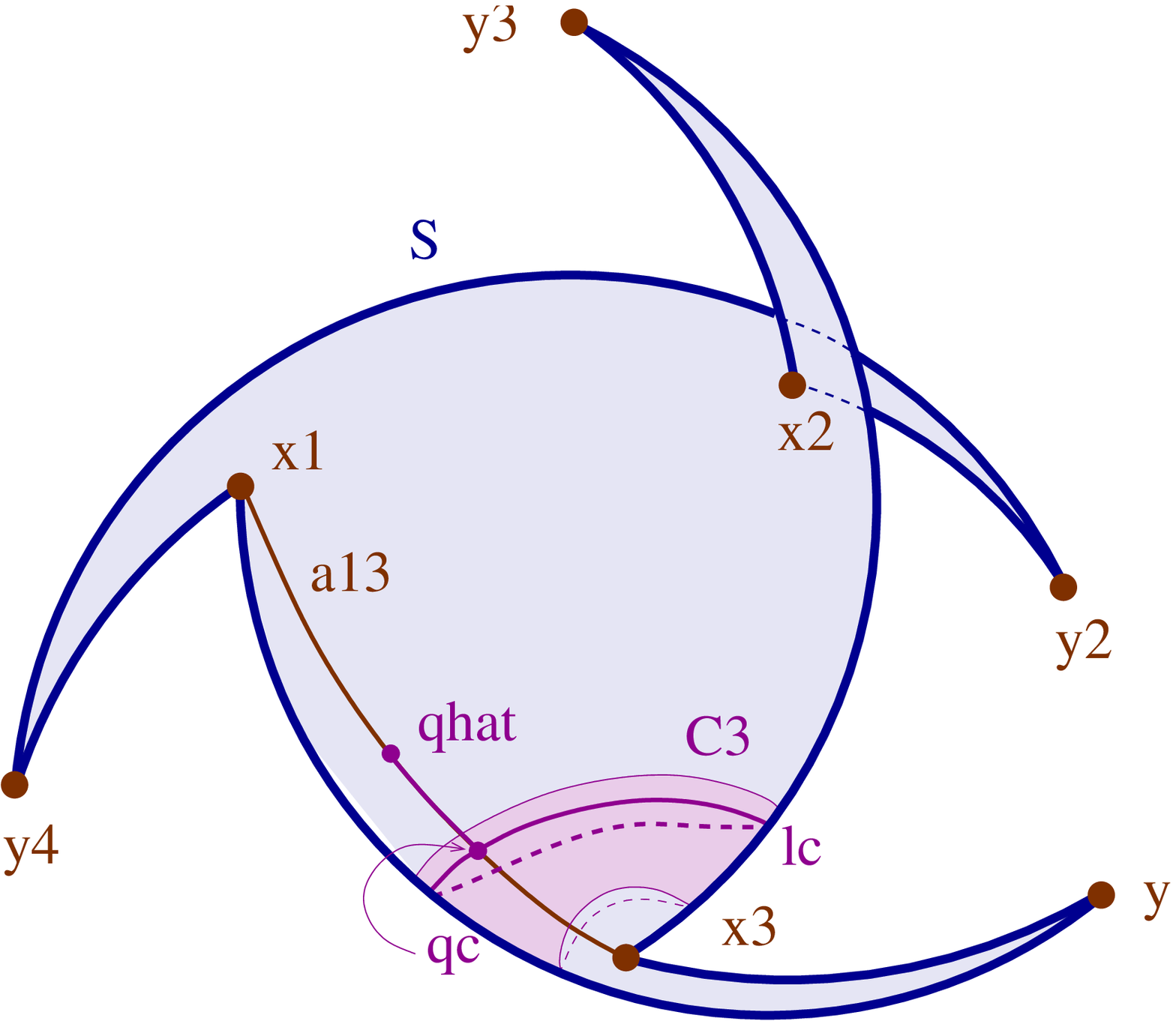}
\caption{{\small An example of a spherical surface $S$ in $\MSPH_{0,7}\left(\frac{3}{2},\frac{5}{2},\frac{3}{2},\e,\e,\e,\e\right)$ with the Voronoi cylinder $C_3$, the arc $\alpha_{13}$ and its midpoint $\hat{q}$ and the level curve $\lambda_c$ that intersects $\alpha_{13}$ at $q_c$.}}\label{fig:disconnected}
\end{figurehere}
\end{center}

\subsection{Monodromy of spherical surfaces of genus $0$}\label{sec:holonomy0}

In this subsection 
we establish some relations between the monodromy of a spherical surfaces of genus $0$,
the length of certain geodesics and the distances between certain pairs of conical points.

We begin by recalling \cite{mondello-panov:constraints} that to each spherical surface of genus $0$ with
$n$ conical singularities of angles $2\pi\bm{\th}$
one can associate a {\it standard set} of $n$ elements $\Q_1,\ldots,\Q_n\in \SO_3(\RR)$ such that $\Q_1\cdot\ldots\cdot \Q_n=I$ and  each $\Q_i$ is a rotation of $\Sph$ by angle $2\pi\th_i$. 

To construct such a set, choose a basepoint $s_0\in \dot S$ and a {\it standard set of loops} 
$\beta_1,\ldots,\beta_n$ based at $s_0$, namely a collection of simple peripheral loops such that
$\beta_1\ast\ldots \ast \beta_n=\id$ in $\pi_1(\dot S,s_0)$
and each $\beta_i$ simply winds about the $i$-th conical point counterclockwise.
Moreover, fix a universal cover $\tilde{\dot{S}}\rar\dot{S}$ and a lift $\tilde{s}_0\in\tilde{\dot{S}}$ of $s_0$.

Associated to the given spherical metric on $S$ we can choose a 
locally isometric developing map $\iota:\tilde{\dot{S}}\rar\Sph$,
which is equivariant with respect to a monodromy representation $\pi_1(\dot{S},s_0)\rar\SO_3(\RR)$.
The element $\Q_i$ is then the image of $\beta_i$ through such monodromy representation.
%

We stress that the $n$-tuple $(\Q_1,\dots,\Q_n)$, and so equivalently the monodromy representation, is unique only up to
conjugation; namely, a different choice of the universal cover, of the basepoint, or of the developing map would
be associated to the $n$-tuple $(\P\Q_1 \P^{-1},\dots,\P\Q_n \P^{-1})$ for some $\P\in\SO_3(\RR)$.

\begin{remark}\label{holboundary} 
A similar construction can be performed on a spherical disk $D$ with $n$ conical points in its interior.
We can pick a basepoint $s_0\in\pa D$ and 
a standard set of loops $\beta_1,\ldots,\beta_n$ in $\dot{D}$ based at $s_0$.
As above, the choice of a developing map $\iota:\tilde{\dot{D}}\rar\Sph$ will produce
an $n$-tuple $(\Q_1,\ldots, \Q_n)$. Since the loop $\beta_{\partial}=\beta_1\ast\ldots \ast\beta_n$ is homotopic to the boundary $\pa D$ in $\dot{D}$, 
the product $\Q_1\cdot \ldots \cdot \Q_n$ is equal to the monodromy $\Q_{\partial D}$ along the boundary of $D$. 
\end{remark}

%
%

\begin{definition}
The {\it{rotation number}} $\rot:\SO_3(\RR)\rar\left[0,\frac{1}{2}\right]$
is the function defined by requiring that $\Q\in\SO_3(\RR)$ is a rotation
of angle $2\pi\cdot\rot(\Q)$.
\end{definition}

We omit the proof of the following lemma, which summarizes 
a few properties of the rotation number of a composition.

\begin{lemma}[Basic properties of the rotation number]\label{orthogprod} 
The  function $\rot$ is invariant under conjugation and it satisfies the following properties.
\begin{itemize}
\item[(a)] 
Let $\Q_1$ and $\Q_2$ be two elements of $ \SO_3(\RR)$ with $\rot(\Q_1)=\rot(\Q_2)=\frac{1}{2}$ and let $\pi\phi\in \left[0,\frac{\pi}{2}\right]$ be the angle between the rotation axes of $\Q_1$ and $\Q_2$. Then $\rot(\Q_1\cdot \Q_2)=\phi$.
\item[(b)]
For any $\Q_1,\Q_2\in \SO_3(\RR)$   we have  $\rot(\Q_1\cdot \Q_2)\ge |\rot(\Q_1)-\rot(\Q_2)|$.
\item[(c)]
For any  $\Q_1,\ldots, \Q_n\in \SO_3(\RR)$ we have $\rot(\Q_1\cdot \Q_2\cdot \ldots \cdot \Q_n)\le \sum_i \rot(\Q_i)$.
\end{itemize}
\end{lemma}
In the following lemma we show that the distance between two conical points in the spherical surface
$S$ whose angles are odd multiples of $\pi$ is controlled by the monodromy.

\begin{lemma}[Monodromy and distance between conical points]\label{geodesichol} 
Consider a spherical surface $S$ of genus $0$ with $n$ conical points of angles $2\pi\bm{\th}$
and call $x_1,x_2$ the first two conical points.
Suppose that $\th_1,\th_2\in\frac{1}{2}+\ZZ_{\geq 0}$.
Then there exists a standard set of loops $\{\beta_i\}$ such that monodromies $\Q_1,\Q_2$ corresponding to $\beta_1,\beta_2$ satisfy $\rot(\Q_1\cdot \Q_2)\le \frac{1}{\pi}d(x_1,x_2)$. If moreover $x_1$ and $x_2$ are joined by a geodesic of length $d(x_1,x_2)$, then $\Q_1\cdot \Q_2$  is a rotation of angle $2d(x_1,x_2)$.
\end{lemma}
\begin{proof} 
Let $\epsilon>0$ and let $\gamma\subset \dot{S}\cup \{x_1, x_2\}$ be an arc of length at most $d(x_1,x_2)+\epsilon$ that joins $x_1$ with $x_2$. First, we choose the standard set of loops $\{\beta_i\}$ on $\dot{S}$  based at $s_0\in \gamma$
in such a way that
the loop $\beta_1$ follows $\gamma$ from $s_0$ to a point very close to $x_1$, then encircles $x_1$ and then goes back, and the loop $\beta_2$ is chosen in an analogous way.
One sees from this construction that monodromies $\Q_1$ and $\Q_2$ are rotations of $\Sph$ around points at distance at most $d(x_1,x_2)+\epsilon$ and so $\rot(\Q_1\cdot \Q_2)\le \frac{1}{\pi}\left(d(x_1,x_2)+\epsilon\right)$ by Lemma \ref{orthogprod}(a). Letting $\epsilon\rar 0$, we get the first claim. 

Finally, if $x_1$ and $x_2$ are joined by a geodesic of length $d(x_1,x_2)$ 
completely contained in $\dot{S}\cup\{x_1,x_2\}$,
we repeat the same proof by choosing $\gamma$ to be this geodesic. Then $\Q_1$ and $\Q_2$ are rotations around two points on $\Sph$ that can be joined by a geodesic of length $2d(x_1,x_2)$ and so we are done by Lemma \ref{orthogprod}(a).
\end{proof}

The next lemma gives a constraint for the monodromy of a spherical surface along a piecewise-geodesic boundary curve.

\begin{lemma}[Length-monodromy constraint for a surface bounded by a geodesic loop]\label{geoloophol} 
Let $\Sigma$ be a spherical surface with boundary and with conical points in its interior.
Suppose that a boundary component $\beta$ of $\Sigma$ is a geodesic loop based at $s_0$
of length $\ell(\beta)<\pi$
and let $2\pi\phi$ be the angle formed by $\beta$ at $s_0$.
Then
\begin{equation}\label{lengtholin}
  \rot(\Q_{\beta})\ge \frac{1}{2\pi}\ell(\beta)\ge  \left|\rot(\Q_{\beta})-d_{\RR}\left(\phi-\frac{1}{2},
  \,\ZZ\right)\right|
 \end{equation}
where $\Q_\beta$ denotes the monodromy of $\Sigma$ along $\beta$.
\end{lemma}
\begin{proof}
We can clearly restrict our attention to a cylindrical neighbourhood $C$ of $\beta$ inside $\Sigma$.
Let $\tilde{C}$ be the universal cover of $C$, and $\tilde{\beta}$ be a geodesic segment in $\partial \tilde{C}$ whose interior projects isomorphically to $\beta\setminus \{s_0\}$. Let $\iota: \tilde{C}\to \Sph$ be a developing map and denote by $Y$ and $Y'$ the endpoints of $\iota(\tilde{\beta})$ inside $\Sph$. Clearly $\Q_{\beta}(Y)=Y'$ and so $\ell(\beta)=|YY'|\le 2\pi\cdot\rot(\Q_{\beta})$.

In order to prove the right inequality, let us introduce two auxiliary elements of $\SO_3(\RR)$:
let $\Q_{YY'}$ be the rotation that preserves the  geodesic $YY'$ and sends $Y$ to $Y'$,
and let $\Q_{Y',\phi}$ be the clockwise rotation of $\Sph$ about $Y'$ by angle $2\pi(\phi+\frac{1}{2})$. Clearly $\Q_{\beta}=\Q_{Y',\phi}\cdot \Q_{YY'}$, and so
$$\Q_{YY'}=\Q^{-1}_{Y',\phi}\cdot \Q_{\beta}.$$
Since $\rot(\Q_{YY'})=\frac{1}{2\pi}\ell(\beta)$ the desired inequality follows now from Lemma \ref{orthogprod}(b).
\end{proof}


\subsection{Gap properties}

In this subsection we prove two types of gap properties for spherical surfaces under consideration in Theorem \ref{main:many}: 
the first one concerns distances between conical points, the second one lengths of loops based at $x$-points.

The following proposition is the last preparatory result needed for our proof of Theorem \ref{main:many}(a).
The proposition states that the points $x_1,x_2,x_3$ 
are always sufficiently far from one another and
that each $y$-point is closest to a unique conical point, which is an $x$-point.

\begin{proposition}[Gap properties for the distances between conical points]\label{gap} 
Suppose we are in the setting of Theorem \ref{main:many}. Then the following hold.
\begin{itemize}
\item[(a)]
For $j,k\in \{1,2,3\}$ we have $d(x_j,x_k)\ge \pi(\frac{1}{2}-m\varepsilon)$.
\item[(b)]
Each point $y_i$ has a unique closest conical point, which has to be $x_1$, $x_2$, or $x_3$.
 If $x_j$ is the closest conical point to $y_i$ then for  $k\ne j$ we have
\begin{equation}\label{gapinequality}
d(y_i,x_j)\le d(y_i, x_k)-\pi(1/2-(m+1)\varepsilon)<d(y_i, x_k).
\end{equation}
\item[(c)]
If $x_j$ is the closest conical point to $y_i$ then $d(y_i,x_k)>d(x_j, x_k)$ for  $k\ne j$.
\item[(d)]
If $d(x_2,x_3)\ge d(x_1,x_3)$
and $\alpha_{13}$ is an arc between $x_1$ and $x_3$ of length $\ell(\alpha_{13})=d(x_1,x_3)$, then
$\alpha_{13}$ is a saddle arc. The same statement holds for any permutation of indices $\{1,\,2,\,3\}$.
\end{itemize}
\end{proposition}

\begin{proof}
As for (a), we can assume that $j=1$ and $k=2$. Choose a standard set of loops as in Lemma \ref{geodesichol} and denote the monodromy of the loop encircling $y_i$ by $R_i$
and the monodromy of the loop encircling $x_j$ by $\Q_j$. We have 
\begin{equation}\label{holn3}
\Q_1\cdot \Q_2\cdot \Q_3\cdot \R_1\cdot \ldots\cdot  \R_m=I.
\end{equation}
 Since $\rot(\R_i)\leq \varepsilon$, we have $\rot(\R_1\cdot \ldots\cdot \R_m)\le m\varepsilon$
by Lemma \ref{orthogprod}(c). 
Hence, $\rot(\Q_3\cdot \R_1\cdot \ldots\cdot \R_m)\ge \frac{1}{2}-m\varepsilon$
by Lemma  \ref{orthogprod}(b). 
Now, Equation (\ref{holn3}) implies that $\rot(\Q_1\cdot \Q_2)=\rot(\Q_3\cdot \R_1\cdot \ldots\cdot \R_m)$ and so we conclude by Lemma \ref{geodesichol} that $d(x_1,x_2)\ge \pi(\frac{1}{2}-m\varepsilon)$.

Let us now turn to claim (b). 
Since $\varepsilon<\frac{1}{4}$, it immediately follows from Corollary \ref{closestcone} that every marked point closest to $y_i$ belongs to the set $\{x_1,\,x_2,\,x_3\}$. Suppose that $x_j$ is a marked point closest to $y_i$, so that $d_i=d(y_i,x_j)$. Given $k\ne j$, we want prove Inequality (\ref{gapinequality}) 
which implies, in particular, that $x_j$ is the unique marked point closest to $y_i$. The second inequality of (\ref{gapinequality}) holds since $\varepsilon<\frac{1}{2m+2}$. 
To derive the first inequality, we first apply
Lemma \ref{smallangleneigh}(b) to the points $y_i,\,x_j,\,x_k$ and then use part (a)
\[
d(x_k,y_i)\ge d(x_k,x_j)+d_i-\pi\th_i\geq \pi\left(1/2-m\e\right)+d(y_i,x_j)-\pi\e.
\]

%
%


Claim (c) follows immediately from Lemma \ref{smallangleneigh} (b), since $\e<\frac{1}{3}$.

In order to prove (d),
note first that $\alpha_{13}$ cannot pass through
any $y_i$, because $\alpha_{13}$ is length-minimizing and $\e<\frac{1}{2}$.
On the other hand, $\alpha_{13}$ clearly cannot pass through $x_2$ because
$d(x_2,x_3)\geq d(x_1,x_3)$. Hence, $\alpha_{13}$ is a geodesic segment. In order to prove that $\alpha_{13}$ is saddle, we will apply Lemma \ref{saddlechar} (a). We need to show that for any conical point $p\in S$ different from $x_1$ and $x_3$ we have  $\max(d(p,x_1),d(p,x_3))\ge\ell(\alpha_{13})$. We will split this consideration into three cases.
\begin{itemize}
\item  $p=x_2$. Then 
by our assumptions $d(x_2,x_3)\ge d(x_1,x_3)=\ell(\alpha_{13})$.
\item $p=y_i$ and the closest to $y_i$ conical point is $x_1$ or $x_3$. In this case $\max(d(y_i,x_1),d(y_i,x_3))> \ell(\alpha_{13})$ by assertion (c).
\item $p=y_i$ and the closest to $y_i$ conical point is $x_2$. In this case by assertion (c) $d(y_i,x_3)>d(x_2,x_3)$ and again by our assumptions $d(x_2,x_3)\ge d(x_1,x_3)=\ell(\alpha_{13})$.
\end{itemize}

\end{proof}
\begin{definition}[Tied $y$-points] Suppose that we are in the setting of Theorem \ref{main:many}. Then by Proposition \ref{gap}(b) for each point $y_i$ there is a unique closest point $x_i$. We will say that $y_i$ is {\it tied to} $x_j$.
\end{definition}

The next result is an application of Lemma \ref{geoloophol} and it states that a simple geodesic loop based at $x_j$ can be either rather long
or quite short depending on whether it separates or not the other two $x$-points.


\begin{proposition}[Dichotomy for geodesic loops based at $x_j$]\label{goeodesicloop} 
Suppose that we are in the setting of Theorem \ref{main:many} and that $\varepsilon<\frac{1}{8m}$. Let
$\{j,k,l\}=\{1,2,3\}$ and let $\gamma$ be a simple geodesic loop based at $x_j$ with $\ell(\gamma)<\pi$.
Then exactly one of the following occurs:
\begin{itemize}
\item[(a)]
$\ell(\gamma)\le 2\pi m\varepsilon$,
the points $x_k$ and $x_l$ belong to the same component of $S\setminus\gamma$
and the other component contains at least one $y$-point;
\item[(b)]
$\ell(\gamma)\ge  \frac{\pi}{2}-2\pi m\varepsilon$ and 
the points $x_k$ and $x_l$ belong to distinct components of $S\setminus\gamma$.
\end{itemize} 
\end{proposition}

\begin{proof}
Assume, without loss of generality,
that $(k,l,j)=(1,2,3)$, so that $\gamma$ is based at $x_3$.
Since $\ell(\gamma)<2\pi$, both connected components $S\setminus \gamma$ have 
at least one conical point in their interior: denote by $D_2$ the component that contains $x_2$
and by $D_1$ the other component.

(a) Consider first the situation when $D_2$ contains both $x_1$ and $x_2$. In this case,
by Remark \ref{holboundary} 
the monodromy $\Q_{\pa D_1}$ along $\partial D_1$ is a product of at most $m$ rotations of $\Sph$, of angle at most $2\pi\varepsilon$ each. 
Hence, we can apply Lemma \ref{geoloophol} to $\Sigma=D_1$ and obtain $\ell(\gamma)\le 2\pi m\varepsilon$.

(b) Suppose now that $x_1\in D_1$ and $x_2\in D_2$.
Let $2\pi\th_{13}$ and $2\pi\th_{23}$ be the angles that $\partial D_1$ and  $\partial D_2$ form at $x_3$. Since $\left(\th_{13}-\frac{1}{2}\right)+\left(\th_{23}-\frac{1}{2}\right)=\th_3-1=2m_3-\frac{1}{2}$, 
we must have
either $d_\RR\left(\th_{13}-\frac{1}{2},\,\ZZ\right)\le \frac{1}{4}$ or $d_\RR\left(\th_{23}-\frac{1}{2},\,\ZZ\right)\le \frac{1}{4}$. 
Up to exchanging the roles of $x_1$ and $x_2$, we can assume the former.

Applying  first Inequality (\ref{lengtholin}) and then Lemma \ref{orthogprod}(c,b) we obtain
$$\frac{1}{2\pi}\ell(\partial D_1)+\frac{1}{4}\ge \frac{1}{2\pi}\ell(\partial D_1)+
d_\RR\left(\th_{13}-\frac{1}{2},\,\ZZ\right)\ge \rot(\Q_{\pa D_1})\ge \frac{1}{2}-m\varepsilon.$$
This proves the result.
\end{proof}


\subsection{Construction of the separating Voronoi cylinders $C_j$}\label{sec:separating-cylinders}

Stated informally, the goal of this subsection is to explain that spherical surfaces under consideration in Theorem \ref{main:many}(b) {\it are geometrically similar} to the surface depicted on Figure \ref{fig:disconnected}. More precisely, we prove that on such surfaces for each $x_j$ there is a Voronoi cylinder $C_j$ that
separates $x_j$ and $y$-points tied to $x_j$ from all the other conical points. Cylinders $C_1,\, C_2,\, C_3$ are constructed in Lemma \ref{lemma:Cj} and some of their basic properties are stated in Proposition \ref{cyl-Cj}. In Lemma \ref{modulus-Cj} we show that under assumptions of Theorem \ref{main:many}(b) cylinders $C_j$ have modulus at least $\frac{1}{2}$.


\begin{proposition}[$C_j$ does not separate $x_j$ from the $y$-points tied to $x_j$]\label{cyl-Cj}
Suppose that we are in the setting of Theorem \ref{main:many} and that $\e<\frac{1}{8(m+1)}$.
Let $\Ical_\e$ be the interval $\left[\pi (m+1)\e,\,\pi\left(\frac{1}{4}-(m+1)\e\right)\right]$. Then there exist connected components $C_1,\, C_2,\, C_3$ of $\Vor^{-1}(\Ical_\e)$ such that
\begin{itemize}
\item[(a)] each $C_j$ is  a Voronoi cylinder completely contained in the interior of the Voronoi domain $\DV_{x_j}$;
\item[(b)]  if $y_i$ is tied to $x_j$, then $y_i$ and $x_j$ belong
to the same connected component of $S\setminus C_j$.

\end{itemize}

\end{proposition}

Let us remind here that the Voronoi domain $\DV_{x_j}$ is not necessarily a disk, and it can contain it its interior some open edges of the Voronoi graph $\Gamma$. 

We subdivide most of the argument needed to prove the above proposition
into the following three lemmas, in which we assume $j=3$ 
and $d(x_1,x_3)\leq d(x_2,x_3)$ to simplify the notation.

\begin{lemma}[Construction of $C_3$ the interior of  $\DV_{x_3}$]\label{lemma:Cj}
Suppose that $d(x_1,x_3)\leq d(x_2,x_3)$ and let $\alpha_{13}$ be the length-minimizing arc
between $x_1$ and $x_3$. 
For all $c\in \Ical_\e$, denote by $\lev_c$ the connected component of $\Vor^{-1}(c)$
that meets $\alpha_{13}$.
Then the union $C_j:=\bigcup_{c\in \Ical_\e}\lev_c$ is completely contained in the interior of $\DV_{x_3}$.
\end{lemma}
\begin{proof}
By Proposition \ref{gap}(c) the path $\alpha_{13}$ is a saddle arc. The situation is illustrated in Figure \ref{fig:disconnected}.

Let $\hat{q}$ be the  midpoint of $\alpha_{13}$. Then, since $\alpha_{13}$ is a saddle arc, the segment $\hat{q} x_3$ belongs to the Voronoi domain $\DV_{x_3}$. In particular $\Vor(q)=d(x_3,q)$ for any $q\in \hat{q}x_3$. 

Let us show first that for every $c\in \Ical_\e$ the curve $\lev_c$ is disjoint form any Voronoi domain $\DV_{y_i}$. Indeed, let $q_c$ be the point of intersection of $\lev_c$ with $\alpha_{13}$ and consider the connected set $\hat{\lev}_c=\hat{q} q_c\cup \lev_c$. Clearly $\Vor(\hat{\lev}_c)\ge c>\pi\varepsilon$. Applying Lemma \ref{smallangleneigh}(c) to $y_i$, we see that  $\hat{\lev}_c$ does not intersect $\partial \W_{y_i}$. At the same time $\hat{q}$ does not belong to $\W_{y_i}$. Hence $\hat{\lev}_c$ is disjoint from $\W_{y_i}$, and so it is disjoint from $\DV_{y_i}$ by Lemma \ref{smallangleneigh}(a). 

To finish the proof is suffices to show that $\lev_c$ is disjoint from $\DV_{x_1}$ and $\DV_{x_2}$. 
By contradiction, suppose that $\lev_c$ meets $\DV_{x_1}$.
Then there exists a point on $\lev_c$ at distance $c$ from $x_3$ and $x_1$,
and so $d(x_3,x_1)\le 2c$, which contradicts Proposition \ref{gap}(a). Similarly, $\lev_c$
does not meet $\DV_{x_2}$.
%
%
\end{proof}

\begin{lemma}[$C_3$ is a Voronoi cylinder]\label{Cj-Voronoi}
The locus $C_3$ is a Voronoi cylinder.
\end{lemma}
\begin{proof}

It is enough to show that each curve $\lev_c$ for $c\in \Ical_\e$ contains only regular points. Since $\Vor(\lev_c)=c>0$, points in $\lev_c$ are not local minima. Let us show that $\lev_c$ doesn't contain local maxima. Indeed, $\alpha_{13}$ is a saddle geodesic by Proposition \ref{gap}(c) and so all points in $C_3\cap \alpha_{13}$, including the point $\lev_c\cap \alpha_{13}$ are regular. At the same time,  $\lev_c$ is connected by construction, while any local maximum for $\Vor$ in the level set $\Vor^{-1}(c)$ is isolated.

Suppose now that $s\in \lev_c$ is a saddle point for $\Vor$ and let $\sigma_s$ be a saddle geodesic passing through $s$. 
By Lemma \ref{lemma:Cj}, the curve $\lev_c$
is contained in the interior of $\DV_{x_3}$, and so $\sigma_s$ is a saddle loop based at $x_3$ of length $2c$. This contradicts Proposition \ref{goeodesicloop} and proves that $C_3$ is a Voronoi cylinder.
\end{proof}


\begin{lemma} \label{C3-disjoint-Byi}
Suppose that $y_i$ is tied to $x_3$, then $C_3$ is disjoint from $\W_{y_i}$.
\end{lemma}
\begin{proof}
By Lemma \ref{smallangleneigh}(c), the boundary $\pa \W_{y_i}$
is contained in the interior of $B_{x_3}(\pi\e)$.
So, since $\Vor(C_3)>\pi\varepsilon$, we have $C_3\cap \pa \W_{y_i}=\emptyset$.
Suppose by contradiction that $C_3\cap \W_{y_i}\neq\emptyset$, i.e., $C_3\subset \W_{y_i}$.
Pick $q\in \alpha_{13}\cap C_3$ and note the the portion $qx_1$ of $\alpha_{13}$
must cross $\pa \W_{y_i}$ at some point $q'$, since $d(y_i,x_1)>d(y_i,x_3)$.
Since $\alpha_{13}$ is length-minimizing, we have
$d(x_3,q')\geq d(x_3,q)$
but this contradicts that fact that $d(x_3,q)\geq \pi(m+1)\e$
and $d(x_3,q')\leq \pi\e$, and so we can conclude.
\end{proof}

We can now prove that the separation properties of the Voronoi cylinder $C_j$
distinguishes the points $y_i$ tied to $x_j$
from the other $y$-points.

\begin{proof}[Proof of Proposition \ref{cyl-Cj}]
We can assume $j=3$: the other cases $j=1,2$ are analogous.
Also, up to switching the roles of $x_1,x_2$, we can assume that $d(x_1,x_3)\leq d(x_2,x_3)$.

(a) By Lemma \ref{lemma:Cj} and Lemma \ref{Cj-Voronoi},
the locus $C_3$ is a Voronoi cylinder contained in the interior of $\DV_{x_3}$.

(b) Suppose now that $y_i$ is tied to $x_3$
and let $x_3 y_i$ be the length-minimizing arc between them, which is thus
contained inside $\W_{y_i}$.
By Lemma \ref{C3-disjoint-Byi}, the cylinder $C_3$ is disjoint from $\W_{y_i}$
and so $x_3 y_i$ does not meet $C_3$.
It follows that $x_3$ and $y_i$ belong to the same connected component
of $S\setminus C_3$.
\end{proof}

Now we can complete the analysis of how $C_j$ separates $x_j$ from the other conical points.

\begin{proposition}[$C_j$ separates $x_j$ from the other $x$-points]\label{Cj-splits-xj}
The Voronoi cylinder $C_j$ separates $x_j$ from $\{x_k,x_l\}$
for all $\{j,k,l\}=\{1,2,3\}$, namely
$x_j$ and $\{x_k,x_l\}$ are 
not contained in the same connected component of $S\setminus C_j$.
\end{proposition}
\begin{proof}
As above, we can assume that $j=3$ and that $d(x_1,x_3)\leq d(x_2,x_3)$. Then, as in Proposition \ref{gap}(d) we have a saddle arc $\alpha_{13}$ that joins $x_1$ with $x_3$, and we can consider the cylinder $C_3$ constructed as in Lemma \ref{lemma:Cj}.
We denote by $\partial_3 C_3$ the boundary curve of $C_3$ 
that is at distance $\pi (m+1)\varepsilon$ from $x_3$.
By construction,
$\partial_3 C_3$  separates $x_1$ from $x_3$. It remains to show that $\pa_3 C_3$ separates $x_2$ from $x_3$ as well. 

Suppose, in fact, that $\partial_3 C_3$ does not separate $x_2$ from $x_3$. 
We will achieve a contradiction with Proposition \ref{goeodesicloop}  
by constructing a simple geodesic loop based at $x_3$ of length at most 
$2\pi (m+1)\varepsilon<\frac{\pi}{2}-2\pi m\e$ that separates $x_1$ from $x_2$.

We begin by showing in three steps that a piecewise geodesic curve $\alpha_{23}$
of length $d(x_2,x_3)$ that joins $x_2$ and $x_3$ is disjoint from $C_3$ and is, in fact, a saddle arc. 

{\it i) $\alpha_{23}$ is disjoint from $C_3$}. By our assumption $\partial_3 C_3$ does not separate $x_2$ from $x_3$, and so to prove that $\alpha_{23}$ is disjoint from $C_3$ it is enough to show that $\alpha_{23}$ is disjoint from 
$\partial_3 C_3$. Assume the converse. Clearly, the distance function $d(x_3,.)$ parametrizes $\alpha_{23}$. So, since  $\partial_3 C_3$ is equidistant from $x_3$, $\alpha_{23}$ can intersect $\partial_3 C_3$ at most once and the intersection must be transverse. Hence, $\partial_3 C_3$  separates $x_2$ from $x_3$, which is a contradiction.

{\it ii) $\alpha_{23}$ is a geodesic}. It follows from {\it i)}, that $\alpha_{23}$ does not pass through $x_1$.
On the other hand, $\alpha_{23}$ cannot pass through any $y_i$
because it is length-minimizing and $\e<\frac{1}{2}$. It follows that $\alpha_{23}$ is a geodesic arc. 

{\it iii) $\alpha_{23}$ is a saddle arc}. By Proposition \ref{gap}(d), it suffices to prove 
that  $d(x_1,x_2)\ge d(x_1,x_3)$. Since $\partial_3 C_3$ is equidistant from $x_3$, we have $d(x_1,x_3)=d(x_1,\partial_3 C_3)+d(\partial_3 C_3,x_3)$. Since $\partial_3 C_3$ lies in the interior of $\DV_{x_3}$, we have $d(\partial_3 C_3,x_3)<d(\partial_3 C_3,x_2)$. Finally, $d(x_1,x_2)\ge d(x_1,\partial_3 C_3)+d(\partial_3 C_3,x_2)$.


\begin{center}
\begin{figurehere}
\psfrag{S}{$\textcolor{blue}{S}$}
\psfrag{C3}{$\textcolor{Purple}{\! C_3}$}
\psfrag{d3C3}{$\textcolor{Purple}{\!\pa_3 C_3}$}
\psfrag{x1}{$\textcolor{Sepia}{x_1}$}
\psfrag{x2}{$\textcolor{Sepia}{x_2}$}
\psfrag{x3}{$\textcolor{Sepia}{x_3}$}
\psfrag{yi}{$\textcolor{Sepia}{y_i}$}
\psfrag{a13}{$\textcolor{Sepia}{\alpha_{13}}$}
\psfrag{a23}{$\textcolor{Sepia}{\alpha_{23}}$}
\psfrag{a}{$\textcolor{BrickRed}{\alpha}$}
\psfrag{l1c}{$\textcolor{Purple}{\lambda_{1,c}}$}
\psfrag{l2c}{$\textcolor{Purple}{\!\!\lambda_{2,c}}$}
\psfrag{q1c}{$\textcolor{Purple}{q_{1,c}}$}
\psfrag{q2c}{$\textcolor{Purple}{q_{2,c}}$}
\psfrag{Wi}{$\textcolor{OliveGreen}{\W_{y_i}}$}
\psfrag{si}{$\textcolor{red}{\!\sigma_s}$}
\psfrag{s}{$\textcolor{red}{\! s}$}
\includegraphics[width=0.45\textwidth]{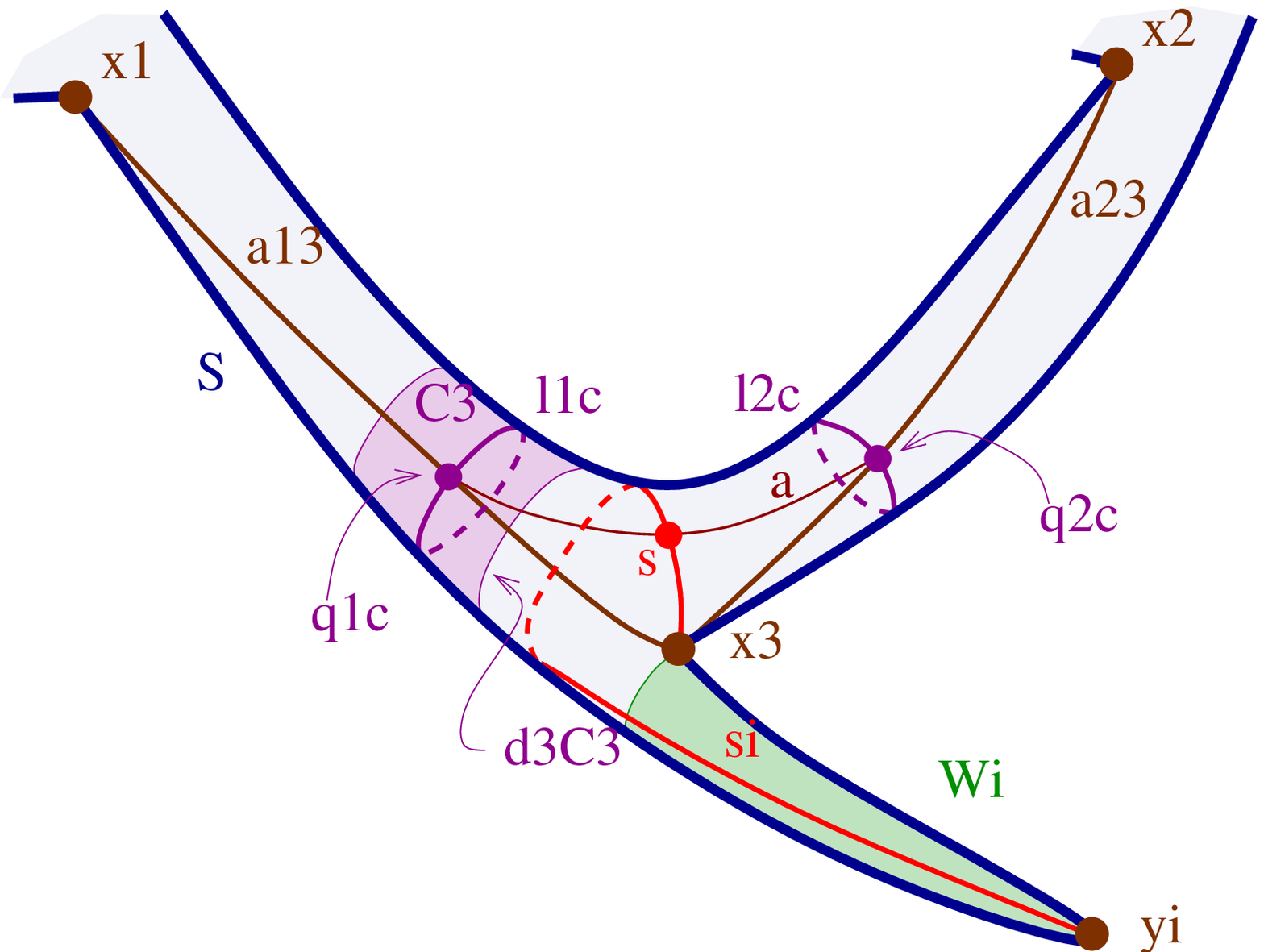}
\caption{{\small If $C_3$ does not separate $x_2$ from $x_3$, the saddle geodesic $\sigma_s$ cannot end at some $y_i$.}}\label{fig:finding-loop}
\end{figurehere}
\end{center}

Now,
fix a regular $c\in \Ical_\e$ and let $q_{1,c}\in \alpha_{13}$ and $q_{2,c}\in \alpha_{23}$ 
be points at distance $c$ from $x_3$. Since $\alpha_{13}$ and $\alpha_{23}$ are saddle geodesics,  we have $\Vor(q_{1,c})=\Vor(q_{2,c})=c$.
Since $q_{1,c}\in C_3$ and $q_{2,c}\notin C_3$,
the points $q_{1,c},q_{2,c}$ belong to different connected components of the level set $\Vor^{-1}(c)$. We 
denote such components by $\lev_{1,c}$ and $\lev_{2,c}$ correspondingly, and denote by $S'$ the cylindrical subsurface of $S$ bounded by $\lev_{1,c}$ and $\lev_{2,c}$. 
By applying Lemma \ref{saddlepath}, we obtain
a path $\alpha$ inside $S'$ with the following properties
\begin{itemize}
\item
$\alpha$ passes through a saddle point $s$ with $\Vor(s)=c'<c$
\item
$\alpha$ is contained in $\Vor^{-1}([c',\pi))\cap S'$
\item
$\alpha$ intersects transversally the saddle geodesic $\sigma_s\subset S'$ passing through $s$.
\end{itemize}
We claim that
$\sigma_s$ is a saddle loop based at $x_3$.
As a consequence, by Lemma \ref{saddlepath} the loop $\sigma_s$ separates $q_{1,c}$ from $q_{2,c}$ on $S$, and so 
it also separates $x_1$ and $x_2$. 
This gives us a desired contradiction with Proposition \ref{goeodesicloop}.

In order to finally prove the above claim, note first that
$\sigma_s$ cannot end at $x_1$ or at $x_2$, since it is contained in $S'$.
%
Suppose by contradiction that $\sigma_s$ has one endpoint at $y_i$ and let $d_{y_i}=d(y_i,x_3)$.
Lemma \ref{smallangleneigh}(a) and  Lemma \ref{smallangleneigh}(d) state correspondingly that 
$$d_{y_i}/2<\Vor(s)=c'<d_{y_i},\;\;\; \Vor(\pa\W_{y_i})<d_{y_i}/2.$$
But since $\Vor(\alpha)\ge c'>d_{y_i}/2$, the second inequality implies that $\alpha\cap \pa\W_{y_i}=\emptyset$. Hence $\alpha$ cannot enter $\W_{y_i}$, in particular $s\notin \W_{y_i}$, i.e., $d(s,y_i)>d_{y_i}$. However $\Vor(s)=d(s,y_i)$ and so we get a contradiction with Lemma \ref{smallangleneigh}(a).
Hence, $\sigma_s$ must be a saddle loop based at $x_3$ and the proof is complete.
\end{proof}

\begin{corollary} If $y_i$ is not tied to $x_j$ then $y_i$ and $x_j$ belong to distinct connected components of $S\setminus C_j$.
\end{corollary}
\begin{proof} By definition $C_1,\,C_2,\,C_3$ are pairwise disjoint. And so the conclusion follows from Propositions \ref{Cj-splits-xj} and \ref{cyl-Cj}(b).
\end{proof}

As a last piece of information on the cylinders $C_j$,
here we give an estimate
of their modulus that will be needed in the proof of Theorem \ref{main:many}(b).

\begin{lemma}\label{modulus-Cj}
Under the hypotheses of Theorem \ref{main:many}(b),
each Voronoi cylinder $C_j$ constructed above has modulus $M(C_j)>\frac{1}{2}$.
\end{lemma}
\begin{proof}
Assume $j=3$, the other cases $j=1,2$ being analogous.

Consider the Voronoi cylinder $C_3$
constructed in Proposition \ref{cyl-Cj}.
The boundary curves of $C_3$ belong to the level sets $\Vor^{-1}(\pi (m+1)\e)$ and
$\Vor^{-1}(\pi/4-\pi (m+1)\e)$
and
the total angle of the conical points contained in the component of $S\setminus C_3$ that contains 
$x_3$ is at most $2\pi\left(m_1+\frac{1}{2}+m\e\right)<2\pi(m_1+1)$.
By Lemma \ref{cylinderslength}, it follows that
\[
M(C_1)> \frac{1}{2\pi(m_1+1)}\log\left(\frac{1}{4(m+1)\varepsilon}-1\right).
\]
As a consequence, $M(C_1)>\frac{1}{2}$ if 
\[
\varepsilon< \frac{1}{4(m+1)\left(1+\exp \pi(\max\{m_i\}+1)\right)}
\]
is satisfied. 
This is the case, because the right hand side is not smaller than $\frac{1}{16}\exp(-2\pi \max\{m_i\})$.
\end{proof}

\subsection{Disconnectedness of the moduli spaces}\label{disconnectedproofsub}

We can finally assemble all the information on the cylinders $C_j$ obtained above
and prove the main result of this section.

\begin{proof}[Proof of Theorem \ref{main:many}]
%
(a) Consider a spherical surface in $\MSPH_{0,m+3}(\bm{\th})$ with conical points $x_1,x_2,x_3,y_1,\dots,y_m$.
By Proposition \ref{gap}(b), each point $y_j$ on $S$ has a unique 
closest conical point among $x_1,\,x_2,\,x_3$. Hence, to each spherical surface
in $\MSPH_{0,3+m}(\bm{\th})$ we can associate a function 
$\kappa:\{1,\ldots, m\}\to \{1,2,3\}$
in such a way that $x_{\kappa(i)}\in \{x_1,x_2,x_3\}$ is the point closest to $y_i$. 
Since the
$3^m$ functions $d(y_i,x_j)$ are continuous on the moduli space of metrics, by Proposition \ref{gap}(b) the application 
$\Kcal:\MSPH_{0,3+m}(\bm{\th})\rar \{1,2,3\}^m$
that sends a spherical surface to its associated vector
$(\kappa(1),\ldots, \kappa(m))$ is locally constant. 
It follows that $\MSPH^{\kappa}_{0,3+m}(\bm{\th}):=\Kcal^{-1}(\kappa)$ is a union of connected components
of $\MSPH_{0,3+m}(\bm{\th})$.
Hence, to prove claim (a) we need to show that each $\MSPH^{\kappa}_{0,3+m}(\bm{\th})$ is non-empty.

Consider a partition of $\{1,2,\dots,m\}$ into three disjoint subsets $I_1,I_2,I_3$. 
In order to construct the metric such that $\kappa^{-1}(j)=I_j$, we start first with a spherical surface of genus $0$ with three conical points $(x_1',x_2',x_3')$ such that $\th'_j=m_j+\frac{1}{2}+\sum_{i\in I_j}(\e_i-1)$. 
Such a spherical surface exists by \cite{eremenko:three} and, since the monodromy is not coaxial, we can apply Proposition \ref{prop:splitting} to split each point $x'_j$ into a point $x_j$ of angle $2\pi\left(m_j+\frac{1}{2}\right)$ and a collection of nearby points $y_i$ with $i\in I_j$ of conical angles $2\pi\e_i$. 
We can arrange in such a way that
the function $f$ associated to such spherical surface satisfies $\kappa(I_j)=j$ for $j=1,\,2,\,3$ simply by taking 
$\eta>0$ small enough in Proposition \ref{prop:splitting}.

(b) For every $i=1,\dots,m$ consider the map $\Pi_i:\Mcal_{0,3+m}\rar \Mcal_{0,4}$
that sends $(S,J,x_1,x_2,x_3,y_1,\dots,y_m)$ to $(S,J,x_1,x_2,x_3,y_i)$
and let $\Pi=(\Pi_1,\dots,\Pi_m):\Mcal_{0,3+m}\rar \left(\Mcal_{0,4}\right)^m$. Let us denote subregions $\Mcal_{0,4}^{(1,4)}$, $\Mcal_{0,4}^{(2,4)}$ and $\Mcal_{0,4}^{(3,4)}$ of $\Mcal_{0,4}$ as in Lemma \ref{lemma:peripheral}.
For every $\kappa\in\{1,2,3\}^m$, we denote by $\Mcal^\kappa_{0,3+m}$ the subset of $\Mcal_{0,3+m}$ defined as
\[
\Mcal^\kappa_{0,3+m}:=
\Pi^{-1}\left(
\Mcal_{0,4}^{(\kappa(1),4)}\times\Mcal_{0,4}^{(\kappa(2),4)}\times\dots\times\Mcal_{0,4}^{(\kappa(m),4)}
\right)
\]
and we note that such $\Mcal^\kappa_{0,3+m}$ are pairwise disjoint by Lemma \ref{lemma:peripheral}.
In order to conclude, it is enough to show that
$F_{0,3+m,\bm{\th}}$ maps $\MSPH^\kappa_{0,3+m}(\bm{\th})$ to $\Mcal^\kappa_{0,3+m}$.

We recall that on each such surface $S$ there exists a cylinder $C_{\kappa(i)}$
(Proposition \ref{cyl-Cj})
of modulus greater than $\frac{1}{2}$ (Lemma \ref{modulus-Cj}) such that
the pairs of conical points $\{x_{\kappa(i)},\,y_i\}$ and $\{x_1,x_2,x_3\}\setminus\{x_{\kappa(i)}\}$
are contained in different components of $S\setminus C_{\kappa(i)}$ (Proposition \ref{Cj-splits-xj}).
This in particular implies that a simple closed curve $\gamma_j$ homotopic to $C_j$
has $\Ext_{\gamma_j}(\dot{S},J)<2$ and so $\Ext_{\gamma_j}(S\setminus\{x_1,x_2,x_3,y_i\},J)<2$.
It follows from Lemma \ref{lemma:peripheral} that $(\Pi_i\circ F_{0,3+m,\bm{\th}})(\MSPH^\kappa_{0,3+m}(\bm{\th}))$ is contained inside
$\Mcal_{0,4}^{(\kappa(i),4)}$ and so
\[
F_{0,3+m,\bm{\th}}(\MSPH^\kappa_{0,3+m}(\bm{\th}))\subseteq \Mcal^\kappa_{0,3+m}
\]
as desired.
\end{proof}

%
%


\section{Disks with one conical point}\label{sec:disks-one}


The purpose of this section is to estimate the area of a 
topological disk 
endowed with a spherical metric with at most one conical point and a short boundary.
Such computation will be needed to calculate the area of almost bubbling surfaces in Section \ref{sec:systole}.
For this reason, we will use the symbol $\BU$ to denote such 
topological disks.

In order to measure how short the boundary of $\BU$ is, we introduce the following quantity.

\begin{definition}[$\lambda$-invariant]
Let $\BU^0$ be a spherical disk without conical points and $\BU^1$ be a spherical with one conical point $x$. We define their
{\it{$\lambda$-invariant}} as
\[
\lambda_0(\BU^0):=\left(\frac{\ell(\partial \BU^0)}{2\pi}\right)^2
\qquad\text{and}\qquad
\lambda_1(\BU^1):=\frac{\ell(\pa \BU^1)}{d(x,\pa \BU^1)}.
\]
\end{definition}


The following result is an essential ingredient of our proof of the systole inequality.

\begin{theorem}[Disks with one conical point and short boundary]\label{diskstructure} 
Let $\BU^1$ be a spherical disk with one conical point $x$ of angle $2\pi\th$
and assume that $\lambda_1(\BU^1)<\frac{1}{2}$.
Then there exists $b^1\in\ZZ_{\geq 0}$ such that
\[
\frac{1}{2\pi}\Big|\Area(\BU^1)-4\pi(\th+b^1)\Big|<\frac{\ell(\pa \BU^1)}{d(x,\pa \BU^1)}=\lambda_1(\BU^1).
\]
\end{theorem}

%


\begin{remark}
The hypothesis $\lambda_1(\BU^1)<\frac{1}{2}$
is only used at the very end of proof of Theorem \ref{diskstructure}.
The analogous estimate for disks without conical points
is proven in Corollary \ref{shorthindisk}.
\end{remark}

The proof of Theorem \ref{diskstructure} proceeds as follows.
First we reduce the calculation of the area of the disk $\BU^1$ 
to that of a disk $D_\alpha$ without conical points,
which is obtained
by cutting $\BU^1$ along a geodesic
arc $\alpha$ that joins $x$ to $\pa \BU^1$.
Then we compute the area of $D_\alpha$
using the degree function of 
a developing map $\iota:D_\alpha\rar\Sph$
and the {\it algebraic area} of
its boundary $\iota(\pa D_\alpha)$ relative to a base point $Z\in\Sph$. Such algebraic area is
an invariant of oriented loops in $\Sph$ which has good additive properties and is tightly related to the degree function of $\iota$.
The final estimate for the algebraic area of $\iota(\pa D_\alpha)$
relies on an area estimate for isosceles spherical triangles
embedded in $\Sph$ and on
the isoperimetric inequality for domains in $\Sph$.

\subsection{Degree functions and algebraic area on $\Sph$}\label{sec:algebraic-area}

In this subsection we introduce the degree function 
and the algebraic area associated to an oriented loop in $\Sph$
and we prove some elementary relation between such quantities,
the classical degree and the standard area.

\begin{definition}[Degree function of an oriented loop in $\Sph$ relative to a base point]
Let $\xi$ be a piecewise smooth oriented closed curve on $\Sph$. Let $Z$ be a point in the complement $\Sph\setminus \xi$. The {\it degree function} $\deg_{Z}(\xi):\Sph\setminus \xi\rar\ZZ$ is defined as
\[
\deg_{Z}(\xi)(Y):=[YZ]\cdot \xi
\]
for every point $Y\in\Sph\setminus\xi$,
where $[YZ]$ is (the relative homology class of) any smooth path that runs from $Y$ to $Z$ and $[YZ]\cdot \xi$ is the intersection number (with sign) of two curves. 
\end{definition}

\begin{definition}[Algebraic area of an oriented loop in $\Sph$]
The {\it algebraic area} of $\xi$ with respect to $Z$ is defined as
\[
\Alg_Z(\xi)=\int_{\Sph}\deg_{Z}(\xi)\cdot\,\omega
\]
where $\omega$ is the standard area form on $\Sph$.
\end{definition}

Note that the algebraic area $\Alg_Z(\xi)$ is continuous for piecewise-smooth deformations of $\xi$ in $\Sph\setminus Z$. The following standard lemma justifies the name of degree function.

\begin{lemma}[Degree of a map from a disk to $\Sph$ and degree of its boundary]\label{degreefunction} 
Let $D$ be an oriented disk and $ \varphi: D\to \Sph$ be a piecewise smooth map and let 
$\xi=\pa\varphi:\pa D\rar\Sph$ be the restriction of $\varphi$ to the boundary $\pa D$, which is endowed with
the induced orientation.
Choose a regular point $Z\in \Sph\setminus\xi$ for $\varphi$. Then
the degree function $\deg(\varphi):\Sph\setminus\xi\rar\ZZ$ of the map $\varphi$
coincides with $\deg(\varphi)(Z)+\deg_{Z}(\xi)$.
\end{lemma}

We omit the proof since it is clear. The next two lemmas are straightforward as well.

\begin{lemma}[Additivity of the degree function]\label{additivity} Let $P, Q$ be two points on $\Sph$ and 
$\xi_1$, $\xi_2$, $\xi_3$ piecewise smooth paths that run from $P$ to $Q$. 
Choose  $Z\in \Sph\setminus (\xi_1\cup \xi_2\cup\xi_3)$ and let $\xi_{ij}$ be the oriented loop based at $P$ obtained by travelling first along $\xi_j$ and then along $\xi_i^{-1}$.
Then for any $Y\in \Sph\setminus (\xi_1\cup \xi_2\cup\xi_3)$ we have
$$\deg_{Z}(\xi_{13})(Y)=\deg_{Z}(\xi_{12})(Y)+\deg_{Z}(\xi_{23})(Y).$$
\end{lemma}

\begin{lemma}[Area of a spherical disk and algebraic area of its boundary]\label{areadegree} Let $D$ be a disk with a spherical metric and piecewise smooth boundary. Let $\iota: D\to \Sph$ be an orientation preserving developing map
and let $Z\in \Sph\setminus \iota(\partial D)$.
Then the following equality holds
$$\Area(D)=4\pi \deg(\iota)(Z)+\Alg_Z(\iota(\partial D)).$$
\end{lemma}
\begin{proof} 
Since $\iota$ is locally isometric we clearly have $\Area(D)=\int_{D}\iota^*(\omega)$. Now the statement follows from Lemma \ref{degreefunction}, since $\int_{D}\iota^*(\omega)=\int_{\Sph}\deg(\iota)\omega$.
\end{proof}

\subsection{Algebraic area of short curves}

In this subsection we estimate the algebraic area of curves on $\Sph$ of length less than $2\pi$. Such estimate takes the form of an isoperimetric inequality.
%

\begin{lemma}[Isoperimetric inequality in $\Sph$ for the algebraic area]\label{smallarea}  
Let $\xi\subset \Sph$ be an oriented piecewise smooth curve with  $\ell(\xi)<2\pi$. Let $Z$ be a point on $\Sph$ separated from $\xi$ by a geodesic circle in $\Sph$. Then $|\Alg_Z(\xi)|<\ell(\xi)^2/2\pi$. 
\end{lemma}

The above lemma is probably well-known but for the sake of completeness we will provide a proof that relies on the following
classical isoperimetric inequality (see \cite[Lemma 6.1]{hayman:meromorphic}).

\begin{lemma}[Isoperimetric inequality for disk domains in $\Sph$]\label{isolemma} 
Let $\Omega\subset\Sph$ be a disk with piecewise smooth boundary.
Then $\ell(\pa\Omega)^2\ge \area(\Omega)\left(4\pi-\area(\Omega)\right)$. 
In particular, if  $\area(\Omega)<2\pi$, then $\ell(\pa\Omega)^2>2\pi \cdot\area(\Omega)$.
\end{lemma}

Before proving Lemma \ref{smallarea}, we recall that 
the concatenation at times $(t_1,t_2)$
of two loops $\xi_1,\xi_2:S^1\rar\Sph$
that satisfy $\xi_1(t_1)=\xi_2(t_2)$
is the loop $(\xi_1)\,_{t_1}\!\!\ast_{t_2}\!(\xi_2):S^1\rar\Sph$ defined as
\[
(\xi_1)\,_{t_1}\!\!\ast_{t_2}\!(\xi_2)(t)=\begin{cases}
\xi_1(t_1+2t) & \text{if $t\in [0,\frac{1}{2}]$}\\
\xi_2(t_2+2t-1) & \text{if $t\in[\frac{1}{2},1]$}
\end{cases}
\]
where we have identified $S^1$ with $\RR/\ZZ$. 
It is clearly possible to concatenate more than two loops (at different times)
iterating the above procedure.

\begin{lemma}[Loops generically immersed in $\Sph$ are concatenations of simple loops]\label{simple-decomposition}
Let $\xi:S^1\rar \Sph$ be a piecewise smooth curve with a finite number of self-intersections.
Then, up to a reparametrization, $\xi$ 
is a concatenation of finitely many simple loops $\xi_1,\dots,\xi_k$
such that $\xi_i$ and $\xi_j$ intersect at finitely many points for every $i\neq j$.
\end{lemma}
\begin{proof}
By assumption, 
the set of non-injectivity points
$N=\{t\in S^1\,|\,\exists t'\neq t\ \text{such that $\xi(t')=\xi(t)$}\}$
is finite. We proceed by induction on $|N|$.

For $|N|=0$, the loop $\xi$ is already simple.
Assume then $N\neq\emptyset$
and consider $u=\min\{d_{S^1}(t',t)\,|\,\xi(t')=\xi(t)\ \text{with $t'\neq t$}\}>0$.
Let $(t_1,t_1+u)$ be a couple such that $\xi(t_1)=\xi(t_1+u)$.
Then $\xi$ is the reparametrized concatenation $(\xi_1)\ast(\xi'_1)$,
where $\xi_1$ is a reparametrization of $\xi|_{[t_1,t_1+u]}$ and
$\xi'_1$ is a reparametrization of the remaining portion of $\xi$.
By the minimality of $u$, the loop $\xi_1$ is simple.
Moreover, the set $N'$ of non-injectivity points of $\xi'_1$ 
satisfies $|N'|<|N|$ and so the curve $\xi'_1$ is a reparametrized concatenation of finitely many
simple loops $\xi_2,\dots,\xi_k$ by induction.
It follows that $\xi$ is a reparametrized concatenation of $\xi_1,\dots,\xi_k$.
Finally, for $i\neq j$ the simple loops $\xi_i$ and $\xi_j$ can only intersect
at $\xi(N)$, which is a finite subset of $\Sph$.
\end{proof}

\begin{proof}[Proof of Lemma \ref{smallarea}]
Note first that, since $\ell(\xi)<2\pi$, a geodesic circle $E$ 
on $\Sph$ that does not intersect $\xi$ indeed exists, and so we can choose $Z$
in the component of $\Sph\setminus E$ that does not contain $\xi$.

After perturbing $\xi$, if necessary, we may assume that $\xi$ has finite number of self-intersections in $\Sph$. Denote by $\Omega_0$ the connected component of $\Sph\setminus\xi$ that contains $Z$.

Let us now give a different presentation of the function $\deg_{Z}(\xi)$.
Thanks to Lemma \ref{simple-decomposition}, the curve $\xi$ can be 
decomposed into a finite number of simple loops $\xi_1,\ldots, \xi_k$. Each loop $\xi_i$ encloses a disk $\Omega_i$, whose interior is disjoint from $\Omega_0$. 
We underline that two disks $\Omega_i$ and $\Omega_j$ may well overlap if $i,j\neq 0$.

\begin{center}
\begin{figurehere}
\psfrag{S2}{$\Sph$}
\psfrag{O1}{${\Omega_1}$}
\psfrag{O2}{${\Omega_2}$}
\psfrag{O3}{${\Omega_3}$}
\psfrag{O0}{${\Omega_0}$}
\psfrag{y1}{$\textcolor{BrickRed}{Y_1}$}
\psfrag{y2}{$\textcolor{BrickRed}{Y_2}$}
\psfrag{g1}{$\textcolor{BrickRed}{\xi_1}$}
\psfrag{g2}{$\textcolor{BrickRed}{\xi_2}$}
\psfrag{g3}{$\textcolor{BrickRed}{\xi_3}$}
\psfrag{z0}{${Z}$}
\includegraphics[width=0.3\textwidth]{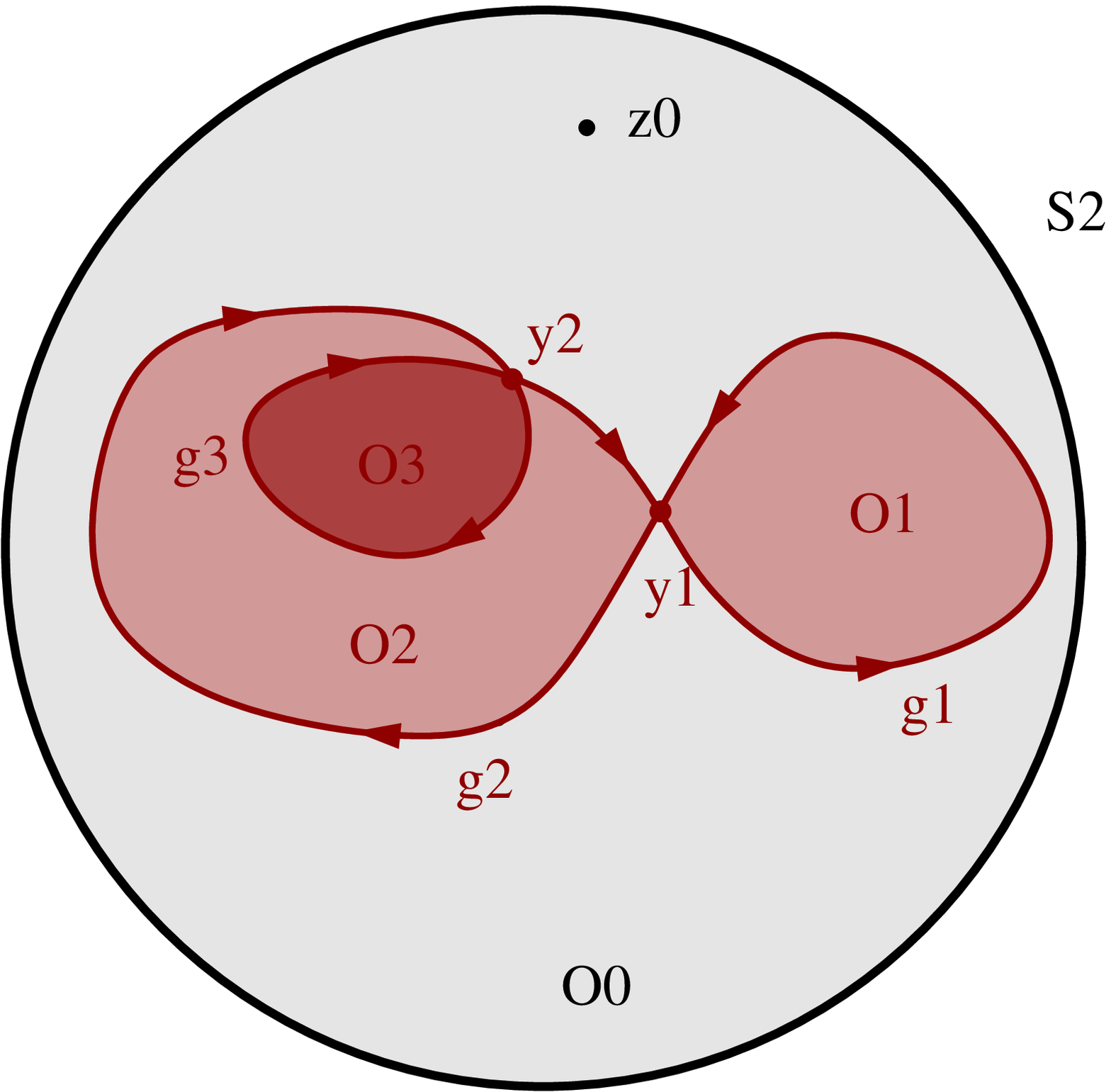}
\caption{{\small An example of decomposition of $\xi$: the loops $\xi_1,\xi_2$ are based at $Y_1$, $\xi_3$ is based at $Y_2$ and $\Omega_3$ is contained in $\Omega_2$.}}\label{fig:algarea1}
\end{figurehere}
\end{center}

It follows from Lemma \ref{additivity} that 
$$\deg_{Z}(\xi)=\sum_i \deg_{Z}(\xi_i).$$
Since the curves $\xi_i$ are separated from $Z$ by the geodesic circle $E$, we deduce
$$\Alg_Z(\xi)=\int_{\Sph}\deg_{Z}(\xi)\omega=\int_{\Sph}\sum_i \deg_{Z}(\xi_i)\omega=
\sum_i\pm \Area(\Omega_i).$$
Using Lemma \ref{isolemma} we conclude that
\[
|\Alg_Z(\xi)|\le\sum_i \Area(\Omega_i)< \sum_i \ell(\xi_i)^2/2\pi\le \ell(\xi)^2/2\pi
\]
because  $\sum_i \ell(\xi_i)=\ell(\xi)$.
\end{proof}

As a consequence of Lemma \ref{smallarea}, we obtain
the following isoperimetric inequality for the area of an abstract
spherical disk $\BU^0$ with no conical points, which will be
needed in the proof of the systole inequality.


\begin{corollary}[Isoperimetric inequality for spherical disks with short boundary]\label{shorthindisk}
Let $\BU^0$ be a disk with spherical metric 
and no conical points
such that $\partial \BU^0$ is piecewise smooth and $\ell(\partial \BU^0)<2\pi$. Then 
\[
\frac{1}{2\pi}\Big|\Area(\BU^0)-4\pi b^0\Big|<\left(\frac{\ell(\partial \BU^0)}{2\pi}\right)^2=\lambda_0(\BU^0)
\]
for some $b^0\in \mathbb Z_{\ge 0}$.
\end{corollary}
\begin{proof}
Consider a developing map $\iota: \BU^0\to \Sph$. Since $\ell(\partial \BU^0)<2\pi$, there is a geodesic circle $E\subset \Sph$ disjoint from $\iota(\partial \BU^0)$. Let $O$ be the center of the 
component of $\Sph\setminus E$
which does not contain $\iota(\partial \BU^0)$. By Lemma \ref{areadegree} we have
$\Area(\BU^0)=4\pi \deg(\iota)(O)+\Alg_O(\iota(\partial \BU^0))$.
Now the statement follows from Lemma \ref{smallarea},
because $\iota$ is orientation-preserving and so $b^0:=\deg(\iota)(O)\geq 0$.
\end{proof}

\subsection{Isosceles spherical triangles}\label{sec:isosceles}

In this section we estimate the area of certain isosceles triangles
embedded in $\Sph$ with one short side.

The following lemma will be needed in the proof of Theorem \ref{diskstructure}.

\begin{lemma}[Area estimate for isosceles triangles]\label{triangleineq} 
Let $T\subset\Sph$ be a solid triangle with geodesic sides
and endow its boundary $\pa T$ with the induced orientation
and then its vertices $P,O,Q$
with the cyclic orientation $P\prec O\prec Q$.
Suppose that $|OP|=|OQ|=r$ and that $|PQ|<r\cdot \lambda_1$
for some $\lambda_1\in (0,1)$.
Then 
\[
|\Alg_Z(\pa T)-4\pi\theta|=|\Area(T)-4\pi\theta|<\pi\lambda_1
\]
for any point $Z\in \Sph\setminus T$, where $2\pi\theta=\wh{O}$ is the internal angle of $T$ at $O$.
\end{lemma}

\begin{proof} 
The  equality holds since $\Alg_Z(\pa T)=\Area(T)$ thanks to the choice of the cyclic orientation of the vertices of $T$ and because $Z\notin T$.  So we only need to prove the inequality on the right.

Suppose first $r\in\left[\frac{\pi}{2},\pi\right)$. Let $O'$ be the point on $\Sph$ opposite to $O$ and let $T'\subset\Sph\setminus\inte{T}$ be the solid triangle with
edges consisting of the shortest geodesics $O'P$ and $O'Q$ and
of the edge $PQ$ of $T$. Since $T\cup T'$ is a bigon with angles $2\pi\theta$,
\[
\Area(T)+\Area(T')=4\pi\theta.
\]
Moreover, $|O'P|=|O'Q|\le \frac{\pi}{2}$ and so the triangle $T'$ can be embedded inside an isosceles triangle with sides of lengths $\frac{\pi}{2},\,\,\frac{\pi}{2}\, |PQ|$. Hence the area of $T'$ is bounded by the length of the side $PQ$.
As a consequence, $|\Area(T)-4\pi\theta|\le |PQ|<r\cdot\lambda_1\le \pi \lambda_1$ because $|PQ|< r\cdot \lambda_1$ by assumption.


Suppose now $r\in\left(0,\frac{\pi}{2}\right)$. Assume first $\theta\le \frac{1}{2}$.
%
Applying Lemma \ref{rightriang} to triangle $T$,
we get $\theta<\frac{\lambda_1}{4}$. Since $\Area(T)<4\pi\theta$, we conclude that
\[
|\Area(T)-4\pi\theta|<4\pi\theta< \pi\lambda_1.
\]
Finally, consider the case $\theta\geq\frac{1}{2}$ and let
$T'=\Sph\setminus \inte{T}$ be the complementary of $T$ inside $\Sph$, so that $\Area(T)+\Area(T')=4\pi$.
Since $T'$ has internal angle $2\pi(1-\theta)$ at $O$,
we have just seen that $(1-\theta)<\frac{\lambda_1}{4}$
and so $4\pi(1-\theta)<\pi\lambda_1$.
From $\Area(T')<4\pi(1-\theta)$ we deduce that
$\Area(T)\in\left(4\pi\theta,4\pi\right)$
and so we conclude that $|\Area(T)-4\pi\theta|<\pi\lambda_1$.
\end{proof}

The above proof relies on the following computation
in the case $r\in\left(0,\frac{\pi}{2}\right)$ and $\theta\leq\frac{1}{2}$.

\begin{lemma}\label{rightriang}
Fix $\theta\in\left(0,\frac{1}{2}\right)$,
$r\in\left(0,\frac{\pi}{2}\right)$, and $\lambda_1\in (0,1)$.
Let $POQ$ be a convex spherical triangle with angle $2\pi\theta$ at $O$, and with $|OP|=|OQ|=r$ and $|QP|=\lambda_1 r$.
Then $\theta<\lambda_1/4$.
\end{lemma}
\begin{proof} 
Let $R$ be the midpoint of $QP$ and consider the triangle $POR$,
which has angle $\pi\theta$ at $O$ and $\frac{\pi}{2}$ at $R$
and such that $|RP|=r\lambda_1 /2$.

Applying the sine rule  to the sides  $PO$ and $OR$ of triangle $POR$, we get
$$\sin(\pi \theta)=\frac{\sin(r\lambda_1 /2)}{\sin (r)}=\varphi(r).$$
Note that for $r=\frac{\pi}{2}$ we get $\theta=\lambda_1/4$. Hence, to prove the lemma, it is enough to show that $\varphi$ is strictly increasing on  $(0,\frac{\pi}{2})$, namely that for $r\in (0,\frac{\pi}{2})$ we have $\varphi'(r)>0$. The latter is equivalent to proving 
$$(\lambda_1/2) \cos(r\lambda_1 /2)\sin(r)-\cos(r)\sin(r\lambda_1 /2)>0.$$
This inequality holds since $t\tan(r)>\tan(t\eta)$ for all $t\in(0,1)$ and $r\in (0, \pi/2)$. 
\end{proof}

\subsection{Area estimate of a disk with one conical point and short boundary} 

We can finally assemble all the ingredients to prove our wished
estimate for the area a disk with one conical point and short boundary.

\begin{proof}[Proof of Theorem \ref{diskstructure}] 
Let us write   $\th=\floor{\th}+\theta$, where $\theta=\{\th\}\in [0,1)$. 
Let $\alpha\subset \BU^1$ be a geodesic of length $r=d(x,\partial \BU^1)$ that joins $x$ with a point $y$ on $\partial \BU^1$. 
Denote by $D_{\alpha}$ the disk obtained from $\BU^1$ by cutting it along $\alpha$ and completing. 
Denote by $x_O$ the point in $D_{\alpha}$ corresponding to $x$ and by $y_P$ and $y_Q$ two points corresponding to $y$. The points $x_O$, $y_P$, $y_Q$ cut $\partial D_{\alpha}$ into two geodesic segments $y_Px_O$, $x_Oy_Q$ and a 
non-geodesic path $y_Qy_P$.
We choose the orientation on $D_{\alpha}$ so that points $y_P\prec x_O\prec y_Q$ 
according to the induced orientation on $\pa D_{\alpha}$.

Let now $\iota: D_{\alpha}\to \Sph$ be an orientation preserving developing map.
Call $O=\iota(x_O)$, $P=\iota(y_P)$ and $Q=\iota(y_Q)$
and denote by $PO$ the geodesic arc $\iota(y_Px_O)$ and by $OQ$ the geodesic arc $\iota(x_Oy_Q)$. 
To define $PQ$ note that $d(P,Q)\le \ell(\partial \BU^1)=\lambda_1(\BU^1)r<r<\pi$ and so we can choose $PQ$ to be the unique geodesic segment between $P$ and $Q$
of length less than $\pi$. 
Finally, let $T$ be a solid triangle in $\Sph$ 
with vertices $P,O,Q$ and sides $PO$, $OQ$, and $PQ$, 
and with angle $2\pi\theta$ at $O$ (such a $T$ is unique, unless $\theta=\frac{1}{2}$). 

Pick a point $Z$ in $\Sph\setminus T$ at distance $\lambda_1(\BU^1)$ from $O$. 
Since the conical angle at $x$ is equal to $2\pi\left(\floor{\th}+\theta\right)$, 
it is easy to see now that $\deg_Z(\iota)=\floor{\th}+b^1$ for some $b^1\geq 0$. 
We deduce from Lemma \ref{areadegree}, that 
$$\Area(\BU^1)=\Area(D_{\alpha})=\Alg_Z(\iota(\partial D_{\alpha}))+4\pi(\floor{\th}+b^1).$$

Hence, we need to show $|\Alg_Z(\iota(\partial D_{\alpha}))-4\pi\theta|\le 2\pi\lambda_1(\BU^1)$.

\begin{center}
\begin{figurehere}
\psfrag{S2}{$\Sph$}
\psfrag{O}{${O}$}
\psfrag{P}{${P}$}
\psfrag{Q}{${Q}$}
\psfrag{ipD}{${\iota(\pa D_\alpha)}$}
\psfrag{z0}{${Z}$}
\psfrag{th}{$\! 2\pi\theta$}
\includegraphics[width=0.85\textwidth]{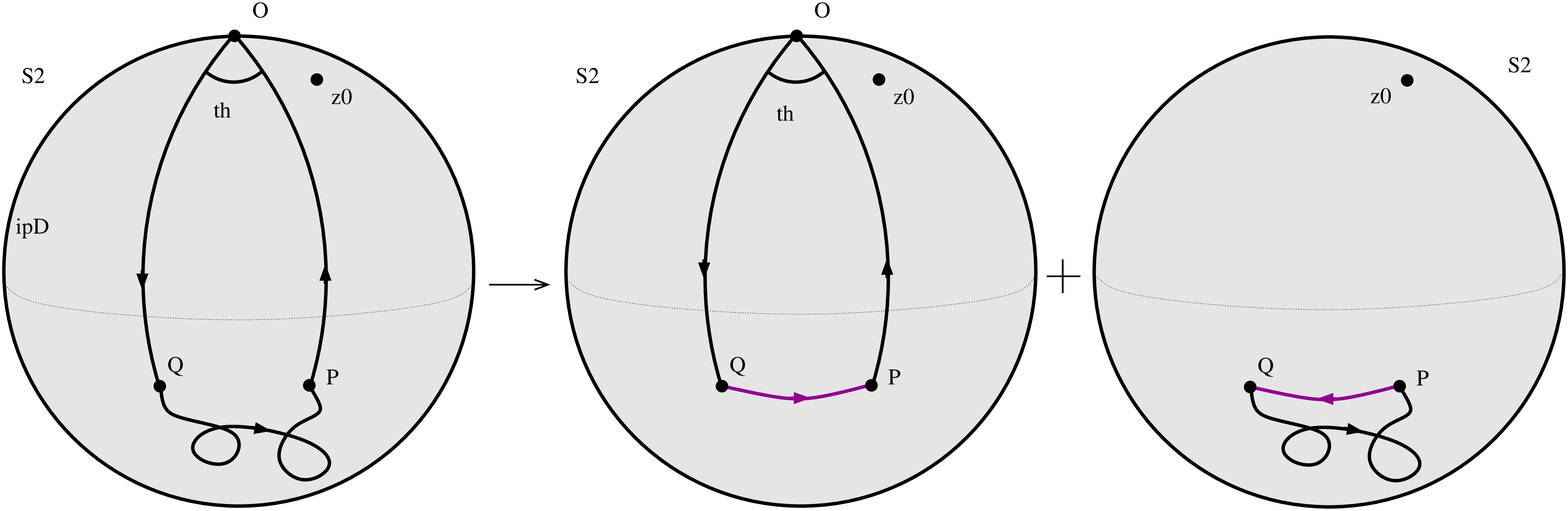}
\caption{{\small Splitting $\iota(\pa D_\alpha)$ into $\pa T$ and a curve $\xi$.}}\label{fig:algarea2}
\end{figurehere}
\end{center}

In order to prove this inequality let us present $\iota(\partial D_{\alpha})$ as a sum of two curves. The first is the boundary of triangle $T$ and the second is a curve $\xi$, composed of segment $PQ$ and the curvy part of $\iota(\partial D_{\alpha})$ going from $Q$ to $P$. Clearly $\ell(\xi)\le 2 \ell(\pa \BU^1)$ and so, by our hypotheses,
$\ell(\xi)\le 2r\lambda_1(\BU^1)$. Applying the additivity property of Lemma \ref{additivity} and the 
inequalities of Lemma \ref{smallarea} and Lemma \ref{triangleineq} we obtain
\begin{align*}
|\Alg_Z(\iota(\partial D_{\alpha}))-4\pi\theta| &=|\Alg_Z(T)-4\pi\theta+\Alg_Z(\xi)|\le \\
& \leq |\Alg_Z(T)-4\pi\theta|+|\Alg_Z(\xi)|\leq
\pi \lambda_1(\BU^1)+\left(2r\lambda_1(\BU^1)\right)^2/2\pi<2\pi\lambda_1(\BU^1)
\end{align*}
because $\lambda_1(\BU^1)<\frac{1}{2}$ implies $2r\lambda_1(\BU^1)<\pi$.
\end{proof}

\section{Almost bubbling surfaces and systole inequality}\label{sec:systole}

In this section we will prove Theorem \ref{main:systole} which
relates the extremal systole, 
that measures how much the surface
is conformally far from degenerating, and the spherical systole,
that measures how much the surface is metrically far from
degenerating.
As mentioned in the introduction, if the extremal systole is small, then
the spherical systole is small. In the following theorem we show
that the converse also holds, whenever $\Ab_{\bm{\th}}>0$.

\begin{mainthm}{\ref{main:systole}}[Systole inequality] 
Let $S$ be a surface with spherical metric and conical singularities 
at $\bm{x}$ of angles $2\pi\bm{\th}$. Assume that $\chi(\dot{S})<0$
and $\dot{S}$ is not a $3$-punctured sphere.\\
Suppose that there exists $\varepsilon\in\left(0,\frac{1}{2}\right)$
such that
\begin{itemize}
\item[(i)]
$\Ext\sys(\dot{S})\geq \frac{2\pi\|\bm{\th}\|_1}{\log(1/\e)}$.
%
\item[(ii)]
$\Ab_{\bm{\th}}(S)\geq\varepsilon$.
\end{itemize}
Then the following inequality holds:
\begin{equation}\label{sysineq}\tag{$\clubsuit$}
\sys(S,\bm{x})\ge \left(\frac{\varepsilon}{4\pi\|\bm{\th}\|_1}\right)^{-3\chi(\dot S)+1}.
\end{equation}
\end{mainthm}

In Appendix \ref{app:example} we will show through a sequence of examples
that the estimate for the extremal systole determined by Theorem \ref{main:systole}
is reasonably sharp.

\subsection{Non-bubbling parameter}\label{sec:non-bubbling}

We recall from Section \ref{sec:bubbling} that the value
$\frac{1}{2\pi}\Area(S)\in\RR_{\geq 0}$ of an $\e$-bubbling spherical surface $S$
with conical points of angles $2\pi\bm{\th}$
sits at distance at most $\e$ from the subset
\[
\Acrit_{\bm{\th}}(S)=\left\{2b+2\|\bm{\th}_I\|\ \big|\ I\subseteq\{1,\dots,n\},\ b\in\ZZ_{\geq 0} \right\}.
\]
Thus, an obstruction $\e$-bubbling is exactly given
by $\Ab_{\bm{\th}}(S)\geq \e$.
Here we collect a few basic properties of such quantity.

\begin{lemma}[Elementary properties of the non-bubbling parameter]\label{lemma:upper-bound-Ab}
The subsets $\Crit_{\bm{\th}}(S)$ and $\Acrit_{\bm{\th}}(S)$ satisfy
$\Acrit_{\bm{\th}}(S)=\Crit_{\bm{\th}}(S)+\|\bm{\th}\|_1$ and so
$\Ab_{\bm{\th}}(S)=d_\RR(\chi(S,\bm{\th}),\, \Acrit_{\bm{\th}}(S))$.
Moreover, the following holds.
\begin{itemize}
\item[(a)]
If $\chi(S,\bm{\th})\geq 0$, then $\Ab_{\bm{\th}}(S)\leq 1$. In particular,
this holds whenever there
exists a spherical metric on $S$ with conical singularities at $\bm{x}$
of angles $2\pi\bm{\th}$.
\item[(b)]
If $\chi(S,\bm{\th})\leq 0$, then $\Ab_{\bm{\th}}(S)=-\chi(S,\bm{\th})$
since $0\in\Acrit_{\bm{\th}}(S)$.
\end{itemize}
\end{lemma}
\begin{proof}
The first claim is obvious.
As for (a), note that $\Acrit_{\bm{\th}}\supseteq 2\ZZ_{\geq 0}$ and so
\[
\Ab_{\bm{\th}}(S)=d_\RR(\chi(S,\bm{\th}),\,\Acrit_{\bm{\th}}(S))
\leq d_{\RR}(\chi(S,\bm{\th}),\,2\ZZ_{\geq 0})\leq 1
\]
Property (b) is immediate, since $\Acrit_{\bm{\th}}$ contains only non-negative values
and $0\in\Acrit_{\bm{\th}}$.
\end{proof}

\begin{remark}[Upper bound for $\Vor$ in terms of $\Ab_{\bm{\th}}(S)$]\label{maxofV_S}
If $S$ is endowed with a spherical metric with singularities of angles $2\pi\bm{\th}$, then
\[
\frac{1}{2\pi}\Area(S)=\chi(S,\bm{\th})=d_\RR\left(\chi(S,\bm{\th}),\,0\right)
\geq \Ab_{\bm{\th}}(S).
\]
As a consequence,
\[
\max(\Vor)\ge \sqrt{2\Ab_{\bm{\th}}(S)\|\bm{\th}\|_1^{-1}}
\]
by applying Lemma \ref{lengthoflevel} and the previous inequality.
\end{remark}

As mentioned in the introduction, another important property of the non-bubbling parameter is that
its positivity prevents spherical metrics from having coaxial monodromy.

\begin{lemma}[Coaxial metrics have vanishing non-bubbling parameter]\label{lemma:coaxial-zero-NB}
Let $(S,\bm{x},h)$ be a spherical surface with conical points of angles
$2\pi\cdot\bm{\th}$. If $(S,\bm{x})$ has coaxial monodromy,
then $\Ab_{\bm{\th}}(S,\bm{x})=0$.
\end{lemma}
\begin{proof}
Fix a universal cover $\tilde{\dot{S}}$ of $\dot{S}$,
a developing map $\iota:\tilde{\dot{S}}\rar\Sph$ to the standard sphere $(\Sph,h_{\Sph})$
and a compatible monodromy representation $\rho:\pi_1(\dot{S})\rar\SO_3(\RR)$.
Since $\rho$ is coaxial by hypothesis, there exist antipodal points $O,O'$ in $\Sph$
that are fixed by the image of $\rho$. Let $G\subset\PSL_2(\CC)$
be the real 1-parameter subgroup of M\"obius transformations of $\Sph\cong\CC\PP^1$
that fix $O,O'$ and send every meridian between the two poles $O,O'$ to itself.
Clearly, every element $g\in G$ commutes with the image of $\rho$
and so the metric $\iota^*g^*h_{\Sph}$ descends to a spherical metric $h_g$ on $(S,\bm{x})$,
which is conformally equivalent to $h$.

Let $\sigma\subset S$ be a geodesic arc that realizes the systole
and let $\tilde{\sigma}\subset\tilde{\dot{S}}$ be a lift of its.
Since $\sigma$ has
length at most $\frac{\pi}{2}$ by Lemma \ref{lemma:shorterthanpi/2}, 
the closure of $\iota(\tilde{\sigma})$ inside $\Sph$
cannot contain both $O$ and $O'$.

Hence, for every $s>0$ there exists $g\in G$ such that 
the $h_g$-length of $\sigma$ is at most $s$ and so
$(S,\bm{x},h_g)$ has systole at most $s$.
Since $s>0$ is arbitrary, it follows from Theorem \ref{main:systole} that
$\Ab_{\bm{\th}}(S,\bm{x})=0$.
\end{proof}

We underline that the above Lemma \ref{lemma:coaxial-zero-NB} can be proven
in a different way, and in particular without invoking Theorem \ref{main:systole}.
Indeed, one could show that, by using suitable elements $g\in G$, one
can produce metrics $h_g$ whose mass concentrates near certain points
$x_i$ with $i\in I$ and near other $b\geq 0$ smooth points,
so that the value of the area of $h_g$ (and so of $h$) can be estimated to be
arbitrarily close to $2\pi\cdot (2b+2\|\bm{\th}_I\|_1)$.

\subsection{Almost bubbling surfaces}\label{sec:almost-bubbling}

In this section we give a precise definition of $\e$-bubbling spherical surface mentioned in the introduction
and we show that such surfaces have small non-bubbling parameter.
The definition relies on the $\lambda$-invariant of a spherical disk with at most one conical point, introduced
in Section \ref{sec:disks-one}.

\gm{Definition \ref{def:bubbling} minimally touched.}
\begin{definition}[$\e$-bubbling surfaces]\label{def:bubbling}
Let $\e\in(0,\frac{1}{2})$. An {\it{$\e$-bubbling decomposition}} of a spherical surface $S$ with conical points $\bm{x}$ is the datum of a subset
$I\subseteq\{1,2,\dots,n\}$ and a partition of $S$ as a union of 
\begin{itemize}
\item
finitely many disks $\BU^0_j$ without conical points,
\item
a disk $\BU^1_i$ with one conical point $x_i$ for each $i\in I$, and
\item
a subsurface $S^c$ (called {\it{core}}),
\end{itemize}
such that
\[
\frac{1}{2\pi}\Area(S^c)+\sum_{j}\lambda_0(\BU^0_j)+\sum_{i\in I}
\lambda_1(\BU^1_i)<\e.
\]
We say that $(S,\bm{x})$ is {\it{$\e$-bubbling}} if it admits an $\e$-bubbling decomposition
and that it is {\it{almost bubbling}} if it is $\e$-bubbling for some $\e$. 
\end{definition}

The following statement is a simple application of Theorem \ref{diskstructure} and Corollary \ref{shorthindisk}.

\begin{theorem}[Almost bubbling implies small non-bubbling parameter]\label{thm:almost-bubbling}
Let $S$ be a spherical surface with conical points $\bm{x}$ of angles $2\pi\bm{\th}$.
If $S$ is $\e$-bubbling, then $\Ab_{\bm{\th}}(S,\bm{x})<\e$.
\end{theorem}
\begin{proof}
Consider an $\e$-bubbling decomposition of $S$.
An example for $(g,n)=(1,7)$ with $I=\{2,4\}$ and one $\BU^0_1$
is illustrated in Figure \ref{fig:nearly-complete-bubbling}.

Applying Corollary \ref{shorthindisk} to  each $\BU^0_j$
and Theorem \ref{diskstructure} to each $\BU^1_i$,
we get 
\begin{equation}\label{eq:bubbles}
\frac{1}{2\pi}\left|\Area(\BU^0_j)-4\pi b^0_j\right|<\lambda_0(\BU^0_j),
\quad
\frac{1}{2\pi}\left|\Area(\BU^1_i)-4\pi(b^1_i+\th_i)\right|<\lambda_1(\BU^1_i)
\end{equation}
for suitable integers $b^0_j,\,b^1_i\geq 0$.
As a consequence, setting $b=\sum_j b^0_j+\sum_{i\in I} b^1_i$,
we obtain
\begin{align*}
\Ab_{\bm{\th}}(S,\bm{x}) &\leq 
\left |\frac{1}{2\pi}\Area(S)-(2\|\bm{\th}_I\|_1+2b)\right|
\leq \\
&\leq
\frac{1}{2\pi}\Area(S^c)+
\sum_j\left|\frac{1}{2\pi}\Area(\BU^0_j)-2b^0_j\right|+
\sum_{i\in I}\left|\frac{1}{2\pi}\Area(\BU^1_i)-(2\th_i+2b^1_i)\right|<
\\
&<\frac{1}{2\pi}\Area(S^c)+\sum_j\lambda_0(\BU^0_j)+\sum_{i\in I}\lambda_1(\BU^1_i)<\e
\end{align*}
where the first inequality follows from the definition of non-bubbling parameter,
the second inequality is a simple rearrangement of the summands,
the third inequality is a consequence of Inequalities \eqref{eq:bubbles} and
the last inequality is the definition of $\e$-bubbling.
\end{proof}

\subsection{Voronoi core subsurfaces}

In this section we introduce Voronoi core subsurfaces of a given spherical surface
$S$ with conical points $\bm{x}$ such that $\chi(\dot{S})<0$.
In Theorem \ref{Abound} we will use Voronoi cores as
core subsurfaces of some $\e$-bubbling decomposition
in the sense of Definition \ref{def:bubbling}: for this reason, we will again use
the symbol $S^c$ to denote a Voronoi core.

\begin{definition}[Voronoi core subsurfaces]\label{def:Voronoi-core}
Let $S$ be a spherical surface with $\chi(\dot{S})<0$ and let $r\in \left(0,\frac{\pi}{2}\right)$ be a regular value for $\Vor$.
A connected component $S^c$ of $\Vor^{-1}([0,r])$
is a {\it{Voronoi $r$-core}} of $S$
if
the complement $S\setminus S^c$ is the disjoint union
of disks $\BU^0_j$ without conical points
and disks $\BU^1_i$ with at most one conical point each.
We will say that $S^c$ is a {\it{Voronoi core}} if it is the Voronoi $r$-core for some $r$.
\end{definition}



The existence of a Voronoi $r$-core can be verified
by studying the nature of the curves in $\Vor^{-1}(r)$ as follows.

\begin{lemma}\label{lemma:core-systole}
There are no Voronoi $r$-cores in $S$ with $r\leq \sys(S,\bm{x})$.
Moreover, a Voronoi core
contains every systole geodesic of $S$.
\end{lemma}
\begin{proof}
The first claim is immediate, since
for $r<\sys(S,\bm{x})$ all connected components of $\Vor^{-1}([0,r])$ are standard disks with one conical point. 

Concerning the second claim, suppose by contradiction that
$\sigma_{\sys}$ is a systole geodesic 
contained in a connected component $S'$ of $S\setminus S^c$.
Then $S'$ has one conical point $x_i$ and $\sigma_{\sys}$ is a loop based at $x_i$.
Moreover, $S'\setminus \sigma_{\sys}$ should have a component which is a disk $D$ whose geodesic boundary $\sigma_{\sys}$ has at most one non-smooth point, namely $x_i$.
Consider the developing map $\iota: D\to \Sph$. Then $\iota(\sigma_{\sys})$ should be a geodesic loop in $\Sph$ based at $\iota(x_i)$. At the same time, the length of $\sigma_{\sys}$ is at most $\pi$ by Lemma \ref{lemma:shorterthanpi/2}. 
This is a contradiction, since closed geodesics on $\Sph$ have length $2\pi$.
\end{proof}

\begin{lemma}[Characterization of Voronoi cores]\label{lemma:subcore}
Let $r<\frac{\pi}{2}$ be a regular value for $\Vor$.
Then there exists a component $S^c$
of
$\Vor^{-1}([0,r])$ which is the Voronoi $r$-core
if and only if the following conditions hold
\begin{itemize}
\item[(i)]
$r>\sys(S,\bm{x})$
\item[(ii)]
every component of $\Vor^{-1}(r)$ is a non-essential simple closed curve.
\end{itemize}
Moreover, the $r$-Voronoi core is unique, whenever it exists.
\end{lemma}

\begin{proof}
Given a Voronoi $r$-core $S^c$,
property (ii) is satisfied by definition; moreover,
the uniqueness of $S^c$ and (i) follow from
Lemma \ref{lemma:core-systole}.

Conversely, suppose that (i) and (ii) are satisfied.
Since all connected components of $\Vor^{-1}(r)$ are non-essential in $\dot{S}$, each one bounds a unique open disk in $S$ with at most one conical point. Any two such disks are either disjoint or one is completely contained inside the other. For this reason, the subsurface $S^c$ of $S$ consisting of points that do not lie inside
any of these open disks is connected.
By construction,
each connected component of $S\setminus S^c$
is a disk with at most one conical point.
As in the proof of Lemma \ref{lemma:core-systole},
none of such disks can contain a systole geodesic.
It follows that $S^c$ contains all systole geodesics
and, in particular, at least one conical point.
We conclude that $S^c$ is a connected component of
$\Vor^{-1}([0,r])$ and so it is a Voronoi $r$-core.
\end{proof}

%
%
%
%

Finally, we give an upper bound for the total area of the $\BU^0_j$'s
and a lower bound for the total angle of the conical points sitting in a Voronoi core.

\begin{lemma}[Estimate for the total area of the $\BU^0_j$'s]\label{balls0area} 
Assume that the length of each $\pa\BU^0_j$ is smaller than $2\pi$.
Then
\[
\frac{1}{2\pi}\sum_{j} |\Area(\BU^0_j)-4\pi b^0_j|< \left(\frac{\ell\left(\Vor^{-1}(r)\right)}{2\pi}\right)^2.
\]
\end{lemma}

\begin{proof} 
By Corollary \ref{shorthindisk}, the area of $\BU^0_j$ satisfies
$\frac{1}{2\pi}| \Area(\BU^0_j)-4\pi b^0_j|<\left(\ell(\partial \BU^0_j)/2\pi\right)^2$.
Since the boundaries $\partial \BU^0_j$ are disjoint and
they belong to $\Vor^{-1}(r)$, we have
$\sum_j \ell(\pa \BU^0_j)\leq \ell\Big(\Vor^{-1}(r)\Big)$ and
the inequality follows.
\end{proof}

\begin{lemma}[Bound from below on the angles in a Voronoi core]\label{lemma:core-angles}
The sum of the angles of the conical points sitting inside a Voronoi core $S^c$
is larger than $\frac{4\pi}{3}$.
\end{lemma}
\begin{proof}
Since the subsurface $S^c$ is a connected component of $\Vor^{-1}([0,r])$ for some $r\in(\sys(S,\bm{x}),\frac{\pi}{2})$,
and $\sys(S,\bm{x})$ is a saddle value for $\Vor$, it follows that $S^c$ contains a saddle critical point for $\Vor$.
We then conclude by  Proposition \ref{4pi3prop}.
\end{proof}


\subsection{Detecting almost bubbling surfaces through their Voronoi function}

In Section \ref{sec:almost-bubbling} it was shown that almost bubbling spherical surfaces
have small non-bubbling parameter.
%
Here we prove that a spherical surface with small systole
whose Voronoi function satisfies certain geometric properties is in fact almost bubbling.

\begin{theorem}[Detecting almost bubbling surfaces via $\Vor$]\label{Abound} 
Let $S$ be a surface with $n$ conical points $\bm{x}$ of angles $2\pi\bm{\th}$
and $\chi(\dot{S})<0$.
Let $\delta\in\left(0,\frac{\pi}{2}\right)$ and $\varepsilon\in 
\left(0,\frac{1}{2}\right)$
be such that
\[
\sys(S,\bm{x})<\frac{\delta\varepsilon}{4\pi\|\bm{\th}\|_1}< \delta<\max(\Vor)
\]  
and suppose 
that the following conditions hold:
\begin{itemize}
\item[(i)]
the function $\Vor$ does not have saddle critical values in the interval 
$\left[\frac{\delta\varepsilon}{4\pi\|\bm{\th}\|_1},\, \delta \right]$; 
\item[(ii)]
no component of $\Vor^{-1}\left[\frac{\delta\varepsilon}{4\pi\|\bm{\th}\|_1},\, \delta \right]$ is an essential cylinder.
%
\end{itemize}
Then $S$ is $\left(\frac{3\e}{5}\right)$-bubbling.
\end{theorem}

\begin{center}
\begin{figurehere}
\psfrag{S'}{$\textcolor{Purple}{S^c}$}
\psfrag{S}{$\textcolor{blue}{S}$}
\psfrag{S4}{$\textcolor{blue}{\!\!\BU^1_4}$}
\psfrag{S2}{$\textcolor{blue}{\BU^1_2}$}
\psfrag{B1}{$\textcolor{blue}{\BU^0_1}$}
\psfrag{x2}{$\textcolor{Sepia}{x_2}$}
\psfrag{x4}{$\textcolor{Sepia}{x_4}$}
\includegraphics[width=0.4\textwidth]{almost-bubbling.eps}
\caption{{\small An example of almost bubbling spherical surface.}}\label{fig:nearly-complete-bubbling}
\end{figurehere}
\end{center}




\begin{proof}[Proof of Theorem \ref{Abound}]
By (i), components of $\Vor^{-1}\left[\frac{\delta\e}{4\pi\|\bm{\th}\|_1},\,\delta\right]$
are disks without conical points or Voronoi cylinders. Hence, (ii) implies that
every connected component of $\Vor^{-1}\left(\frac{\delta\e}{4\pi\|\bm{\th}\|_1}\right)$
is non-essential.

By Lemma \ref{lemma:subcore},
a Voronoi core $S^c$ of $S$ can be
obtained as a component of 
$\Vor^{-1}\left[0,\frac{\delta\e}{4\pi\|\bm{\th}\|_1}\right]$.
The complement
$S\setminus S^c$ is a disjoint union of disks with at most one conical point each. 

For every $i\in I:=\{i\in \{1,\dots,n\}\ |\ x_i\notin S^c\}$,
denote by 
$\BU^1_i$ the connected component of $S\setminus S^c$
that contains $x_i$, and let
$\{\BU^0_j\}$ be the components of $S\setminus S^c$
that do not contain conical points. 

In order to show that $(S^c,\{\BU^0_j\},\{\BU^1_i\})$ is an $\left(\frac{3\e}{5}\right)$-bubbling partition of $S$, we need to show that
$$\frac{1}{2\pi}\Area(S^c)+\sum_j\lambda_0(\BU^0_j)+\sum_i\lambda_1(\BU^1_i)<\frac{3\e}{5}.$$

The area of $S^{c}$ is estimated using Lemma \ref{lengthoflevel}(b) as follows
\begin{equation}\label{areas'}
\Area(S^{c})\le 
\pi\left(\frac{\delta\e}{4\pi\|\bm{\th}\|_1}\right)^2\|\bm{\th}_{I^c}\|_1
\le
\frac{(\delta\varepsilon)^2}{16\pi}
<
\frac{\pi^2}{4}\, \frac{1}{16\pi}
\,\frac{\e}{2}=
\frac{\pi}{128}\e
\end{equation}
because $\|\bm{\th}_{I^c}\|_1/\|\bm{\th}\|_1\leq 1$ and, by Gauss-Bonnet, $\|\bm{\th}\|_1>-\chi(\dot{S})\geq 1$.

Now, by Lemma \ref{lengthoflevel}(a)
\begin{equation}\label{allcurves}
\sum_{j} \ell(\pa \BU^0_j)+\sum_{i\in I}\ell(\pa \BU^1_i)
=\ell(\pa S^{c})
\leq 2\pi \frac{\delta\varepsilon}{4\pi\|\bm{\th}\|_1} \|\bm{\th}_{I^c}\|_1
\leq \frac{\e}{2}\delta< \frac{\pi}{8}.
\end{equation}

Inequality (\ref{allcurves}) allows us to estimate the total area of the bubbles $\BU^0_j$.
It clearly implies that $\sum_{j}  \ell(\pa \BU^0_j)\le \frac{\delta}{2}\varepsilon$.
Now, applying Lemma \ref{balls0area} we get 
\begin{equation}\label{allitledisks}
\sum_{j}|\Area(\BU^0_j)-4\pi b_j'|<\frac{1}{2\pi}\left(\frac{\delta\e}{2}\right)^2
<\frac{1}{2\pi}\, \frac{\pi^2}{4}\,\frac{1}{4}\,\frac{\e}{2}=
\frac{\pi}{64}\e.
\end{equation}

Again using Inequality (\ref{allcurves}), we are able to estimate the sum of the $\lambda_1$-invariants of the disks $\BU^1_i$. In fact, all Voronoi cylinders in $\Vor^{-1}\left[\frac{\delta\e}{4\pi\|\bm{\th}\|_1},\delta\right]$ have height 
larger than $\left(1-\frac{\e}{4\pi\|\bm{\th}\|_1}\right)\delta>\frac{24}{25}\delta$. It follows that
for any $i\in I$ we have  $d(x_i,\partial \BU^1_i)>\frac{24}{25}\delta$. 
Thus, $\sum_i \lambda_1(\BU^1_i)<\frac{\e\delta/2}{24\delta/25}=\frac{25}{48}\e$.

Putting together the previous estimates, we conclude that
\[
\frac{1}{2\pi}\Area(S^c)+\sum_j\lambda_0(\BU^0_j)+\sum_i \lambda_1(\BU^1_i)<\left(\frac{3}{256}+\frac{25}{48}\right)\e<\frac{3}{5}\e,
\]
which proves the theorem.
\end{proof}

\subsection{Proof of the systole inequality}

The following elementary lemma motivates why the power $-3\chi(\dot{S})$ appears
in the statement of Theorem \ref{main:systole}.

\begin{lemma}[Pigeonhole principle]\label{lemma:pigeon}
Let $N\geq 2$ be an integer and $[r,t]$ be an interval contained in $(0,1)$ such that $r<t^N$.
For every collection of $N-2$ points $c_1,\dots,c_{N-2}$ in $[r,t]$, there exists
$\delta\in (r,t)$ such that the interval $[t\delta,\delta]$ 
is inside $(r,t)$ and it
does not contain any $c_i$.
\end{lemma}
\begin{proof}
Since $r<t^N$, there exists a small $\eta>0$ such that $r<(t-\eta)^{N-1}(t+\eta)$.
Consider the $N-1$ disjoint intervals $\big((t-\eta)^{k+1}(t+\eta),\,(t-\eta)^k(t+\eta)\big]$ 
contained in $[r,t]$ for $k=0,\dots,N-2$. There must be one such interval
that does not contain any $c_i$: suppose it is $\big((t-\eta)^{k_0+1}(t+\eta),\,(t-\eta)^{k_0}(t+\eta)\big]$.
It is enough to choose $\delta=(t-\eta)^{k_0}(t+\eta)$.
\end{proof}

We have now all the ingredients to prove the main result of this section.

\begin{proof}[Proof of Theorem \ref{main:systole}]
We will assume that Condition (i) is satisfied and Inequality (\ref{sysineq}) is violated,
and we will deduce that Condition (ii) cannot hold.

According to Proposition \ref{critnumber}, the number of non-zero saddle values of the function $\Vor$ is at most $-3\chi(\dot S)$. By Remark \ref{maxofV_S} and Condition (ii) we have $\max(\Vor)\ge \sqrt{2\pi\varepsilon\|\bm{\th}\|_1^{-1}}$. 

At the same time, the systole is shorter than $
\left(\frac{\varepsilon}{4\pi\|\bm{\th}\|_1}\right)^{-3\chi(\dot S)}$.
Since $\frac{\varepsilon}{4\pi\|\bm{\th}\|_1}< \sqrt{2\pi\varepsilon\|\bm{\th}\|_1^{-1}}$,
by Lemma \ref{lemma:pigeon} applied to $r=\sys(S,\bm{x})$, $t=\frac{\e}{4\pi\|\bm{\th}\|_1}$ and $N=-3\chi(\dot{S})$,
there exists a $\delta$ satisfying
\[
\sys(S,\bm{x})
< \delta < \frac{\varepsilon}{4\pi\|\bm{\th}\|_1}
<\max(\Vor)
\]
such that
there are no saddle values of $\Vor$ in the interval $\left[\frac{\varepsilon}{4\pi\|\bm{\th}\|_1}\delta,\, \delta\right]$. 
Hence, by Corollary \ref{cor:non-essential}
every connected component of 
$\Vor^{-1}\left[\frac{\varepsilon}{4\pi\|\bm{\th}\|_1}\delta,\, \delta\right]$
is either a disk without conical points or
a non-essential Voronoi cylinder.

%
%
%

Finally, Theorem \ref{Abound} ensures that 
$S$ is $\left(\frac{3\e}{5}\right)$-bubbling and so
$\Ab_{\bm{\th}}(S)<\varepsilon$ by Theorem \ref{thm:almost-bubbling}.
This contradicts Condition (ii).
\end{proof}



The argument used in the proof of Theorem \ref{main:systole} can be adapted to 
prove the following result, that we formulate separately as a corollary.


\begin{corollary}[Small systole and large $\Ab_{\bm{\th}}$ give long essential Voronoi cylinders]\label{longcylfromshortsys} 
Let $S$ be a surface with conical points $\bm{x}$ of angles $2\pi\bm{\th}$. 
Suppose that for some $\varepsilon\in\left(0,\frac{1}{2}\right)$  the following hold:
\begin{itemize}
\item[(i)]
$\displaystyle \sys(S,\bm{x})< \left(\frac{\varepsilon}{4\pi\|\bm{\th}\|_1}\right)^{-3\chi(\dot S)+1}$
\item[(ii)]
$\displaystyle \Ab_{\bm{\th}}(S)\geq  \varepsilon$.
\end{itemize}
Then there exists $\delta\in\left(\sys(S,\bm{x}), \frac{\varepsilon}{4\pi\|\bm{\th}\|_1}\right]$  such that 
\begin{itemize}
\item[(a)]
the function $\Vor$ has no saddle values in the interval $\left[\frac{\delta\varepsilon}{4\pi\|\bm{\th}\|_1},\,\delta \right]$
\item[(b)]
a connected component of $\Vor^{-1}\left(\left[\frac{\delta\varepsilon}{4\pi\|\bm{\th}\|_1},\,\delta \right]\right)$ is an essential cylinder.
\end{itemize}
\end{corollary}

\begin{proof}
Statement  (a) is proven as in the proof of Theorem \ref{main:systole}. We use  the upper bound for $\sys(S,\bm{x})$ given by  Condition (i)  and the fact that $\Vor$ has at most $-3\chi(\dot S)$ saddle critical values of which one is equal to $\sys(S,\bm{x})$.


About claim (b),
note that $\delta<\max (\Vor)$ and so the set $\Vor^{-1}\left(\left[\frac{\delta\varepsilon}{4\pi\|\bm{\th}\|_1},\, \delta \right]\right)$ is non-empty; in fact, it is a union of cylinders 
and caps by (a).
Hence, Condition (ii), Theorem \ref{Abound} 
and Theorem \ref{thm:almost-bubbling} imply (b).
%
%
%
%
%
\end{proof}

\appendix

%

\section{On the extremal length}\label{sec:appendix}


In this appendix we will recall the definition of extremal length of an essential simple closed
curve in a Riemann surface and of extremal systole, and we will list some basic facts about them.

\begin{definition}[Extremal length]\label{defofextermal}
Let $\Sigma$ be a Riemann surface and let $\gamma\subset\Sigma$ be an essential simple closed curve. The {\it{extremal length}}
of $\gamma$ in $\Sigma$ is
\[
\Ext_\gamma(\Sigma):=\sup_{\rho} \frac{\inf_{\check{\gamma}}\ell_{\rho}(\check{\gamma})^2}{\area(\rho)}
\]
where the inf is taken over all $\check{\gamma}$ freely homotopic to $\gamma$
and the sup is taken over all conformal metrics $\rho$ on $\Sigma$ of finite area.
If $C$ is a cylinder and $\gamma$ is its waist, then 
the {\it{modulus}} of
$C$ is $M(C)=1/\Ext_\gamma(C)$.
\end{definition}

It is a fact that all cylinders with finite positive extremal length are isomorphic
to a standard annulus as in the below example.

\begin{example}[Modulus of a standard plane annulus]\label{example:groeztsch}
For every $0<r'<r''$, the modulus of the annulus 
$C=\{z\in\CC\,|\,r'<|z|<r''\}$ is
$M(C)=\frac{1}{2\pi}\log(r''/r')$
and it is attained at Euclidean metrics homothetic to $\frac{|dz|^2}{|z|^2}$.
\end{example}

Since cylinders are biholomorphic to standard annuli of
Example \ref{example:groeztsch},
their extremal length is attained at the standard flat metric.
Moreover, the following well-known 
variational characterization holds.

\begin{lemma}[Modulus and height of a cylinder]\label{modulus-height}
Let $C$ be a cylinder with metric $\rho'$ of area $\Area(C)$ and such that the distance between the two boundary components is $H$. Then $M(C)\geq  H^2/\Area(C)$.
Moreover, equality holds if and only if $(C,\rho')$ is a flat straight cylinder.
\end{lemma}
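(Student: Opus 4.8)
The plan is to reduce to a concrete model annulus and then run the classical Gr\"otzsch length--area argument, keeping track of the equality case.

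\emph{Step 1: reduction to a standard annulus.} By the remark preceding the statement, $C$ is biholomorphic to a standard annulus $A_R=\{z\in\CC\,:\,1<|z|<R\}$ whose modulus is $M(C)=\frac{1}{2\pi}\log R$. I would fix such a biholomorphism and transport $\rho'$ through it to a conformal metric $\rho=\rho(z)\,|dz|$ on $A_R$; this preserves the total area and the $\rho'$-lengths of curves, hence also the distance $H$ between the two boundary components. Writing $A:=\Area(C)=\int_{A_R}\rho^2\,dx\,dy$, the inequality $M(C)\geq H^2/\Area(C)$ becomes equivalent to $\frac{1}{2\pi}\log R\geq H^2/A$.

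\emph{Step 2: the length--area estimate.} For each $\theta\in[0,2\pi)$ the radial segment $\sigma_\theta=\{re^{i\theta}:1\le r\le R\}$ joins the two boundary circles, so $\int_1^R\rho(re^{i\theta})\,dr\geq H$. Integrating over $\theta$ and applying Cauchy--Schwarz twice --- first in $r$, pairing $\rho\sqrt r$ with $1/\sqrt r$ so as to recover the area density $r\,dr$, then in $\theta$ --- I obtain
\[
\begin{aligned}
2\pi H\ &\leq\ \int_0^{2\pi}\!\!\int_1^R\rho(re^{i\theta})\,dr\,d\theta\\
&\leq\ \Big(\int_0^{2\pi}\!\!\int_1^R\rho(re^{i\theta})^2\,r\,dr\,d\theta\Big)^{\!1/2}\big(2\pi\log R\big)^{1/2}=\big(2\pi(\log R)A\big)^{1/2},
\end{aligned}
\]
and squaring gives $H^2\leq\frac{\log R}{2\pi}A=M(C)\cdot A$, which is the asserted inequality.

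\emph{Step 3: the equality case.} Equality propagates back through both Cauchy--Schwarz steps: the first forces $\rho(re^{i\theta})=c(\theta)/r$ for a.e.\ $\theta$, and then the second forces $c(\theta)$ to equal a constant $c$, so that $\rho=c\,|dz|/|z|$, i.e.\ $(C,\rho')$ is a flat straight cylinder. Conversely, for $\rho=c\,|dz|/|z|$ one computes directly $H=c\log R$ and $A=2\pi c^2\log R$, which gives equality. I expect the only delicate point to be precisely this equality analysis: one should justify the pointwise proportionality conclusions for an arbitrary conformal metric of finite area (radial integrals equal to $+\infty$ only weaken the inequality, hence do no harm) and check that in the flat case the radial segments really do realize the boundary-to-boundary distance. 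Everything else is the standard length--area computation.
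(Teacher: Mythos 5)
The paper states this lemma as ``well-known'' and does not supply a proof, so there is nothing in the source to compare against; your argument is precisely the classical Gr\"otzsch length--area proof that the statement is implicitly invoking, and it is correct. One small expositional remark: in Step 3 you should also note that overall equality forces the initial bound $\int_1^R \rho(re^{i\theta})\,dr \geq H$ to be an \emph{equality} for a.e.\ $\theta$ (not merely the two Cauchy--Schwarz steps); combined with $\rho = c/|z|$ this pins down $c\log R = H$, which is exactly the relation you verify in the converse direction, so the argument closes up consistently.
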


The following standard subadditivity property of modulus directly descends from its definition
and is used to estimate the modulus of a Voronoi cylinder.

\begin{lemma}[Subadditivity of modulus]\label{subadditivity} 
Suppose that a cylinder  $C$ is cut into two cylinders $C'$ and $C''$ by a
homotopically nontrivial simple loop. Then $M(C)\geq M(C')+M(C'')$. 
\end{lemma}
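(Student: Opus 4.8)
The plan is to work directly from Definition \ref{defofextermal}, exploiting the fact that the modulus is defined as a supremum over metrics and that the curves relevant to $C'$ and $C''$ are automatically relevant to $C$. Fix the cutting loop $\delta$ and write $C = C' \cup_\delta C''$. Given any conformal metric $\rho'$ on $C'$ of finite area and any conformal metric $\rho''$ on $C''$ of finite area, I would glue them (after rescaling, if one wishes to normalise, though it is not strictly necessary) to a conformal metric $\rho$ on $C$ which restricts to $\rho'$ on $C'$ and to $\rho''$ on $C''$; since $\delta$ has measure zero, $\area(\rho) = \area(\rho') + \area(\rho'')$.

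\smallskip

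Next I would compare the relevant curve-length infima. Let $\gamma$ be the waist of $C$; every simple closed curve freely homotopic to $\gamma$ in $C$ is also freely homotopic in $C$ to $\delta$, hence to the waist $\gamma'$ of $C'$ and to the waist $\gamma''$ of $C''$. The key point is a \emph{length additivity under concatenation} observation in the opposite direction: rather than estimating $\inf_{\check\gamma}\ell_\rho(\check\gamma)$ from below by a sum, it is cleaner to use the dual (extremal-width) description, or equivalently to argue as follows. For the subadditivity of $M$ it is actually more convenient to test $C$ with the glued metric $\rho$ and observe that any admissible curve $\check\gamma$ for $\gamma$ in $C$ decomposes, up to homotopy, so that its $\rho$-length is at least $\inf_{\check\gamma'}\ell_{\rho'}(\check\gamma') $ plus $\inf_{\check\gamma''}\ell_{\rho''}(\check\gamma'')$ — wait, that inequality goes the wrong way too. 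The correct route is the standard one: use that $\Ext$ is the reciprocal of the \emph{extremal width} $\mathrm{EW}$, where $\mathrm{EW}_\gamma(\Sigma) = \sup_\rho \frac{(\inf \text{ over families of arcs crossing } C)^2}{\area(\rho)}$; widths \emph{do} add when cylinders are stacked, because a crossing arc in $C$ restricts to crossing arcs in $C'$ and $C''$. Concretely, $M(C) = \mathrm{EW}$ of the family of arcs joining the two ends of $C$, and such an arc must cross $\delta$, hence splits into an arc crossing $C'$ and one crossing $C''$; a metric $\rho$ extremal for $C$ restricts to (generally non-extremal) metrics on $C'$ and $C''$, giving $M(C) \le \ldots$ — so instead I would run it as: glue extremal metrics $\rho', \rho''$ for $C', C''$, normalised so that each has unit extremal-width-realising flux, and check that the glued metric realises extremal width at least $M(C') + M(C'')$ for $C$, whence $M(C) \ge M(C') + M(C'')$.

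\smallskip

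The cleanest concrete execution avoids the width language by using Lemma \ref{modulus-height} together with Example \ref{example:groeztsch}. Since $C$, $C'$, $C''$ are all cylinders of finite positive modulus, each is conformally a standard flat straight cylinder; put $C'$ in its flat model of circumference $1$ and height $h' = M(C')$, and $C''$ in its flat model of circumference $1$ and height $h'' = M(C'')$. Glue them isometrically along $\delta$ (the circumferences match after rescaling) to get a flat straight cylinder structure on $C$ of circumference $1$ and height $h' + h''$. By Lemma \ref{modulus-height} applied to this flat metric on $C$ — for which equality holds — we get $M(C) = H^2/\Area(C)$ with $H = h'+h''$ and $\Area(C) = h'+h''$, i.e. $M(C) = h' + h'' = M(C') + M(C'')$. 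In fact this shows more than subadditivity; but the point is that an \emph{arbitrary} homotopically nontrivial cut need not be the waist circle of this flat model, so the glued flat metric is only one admissible test metric for $C$, and the distance between its boundary components is $\ge$ (not $=$) $h'+h''$ only after one checks the cut is "short" — which it need not be. Hence the honest statement: build the test metric on $C$ by gluing the flat models, note its two boundary components are at distance exactly $h'+h''$, apply Lemma \ref{modulus-height} to get $M(C) \ge (h'+h'')^2/(h'+h'') = M(C')+M(C'')$.

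\smallskip

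The main obstacle is the bookkeeping of \emph{which homotopy class of curves realises each modulus} and making sure the gluing respects it: one must check that the free homotopy class of the waist of $C$ corresponds under the inclusions $C' \hookrightarrow C \hookleftarrow C''$ to the waists of $C'$ and $C''$ (this uses that the cutting loop is homotopically nontrivial in $C$, so that $\pi_1(C') \to \pi_1(C)$ and $\pi_1(C'') \to \pi_1(C)$ are isomorphisms), and that gluing two flat straight cylinders of equal circumference along a boundary circle produces a flat straight cylinder whose two remaining boundary circles are at distance equal to the sum of the two heights. Both are elementary but deserve a sentence. Everything else — finiteness of areas, measure-zero of $\delta$, the equality case of Lemma \ref{modulus-height} — is immediate from the results already recorded in the appendix.
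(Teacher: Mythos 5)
The paper offers no proof of this lemma (it is stated as a standard fact that ``directly descends from the definition''), so there is nothing in the source to compare against; the question is only whether your argument works. Your final argument does: take the flat extremal metrics on $C'$ and $C''$, normalised to circumference $1$ and heights $h'=M(C')$, $h''=M(C'')$, define a conformal metric $\rho$ on $C$ piecewise by $\rho'$ on $C'$ and $\rho''$ on $C''$, observe that $\Area(\rho)=h'+h''$ and that any arc joining the two ends of $C$ must cross $\delta$ and hence has $\rho$-length at least $h'+h''$, and apply Lemma~\ref{modulus-height} to get $M(C)\geq (h'+h'')^2/(h'+h'')=M(C')+M(C'')$. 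This is the standard proof and it is sound.

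Two remarks on the write-up, though. First, most of your text consists of false starts that you yourself abandon mid-sentence (the direct length-infimum attempt, the half-executed extremal-width digression). These should be cut entirely: in a finished proof one simply presents the argument that works. Second, the phrases ``glue them isometrically along $\delta$'' and ``the circumferences match after rescaling'' are slightly misleading. The gluing of $C'$ to $C''$ is dictated by the topology of $C$ and will generally \emph{not} be an isometry of the two boundary circles; but that is irrelevant, since all one needs is that $\rho$, defined piecewise, is an admissible (Borel) conformal metric on $C$ with $\delta$ of measure zero. Once that is said, the worry you raise about whether the cut is ``short'' or whether the glued object is a bona fide flat straight cylinder evaporates: you never need the glued metric to be extremal for $C$, only that it provides a lower bound via Lemma~\ref{modulus-height}, and the distance computation $H\geq h'+h''$ (with equality, since vertical segments in $C'$ and $C''$ can be chosen to meet at a common point of $\delta$) goes through regardless. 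The $\pi_1$ bookkeeping you flag at the end is fine and worth the one sentence you give it.
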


In order to understand what metric
on a general punctured surface realizes the extremal length
of a given simple closed curve $\gamma$,
let us first recall
the relation between 
$\Ext_\gamma(\dot{S})$
and modulus of subcylinders $C$ homotopic to $\gamma$.

\begin{remark}[Modulus and extremal length]\label{modulevialength}
It is well-known that the extremal length can be characterized as
\[
\Ext_\gamma(\Sigma)=\inf_{C\sim\gamma} \frac{1}{M(C)}
\] 
where the infimum is taken over all annuli $C\subset \Sigma$ homotopic to $\gamma$.
It follows that, if $\Sigma\subset \Sigma'$ is a conformal embedding, then
$\Ext_\gamma(\Sigma)\geq \Ext_\gamma(\Sigma')$.
\end{remark}


Using the above remark,
it can be shown that
on a general punctured surface $\dot{S}$
the sup in the definition of extremal length
is achieved at 
the flat metric $|\q_\gamma|$ with conical singularities
associated to the
Strebel differentials $\q_\gamma$ on $\dot{S}$ introduced in the below proposition
(see, for instance, Strebel's book \cite{strebel:quadratic}).

\begin{proposition}[Strebel differentials]\label{prop:strebel}
Let $(S,J)$ be a Riemann surface with marked points $\bm{x}$ and let $\gamma\subset\dot{S}$ be
an essential simple closed curve. Then there exists a non-zero holomorphic quadratic differential $\q_\gamma$ on $\dot{S}$  
(unique up to positive multiples) such that
\begin{itemize}
\item
$\q_\gamma$ has at worst simple poles at $\bm{x}$
\item
every horizontal trajectory of $\q_\gamma$ is either smooth, closed and freely homotopic to $\gamma$
or it is an arc with endpoints in $\bm{x}\cup \sing(\q_\gamma)$,
where $\sing(\q_\gamma)\subset\dot{S}$ is the zero locus of $\q_\gamma$.
\end{itemize}
Moreover, the union $C_\gamma$ of all smooth closed horizontal trajectories of $\gamma$ 
is the complement of finitely many arcs.
\end{proposition}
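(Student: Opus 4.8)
The plan is to identify $\q_\gamma$ with the extremal quadratic differential of the classical \emph{module problem} attached to the free homotopy class of $\gamma$, and then to extract the four assertions from Jenkins--Strebel theory; all the analytic input will be imported from Strebel's book \cite{strebel:quadratic}. First I would set up the extremal problem: since $\gamma$ is essential and non-peripheral in $\dot{S}$, the supremum $\sup_{C\sim\gamma}M(C)$ over all embedded annuli $C\subset\dot{S}$ homotopic to $\gamma$ is finite and strictly positive, i.e. $\Ext_\gamma(\dot{S})\in(0,\infty)$ by Remark \ref{modulevialength}. The Jenkins--Strebel existence theorem then produces a ring domain $C_\gamma\subset\dot{S}$ homotopic to $\gamma$ that realizes this supremum, together with the associated extremal metric; the content of that theorem is precisely that the extremal metric has the form $|\q_\gamma|$ for a holomorphic quadratic differential $\q_\gamma$ on $\dot{S}$ whose horizontal trajectories foliate $C_\gamma$ by smooth closed curves freely homotopic to $\gamma$. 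This is the step I would import essentially verbatim.

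Granting this, the remaining properties follow by local analysis and the structure theory of quadratic differentials with closed trajectories. The extremal metric $|\q_\gamma|$ has finite total area (the cylinder $C_\gamma$ contributes $M(C_\gamma)$ times the square of the $|\q_\gamma|$-length of a closed trajectory, and its complement has zero area); a short local computation near a marked point $x_i$ shows that a pole of $\q_\gamma$ of order $\ge 2$ at $x_i$ would force infinite $|\q_\gamma|$-area in a punctured neighbourhood, so $\q_\gamma$ has at worst simple poles at $\bm{x}$ and is holomorphic on the rest of $\dot{S}$. For the trajectory structure, the theory of Jenkins--Strebel differentials says that every horizontal trajectory is either one of the closed leaves filling $C_\gamma$ or lies in the critical graph $\Gamma$ of $\q_\gamma$, that is, an arc whose endpoints lie in $\bm{x}\cup\sing(\q_\gamma)$. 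Finally, because $\dot{S}$ has finite topological type and $\q_\gamma$ has only finitely many zeros, $\Gamma$ is a finite graph, so $\dot{S}\setminus C_\gamma=\Gamma$ is the complement of finitely many arcs, which is the last assertion.

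Uniqueness up to positive multiples I would deduce from Jenkins' uniqueness theorem for the module problem: any holomorphic quadratic differential $\q'$ with the listed properties gives an extremal metric $|\q'|$ for the same module problem, and the extremal metric of a one-curve module problem is unique up to a positive scalar, hence $\q'=c\,\q_\gamma$ with $c>0$. The genuinely hard part is the first step — the existence of the extremal ring domain and the fact that the extremal metric is given by a holomorphic quadratic differential, i.e. the Jenkins--Strebel theorem itself; once that is in hand, the pole order, the trajectory structure, the finiteness of $\Gamma$, and the uniqueness are routine consequences or direct citations.
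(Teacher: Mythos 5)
Your proposal is correct and takes essentially the same approach as the paper: the paper itself gives no proof of this proposition, stating it as a citation to Strebel's book \cite{strebel:quadratic}, and your argument is precisely a well-organized unpacking of the Jenkins--Strebel existence and uniqueness theorems for the one-curve module problem, together with the routine local checks (finite area forcing at-worst-simple poles, finiteness of the critical graph). Nothing to flag beyond noting that your reading of ``essential'' as non-peripheral is the correct one and is needed to ensure $\Ext_\gamma(\dot S)\in(0,\infty)$.
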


In view of Remark \ref{modulevialength},
Strebel's study of the extremal properties of the cylinder $C_\gamma$
leads to the following characterization of the extremal length.

\begin{corollary}[Extremal length and embedded cylinders]\label{cor:ext-from-cyl}
Let $\gamma$ be a simple closed essential curve in $\dot{S}$
and let $C\subset \dot{S}$ be a cylinder that retracts by deformation onto $\gamma$. Then 
$\Ext_{\gamma}(\dot{S},J)\leq 1/M(C)$. Equality is attained if and only if 
both the following conditions hold:
\begin{itemize}
\item
the metric $\rho$ is $|\q_\gamma|$,
where $\q_\gamma$ is a Strebel differential associated to $\gamma$;
\item
the cylinder $C$ is $C_\gamma$.
\end{itemize}
\end{corollary}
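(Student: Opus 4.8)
The plan is to derive the corollary directly from the Strebel structure theorem (Proposition~\ref{prop:strebel}) together with the modulus-vs-extremal-length characterization of Remark~\ref{modulevialength}. First I would observe that the inequality $\Ext_\gamma(\dot S,J)\le 1/M(C)$ is immediate: by Remark~\ref{modulevialength} the extremal length is the infimum of $1/M(C')$ over all annuli $C'\subset\dot S$ homotopic to $\gamma$, and $C$ is one such annulus, so it bounds the infimum from below. The content is therefore entirely in the equality discussion, which splits into two implications.

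For the "if" direction, suppose $\rho=|\q_\gamma|$ and $C=C_\gamma$. Here $C_\gamma$ is, by Proposition~\ref{prop:strebel}, a flat straight cylinder (a union of closed horizontal trajectories all of the same $|\q_\gamma|$-length $\ell$ and total height equal to the $|\q_\gamma|$-distance across), so its modulus in the intrinsic metric is $M(C_\gamma)=H^2/\Area(C_\gamma)$ by the equality case of Lemma~\ref{modulus-height}. One then has to check that $1/M(C_\gamma)$ actually equals $\Ext_\gamma(\dot S,J)$ and not merely bounds it: the standard argument is that $|\q_\gamma|$ is an admissible competitor metric in the sup defining $\Ext_\gamma$, and the ratio $\big(\inf_{\check\gamma}\ell_{|\q_\gamma|}(\check\gamma)\big)^2/\Area(|\q_\gamma|)$ equals $\ell^2/\Area(|\q_\gamma|)$ because every curve freely homotopic to $\gamma$ meets every closed horizontal trajectory and hence has $|\q_\gamma|$-length at least $\ell$ (a length-area / Cauchy--Schwarz estimate over the arc complement has measure zero and does not affect areas). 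Comparing, $\Ext_\gamma(\dot S,J)\ge \ell^2/\Area(|\q_\gamma|)=1/M(C_\gamma)\ge\Ext_\gamma(\dot S,J)$, forcing equality throughout.

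For the "only if" direction, assume $1/M(C)=\Ext_\gamma(\dot S,J)$ for some cylinder $C$ retracting onto $\gamma$. First I would use subadditivity (Lemma~\ref{subadditivity}) and the embedding monotonicity of Remark~\ref{modulevialength} to argue that $C$ must be a \emph{maximal} embedded annulus homotopic to $\gamma$: if $C$ were properly contained in a larger such annulus $C'$, then $M(C')>M(C)$ (the complementary annulus has positive modulus), contradicting that $1/M(C)$ already achieves the infimum. By Strebel's theory the maximal annulus is unique and equals $C_\gamma$, giving $C=C_\gamma$. For the metric, one runs the length-area argument in reverse: equality in the Cauchy--Schwarz / Lemma~\ref{modulus-height} step forces $\rho$ to be flat and straight on $C_\gamma$ with horizontal trajectories geodesic, and the sup in the definition of $\Ext_\gamma$ being attained forces $\rho$ to be supported (up to null sets) on $C_\gamma$ with the trajectory foliation, which characterizes $\rho$ as $|\q_\gamma|$ up to scale.

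The main obstacle I expect is the rigidity in the "only if" direction — pinning down that an \emph{extremal} metric for $\Ext_\gamma$ must be exactly $|\q_\gamma|$ rather than merely flat-and-straight on $C_\gamma$ with arbitrary behavior elsewhere. This requires knowing that any admissible metric supported off $C_\gamma$ contributes area without contributing to $\inf_{\check\gamma}\ell_\rho(\check\gamma)$, so that an extremizer puts no mass there, combined with the uniqueness clause of Proposition~\ref{prop:strebel}. Since Proposition~\ref{prop:strebel} and the cited Strebel theory (\cite{strebel:quadratic}) already package precisely this uniqueness and the maximality of $C_\gamma$, the corollary follows without any genuinely new estimate; the proof is essentially an assembly of Remark~\ref{modulevialength}, Lemma~\ref{modulus-height}, Lemma~\ref{subadditivity}, and Proposition~\ref{prop:strebel}.
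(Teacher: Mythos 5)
The paper gives no proof of this corollary: it is stated as a direct consequence of Remark \ref{modulevialength} and Strebel's extremal theory (cited from \cite{strebel:quadratic}), and your proposal is in the same spirit of unpacking that citation. Your handling of the basic inequality $\Ext_\gamma(\dot{S},J)\le 1/M(C)$ and of the ``if'' direction (comparing $1/M(C_\gamma)$ against the competitor metric $\rho=|\q_\gamma|$, using that $\Area(|\q_\gamma|)=\Area(C_\gamma)$ because Proposition \ref{prop:strebel} makes the complement of $C_\gamma$ a finite union of arcs) is what one would extract from Strebel's book.

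One imprecision worth flagging in your ``only if'' direction: you argue that $C$ must be inclusion-maximal among annuli homotopic to $\gamma$ and then invoke ``Strebel's theory the maximal annulus is unique.'' But the uniqueness statement in Strebel's theorem concerns the annulus of \emph{maximal modulus}, not inclusion-maximal annuli, of which a surface can have many. The detour is also unnecessary and a little circular: the hypothesis $1/M(C)=\Ext_\gamma(\dot{S},J)=\inf_{C'}1/M(C')$ already says that $M(C)$ realizes $\sup_{C'} M(C')$, so you can invoke Strebel's uniqueness of the maximal-modulus annulus directly to conclude $C=C_\gamma$ without any containment argument; the inclusion step neither replaces nor strengthens this. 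With that corrected, the proof goes through along the lines you describe, and the rigidity of the extremal metric $\rho=|\q_\gamma|$ is indeed exactly the uniqueness packaged in Proposition \ref{prop:strebel} together with the equality case of Lemma \ref{modulus-height}.
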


Now we introduce a quantity associated to a punctured Riemann surface $\Sigma$
which is invariant under biholomorphisms. 
Such quantity
measures how close 
$\Sigma$
is to be conformally degenerate
(see Lemma \ref{lemma:ext-systole-function}).

\begin{definition}[Extremal systole]
Let $\Sigma$ be a connected punctured Riemann surface.
The {\it{extremal systole}} $\Ext\sys(\Sigma)$ is the minimum of $\Ext_{\gamma}(\Sigma)$ as
$\gamma$ ranges over all essential simple closed curves on $\Sigma$.
\end{definition}

Finally, we show how
the extremal length of a simple closed curve $\gamma$
inside a punctured spherical surface $\dot{S}$
provides a non-trivial upper bound for the length of shorter curves homotopic to $\gamma$ inside $\dot{S}$.

\begin{proposition}[Extremal length bounds length from above]\label{systolfromcylinder} 
Let $S$ be a surface with spherical metric and conical singularities at $\bm{x}$ of angles $2\pi\bm{\th}$ and assume $\chi(\dot{S})<0$.
For every essential, simple closed curve $\gamma$ on $\dot{S}$
there exists a simple closed curve $\check{\gamma}\subset\dot{S}$ freely homotopic to $\gamma$
of length 
$$\ell(\check{\gamma})<\sqrt{\Area(S)\cdot\Ext_{\gamma}(\dot{S})}<\sqrt{2\pi\Ext_\gamma(\dot{S})\|\bm{\th}\|_1}.$$
\end{proposition}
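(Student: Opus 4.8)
The plan is to use the definition of extremal length directly, applied to the spherical metric itself as a competitor conformal metric.

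\medskip

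First, recall that $\Ext_\gamma(\dot S)$ is the supremum over all finite-area conformal metrics $\rho$ on $\dot S$ of the ratio $\inf_{\check\gamma}\ell_\rho(\check\gamma)^2/\area(\rho)$, where the infimum runs over curves freely homotopic to $\gamma$. The spherical metric on $S$, restricted to $\dot S = S\setminus\bm{x}$, is itself a conformal metric of finite area $\area = \Area(S)$ (since conical points are removable for area integration, $\Area(S)$ being finite). Therefore, plugging this particular $\rho$ into the supremum gives
\[
\Ext_\gamma(\dot S)\ \geq\ \frac{\inf_{\check\gamma}\ell(\check\gamma)^2}{\Area(S)},
\]
so that $\inf_{\check\gamma}\ell(\check\gamma)^2 \leq \Area(S)\cdot\Ext_\gamma(\dot S)$. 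This yields the first inequality, once one verifies that the infimum over homotopic curves is actually realized (or at least approximated strictly below the bound) by an embedded simple closed curve. For the strictness of the inequality $\ell(\check\gamma)<\sqrt{\Area(S)\cdot\Ext_\gamma(\dot S)}$ and for the claim that $\check\gamma$ can be taken \emph{simple}, one uses that the spherical metric is not the extremal metric $|\q_\gamma|$ for $\Ext_\gamma$ (by Corollary \ref{cor:ext-from-cyl}, equality would force $\rho=|\q_\gamma|$, which is flat with conical singularities and hence not the spherical metric), so the inequality in the variational bound is strict; and the geodesic representative in the free homotopy class of $\gamma$, which minimizes length, is simple because $\gamma$ is simple.

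\medskip

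For the second inequality, one invokes the Gauss--Bonnet formula for spherical surfaces with conical points: $\Area(S) = 2\pi\big(\chi(S) + \sum_i(\th_i - 1)\big) = 2\pi\big(2-2g-n+\sum_i\th_i - \text{(something)}\big)$. Rearranging, $\Area(S) = 2\pi\big(\chi(\dot S) + \|\bm\th\|_1\big)$ where $\chi(\dot S) = 2-2g-n < 0$ by hypothesis. Hence $\Area(S) < 2\pi\|\bm\th\|_1$, and combining with the first inequality gives $\ell(\check\gamma) < \sqrt{2\pi\,\Ext_\gamma(\dot S)\,\|\bm\th\|_1}$.

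\medskip

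The main obstacle I expect is the careful justification that the length infimum over the free homotopy class is attained by a \emph{simple} closed geodesic, strictly below $\sqrt{\Area(S)\cdot\Ext_\gamma(\dot S)}$. On a surface with conical points of angle possibly less than $2\pi$ (the "small angle" case the paper emphasizes), geodesics can pass through or around conical singularities, and the standard Hilbert--Arzel\`a compactness argument for existence of a shortest curve in a homotopy class must be adapted: one must rule out the minimizing sequence escaping into a conical point (handled by a collar/angle argument, or since $\chi(\dot S)<0$ the class $\gamma$ is essential and cannot be contracted into a puncture) and confirm simplicity of the minimizer (a shortest curve in the class of an embedded curve on a surface is embedded, by the standard surgery argument cutting self-intersections). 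Once existence and simplicity of the geodesic representative $\check\gamma$ are secured, the strict inequality follows from Corollary \ref{cor:ext-from-cyl} as indicated, and the rest is the Gauss--Bonnet computation.
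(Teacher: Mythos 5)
Your proof is essentially the paper's: use the spherical metric as a competitor in the definition of extremal length, invoke Corollary \ref{cor:ext-from-cyl} for strictness, and finish with Gauss--Bonnet. The one place you make the argument harder than necessary is the discussion of a length-minimizing geodesic in the homotopy class: the paper sidesteps existence entirely by choosing a simple closed curve $\check{\gamma}$ whose squared length is within $\e>0$ of the infimum, with $\e$ small enough that the strict inequality survives --- precisely the escape route you yourself mention parenthetically (``or at least approximated strictly below the bound''), which avoids the compactness and cone-point concerns you raise at the end.
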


\begin{proof} 
By definition \ref{defofextermal} of extremal length,
$\Area(S)\cdot\Ext_{\gamma}(\dot{S})\geq \inf_{\gamma'}\ell(\gamma')^2$, where
$\gamma'$ ranges over all simple closed curves in $\dot{S}$ freely homotopic to $\gamma$.
By Corollary \ref{cor:ext-from-cyl}, the sup in the
definition of $\Ext_{\gamma}(\dot{S})$ is not attained at a spherical metric,
and so there exists $\e>0$ such that
$\Area(S)\cdot\Ext_{\gamma}(\dot{S})>2\e+\inf_{\gamma'}\ell(\gamma')^2$.
Furthermore, there exists $\check{\gamma}\simeq \gamma$ such that $\ell(\check{\gamma})^2\leq \e+\inf_{\gamma'}\ell(\gamma')^2$,
and so $\Area(S)\cdot\Ext_{\gamma}(\dot{S})>\e+\ell(\check{\gamma})^2$.
In other words, $\ell(\check{\gamma})<\sqrt{\Area(S)\cdot\Ext_{\gamma}(\dot{S})}$.
The conclusion then follows, since $\Area(S)=2\pi\left(\chi(\dot{S})+\|\bm{\th}\|_1\right)< 2\pi\|\bm{\th}\|_1$ by Gauss-Bonnet.
%
%
%
\end{proof}

As a consequence,
combining Proposition \ref{systolfromcylinder} and Lemma \ref{birkhoff}, we obtain the following bound
from above for the systole in terms of the extremal systole.

\begin{corollary}[Extremal systole bounds systole from above]\label{cor:systole-upper-bound}
In a spherical surface $S$ with conical singularities at $\bm{x}$ of angles $2\pi\bm{\th}$, 
the systole satisfies $\sys(S,\bm{x})\le \sqrt{(\pi/2) \Ext\sys(\dot{S})\|\bm{\th}\|_1}$. 
\end{corollary}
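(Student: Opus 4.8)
The goal is to bound the systole $\sys(S,\bm{x})$, which by definition is the length of the shortest essential closed geodesic (or the shortest geodesic arc between conical points, depending on the convention fixed earlier in the paper), in terms of the extremal systole of the underlying punctured Riemann surface. The plan is to combine the two ingredients advertised in the statement: Proposition \ref{systolfromcylinder}, which gives for \emph{every} essential simple closed curve $\gamma$ a representative $\check\gamma$ with $\ell(\check\gamma)<\sqrt{\Area(S)\cdot\Ext_\gamma(\dot S)}<\sqrt{2\pi\Ext_\gamma(\dot S)\|\bm\th\|_1}$, and Lemma \ref{birkhoff}, which should be a Birkhoff-type curve-shortening statement guaranteeing that a freely homotopically nontrivial curve can be tightened to a geodesic (or a geodesic loop through a cone point) of no greater length, so that the systole is genuinely $\le\inf_\gamma\ell(\check\gamma)$ over curves $\check\gamma$ produced as above.

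First I would choose $\gamma$ to be an essential simple closed curve on $\dot S$ realizing the extremal systole, i.e.\ with $\Ext_\gamma(\dot S)=\Ext\sys(\dot S)$; such a curve exists because the extremal systole is defined as a minimum over essential simple closed curves. Next I would apply Proposition \ref{systolfromcylinder} to this $\gamma$ to produce a simple closed curve $\check\gamma\simeq\gamma$ on $\dot S$ with $\ell(\check\gamma)<\sqrt{2\pi\,\Ext\sys(\dot S)\,\|\bm\th\|_1}$. Then I would invoke Lemma \ref{birkhoff} to replace $\check\gamma$ by an actual closed geodesic (or cone-point geodesic loop) freely homotopic to it of length at most $\ell(\check\gamma)$, which by definition of the systole gives $\sys(S,\bm{x})\le\ell(\check\gamma)<\sqrt{2\pi\,\Ext\sys(\dot S)\,\|\bm\th\|_1}$.

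At this point the bound reads $\sqrt{2\pi\,\Ext\sys\|\bm\th\|_1}$, whereas the claimed bound is $\sqrt{(\pi/2)\,\Ext\sys\|\bm\th\|_1}$, a factor of $2$ better inside the square root. This improvement is the one real subtlety. I expect it comes from the fact that a shortest \emph{essential} simple closed curve on a hyperbolic-type surface, once made geodesic, is automatically \emph{simple} and bounds an embedded collar/cylinder; running Proposition \ref{systolfromcylinder} not on $\dot S$ but on this embedded maximal cylinder $C$ (together with the conformal monotonicity $\Ext_\gamma(\dot S)\ge\Ext_\gamma(C)$ from Remark \ref{modulevialength}, used in the favorable direction via $1/M(C)$) and using $\Area(C)\le\Area(S)$ replaces the crude $\Area(S)$ by an area that the standard collar/Groetzsch estimate controls by a quarter of the naive value — equivalently, the $H^2/\Area$ comparison in Lemma \ref{modulus-height} is sharp for a flat straight cylinder while a cone-point geodesic is covered twice, saving the factor $4=2^2$. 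The cleanest route is probably to argue that the systole-realizing curve $\gamma$ has $\Ext_\gamma(\dot S)\ge 4\,\ell(\gamma)^2/\Area(S)$ by plugging the spherical metric restricted to an embedded cylinder into the sup defining $\Ext_\gamma$ and then invoking Lemma \ref{modulus-height}, whence $\sys^2\le\ell(\gamma)^2\le\tfrac14\Area(S)\Ext\sys(\dot S)<\tfrac14\cdot 2\pi\|\bm\th\|_1\cdot\Ext\sys(\dot S)=(\pi/2)\Ext\sys(\dot S)\|\bm\th\|_1$, and take square roots.

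The main obstacle, then, is not the overall strategy but pinning down exactly where the factor of $4$ enters and making sure Lemma \ref{birkhoff} applies to the curve at hand (in particular that the systole-realizing curve can be taken simple and that it bounds the embedded cylinder needed to run the sharp collar estimate, with area at most $\Area(S)$). Once those two points are in place, the chain of inequalities above and Gauss--Bonnet (in the form $\Area(S)=2\pi(\chi(\dot S)+\|\bm\th\|_1)<2\pi\|\bm\th\|_1$) close the argument.
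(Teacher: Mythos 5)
Your two ingredients are exactly the paper's: the paper combines Proposition~\ref{systolfromcylinder} with Lemma~\ref{birkhoff}, and your first two steps (minimize $\Ext_\gamma$ over $\gamma$, then apply Proposition~\ref{systolfromcylinder} to the minimizing curve) are correct and match what the paper does. The issue is where you locate the factor of $2$ (in length; $4$ under the radical). Your speculation that the improvement comes from a collar/embedded-cylinder estimate via Lemma~\ref{modulus-height} and Remark~\ref{modulevialength} does not actually work as you sketch it: Lemma~\ref{modulus-height} gives a \emph{lower} bound $M(C)\ge H^2/\Area(C)$, and Remark~\ref{modulevialength} gives $\Ext_\gamma=\inf_C 1/M(C)$, so combining them yields an \emph{upper} bound $\Ext_\gamma\le\Area(C)/H^2$, not the lower bound $\Ext_\gamma\ge 4\ell(\gamma)^2/\Area(S)$ that you assert. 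Plugging the spherical metric into the $\sup$ in Definition~\ref{defofextermal} only gives the basic $\Ext_\gamma\ge \big(\inf_{\check\gamma}\ell(\check\gamma)\big)^2/\Area(S)$, with no extra factor of $4$, so the ``cleanest route'' you propose has a gap.

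The factor of $2$ comes from Lemma~\ref{birkhoff} itself (together with how $\sys(S,\bm{x})$ is defined), not from a refinement of the extremal-length estimate. A Birkhoff-type curve-shortening process on a spherical surface applied to an essential simple closed curve of length $L$ need not terminate at a closed geodesic: it can collapse onto a conical point, producing a geodesic loop at a cone point, or onto a geodesic arc between two cone points traversed twice. In each outcome the relevant systolic quantity (closed geodesic, geodesic loop at a cone point, or twice a distance between cone points) is bounded by $L/2$, not merely by $L$; that is the content of Lemma~\ref{birkhoff}, and it is exactly the factor of $2$ you are missing. With $\sys(S,\bm{x})\le \tfrac12\ell(\check\gamma)<\tfrac12\sqrt{2\pi\,\Ext\sys(\dot S)\,\|\bm\th\|_1}=\sqrt{(\pi/2)\,\Ext\sys(\dot S)\,\|\bm\th\|_1}$, the argument closes without any appeal to collars or embedded cylinders. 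So: your global structure is right and the same as the paper's, but the mechanism you propose for the constant is wrong and the step you flag as ``the one real subtlety'' is resolved by the statement of Lemma~\ref{birkhoff} rather than by the extremal-length machinery.
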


\subsection{Peripheral regions in $\Mcal_{0,4}$}

%

We recall that the moduli space $\Mcal_{0,4}$ 
of Riemann surfaces of genus $0$ with $4$ distinct marked points
is isomorphic to $\CC\PP^1\setminus\{0,1,\infty\}$.

For every $1\leq i\leq 3$ denote by $\Mcal_{0,4}^{(i,4)}$ 
the subset of $\Mcal_{0,4}$
consisting of isomorphism classes of Riemann surfaces $(S,J,\bm{x})$ such that
there exists a simple closed curve $\gamma\subset\dot{S}$ 
with $\Ext_{\gamma}(\dot{S},J)<2$
such that a connected component of $S\setminus \gamma$ contains $\{x_i,x_4\}$.

In this subsection we want to prove the following result.

\begin{lemma}[Peripheral regions of $\Mcal_{0,4}$]\label{lemma:peripheral}
The subsets
$\Mcal_{0,4}^{(1,4)}$, $\Mcal_{0,4}^{(2,4)}$, $\Mcal_{0,4}^{(3,4)}$
of $\Mcal_{0,4}$ are non-empty, open, connected and disjoint.
\end{lemma}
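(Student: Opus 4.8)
The plan is to identify $\Mcal_{0,4}$ with $\CC\PP^1\setminus\{0,1,\infty\}$ in the standard way, placing the marked points at $x_1=0$, $x_2=1$, $x_3=\infty$ and $x_4=\lambda$, so that the coordinate on $\Mcal_{0,4}$ is $\lambda\in\CC\setminus\{0,1\}$. Under this description, a simple closed curve $\gamma$ on $\dot S$ separating $\{x_i,x_4\}$ from the other two points corresponds (for $i=1$) to a curve encircling $0$ and $\lambda$, and the extremal length $\Ext_\gamma(\dot S,J)$ is a continuous function of $\lambda$ by the variational characterization in Remark \ref{modulevialength} together with the Strebel description in Proposition \ref{prop:strebel} and Corollary \ref{cor:ext-from-cyl}. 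Each set $\Mcal_{0,4}^{(i,4)}$ is then, by definition, the set of $\lambda$ for which this particular extremal length is $<2$; openness follows immediately from continuity of extremal length in $\lambda$.

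For \textbf{non-emptiness}, I would exhibit for each $i$ an explicit degenerating family. For $i=1$: send $x_4=\lambda\to 0$. As $\lambda\to 0$, the points $\{x_1,x_4\}=\{0,\lambda\}$ collide, so there is a family of cylinders $C$ separating $\{0,\lambda\}$ from $\{1,\infty\}$ whose modulus $M(C)\to\infty$; hence $\Ext_\gamma(\dot S)\to 0<2$ for all $\lambda$ close enough to $0$. This shows $\Mcal_{0,4}^{(1,4)}\neq\emptyset$. The cases $i=2$ and $i=3$ are obtained by the symmetries of $\CC\PP^1\setminus\{0,1,\infty\}$ that permute the marked points (e.g.\ $\lambda\mapsto 1-\lambda$ swaps $x_1\leftrightarrow x_2$ and fixes the degeneration $\lambda\to 1$ corresponding to $i=2$; $\lambda\mapsto 1/\lambda$ handles $i=3$, sending $\lambda\to\infty$). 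Connectedness is likewise reduced to a single case by symmetry: I claim $\Mcal_{0,4}^{(1,4)}$ is connected. The idea is that it deformation-retracts onto the "cusp neighborhood'' at $\lambda=0$: if $\Ext_\gamma(\dot S_\lambda)<2$ then along the radial path $t\mapsto t\lambda$, $t\in(0,1]$, the relevant extremal length only decreases (a shorter cylinder around the colliding points has at least as large a modulus — this can be made precise via the monotonicity in Remark \ref{modulevialength} under the conformal embeddings obtained as $t$ decreases, or by a direct Teichm\"uller-type estimate). Thus every point of $\Mcal_{0,4}^{(1,4)}$ is joined within the set to points arbitrarily near $0$, and a small punctured disk around $0$ is contained in $\Mcal_{0,4}^{(1,4)}$ and is connected; hence the whole set is connected.

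For \textbf{disjointness}, suppose $(S,J,\bm x)\in\Mcal_{0,4}^{(i,4)}\cap\Mcal_{0,4}^{(j,4)}$ with $i\neq j$, witnessed by curves $\gamma_i$ and $\gamma_j$ with $\Ext_{\gamma_i}(\dot S)<2$ and $\Ext_{\gamma_j}(\dot S)<2$. Since $\gamma_i$ separates $\{x_i,x_4\}$ from the complementary pair and $\gamma_j$ separates $\{x_j,x_4\}$ from its complementary pair, and $i\neq j$, the two curves $\gamma_i,\gamma_j$ are non-isotopic essential simple closed curves on the four-punctured sphere, hence they must intersect (any two distinct essential simple closed curves on $\dot S_{0,4}$ have nonzero geometric intersection number, in fact $\geq 2$). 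The main obstacle — and the crux of the argument — is to derive a contradiction from the coexistence of two intersecting curves of small extremal length; the standard tool is the \emph{collar/intersection inequality} for extremal lengths: if $\alpha,\beta$ are simple closed curves on $\Sigma$ with geometric intersection number $i(\alpha,\beta)$, then $\Ext_\alpha(\Sigma)\cdot\Ext_\beta(\Sigma)\geq i(\alpha,\beta)^2$ (this follows by taking a test metric adapted to one curve and estimating the length of the other, as in the classical proof that short curves on a Riemann surface are disjoint). On $\dot S_{0,4}$ one has $i(\gamma_i,\gamma_j)\geq 2$, so $\Ext_{\gamma_i}\cdot\Ext_{\gamma_j}\geq 4$, contradicting that both factors are $<2$. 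This proves the three sets are pairwise disjoint, completing the proof.
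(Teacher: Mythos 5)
Your proof handles openness, non-emptiness and disjointness along essentially the same lines as the paper. For disjointness, the key inequality $\Ext_{\gamma_i}\cdot\Ext_{\gamma_j}\geq i(\gamma_i,\gamma_j)^2\geq 4$ is exactly the content of the paper's Lemma \ref{lemma:M04-extsys}, proved there via the Strebel cylinder description and used to conclude that no surface in $\Mcal_{0,4}$ carries two distinct curves of extremal length $<2$. Openness via continuity of extremal length (or, as the paper does, of the extremal systole) is also the same idea, and your degeneration argument for non-emptiness is fine.

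The gap is in connectedness. You reduce it to the claim that along a radial path $t\mapsto t\lambda$ the extremal length of the relevant curve is monotone, but you neither prove this nor sketch a proof, and the two tools you point to do not obviously apply: Remark \ref{modulevialength} gives monotonicity of extremal length under a conformal \emph{inclusion} of surfaces, whereas the $4$-punctured spheres $\dot{S}_{t\lambda}$ for distinct $t$ are not naturally nested in one another (moving a puncture is not a conformal embedding of the complement). A ``direct Teichm\"uller-type estimate'' could in principle work, but nothing is supplied, and monotonicity of extremal length along an arbitrary radial ray toward a cusp is not a standard fact one can just quote. There is also a subtler point you pass over: $\Mcal_{0,4}^{(1,4)}$ is defined by the existence of \emph{some} curve of the given separating type with $\Ext<2$; there are infinitely many isotopy classes of such curves (related by Dehn twists), so identifying the region with a sublevel set of the extremal length of a single fixed curve requires an argument (it is true here only because, by Lemma \ref{lemma:M04-extsys}, at most one curve can be short on any given surface). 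The paper sidesteps both issues by a different route: it parametrizes pairs (surface, Strebel differential for a $(1,2)|(3,4)$-curve) by $(r,\phi)\in\RR_{>0}\times\RR/\ZZ\cong\CC^*$ via Example \ref{example:strebel0,4}, uses Lemma \ref{lemma:strebel0,4} to show that \emph{every} surface in $\Mcal_{0,4}^{(3,4)}$ arises this way with $\Ext_\gamma=2r^2$, and concludes that $\Mcal_{0,4}^{(3,4)}$ is the continuous image of the punctured disk $\{r<1\}$, hence connected. To fix your argument you would either need to supply the monotonicity estimate for a canonical curve and justify why only that curve matters, or switch to a parametrization argument in the spirit of the paper.
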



%

We begin by exhibiting an explicit construction of Riemann surfaces of genus $0$
with marked points $x_1,x_2,x_3,x_4$ endowed with a Strebel differential $\q_{12}$ associated
to a simple closed curve $\gamma$ that separates $x_1,x_2$ from $x_3,x_4$.

\begin{center}
\begin{figurehere}
\psfrag{S}{$\textcolor{Blue}{\Ccal_r}$}
\psfrag{x1}{$\textcolor{Sepia}{x_1}$}
\psfrag{x2}{$\textcolor{Sepia}{x_2}$}
\psfrag{x3}{$\textcolor{Sepia}{x_3}$}
\psfrag{x4}{$\textcolor{Sepia}{x_4}$}
\psfrag{a12}{$\textcolor{PineGreen}{\alpha_{12}}$}
\psfrag{a34}{$\textcolor{Purple}{\alpha_{34}}$}
\includegraphics[width=0.25\textwidth]{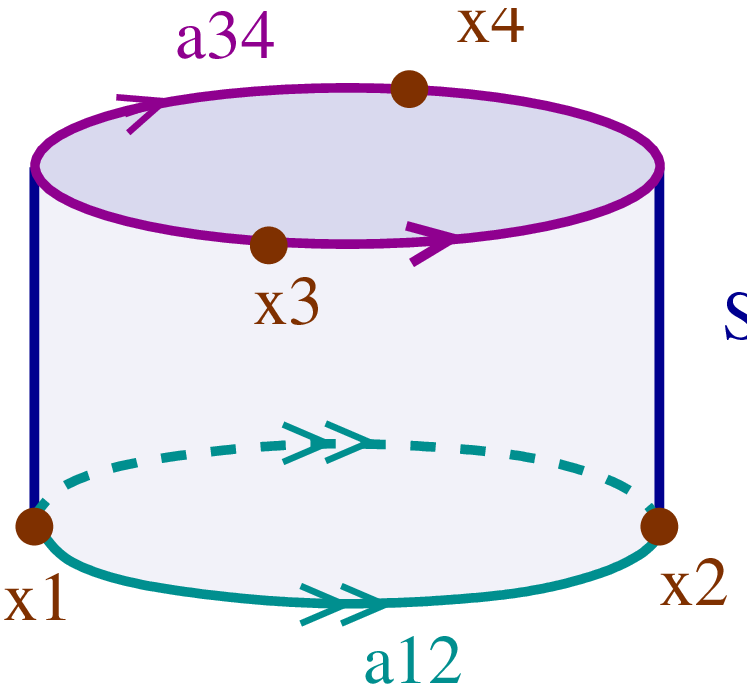}
\caption{{\small The surface $S_{r,\phi}$ is obtained from a cylinder $\Ccal_r$ via identification on $\pa\Ccal_r$.}}\label{fig:M04-bis}
\end{figurehere}
\end{center}

\begin{example}\label{example:strebel0,4}
Let $r>0$ and consider a flat cylinder $\Ccal_r=(\RR/2r\ZZ)\times [0,\frac{1}{r}]$ of waist $2r$ and height $\frac{1}{r}$,
which can be obtained from the strip $\{z\in\CC\,|\,\IM(z)\in[0,\frac{1}{r}]\}$ by identifying $z\sim z+2r$.
Given $\phi\in\RR/\ZZ$, let $S_{r,\phi}$ be the flat surface obtained from $\Ccal_r$
by identifying $(u,0)\sim(-u,0)$ and $(u-2r\phi,\frac{1}{r})\sim (2r\phi-u,\frac{1}{r})$, and then marking the conical points
$[0,0],[r,0],[2r\phi,\frac{1}{r}],2r(\phi+\frac{1}{2}),\frac{1}{r}]$ by $x_1,x_2,x_3,x_4$.
The quadratic differential $dz^2$ on the strip descends to a quadratic differential $\q_{12}$ on $S_{r,\phi}$
and $|\q_{12}|$ agrees with the induced flat metric on $S_{r,\phi}$.
Note that the only two horizontal non-periodic trajectories of $\q_{12}$
have length $r$: they are the arc $\alpha_{12}$ between $x_1,x_2$ 
induced by the boundary curve $\{\IM(z)=0\}$ of $\Ccal_r$
and the arc $\alpha_{34}$ between $x_3,x_4$ induced by the boundary curve $\{\IM(z)=\frac{1}{r}\}$ of $\Ccal_r$.
\end{example}

%
%

In the next lemma we show that all surfaces in $\Mcal_{0,4}$ endowed with Strebel differentials
are of the type seen above.

\begin{lemma}[Strebel differentials that separate $x_1,x_2$ from $x_3,x_4$]\label{lemma:strebel0,4}
Let $(S,\bm{x},J)\in\Mcal_{0,4}$.
Let $\gamma$ be a simple closed curve in $\dot{S}$ that separates $x_1,x_2$ from $x_3,x_4$
and let $\q_\gamma$ be the associated Strebel differential of area $2$.
Then $(S,\q_\gamma)$ is isomorphic to a $(S_{r,\phi},\q_{12})$ constructed in Example \ref{example:strebel0,4}
for some $r>0$ and $\phi\in\RR/\ZZ$. As a consequence, $\Ext_{\gamma}(S_{r,\phi})=2r^2$.
\end{lemma}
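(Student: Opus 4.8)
The plan is to show that any Strebel differential $\q_\gamma$ for a curve $\gamma$ separating $x_1,x_2$ from $x_3,x_4$ produces exactly the combinatorial picture of Example \ref{example:strebel0,4}, and then read off the modulus. First I would invoke Proposition \ref{prop:strebel} to decompose $(S,\q_\gamma)$ into the maximal cylinder $C_\gamma$ of closed horizontal trajectories, together with the finitely many critical horizontal arcs forming the complement. Since $S$ has genus $0$ and $\q_\gamma$ has at worst simple poles at the four marked points, a count of zeros via the Riemann–Roch / Poincaré–Hopf relation $\sum_p \mathrm{ord}_p(\q_\gamma) = 2g-2 + (\text{number of marked points}) = 2$ (with simple poles counting as $\mathrm{ord}=-1$ and the four marked points contributing $-4$, forcing total zero order $2$, i.e.\ either two simple zeros or one double zero) pins down the critical graph. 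The generic and only possibility consistent with a single cylinder separating $\{x_1,x_2\}$ from $\{x_3,x_4\}$ is: each boundary circle of $C_\gamma$ is a union of critical trajectories forming a ``figure'' with one double zero (or two simple zeros) and passing through the two marked points on its side. I would argue that each side must carry a non-periodic trajectory structure that, when developed, is an interval of some length doubled up, matching the foldings $(u,0)\sim(-u,0)$ and $(u-2r\phi,\tfrac1r)\sim(2r\phi-u,\tfrac1r)$.

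Next I would set up the explicit uniformization. The cylinder $C_\gamma$, with the flat metric $|\q_\gamma|$ and the natural coordinate $z$ with $\q_\gamma = dz^2$, is isometric to a standard flat cylinder $(\RR/2r\ZZ)\times[0,h]$ for some waist $2r$ and height $h$; normalizing $\area(\q_\gamma) = 2$ gives $2rh = 2$, hence $h = 1/r$, exactly as in the Example. The boundary identifications: on each boundary circle the critical arc structure is an involution (because genus $0$ forces the critical graph on each side to be a single arc traversed back and forth, with the marked points at the turning points or endpoints), and up to rotating the $z$-coordinate this involution is $u\mapsto -u$ on the bottom and $u\mapsto 2r\phi - u$ on the top for a unique $\phi\in\RR/\ZZ$ recording the relative twist; the two marked points on each side are the two fixed points of the respective involution, i.e.\ $\{[0,0],[r,0]\}$ and $\{[2r\phi,\tfrac1r],[2r(\phi+\tfrac12),\tfrac1r]\}$. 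This identifies $(S,\q_\gamma)$ with $(S_{r,\phi},\q_{12})$ as in Example \ref{example:strebel0,4}. Finally, by Corollary \ref{cor:ext-from-cyl}, $\Ext_\gamma(S_{r,\phi}) = 1/M(C_\gamma)$, and by Lemma \ref{modulus-height} applied to the flat straight cylinder (equality case) $M(C_\gamma) = h^2/\area = h^2/2 = (1/r)^2/2 = 1/(2r^2)$, whence $\Ext_\gamma(S_{r,\phi}) = 2r^2$.

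The main obstacle I expect is the rigidity step: showing that the critical graph is forced to be precisely two arcs, one on each boundary circle, each meeting only the ``correct'' pair of marked points, with no extra configuration (e.g.\ a critical arc joining two marked points on the \emph{same} side while the cylinder boundary is more complicated, or a zero of order two splitting into two separate simple zeros giving a longer boundary cycle). This is where the genus-$0$ hypothesis and the separation condition on $\gamma$ are essential: one must rule out any critical graph that is not a tree-like ``spine'' of each side, and argue that after cutting along all critical horizontal trajectories one is left with a single cylinder (not several), so the complement of $C_\gamma$ has no interior. I would handle this by a careful Euler-characteristic bookkeeping on the ribbon graph dual to the horizontal foliation, combined with the observation that any closed horizontal trajectory must be homotopic to $\gamma$ (Proposition \ref{prop:strebel}), so there cannot be a second cylinder in a distinct homotopy class on a sphere with four punctures unless it were parallel, in which case it would be absorbed into $C_\gamma$. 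Once the combinatorial type is nailed down, the rest is the routine normalization of coordinates sketched above.
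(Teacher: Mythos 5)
Your overall strategy mirrors the paper's: reduce to the structure of the critical graph of $\q_\gamma$, identify $S$ with $S_{r,\phi}$, and read off the modulus from the flat cylinder. The modulus computation at the end (waist $2r$, height $1/r$, area $2$, so $M(C_\gamma)=1/(2r^2)$ and $\Ext_\gamma=2r^2$) is correct and matches the paper's answer.

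However, there is a concrete error in the degree count, and it is exactly the count that would dissolve the ``main obstacle'' you worry about. For a \emph{quadratic} differential on a genus-$g$ surface, the sum of orders is $4g-4$, not $2g-2$; your formula $\sum_p \mathrm{ord}_p(\q_\gamma)=2g-2+n$ is the one for abelian differentials. With $g=0$ and four simple poles one gets total interior zero order $4g-4+4=0$, i.e.\ $\q_\gamma$ has \emph{no} zeros at all. This is why the paper can say immediately that ``there is a unique horizontal trajectory outgoing from each $x_i$'' and conclude that the complement of $C_\gamma$ is exactly two segments: four valence-one vertices (simple poles) and no other vertices forces the critical graph to be two disjoint arcs, and the separating condition on $\gamma$ forces them to join $x_1$ to $x_2$ and $x_3$ to $x_4$. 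Your subsequent discussion of ``one double zero (or two simple zeros)'' and the possibility of ``a zero of order two splitting into two separate simple zeros giving a longer boundary cycle'' is chasing configurations that cannot occur, and your later phrase ``a single arc traversed back and forth, with the marked points at the turning points'' is actually the zero-free picture, which contradicts the zero count you wrote a few lines earlier. The internal inconsistency resolves in the right direction, but as written the argument does not establish the rigidity you flag as the crux; correcting the count to $4g-4+n$ makes it immediate.
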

\begin{proof}
Since $\q_\gamma$ has simple poles at $\bm{x}$, there is a unique horizontal trajectory outgoing from each $x_i$.
It follows that the complement of the cylinder $C_\gamma$ is the union of two segments joining $x_1$ to $x_2$ and $x_3$ to $x_4$.
It is now easy to see that $\q_\gamma$ must be of the type produced in Example \ref{example:strebel0,4}.
The last claim follows by noting that a geodesic loop homotopic to ${\gamma}$ has length $2r$.
\end{proof}

In the lemma below we show that there can be 
at most one simple closed curve with extremal length smaller than $2$
on a Riemann surface in $\Mcal_{0,4}$. This is a special case of a more general lower bound
for the product the extremal lengths of two simple closed curves in terms of their geometric intersection product.
We include a short proof for completeness.


\begin{lemma}[Small extremal systole is realized at one curve]\label{lemma:M04-extsys}
For every Riemann surface $(S,J)$ of genus $0$ with $4$ marked points
there exists at most one (essential) simple closed curve $\gamma\subset \dot{S}$ such that
$\Ext_\gamma(\dot{S},J)<2$.
\end{lemma}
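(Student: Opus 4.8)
The plan is to argue by contradiction: suppose there were two distinct essential simple closed curves $\gamma_1,\gamma_2\subset\dot S$, both with extremal length $<2$. On a four-times-punctured sphere any two distinct (isotopy classes of) essential simple closed curves have positive geometric intersection number, and in fact $i(\gamma_1,\gamma_2)\geq 2$, since curves separating the punctures into pairs $\{x_i,x_j\}$ vs $\{x_k,x_l\}$ are classified by the unordered partition, and distinct partitions force the curves to cross at least twice. So the crux is a lower bound of the shape $\Ext_{\gamma_1}(\dot S)\cdot\Ext_{\gamma_2}(\dot S)\geq i(\gamma_1,\gamma_2)^2\geq 4$, which immediately contradicts both extremal lengths being $<2$.

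First I would set up the Strebel picture for $\gamma_1$. By Proposition~\ref{prop:strebel} and Lemma~\ref{lemma:strebel0,4}, we may assume $(S,\q_{\gamma_1})$ is the flat model $S_{r,\phi}$ of Example~\ref{example:strebel0,4}, with $\Ext_{\gamma_1}(\dot S)=2r^2$; so $r<1$. The surface carries the flat metric $|\q_{\gamma_1}|$ of area $2$, and $\gamma_1$ is represented by the core of the cylinder $\Ccal_r$ of height $1/r$ and circumference $2r$. Next I would estimate $\Ext_{\gamma_2}(\dot S)$ from below using this explicit flat metric: by the definition of extremal length (Definition~\ref{defofextermal}), $\Ext_{\gamma_2}(\dot S)\geq \dfrac{\bigl(\inf_{\check\gamma_2\simeq\gamma_2}\ell_{|\q_{\gamma_1}|}(\check\gamma_2)\bigr)^2}{\area(|\q_{\gamma_1}|)}=\dfrac{L^2}{2}$, where $L$ is the $|\q_{\gamma_1}|$-length of the shortest representative of $\gamma_2$. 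It then suffices to show $L\geq 2$, i.e. every curve in the class of $\gamma_2$ has flat length at least $2$ in the model $S_{r,\phi}$.

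To see $L\geq 2$: since $\gamma_2\neq\gamma_1$, the class $\gamma_2$ is not homotopic to the core of $\Ccal_r$, so any representative $\check\gamma_2$ must cross the two complementary horizontal slit-arcs $\alpha_{12}$ and $\alpha_{34}$ — equivalently, lifting to the strip $\{\,0\leq \IM(z)\leq 1/r\,\}$, the curve must traverse the full height $1/r$ of the cylinder and come back, so it meets each of the two boundary circles $\{\IM(z)=0\}$ and $\{\IM(z)=1/r\}$. The cleanest way to make this rigorous is to observe $i(\gamma_1,\gamma_2)\geq 2$, so $\check\gamma_2$ meets $\gamma_1$ (the core circle $\{\IM(z)=\tfrac{1}{2r}\}$) in at least two points, hence crosses the cylinder $\Ccal_r$ at least twice in the vertical direction; each such crossing contributes at least the height $1/r$ to the length, so $L\geq 2/r>2$. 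Feeding this into the displayed inequality gives $\Ext_{\gamma_2}(\dot S)\geq 2/r^2$, whence $\Ext_{\gamma_1}(\dot S)\cdot\Ext_{\gamma_2}(\dot S)\geq 4$, contradicting the hypothesis that both factors are $<2$.

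The main obstacle is the combinatorial-topological input $i(\gamma_1,\gamma_2)\geq 2$ for distinct essential simple closed curves on the four-punctured sphere, together with turning it into the metric inequality $L\geq 2/r$: one must be careful that a minimal-length representative of $\gamma_2$ in the singular flat metric $|\q_{\gamma_1}|$ is taut (it may pass through conical points $x_i$ or through the zeros of $\q_{\gamma_1}$), so that the count of transverse intersections with $\gamma_1$ is genuinely $\geq i(\gamma_1,\gamma_2)$ and each vertical excursion across $\Ccal_r$ indeed costs at least $1/r$. Once that geometric bookkeeping is in place — for instance by replacing $\check\gamma_2$ with a geodesic representative and projecting each arc of it that lies in $\Ccal_r$ to the vertical coordinate — the rest is the short computation above.
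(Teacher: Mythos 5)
Your argument is correct and follows essentially the same route as the paper: put $\gamma_1$ in its Strebel model $S_{r,\phi}$ of area $2$, observe that any essential curve not homotopic to $\gamma_1$ must meet both slit-arcs $\alpha_{12}$ and $\alpha_{34}$ and hence has $|\q_{\gamma_1}|$-length at least $2/r$, then conclude $\Ext_{\gamma_1}(\dot S)\cdot\Ext_{\gamma_2}(\dot S)\ge 4$. One small caveat worth noting: the aside invoking $i(\gamma_1,\gamma_2)\ge 2$ does not by itself force a curve to traverse the full height of $\Ccal_r$ twice (two transverse crossings of the core circle could both be shallow); the bound $L\ge 2/r$ really comes from the observation you make first — that the curve must touch both boundary arcs $\{\IM(z)=0\}$ and $\{\IM(z)=1/r\}$ — which is exactly the paper's argument.
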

\begin{proof}
Up to relabelling the marked points, we can assume that $\gamma$ separates $x_1,x_2$ from $x_3,x_4$.
Up to rescaling, we can also assume that $(S,|\q_\gamma|)$ has area $2$.
By Lemma \ref{lemma:strebel0,4}, the couple $(S,\q_\gamma)$ is isomorphic to
a couple $(S_{r,\phi},\q_{12})$ constructed in Example \ref{example:strebel0,4}.


If $\beta$ is any other essential simple closed curve in $\dot{S}$ not homotopic to $\gamma$, then
any geodesic representative $\bar{\beta}$ of $\beta$ must cross both the arcs
$\alpha_{12}$ and $\alpha_{34}$.
Hence, $\bar{\beta}$ must have
length at least $\frac{2}{r}$ and so $\Ext_\beta(\dot{S},J)\geq \frac{1}{2}\left(\frac{2}{r}\right)^2=\frac{2}{r^2}$.
It follows that $\Ext_\gamma(\dot{S},J)\cdot\Ext_\beta(\dot{S},J)\geq 4$.
As a consequence, if $\Ext_\gamma(\dot{S},J)<2$, then $\Ext_\beta(\dot{S},J)>2$.
\end{proof}

We can now prove the main result of this subsection.

%
%
\begin{proof}[Proof of Lemma \ref{lemma:peripheral}]
In view of Lemma \ref{lemma:M04-extsys}, the above regions are disjoint.
Since their union is $\Ext\sys^{-1}(0,2)$ and $\Ext\sys:\Mcal_{0,4}\rar\RR_+$
is continuous by Lemma \ref{lemma:ext-systole-function}, it follows that
each region is open.

Let us prove that $\Mcal_{0,4}^{(3,4)}$ is non-empty and connected.
The cases $(1,4)$ and $(2,4)$ will be analogous.

Consider the map $\Psi:\CC^*\rar \Mcal_{0,4}$ that sends
$z=re^{2\pi i\phi}$ to the Riemann surface $(S_{r,\phi},\bm{x})$ constructed in
Example \ref{example:strebel0,4}. It is not difficult to see that such map is continuous.

Since the curve $\gamma$ inside $S_{r,\phi}$ that separates $x_1,x_2$ from $x_3,x_4$
satisfies $\Ext_{\gamma}(S_{r,\phi})=2r^2$, it follows that
$\Mcal_{0,4}^{(3,4)}$ is the image of the punctured unit disk $\Delta^*=\{z=re^{2\pi i\phi}\in\CC^*\,|\,r<1\}$
via $\Psi$.
Hence, $\Mcal_{0,4}^{(3,4)}$ is non-empty and connected.
%
%
%
%
%
\end{proof}

\subsection{Comparing $\sys$ and $\Ext\sys$ in a sequence of explicit examples}\label{app:example}

The content of Theorem \ref{main:systole} can be rephrased as an upper bound
for $\Ext\sys$ as in Corollary \ref{cor:sys-ext-main} below.
The aim of this section is to show that, in the case of spherical surfaces with small systole,
such an upper bound for $\Ext\sys$ 
is reasonably optimal, and more precisely it is optimal up to a factor $3$.

\begin{corollary}[Bound for $\Ext\sys$ in terms of $\sys$]\label{cor:sys-ext-main}
Let $S$ be a surface with spherical metric and conical singularities at $\bm{x}$ of angles $2\pi{\bm{\th}}$.
Assume that $\chi=\chi(\dot{S})<0$ and that $\dot{S}$ is not a $3$-punctured sphere. Then
\[
\Ext\sys(\dot{S})\cdot
\frac{\log(1/\sys(S,\bm{x}))}{2\pi\|\bm{\th}\|_1(1-3\chi)}
\left(
1-\frac{(1-3\chi)\log(4\pi\|\bm{\th}\|_1)}{\log(1/\sys(S,\bm{x}))}
\right)\leq 1
\]
provided $\sys(S,\bm{x})\leq \left(\frac{1}{4\pi\|\bm{\th}\|_1}\min\left\{\frac{1}{2},\,\Ab_{\bm{\th}}(S,\bm{x})\right\}\right)^{1-3\chi}$.
In particular, if $\chi(\dot{S})$ and $\bm{\th}$ are fixed, then
\[
\limsup
\Ext\sys(\dot{S})\cdot
\frac{\log\left(1/\sys(S,\bm{x})\right)}{2\pi\|\bm{\th}\|_1(-\chi)}
\leq 3-\frac{1}{\chi}
\]
as $\sys(S,\bm{x})\rar 0$.
\end{corollary}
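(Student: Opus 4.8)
The plan is to obtain the corollary from the systole inequality of Theorem \ref{main:systole} by a short computation: all the geometry lives in that theorem, and here one only rewrites it. First I would put Theorem \ref{main:systole} into exponential form, so that under the hypotheses of the corollary it asserts a lower bound of the shape
\[
\sys(S,\bm{x})\ \geq\ (4\pi\|\bm{\th}\|_1)^{-(1-3\chi)}\exp\!\left(-\frac{2\pi\|\bm{\th}\|_1(1-3\chi)}{\Ext\sys(\dot{S})}\right),
\]
the smallness assumption $\sys(S,\bm{x})\leq\left(\tfrac{1}{4\pi\|\bm{\th}\|_1}\min\{\tfrac12,\Ab_{\bm{\th}}(S,\bm{x})\}\right)^{1-3\chi}$ being exactly what makes the non-bubbling parameter drop out of the bound. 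Applying $-\log$, which reverses the inequality, turns this into
\[
\log\frac{1}{\sys(S,\bm{x})}\ \leq\ (1-3\chi)\log(4\pi\|\bm{\th}\|_1)+\frac{2\pi\|\bm{\th}\|_1(1-3\chi)}{\Ext\sys(\dot{S})}.
\]

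Next I would rearrange to isolate $\Ext\sys(\dot{S})$. Since $\chi(\dot{S})<0$ forces $\chi\leq-1$ and Gauss--Bonnet gives $\|\bm{\th}\|_1>-\chi\geq1$, the smallness hypothesis in particular implies $\sys(S,\bm{x})<(4\pi\|\bm{\th}\|_1)^{-(1-3\chi)}$, so that $\log(1/\sys(S,\bm{x}))>(1-3\chi)\log(4\pi\|\bm{\th}\|_1)>0$. Hence I can move that term to the left, factor $\log(1/\sys(S,\bm{x}))$ out of the resulting positive quantity, and multiply through by $\Ext\sys(\dot{S})/\left(2\pi\|\bm{\th}\|_1(1-3\chi)\right)$; this is literally the displayed inequality of the corollary. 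The only thing worth checking is that the two factors one divides by, namely $\log(1/\sys(S,\bm{x}))$ and the correction factor $1-\tfrac{(1-3\chi)\log(4\pi\|\bm{\th}\|_1)}{\log(1/\sys(S,\bm{x}))}$, are strictly positive, which is precisely what the previous sentence gives.

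For the asymptotic statement I would fix $\chi(\dot{S})$ and $\bm{\th}$ and let $\sys(S,\bm{x})\to 0$. Then $\log(1/\sys(S,\bm{x}))\to\infty$, so the correction factor tends to $1$ from below; once the smallness hypothesis holds, the displayed inequality gives $\Ext\sys(\dot{S})\cdot\frac{\log(1/\sys(S,\bm{x}))}{2\pi\|\bm{\th}\|_1(1-3\chi)}\leq\frac{1}{1-o(1)}$, so this quantity has $\limsup\leq 1$, and multiplying through by $\frac{1-3\chi}{-\chi}=3-\frac{1}{\chi}$ yields the asserted bound. The one step needing care --- and, together with Theorem \ref{main:systole} itself, the only real content of the proof --- is checking that the smallness hypothesis holds for all sufficiently small $\sys(S,\bm{x})$ along such a family; this reduces to $\min\{\tfrac12,\Ab_{\bm{\th}}(S,\bm{x})\}$ staying bounded away from $0$, which I would extract from the analysis of almost-bubbling surfaces in Section \ref{sec:systole}, or else handle the surfaces that violate it directly, since for those (being close to bubbling) the product $\Ext\sys(\dot{S})\cdot\log(1/\sys(S,\bm{x}))$ is already controlled by the neck estimate.
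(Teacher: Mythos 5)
Your proposal is correct and is essentially the same as the paper's argument: both apply Theorem \ref{main:systole} with $\e=4\pi\|\bm{\th}\|_1\sys(S,\bm{x})^{1/(1-3\chi)}$ (the smallness assumption being exactly what makes this $\e$ admissible, i.e.\ $\e\le\min\{\tfrac12,\Ab_{\bm{\th}}(S,\bm{x})\}$), obtain $\Ext\sys(\dot{S})\le 2\pi\|\bm{\th}\|_1/\log(1/\e)$, and rearrange --- note that the paper's proof writes $\geq$ at this step, which must be a typo, since the corollary asserts an upper bound for $\Ext\sys$ and your sign is the consistent one. Your extra worry about the $\limsup$ goes beyond what the paper does: the paper simply reads the asymptotic off the displayed inequality, implicitly taking the $\limsup$ over the surfaces for which the smallness hypothesis holds (so that the correction factor tends to $1$ as $\sys\to 0$); your instinct that one should check $\Ab_{\bm{\th}}(S,\bm{x})$ stays bounded below along a general family is a reasonable point but not needed for that reading of the statement.
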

\begin{proof}
Let $\e=4\pi\|\bm{\th}\|_1\sys(S,\bm{x})^{\frac{1}{1-3\chi}}$, so that the last inequality in the statement of Theorem 
\ref{main:systole} becomes an equality.
The assumption on $\sys(S,\bm{x})$ implies that
$\e<\frac{1}{2}$ and $\Ab_{\bm{\th}}(S,\bm{x})\geq \e$.
By Theorem \ref{main:systole}, 
\[
\Ext\sys(\dot{S})\geq 
\frac{2\pi\|\bm{\th}\|_1}{\log(1/\e)}=
\frac{2\pi\|\bm{\th}\|_1(1-3\chi)}{\log(1/\sys(S,\bm{x}))-(1-3\chi)\log(4\pi\|\bm{\th}\|_1)}
\]
and the conclusion follows.
\end{proof}


In particular, we will
produce an explicit sequence of surfaces of genus $0$
with an increasing number of marked points and we will estimate their systole and extremal systole,
thus proving the following statement.

\begin{proposition}[Bound for $\Ext\sys$ in terms of $\sys$ in some examples]\label{prop:bound-example}
There exist spherical surfaces $S$ of genus $0$ with 
conical singularities at $\bm{x}$
of angles $2\pi\bm{\th}$ and $\chi=\chi(\dot{S})<-1$
and
$\Ab_{\bm{\th}}(S,\bm{x})=\frac{1}{2}$
such that the ratio
\[
\Ext\sys(\dot{S})\cdot
\frac{\log\left(1/\sys(S,\bm{x})\right)}{2\pi\|\bm{\th}\|_1
(-\chi)}
\]
can be made as close to $1$ as desired.
\end{proposition}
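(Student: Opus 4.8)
The plan is to construct a concrete family of genus-$0$ spherical surfaces by gluing together a chain of identical spherical pieces, each contributing a "thin neck" whose geometry we understand explicitly, and to arrange the combinatorics so that the systole and extremal systole are controlled by the same single bottleneck. The building block I would use is a spherical football (a sphere with two antipodal conical points of equal small angle $2\pi\vartheta$), or more precisely a spherical surface with a small systole built from finitely many footballs connected in a linear chain; one wants each consecutive pair of footballs to be attached along a short geodesic arc passing near a conical point, so that the resulting surface has, say, $n$ conical points of angle $2\pi\vartheta$ and $\chi(\dot S) = 2 - n$, hence $-\chi = n-2$ and $\|\bm\vartheta\|_1 = n\vartheta$. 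By tuning $\vartheta$ and the gluing data one forces $\Ab_{\bm\vartheta}(S,\bm x) = \tfrac12$, as required. The key point of the construction is that there is a unique essential simple closed curve $\gamma_0$ achieving the spherical systole, and it is a waist of a long thin Voronoi-type cylinder $C$ whose two geometric invariants — its height $H$ and its area — are computable from the block data.

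The heart of the estimate is then a two-sided comparison. For the systole: $\sys(S,\bm x) = \ell(\gamma_0)$ equals the waist of the thin cylinder $C$, and by the explicit geometry of the spherical blocks this waist is of order $e^{-H/2}$ up to controlled factors (the standard behavior of the collar around a short geodesic in a constant-curvature surface). For the extremal systole: on one hand $\Ext\sys(\dot S) \le \Ext_{\gamma_0}(\dot S) \le 1/M(C)$ by Corollary \ref{cor:ext-from-cyl}, and by Lemma \ref{modulus-height} the modulus satisfies $M(C) \ge H^2/\Area(C)$; on the other hand, one needs a matching lower bound $\Ext\sys(\dot S) \ge \Ext_{\gamma_0}(\dot S)$, i.e. one must check that $\gamma_0$ is the curve realizing the extremal systole, not merely the spherical systole. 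This is where I would invoke a Strebel-differential/collar-length argument in the spirit of Lemma \ref{lemma:M04-extsys}: any competing essential curve $\beta$ not homotopic to $\gamma_0$ must cross the long thin part of $C$ transversally and pick up length, or else run through enough of the bulk of the surface, forcing $\Ext_\beta$ to stay bounded below by a constant much larger than $\Ext_{\gamma_0}$. Combining, $\Ext\sys(\dot S) \asymp 1/M(C) \asymp \Area(C)/H^2$, and plugging in $\Area(C) \asymp 2\pi\|\bm\vartheta\|_1 \cdot (-\chi)/(\text{const})$ and $H \asymp \log(1/\sys)$ yields
\[
\Ext\sys(\dot S)\cdot\frac{\log(1/\sys(S,\bm x))}{2\pi\|\bm\vartheta\|_1(-\chi)} \longrightarrow 1
\]
as the necks are pinched (i.e. as $\sys \to 0$), which is exactly the claim.

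Concretely the steps, in order, are: (1) describe the spherical block and the linear gluing, compute $n$, $\chi$, $\|\bm\vartheta\|_1$, and verify $\Ab_{\bm\vartheta}(S,\bm x) = \tfrac12$; (2) identify the systolic curve $\gamma_0$ and the thin cylinder $C$ around it, computing its height $H$ and area as functions of the block parameter, and read off $\sys(S,\bm x)$ with its asymptotic $\log(1/\sys) \sim H$; (3) prove the upper bound $\Ext\sys(\dot S) \le \Ext_{\gamma_0}(\dot S) \le 1/M(C) \le \Area(C)/H^2$ via Corollary \ref{cor:ext-from-cyl} and Lemma \ref{modulus-height}; (4) prove the matching lower bound by ruling out every competing curve $\beta$, showing $\Ext_\beta(\dot S)$ stays uniformly bounded away from the (vanishing) value $\Ext_{\gamma_0}(\dot S)$, so that $\gamma_0$ realizes the extremal systole and $\Ext\sys(\dot S) = \Ext_{\gamma_0}(\dot S) \sim \Area(C)/H^2$; (5) assemble the asymptotics to get the ratio $\to 1$. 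The main obstacle is step (4): making the lower bound on $\Ext_\beta$ for competing curves genuinely uniform as the neck degenerates, which requires a careful length/area accounting on the bulk spherical pieces (a collar-type lemma adapted to the conical spherical setting) rather than the clean planar-quadrilateral argument available in the $\Mcal_{0,4}$ case.
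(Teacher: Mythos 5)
Your approach is genuinely different from the paper's, but it has several concrete gaps that I believe are fatal in the form you describe them.

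The paper does not glue footballs. It starts from the double of a single spherical triangle with a large-angle vertex $x_1$ of angle $2\pi\th_1=2\pi(2N+\tfrac12)$, and then \emph{marks} a geometric chain of smooth points $y_0,\dots,y_m$ (cone angle $2\pi$, i.e.\ $\th=1$) along the arc from $x_1$, at distances $\epsilon^i$. This creates a nested sequence of thin annuli around $x_1$ \emph{without changing the spherical metric at all}, so $\sys=\epsilon^m$ is read off immediately and $\Ab=\tfrac12$ by inspection. Your football chain, by contrast, runs into Gauss--Bonnet: with $n$ cone points of angle $2\pi\vartheta$ on a genus-$0$ surface, $\Area(S)=2\pi(2-n+n\vartheta)>0$ forces $\vartheta>1-2/n$, so for long chains the angles cannot be ``small'' — they approach $2\pi$, and the geometry of the footballs degenerates to something else entirely. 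Also, you lose the tuning knob the paper has: the ratio at the end comes out to $\th_1/\|\bm{\th}\|_1$, made close to $1$ by taking $N\gg m$; with uniform angles there is no analogous free parameter.

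More seriously, the asymptotics in your step (3) are wrong in sign and in scaling. You want to show the ratio tends to $1$, i.e.\ you need a \emph{lower} bound on $\Ext\sys$ of the right size, which means an \emph{upper} bound on $M(C_{\gamma_0})$. Lemma~\ref{modulus-height} gives $M(C)\geq H^2/\Area(C)$, a lower bound on the modulus — the wrong direction. (The upper bound on the ratio is already Corollary~\ref{cor:sys-ext-main}; the content of the proposition is the matching lower bound.) Worse, if $H$ and $\Area$ are taken in the spherical metric, $H^2/\Area(C)$ for a thin annulus at radii $r_1<r_2$ around a cone point of angle $2\pi\th_1$ is $\approx r_2^2/(\pi\th_1 r_2^2)\approx 1/\pi\th_1$, a constant, whereas the true modulus is $\approx\frac{1}{2\pi\th_1}\log(r_2/r_1)$, which blows up as $r_1\to 0$. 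The hyperbolic collar heuristic $\sys\sim e^{-H/2}$ likewise does not hold in this setting: the correct relation is $\sys\sim e^{-2\pi\th_1 M(C)}$, exponential in the \emph{modulus}, not in the spherical height. The paper obtains the needed upper bound on $M(C_{\gamma_k})$ by developing $S\setminus\alpha_{23}$ biholomorphically to a planar disk (Lemma~\ref{lemma:planar}) and quoting the classical Teichm\"uller/Gr\"otzsch-type estimate $M(\Omega)\leq\frac{1}{2\pi}\log(16t)$ for an explicit slit plane domain — a tool you never introduce and that does not follow from Lemmas~\ref{modulus-height} or~\ref{subadditivity}.

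Finally, your step (4) — showing the systolic curve also realizes the extremal systole — is the genuinely hard step, and you correctly identify it as the obstacle but then only gesture at ``a collar-type lemma adapted to the conical spherical setting'' without supplying one. The paper handles this by an induction on the number of marked points (Lemma~\ref{lemma:ext-sys-gamma}), comparing competing curves against the flat $|\q_k|$-metric of the associated Strebel differential, and the argument leans crucially on the quantitative lower bound $\Ext_{\gamma_k}\geq\frac{2\pi\th_1}{\th_1\log 16+\log(1/\epsilon)+O(\epsilon)}$ from the planar model. Without a concrete substitute for both the planar model and the induction, the proposal does not close.
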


Incidentally, we recall that the extremal systole can be always bounded from below in terms of the systole as in
Corollary \ref{cor:systole-upper-bound}. \\

The construction of such sequence of spherical surfaces proceeds as follows.

\subsubsection{Construction of the examples}\label{subsec:construction}
Fix positive integers $m$ and $N\geq 1$ 
and a real parameter $\e\in(0,\frac{1}{2})$.
Let $\bm{\th}=(2N+\frac{1}{2},\frac{1}{2},\frac{1}{2},1^{m+1})$
and denote by $\epsilon$ the quantity $\frac{\e}{4\pi\|\bm{\th}\|_1}$.

Consider the surface $S$ obtained by doubling a spherical triangle with vertices
$x_1,x_2,x_3$ and angles $2\pi\cdot(2N+\frac{1}{2},\frac{1}{2},\frac{1}{2})$. Such an $S$ comes with a natural orientation-reversing involution.

Along the geodesic arc $\alpha_{12}$ that goes from $x_1$ and $x_2$, mark
by $y_i$ the point that sits at distance $\epsilon^i$ from $x_1$ for $i=0,\dots,m$.
Thus, $S$ is a spherical surface of genus $0$ with
$4+m$ conical points 
$\bm{x}=(x_1,x_2,x_3,y_0,y_1,\dots,y_m)$
of angles $2\pi\bm{\th}$.
%
Denote by $\dot{S}$ the punctured surface $S\setminus \bm{x}$.
For simplicity, we will use the shorter notation
$\Ab(S)$ and $\sys(S)$
with the obvious meaning.

For all $i=1,\dots,m$ define $\gamma_i$ to be the simple closed curve in $\dot{S}$ consisting of points
at distance $\epsilon^{i-\frac{1}{2}}$ from $x_1$,
which in particular separates $x_1,y_m,\dots,y_i$ from all the other punctures.

\begin{center}
\begin{figurehere}
\psfrag{Sk}{$\textcolor{Blue}{S}$}
\psfrag{Sk+}{$\textcolor{Blue}{S^+_m}$}
\psfrag{Sk-}{$\textcolor{Blue}{S^-_m}$}
\psfrag{Ck}{$\textcolor{PineGreen}{C_m}$}
\psfrag{gk}{$\textcolor{PineGreen}{\gamma_m}$}
\psfrag{x1}{$\textcolor{Sepia}{x_1}$}
\psfrag{x2}{$\textcolor{Sepia}{x_2}$}
\psfrag{x3}{$\textcolor{Sepia}{x_3}$}
\psfrag{y0}{$\textcolor{Sepia}{y_0}$}
\psfrag{yk-2}{$\textcolor{Sepia}{y_{m-2}}$}
\psfrag{yk-1}{$\!\!\textcolor{Sepia}{y_{m-1}}$}
\psfrag{yk}{$\textcolor{Sepia}{y_m}$}
\psfrag{a23}{$\textcolor{blue}{\alpha_{23}}$}
\psfrag{a12}{$\textcolor{blue}{\alpha_{12}}$}
\includegraphics[width=0.7\textwidth]{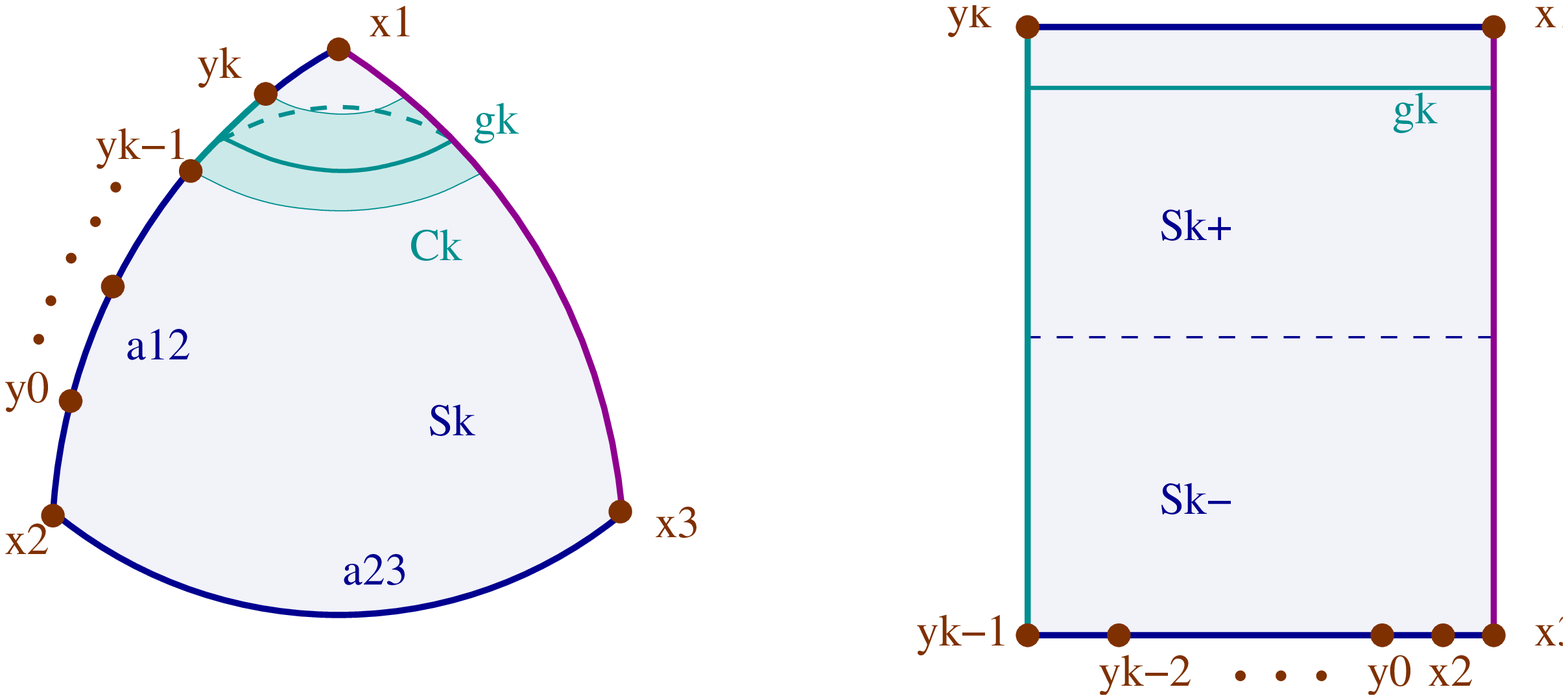}
\caption{{\small $S$ on the left is biholomorphic to the flat surface obtained by doubling the figure on the right.}}\label{fig:example1}
\end{figurehere}
\end{center}

Certainly, the systole is $\sys(S)=\epsilon^m$ and $\Ab(S)=\frac{1}{2}$.
Also, $\|\bm{\th}\|_1=2N+m+2+\frac{1}{2}$.

Since the ratio $\frac{-\chi-2}{-\chi}$ can be made close to $1$ by choosing $m$ large,
Proposition \ref{prop:bound-example} will then be a consequence of the following statement.

\begin{lemma}[Comparison between $\Ext\sys$ and $\sys$ for $S$]\label{lemma:example}
For the surfaces $S$ constructed in Subsection \ref{subsec:construction},
we have $\Ab(S)=\frac{1}{2}>\e$
and
\[
\sys(S)= 
\left(\frac{\e}{4\pi\|\bm{\th}\|_1}\right)^{-\chi-2}.
\]
Moreover, choosing $N,\e$ such that
$N\gg m$ and $\e<\exp\left[-8(4N+1)^2\right]$,
the ratio
\[
\Ext\sys(\dot{S})\frac{\log(4\pi\|\bm{\th}\|_1/\e)}{2\pi\|\bm{\th}\|_1}
=\Ext\sys(\dot{S})\frac{\log(1/\sys(S))}{2\pi\|\bm{\th}\|_1(-\chi-2)}
\leq 1
\]
can be made as close to $1$ as desired.
\end{lemma}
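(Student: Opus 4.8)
The plan is to establish three quantities for the surface $S$ of Subsection \ref{subsec:construction}: the value of $\sys(S)$, an upper bound for $\Ext\sys(\dot S)$, and a matching lower bound, the last two pinching the ratio to $1$. First I would pin down $\sys(S)=\epsilon^m$: the curve realizing it should be the small geodesic loop around the pair $\{x_1,y_m\}$ sitting at the tip of the long thin spherical triangle, of length essentially the distance $\epsilon^m$ from $x_1$ to $y_m$ along $\alpha_{12}$; since $\|\bm\th\|_1=2N+m+\frac52$ and $-\chi=(4+m)-2=m+2$, one checks $\epsilon^m=\left(\frac{\e}{4\pi\|\bm\th\|_1}\right)^{-\chi-2}$ by the definition $\epsilon=\frac{\e}{4\pi\|\bm\th\|_1}$. (Here the condition $\e<\exp[-8(4N+1)^2]$ guarantees $\epsilon$ is small enough that the triangle really is thin near $x_1$ and these loops are genuinely short, so no other essential curve is shorter.)

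Next I would bound $\Ext\sys(\dot S)$ from above. The natural candidate curve is $\gamma_m$, the loop at distance $\epsilon^{m-\frac12}$ from $x_1$, which separates $\{x_1,y_m\}$ from the rest. I would exhibit an embedded cylinder $C_m\subset\dot S$ homotopic to $\gamma_m$ by taking the annular region between the conformal point $x_1$ (with $y_m$ nearby, and using that near $x_1$ the angle $2\pi(2N+\frac12)$ makes the metric locally a large branched cover of a disk) and the rest of the surface, and compute (or estimate from below) its modulus $M(C_m)$; by Corollary \ref{cor:ext-from-cyl}, $\Ext_{\gamma_m}(\dot S)\le 1/M(C_m)$. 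The geometry is essentially that of a flat cone of angle $2\pi(2N+\frac12)$ truncated by removing a small disk around $x_1$ and then passing through the chain of annuli $\{\epsilon^{i+1}<d(\cdot,x_1)<\epsilon^i\}$, each of which (after lifting to a $2N+\frac12$-fold cover near the cone point, or rather quotienting) contributes modulus close to $\frac{1}{2\pi}\cdot\frac{\log(1/\epsilon)}{2N+\frac12}$ by Example \ref{example:groeztsch}; summing the $m$ of them with Lemma \ref{subadditivity} gives roughly $M(C_m)\gtrsim \frac{m\log(1/\epsilon)}{2\pi(2N+\frac12)}$, hence
\[
\Ext\sys(\dot S)\le \Ext_{\gamma_m}(\dot S)\le \frac{2\pi(2N+\tfrac12)}{m\log(1/\epsilon)}.
\]
Finally, for the lower bound I would invoke Corollary \ref{cor:sys-ext-main} (equivalently Theorem \ref{main:systole}) with $\Ab_{\bm\th}(S)=\frac12>\e$, which gives $\Ext\sys(\dot S)\ge \frac{2\pi\|\bm\th\|_1}{\log(1/\e)}$; combining and using $\log(1/\epsilon)=\log(4\pi\|\bm\th\|_1/\e)=\log(1/\e)+\log(4\pi\|\bm\th\|_1)$, the ratio $\Ext\sys(\dot S)\cdot\frac{\log(4\pi\|\bm\th\|_1/\e)}{2\pi\|\bm\th\|_1}$ lies between $1$ and $\frac{(2N+\frac12)\log(1/\epsilon)}{m\,\|\bm\th\|_1\log(1/\epsilon)/(\text{correction})}$, which I would then show tends to $1$ by first letting $N\gg m$ (so $\frac{2N+\frac12}{\|\bm\th\|_1}=\frac{2N+\frac12}{2N+m+\frac52}\to 1$ and the curve $\gamma_m$ through the $N$-fold cone really is the extremal-systolic one, not some curve using the $x_2,x_3$ side) and then letting $\e\to 0$ so that $\log(4\pi\|\bm\th\|_1/\e)/\log(1/\e)\to 1$.

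The main obstacle I expect is the upper bound on $\Ext_{\gamma_m}(\dot S)$ — i.e.\ producing an honest embedded cylinder around $\gamma_m$ of large modulus and controlling its modulus precisely enough. One must check that the chain of annuli $\{\epsilon^{i+1}<d(\cdot,x_1)<\epsilon^i\}$ in the spherical triangle (doubled) really are conformal annuli whose moduli add up as claimed, which requires understanding the conformal type of a neighborhood of the high-order cone point $x_1$ of angle $2\pi(2N+\frac12)$ together with the nearby cone points $y_0,\dots,y_m$ of angle $2\pi$ (the $y_i$ are conformally smooth, so they do not obstruct, but their presence must be accounted for in identifying which region is a cylinder homotopic to $\gamma_m$). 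The hypothesis $\e<\exp[-8(4N+1)^2]$ is exactly what is needed to make each such annulus close to a round flat one so that Lemma \ref{modulus-height} / Example \ref{example:groeztsch} applies with small error; carrying this through carefully, while keeping the bookkeeping between $m$, $N$ and $\e$ straight, is the technical heart of the argument, whereas the lower bound is essentially a black-box application of the systole inequality and the final limit is elementary.
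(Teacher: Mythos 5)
Your proposal has two serious problems, both concentrated in the estimate of $\Ext\sys(\dot S)$; the computation $\sys(S)=\epsilon^m=\bigl(\tfrac{\e}{4\pi\|\bm\th\|_1}\bigr)^{-\chi-2}$ is fine.

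\textbf{The chain-of-annuli modulus argument gives the wrong direction.} The concentric spherical annuli $A_i=\{\epsilon^{i+1}<d(\cdot,x_1)<\epsilon^{i}\}$ that you propose to concatenate are \emph{not} subannuli of a single cylinder in $\dot S$: the marked points $y_1,\dots,y_{m-1}$ sit exactly at the radii $\epsilon^i$, so each junction between consecutive $A_i$'s carries a puncture of $\dot S$. Any cylinder $C\subset\dot S$ homotopic to $\gamma_m$ must avoid all the $y_j$, hence is contained in the single annulus $\{\epsilon^m<d(\cdot,x_1)<\epsilon^{m-1}\}$. If you instead fill in the intermediate punctures (as you implicitly do by chaining), you move to a \emph{larger} surface, and by Remark \ref{modulevialength} the extremal length only \emph{decreases} under conformal inclusion; so the big-modulus cylinder you build would only bound $\Ext_{\gamma_m}(\dot S)$ from \emph{below}, not from above. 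In fact the paper's Lemma \ref{lemma:ext-gamma} gets $\Ext_{\gamma_m}(\dot S)<\frac{2\pi\th_1}{\log(1/\epsilon)}$ from the single annulus, without the factor $m$ in the denominator that your chain would produce — and indeed your two proposed bounds, $\Ext\sys\le\frac{2\pi\th_1}{m\log(1/\epsilon)}$ and $\Ext\sys\ge\frac{2\pi\|\bm\th\|_1}{\log(1/\e)}$, already contradict each other for $m>1$, which should have been a warning sign.

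\textbf{Using Theorem \ref{main:systole} / Corollary \ref{cor:sys-ext-main} for the other side is circular.} The whole purpose of Proposition \ref{prop:bound-example} and Lemma \ref{lemma:example} is to show that the systole inequality is close to sharp; one cannot appeal to that very inequality to supply the matching bound. (Additionally, the inequality in Corollary \ref{cor:sys-ext-main} is an upper bound, $\Ext\sys\cdot(\cdots)\le 1$; the sign in the displayed line of its proof is a slip, as the ``In particular'' statement and the comparison with Proposition \ref{prop:bound-example} make clear, so the corollary does not furnish a lower bound for $\Ext\sys$ in any case.) The genuinely hard side here is the lower bound $\Ext_{\gamma_m}(\dot S)>\frac{2\pi\th_1}{\th_1\log 16+\log(1/\epsilon)+O(\epsilon)}$, which the paper proves by a direct conformal computation: cut $S$ open along $\alpha_{23}$ to get the explicit planar model of Lemma \ref{lemma:planar} with $y'_j=\tan(\epsilon^j/2)^{1/\th_1}$, and then bound the modulus of the relevant plane domain by the classical Teichmüller estimate $M\le\frac{1}{2\pi}\log(16t)$ for $\CC$ minus two slits $[-1,0]$ and $[t-1,\infty)$.

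Finally, you also need to justify that $\gamma_m$ actually realizes $\Ext\sys(\dot S)$, which is not obvious since $\dot S$ has many other essential curves. The paper handles this in Lemma \ref{lemma:ext-sys-gamma} by an induction on the subsurfaces $\dot S_k$, using the doubled-rectangle description of $(\dot S_k,|\q_k|)$ from Lemma \ref{lemma:q_m} to see that any competing curve either crosses the rectangle (hence has $|\q_k|$-length $\ge 2$) or lives in $\dot S_{k-1}$; your proposal gestures at this but does not supply the argument.
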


The systole of our surface $S$ was easily computed above.
To understand the extremal systole,
we consider the surfaces $\dot{S}_k$ obtained from $\dot{S}$
by filling the punctures $y_{k+1},\dots,y_m$ for all $k=1,\dots,m$.
We begin by estimating the extremal length of $\gamma_k$ inside $\dot{S}_k$
and we will show that in fact $\gamma_k$ realizes the extremal systole inside
$\dot{S}_k$ proceeding by induction on $k$.

\subsubsection{Planar model}

In order to reduce the problem to some well-known estimates of conformal moduli of plane annular domains, we will use
the following description of the complement in $S$ of
the geodesic arc $\alpha_{23}$ between $x_2$ and $x_3$.

\begin{lemma}[Planar model for $S\setminus\alpha_{23}$]\label{lemma:planar}
There is a biholomorphism between
$S\setminus\alpha_{23}$ and the unit disk $S':=\Delta$ such that
\begin{itemize}
\item[(a)]
the points $x_1$, $x_2$, $x_3$ in $S$ correspond to 
$x'_1=0$, $x'_2=1$, $x'_3=-1$ in $S'$
\item[(b)]
the point 
$y'_j=\tan(\epsilon^j/2)^{1/\th_1}$ in $(0,1)\subset S'$
corresponds to $y_j$ for all $j$
\item[(c)]
the orientation-reversing involution of $S\setminus\alpha_{23}$ corresponds to the
conjugation, the two shores of the cut in $S\setminus\alpha_{23}$ correspond to the two
arcs on $\pa S'$ between $1$ and $-1$,
and the arc $\alpha_{12}\subset S$ corresponds to $[0,1]\subset S'$.
\end{itemize}
\end{lemma}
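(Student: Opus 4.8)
The plan is to build the biholomorphism explicitly from the uniformization of the spherical triangle and then double it. First I would recall that the spherical triangle $T$ with vertices $x_1, x_2, x_3$ and angles $2\pi\bm{\th}|_{\{1,2,3\}} = (2\pi(2N+\tfrac12), \pi, \pi)$ is, after removing the edge $\alpha_{23}$, conformally a half-disk: the Schwarz reflection across the two edges $\alpha_{12}$ and $\alpha_{13}$ emanating from $x_1$ develops $T$ onto a sector of opening $2\pi\th_1$, and the map $w \mapsto w^{1/\th_1}$ straightens this to a half-plane, sending $x_1 \mapsto 0$. Doubling $T$ across $\alpha_{23}$ corresponds to reflecting the half-disk across the diameter, producing the full disk $\Delta$; the orientation-reversing involution becomes complex conjugation. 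I would normalize so that $x_2 \mapsto 1$ and $x_3 \mapsto -1$, which pins down the biholomorphism uniquely (three boundary points of a disk), giving (a) and (c) at once, since $\alpha_{12}$ is fixed by the involution and joins $0$ to $1$, hence maps to $[0,1]$.

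The substantive point is (b): computing the image of $y_j$. The point $y_j$ sits on $\alpha_{12}$ at spherical distance $\epsilon^j$ from $x_1$. The key is to track the developing map of the spherical metric near $x_1$ along $\alpha_{12}$: a geodesic of length $t$ from a conical point develops to the segment from $0$ to a point whose spherical-distance parametrization is governed by the standard $\SU(2)$ formula, under which distance $t$ corresponds, in the developed $\CC\PP^1 = \CC \cup \{\infty\}$ chart centered at the cone point, to the complex number $\tan(t/2)$ (this is the stereographic-type coordinate in which the round metric is $\frac{4|dz|^2}{(1+|z|^2)^2}$ and geodesics through $0$ are rays, with arclength $2\arctan|z|$). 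The conical point of angle $2\pi\th_1$ is modeled by this developed chart precomposed with $z \mapsto z^{\th_1}$ (so that angle $2\pi$ in the $z$-coordinate becomes angle $2\pi\th_1$). Therefore the uniformizing coordinate — which is precisely the $z$ such that the cone metric pulls back to a smooth metric, i.e. the local parameter that makes the map to the disk conformal — assigns to the point at distance $t = \epsilon^j$ along the geodesic the value $\tan(\epsilon^j/2)^{1/\th_1}$. Matching this local coordinate with the global one (both centered at $0 = x_1$, both real-positive along $\alpha_{12}$, both scaled so $x_2 \mapsto 1$) forces $y_j \mapsto \tan(\epsilon^j/2)^{1/\th_1} = y'_j$, which is (b). One small check is the normalization: the endpoint $x_2$ is at distance $|\alpha_{12}| = \pi/\th_1 \cdot$ (something) — actually one must verify the scaling constant between "the developed coordinate" and "the disk coordinate" is exactly $1$; this is pinned by the requirement $x_2 \mapsto 1$ together with the fact that the involution fixes $[0,1]$, so no free constant remains, but I would spell out that the geodesic $\alpha_{12}$ extended to $x_2$ and the edge length give a consistent value.

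The main obstacle, and the place I would be most careful, is justifying that the global biholomorphism restricted to $\alpha_{12}$ really is the local "$\tan$-then-root" coordinate rather than some other conformal reparametrization of it. The resolution is that a biholomorphism of disks fixing $0$, with $1$ and $-1$ on the boundary, and commuting with conjugation, is rigid enough: combined with the fact that near $x_1$ the spherical metric in the global coordinate $z'$ must have the prescribed cone angle $2\pi\th_1$, and the round metric written in the developed coordinate $\zeta$ is $\frac{4|d\zeta|^2}{(1+|\zeta|^2)^2}$, the relation $z' = c\,\zeta^{1/\th_1}$ holds for some $c > 0$ as germs at $0$; then reality along $\alpha_{12}$ plus the endpoint normalization at $x_2$ forces $c = 1$ after checking the developed image of the full edge $\alpha_{12}$ lands at $\zeta = \tan(|\alpha_{12}|/2)$ and that this maps to $z' = 1$. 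I would organize the write-up as: (i) develop the spherical triangle cut along $\alpha_{23}$ to a half-disk via Schwarz reflection and a power map; (ii) double to get $\Delta$ with conjugation as the involution; (iii) normalize the three marked points to $0, 1, -1$; (iv) read off (a) and (c) immediately; (v) compute (b) by the geodesic-development formula $t \mapsto \tan(t/2)$ and the cone-angle correction $\zeta \mapsto \zeta^{1/\th_1}$, fixing the scaling constant via the $x_2 \mapsto 1$ normalization.
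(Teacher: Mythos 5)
Your proposal follows essentially the same route as the paper: take the (multivalued) developing map of the cone metric on $S\setminus\alpha_{23}$, normalized so that $x_1\mapsto 0$ in the stereographic chart where the round metric is $\bigl(\tfrac{2|dw|}{1+|w|^2}\bigr)^2$ and $\alpha_{23}$ develops onto the great circle $\{|w|=1\}$; factor through $z\mapsto z^{\th_1}$ to absorb the cone angle; and compute $y'_j=\tan(\epsilon^j/2)^{1/\th_1}$ from the geodesic--distance formula $t\mapsto\tan(t/2)$. Your presentation via Schwarz reflection of the triangle and then doubling is an equivalent repackaging of the paper's direct use of the developing map on the doubled surface, and the scaling constant $c$ you are worried about is pinned exactly as the paper pins it: the normalized developing map already has image the open unit disk (so $\iota(x_2)$ lies on $\{|w|=1\}$), leaving only a rotation, which the condition $x_2\mapsto 1$ kills.
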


\begin{center}
\begin{figurehere}
\psfrag{S'}{$\textcolor{Blue}{\dot{S}'}$}
\psfrag{Om}{$\textcolor{Blue}{\Omega}$}
\psfrag{x1}{$\textcolor{Sepia}{x'_1}$}
\psfrag{x2}{$\textcolor{Sepia}{x'_2}$}
\psfrag{x3}{$\textcolor{Sepia}{x'_3}$}
\psfrag{ym-1}{$\textcolor{Sepia}{y'_{m-1}}$}
\psfrag{ym}{$\textcolor{Sepia}{y'_m}$}
\psfrag{a23}{$\textcolor{blue}{\alpha_{23}}$}
\includegraphics[width=0.7\textwidth]{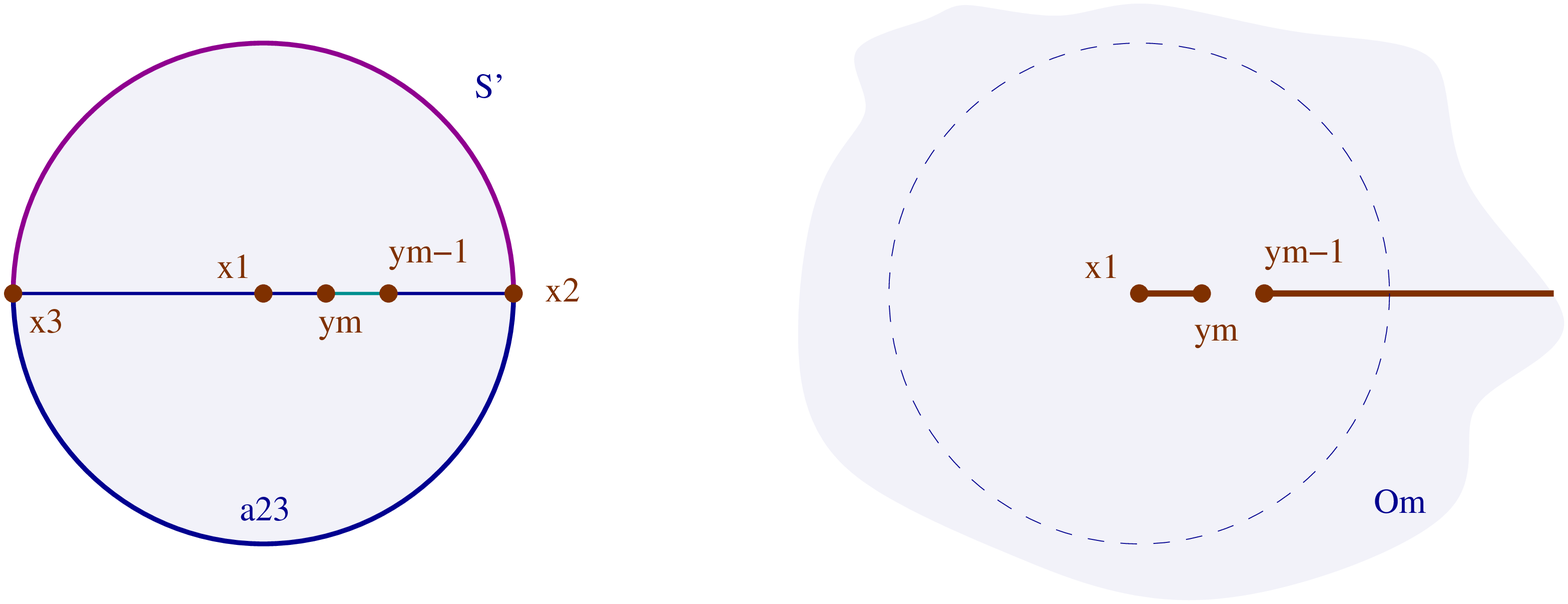}
\caption{{\small The planar model $\dot{S}'$ and the planar domain $\Omega$.}}\label{fig:example2}
\end{figurehere}
\end{center}

\begin{proof}[Proof of Lemma \ref{lemma:planar}]
Let $\CC$ with the natural coordinate $w$
be endowed with the standard 
spherical metric $\left(\frac{2 |dw|}{1+|w|^2}\right)^2$,
for which $\{|w|=1\}$ is a maximal circle.
A (multivalued) developing map from $\iota:S\setminus\alpha_{23}\rar
\CC$ that sends $x_1$ to the origin
has image equal to the open unit disk and it has order $\th_1$
at $x_1$. Thus, there exists a biholomorphism
$\psi:S\setminus\alpha_{23}\rar S'$ that sends $x_1$ to $0$
and such that $\iota=\iota'\circ\psi$, where
$\iota'(z)=z^{\th_1}$. Moreover $\psi$ can be uniquely chosen
so that $x_2$ corresponds to $x'_2=1$.

Since $[0,1)\subset S'$ running from $x'_1=0$ to $x'_2=1$
is sent to a geodesic by $\iota'$,
such segment corresponds to $\alpha_{12}\subset S$.
Thus the orientation-reversing involution of $S\setminus\alpha_{23}$ that fixes $\alpha_{12}$ must correspond to the conjugation in $S'$ and so $x_3\in S$ corresponds to $x'_3=-1$.

Finally, the point $w_j=\tan(\epsilon^j/2)$ in $\CC$ is at distance $\epsilon^j$ from $0$.
Hence,
the point $y'_j=\tan(\epsilon^j/2)^{1/\th_1}$ in $(0,1)\subset S'$ is at distance $\epsilon^j$ from $x'_1=0$,
and so $y'_j\in S'$ corresponds to $y_j\in S$.
\end{proof}

We denote by $\dot{S}'_k$ the punctured domain
$S'\setminus \{x'_1,x'_2,x'_3,y'_0,\dots,y'_k\}$
and by $\dot{S}'=\dot{S}'_m$.

\subsubsection{The Strebel differential $\q_k$ on $\dot{S}_k$}

In order to estimate the extremal lengths
in $\dot{S}_k$,
consider the Strebel differential $\q_k$ on $\dot{S}_k$
associated to $\gamma_k$, such that the total area of $\q_k$ is
exactly $\Ext_{\gamma_k}(\dot{S}_k)$.
%

\begin{lemma}\label{lemma:q_m}
The couple $(S,|\q_k|)$ 
is isomorphic to a doubled rectangle of vertices $x_1,y_k,y_{k-1},x_3$
with base $x_1 y_k$ of length $\frac{1}{2}\Ext_{\gamma_k}(\dot{S}_k)$ and 
height $x_1 x_3$ of length $1$. The horizontal trajectories of $\q_k$ are parallel to
the base. The points $y_{k-1},\dots,y_0,x_2$ lie in this order on the horizontal trajectory
running from $y_{k-1}$ to $x_3$.
\end{lemma}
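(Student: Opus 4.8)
The plan is to analyze the Strebel differential $\q_k$ on $\dot S_k$ directly from Proposition~\ref{prop:strebel}, exactly in the spirit of the proof of Lemma~\ref{lemma:strebel0,4}, but now keeping track of the orientation-reversing involution. First I would note that $\dot S_k$ has genus $0$ with punctures $x_1,x_2,x_3,y_0,\dots,y_k$, and that $\gamma_k$ separates $\{x_1,y_k,\dots,y_i\}$-type data; more precisely $\gamma_k$ is the curve of points at distance $\epsilon^{k-1/2}$ from $x_1$, so in $\dot S_k$ one component of $\dot S_k\setminus\gamma_k$ contains exactly $x_1$ and $y_k$, while the other contains $x_2,x_3,y_0,\dots,y_{k-1}$. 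Since $\q_k$ has at worst simple poles at the punctures, there is exactly one horizontal trajectory issuing from each puncture; counting zeros via the Gauss--Bonnet/Euler-characteristic count for quadratic differentials on a genus-$0$ surface with the given number of simple poles, the critical graph of $\q_k$ complementary to the cylinder $C_{\gamma_k}$ must consist of the minimal possible arc configuration. The two components of $\dot S_k\setminus\gamma_k$ carrying $\{x_1,y_k\}$ and $\{x_2,x_3,y_0,\dots,y_{k-1}\}$ respectively: the first glues up from a single arc joining $x_1$ to $y_k$ (the two shores of that arc forming one boundary circle of the cylinder), and the second from a tree of arcs connecting $x_2,x_3,y_0,\dots,y_{k-1}$ into a single boundary circle.

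Next I would bring in the orientation-reversing isometry $\sigma$ of $S$ (the doubling involution), whose fixed locus is the union of the geodesic arcs $\alpha_{12}$ and $\alpha_{23}$. Because $\gamma_k$ and all the marked points lie on or are symmetric with respect to this fixed locus, $\sigma$ preserves $\gamma_k$ and hence, by uniqueness of the Strebel differential up to positive scalar, $\sigma^*\q_k=\bar\q_k$ (it is an anti-holomorphic symmetry of $\q_k$). Therefore the critical graph is $\sigma$-invariant, which forces the arcs of the critical graph to lie along the fixed arcs $\alpha_{12}\cup\alpha_{23}$: the arc from $x_1$ to $y_k$ lies along $\alpha_{12}$, and in the other component the critical graph, being $\sigma$-invariant and a tree connecting $x_2,x_3,y_0,\dots,y_{k-1}$ with one arc at each puncture, must be the sub-arc of $\alpha_{12}$ from $y_{k-1}$ through $y_{k-2},\dots,y_0$ to $x_2$, together with the arc $\alpha_{23}$ from $x_2$ to $x_3$. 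This pins down the combinatorics: $(S,|\q_k|)$ is the double of a flat rectangle, one pair of opposite sides being the two critical arcs and the horizontal trajectories being parallel to the base $x_1y_k$. That the base has length $\tfrac12\Ext_{\gamma_k}(\dot S_k)$ and the height length $1$ then follows from Lemma~\ref{lemma:strebel0,4}-style bookkeeping: the cylinder $C_{\gamma_k}$ has a core loop of length equal to twice the base (it wraps once around, doubling across $\alpha_{12}$), its area normalization is $\Ext_{\gamma_k}(\dot S_k)$ by our choice of $\q_k$, and one reads off modulus $=$ height$^2/$area $=1/\Ext_{\gamma_k}(\dot S_k)$, forcing height $=1$ after normalizing the base to $\tfrac12\Ext_{\gamma_k}(\dot S_k)$.

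Finally I would verify the stated linear order of $y_{k-1},\dots,y_0,x_2$ on the horizontal side running from $y_{k-1}$ to $x_3$. This is where I expect the only real care is needed: it amounts to checking that the developing/planar picture of Lemma~\ref{lemma:planar} is compatible with the rectangle picture, i.e.\ that along the arc $\alpha_{12}$ the marked points occur in the order $x_1,y_m,\dots,y_1,y_0,x_2$ from one end to the other. Since $y_j$ sits at distance $\epsilon^j$ from $x_1$ and $\epsilon<1$, the points $y_0,\dots,y_k$ are strictly monotone along $\alpha_{12}$ with $y_k$ closest to $x_1$ and $y_0$ closest to $x_2$; combined with the identification of the top side with the $x_1$-$y_k$ arc and the bottom side with the concatenation $y_{k-1}\to\dots\to y_0\to x_2\to x_3$ along $\alpha_{12}\cup\alpha_{23}$, the claimed order follows. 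The main obstacle is thus not a computation but the careful combinatorial argument that the $\sigma$-invariant critical graph of a genus-$0$ Strebel differential with exactly these simple poles and separating curve has no choice but to be the arc configuration described; once the involution is used to rule out all other trees, everything else is bookkeeping already developed in Example~\ref{example:strebel0,4} and Lemma~\ref{lemma:strebel0,4}.
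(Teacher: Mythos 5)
Your plan is a genuinely different route from the paper's. The paper first passes to the four-punctured sphere $\hat S_k = S\setminus\{x_1,y_k,y_{k-1},x_3\}$, where Lemma~\ref{lemma:strebel0,4} already classifies the Strebel differential as a doubled cylinder $S_{r,\phi}$; it then uses the anti-conformal involution $\sigma$ to force $\phi=0$, obtaining a doubled rectangle in which $\alpha_{12}$ runs along the boundary, so that the extra points $y_{k-2},\dots,y_0,x_2$ automatically sit on a horizontal trajectory; and only at the very end does it check that $\wh\q_k$ is therefore also the Strebel differential $\q_k$ for $\gamma_k$ on the full $\dot S_k$. You instead work with $\q_k$ on $\dot S_k$ directly and try to pin down its critical graph. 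That can be made to work, but it is harder because the pole/zero structure of $\q_k$ on $\dot S_k$ is not known a priori; the reduction to the four-punctured sphere is precisely what makes it determinate in the paper.

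Two points in your writeup need repair. First, your opening step asserts that ``there is exactly one horizontal trajectory issuing from each puncture'', i.e.\ that $\q_k$ has a simple pole at every marked point, and then speaks of a tree ``with one arc at each puncture''. This is inconsistent with the configuration you (correctly) arrive at, in which $y_{k-2},\dots,y_0,x_2$ are interior valence-two points of a single arc; in fact $\q_k$ has simple poles only at $x_1,y_k,y_{k-1},x_3$ and is regular at the other marked points, which matches the degree count $-4$ on $\CC\PP^1$ with no zeros. You need to either prove this pole structure or argue in a way that does not presuppose it. Second, $\sigma$-invariance of the critical graph does not by itself force the graph to lie along $\Fix(\sigma)$. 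What does work: the leaves of the critical graph are precisely the simple poles, hence marked points, hence fixed by $\sigma$; since $\sigma$ acts isometrically for $|\q_k|$ and fixes every leaf of the finite metric tree, it fixes the tree pointwise, so the tree lies in the fixed geodesic arcs; a connected sub-arc of $\alpha_{12}\cup\alpha_{23}\cup\alpha_{13}$ inside the relevant disk component that contains all of $y_{k-1},\dots,y_0,x_2,x_3$ and whose endpoints must again be marked points is exactly the arc from $y_{k-1}$ to $x_3$ (and similarly $x_1$ to $y_k$ on the other side). If you add these two justifications, your direct argument closes the gap; as written it asserts the conclusion rather than derives it.
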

\begin{proof}
The curve $\gamma_k$ is essential inside $\hat{S}_k=S_k\setminus\{x_1,y_k,y_{k-1},x_3\}$.
Consider the quadratic differential $\wh{\q}_k$ associated to $\gamma_k$ inside $\hat{S}_k$ such that $|\wh{\q}_k|$ has total area $\frac{1}{2}\Ext_{\gamma_k}(\dot{S}_k)$
(see Figure \ref{fig:example1} on the right, for the case $k=m$).
The analysis contained in Example \ref{example:strebel0,4} shows that 
$\hat{S}_k$ is isomorphic to $S_{r,\phi}$ with $\phi=0$ and $r^2=\frac{1}{2}\Ext_{\gamma_k}(\dot{S}_k)$, and so $(\hat{S}_k,|\wh{\q}_k|)$ is isometric to a doubled rectangle with corners
$x_1,y_k,y_{k-1},x_3$. Moreover, the orientation-reversing involution of $\dot{S}_k$ is an isometry and so conformal; hence, it fixes the metric $|\wh{\q}_k|$ and so it is the natural involution of the doubled rectangle.
In particular, $\alpha_{12}$ is fixed by the involution of the doubled rectangle and so it is the union
of the horizontal segments $x_1 y_k$ and $y_{k-1}x_3$
and the vertical segment $y_k y_{k-1}$. It is easy now to check that $\q_k=\wh{\q}_k$ and the conclusion easily follows.
\end{proof}

%
%
%

We call $\q'_k$ the corresponding quadratic differential on $\dot{S}'_k$ 
and $\gamma'_k$ the curve in $\dot{S}'_k$ that corresponds to $\gamma_k$.

\subsubsection{Estimate for the extremal length of $\gamma_k$ in $\dot{S}_k$}

The extremal length of $\gamma_k\subset\dot{S}_k$
is estimated from above using the spherical metric and superadditivity of the modulus.
The bound from below uses the plane model of $\dot{S}\setminus\alpha_{23}$
and a classical estimate stating that the modulus of the annulus obtained from
$\CC$ by removing the two segments $[-1,0]$ and $[t-1,+\infty)$
is at most $\frac{1}{2\pi}\log(16t)$ for all real $t>1$ (see \cite[Chap.3B]{ahlfors:book}).

\begin{lemma}[Extremal length of $\gamma_k$]\label{lemma:ext-gamma}
The extremal length of $\gamma_k$ inside $\dot{S}_k$ satisfies
\[
\frac{2\pi\th_1}{\th_1\log(16)+\log(1/\epsilon)+O(\epsilon)}
<\Ext_{\gamma_k}(\dot{S}_k)< \frac{2\pi\th_1}{\log(1/\epsilon)}
\]
and so in particular $\displaystyle \Ext_{\gamma_k}(\dot{S}_k)-\frac{2\pi\th_1}{\log(1/\epsilon)}=O\left(\frac{\th_1^2}{\log^2(1/\epsilon)}\right)$.
\end{lemma}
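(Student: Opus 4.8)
The plan is to bound $\Ext_{\gamma_k}(\dot{S}_k)$ from above and below separately, using the two descriptions of the surface developed in the previous lemmas: the spherical model (for the upper bound) and the planar model of Lemma \ref{lemma:planar} (for the lower bound).

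\textbf{Upper bound.} Recall from Remark \ref{modulevialength} that $\Ext_{\gamma_k}(\dot{S}_k)\le 1/M(C)$ for any embedded cylinder $C\subset\dot{S}_k$ homotopic to $\gamma_k$, so it suffices to exhibit such a $C$ with $M(C)\ge\frac{\log(1/\epsilon)}{2\pi\th_1}$. The natural candidate is the spherical annulus $C=\{p\in S\,:\,\epsilon^k<d(p,x_1)<\epsilon^{k-1}\}$: since $x_1$ has cone angle $2\pi\th_1$, a small metric ball of radius $\rho$ around $x_1$ pulls back under the developing map to a genuine spherical ball, and removing the cone point the punctured ball is conformally a punctured disk. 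Passing to the local coordinate $z$ in which the metric near $x_1$ looks like (a multiple of) $|z|^{2\th_1-2}|dz|^2$, the region between the two spherical circles becomes a round planar annulus of ratio $(\epsilon^{k-1}/\epsilon^k)^{1/\th_1}=\epsilon^{-1/\th_1}$ up to the $O(\epsilon)$ distortion coming from the curvature of the sphere. By Example \ref{example:groeztsch} this annulus has modulus $\frac{1}{2\pi}\log(\epsilon^{-1/\th_1})+O(\epsilon)=\frac{\log(1/\epsilon)}{2\pi\th_1}+O(\epsilon)$, giving the stated upper bound. One can also cut $C$ into the two half-annuli $\epsilon^k<d<\epsilon^{k-1/2}$ and $\epsilon^{k-1/2}<d<\epsilon^{k-1}$ and apply Lemma \ref{subadditivity} if one prefers to build up the modulus in pieces; the point is only that $\gamma_k$ itself (the circle $d(\cdot,x_1)=\epsilon^{k-1/2}$) sits in the middle of a controlled flat annulus.

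\textbf{Lower bound.} By Remark \ref{modulevialength}, $\Ext_{\gamma_k}(\dot{S}_k)=1/M(C_k^{\max})$ where $C_k^{\max}$ is the largest embedded cylinder homotopic to $\gamma_k$, so I need an upper bound on $M(C_k^{\max})$. Transport everything to the planar model via Lemma \ref{lemma:planar}: $\dot{S}\setminus\alpha_{23}$ is the unit disk $\Delta$ with $x_1'=0$, $x_2'=1$, $x_3'=-1$ and $y_j'=\tan(\epsilon^j/2)^{1/\th_1}$ on $(0,1)$. Now use Remark \ref{modulevialength} once more in the form that enlarging the surface can only decrease the extremal length: apply $\iota'(z)=z^{\th_1}$ to unfold, and then embed the resulting domain into the plane domain $\Omega$ obtained from $\CC$ by deleting only the two slits $[-1,0]$ (carrying $x_3',x_1'$) and $[y_k'^{\,\th_1}-?,\dots]$ — more precisely, after the $\th_1$-power map the relevant obstruction to fattening the cylinder is the pair consisting of the segment through $0$ and the puncture data near $y_k$, and one arrives at a plane annulus of the shape $\CC\setminus([-1,0]\cup[t-1,+\infty))$ with $t$ governed by $w_k=\tan(\epsilon^k/2)\approx\epsilon^k/2$, namely $t$ of order $\epsilon^{-1}$. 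The classical estimate cited from \cite[Chap.3B]{ahlfors:book} then gives $M\le\frac{1}{2\pi}\log(16t)=\frac{1}{2\pi}\big(\log 16+\log(1/\epsilon)\big)+O(\epsilon)$ before renormalizing by $\th_1$; dividing the modulus estimate by $\th_1$ (equivalently, incorporating the branch factor) yields $M(C_k^{\max})\le\frac{\th_1\log 16+\log(1/\epsilon)+O(\epsilon)}{2\pi\th_1}$, hence the claimed lower bound for $\Ext_{\gamma_k}$.

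\textbf{Conclusion and main obstacle.} Once the two inequalities
\[
\frac{2\pi\th_1}{\th_1\log 16+\log(1/\epsilon)+O(\epsilon)}<\Ext_{\gamma_k}(\dot{S}_k)<\frac{2\pi\th_1}{\log(1/\epsilon)}
\]
are in hand, the final asymptotic $\Ext_{\gamma_k}(\dot{S}_k)-\frac{2\pi\th_1}{\log(1/\epsilon)}=O\!\big(\th_1^2/\log^2(1/\epsilon)\big)$ is a one-line computation: subtract $\frac{2\pi\th_1}{\log(1/\epsilon)}$ from the lower bound, write it over a common denominator, and note the numerator is $-\frac{2\pi\th_1(\th_1\log 16+O(\epsilon))}{(\log(1/\epsilon))(\th_1\log 16+\log(1/\epsilon)+O(\epsilon))}=O(\th_1^2/\log^2(1/\epsilon))$, which dominates the (smaller) discrepancy on the upper side. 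I expect the real work to be in the lower bound: one has to be careful about \emph{which} plane annular domain legitimately contains (after unfolding by $z\mapsto z^{\th_1}$) a cylinder homotopic to $\gamma_k$, so that Ahlfors' Gr\"otzsch-type estimate for $\CC\setminus([-1,0]\cup[t-1,+\infty))$ applies with the correct value $t\asymp 1/\epsilon$, and to check that the curvature of the spherical metric only perturbs the conformal position of $y_k'$ by $O(\epsilon)$ rather than by something larger. The upper bound, by contrast, is essentially Example \ref{example:groeztsch} plus a routine curvature expansion near the cone point $x_1$.
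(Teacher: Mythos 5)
Your overall plan --- spherical annulus around $x_1$ for the upper bound, planar model plus Ahlfors' slit-domain estimate for the lower bound --- matches the paper's, but the lower bound argument as written has a genuine gap exactly where you flagged it, and the flag you raised (worrying about the correct $t$) is not the real issue.

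The real problem is the passage from $\dot{S}_k$ to $\dot S_k\setminus\alpha_{23}$ (equivalently, to the planar model $\dot S'_k$ of Lemma~\ref{lemma:planar}). Removing $\alpha_{23}$ only makes the surface \emph{smaller}, so by Remark~\ref{modulevialength} one gets $\Ext_{\gamma_k}(\dot S_k)\le\Ext_{\gamma_k}(\dot S_k\setminus\alpha_{23})$. This is the \emph{wrong} direction: a lower bound for the right-hand side gives you nothing about the left-hand side. You need \emph{equality}, and that is exactly what Lemma~\ref{lemma:q_m} delivers: the Strebel differential $\q_k$ has $\alpha_{23}$ (and more generally the slits $x_1y_k$ and $y_{k-1}x_3$) as a union of non-periodic horizontal trajectories, so the optimal cylinder $C_{\gamma_k}$ is disjoint from $\alpha_{23}$, and hence $\Ext_{\gamma_k}(\dot S_k)=1/M(C_{\gamma_k})=\Ext_{\gamma'_k}(\dot S'_k)=\Ext_{\gamma'_k}\bigl(\inte{S}'_k\setminus\bigl([0,y'_k]\cup[y'_{k-1},1]\bigr)\bigr)$. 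Only after this equality is established can one use the inclusion of the slit disk into $\Omega_k=\CC\setminus\bigl([0,y'_k]\cup[y'_{k-1},+\infty)\bigr)$ to pass to the Ahlfors estimate. Your proposal skips this step entirely; invoking Lemma~\ref{lemma:planar} alone does not justify it.

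A secondary, but real, confusion is the role of $\th_1$. There is no ``unfolding by $z\mapsto z^{\th_1}$'' and no ``dividing the modulus by $\th_1$'' --- modulus is a conformal invariant and is not rescaled by cone angles. The $1/\th_1$ already lives in the coordinate of the planar model: $y'_j=\tan(\epsilon^j/2)^{1/\th_1}$, so the affine map $z\mapsto z/y'_k-1$ takes $\Omega_k$ to $\CC\setminus\bigl([-1,0]\cup[t-1,+\infty)\bigr)$ with $t=y'_{k-1}/y'_k\asymp\epsilon^{-1/\th_1}$ (not $\epsilon^{-1}$). Applying $M\le\frac{1}{2\pi}\log(16t)$ then directly gives $M(\Omega_k)\le\frac{1}{2\pi}\bigl(\log 16+\frac{\log(1/\epsilon)+O(\epsilon)}{\th_1}\bigr)$. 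In your writeup, the incorrect $t\asymp\epsilon^{-1}$ and the illegal ``divide by $\th_1$'' happen to cancel to the right formula, but neither step is valid on its own. The upper bound is essentially fine: the paper gets the clean strict inequality $M(C_k)>\frac{1}{2\pi\th_1}\log(1/\epsilon)$ from Lemma~\ref{cylinderslength}; in your version you should note that $\tan(\epsilon^{k-1}/2)/\tan(\epsilon^k/2)>\epsilon^{-1}$ (since $\tan(x)/x$ is increasing), so the $O(\epsilon)$ correction has the favorable sign.
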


\begin{proof}
In order to bound $\Ext_{\gamma_k}(\dot{S}_k)$ from below,
consider the locus $C_k\subset\dot{S}_k$ of points at distance
between $\epsilon^k$ and $\epsilon^{k-1}$ from $x_1$.
By Lemma \ref{cylinderslength} we immediately have
$M(C_k)>\frac{1}{2\pi\th_1}\log(1/\epsilon)$ and so
$\Ext_{\gamma_k}(\dot{S}_k)<\frac{2\pi\th_1}{\log(1/\epsilon)}$.

%
In order to bound $\Ext_{\gamma_k}(\dot{S}_k)$ from above, consider the quadratic differential $\q_k$
described in Lemma \ref{lemma:q_m}.
Since $\alpha_{23}$ is a horizontal segment, 
$\Ext_{\gamma_k}(\dot{S}_k)=\Ext_{\gamma_k}(\dot{S}_k\setminus\alpha_{23})=
\Ext_{\gamma'_k}(\dot{S}'_k)$ and it is achieved at the 
metric $|\q'_k|$.
Thus, $\Ext_{\gamma'_k}(\dot{S}'_k)=\Ext_{\gamma'_k}
\Big(\inte{S}'_k\setminus \left([0,y'_k]\cup[y'_{k-1},1]\right)\Big)$.
Since $\Omega_k=\CC\setminus \Big([0,y'_k]\cup[y'_{k-1},+\infty)\Big)$ is biholomorphic to 
$\CC\setminus \left([-1,0]\cup[\frac{y'_{k-1}}{y'_k}-1,+\infty)\right)$,
it follows 
that
\begin{align*}
\Ext_{\gamma_k}(\dot{S}_k)^{-1} &=
M(\inte{S}'_k\cap\Omega_k)<
M(\Omega_k)\leq \frac{1}{2\pi}\log\left(\frac{16 y'_{k-1}}{y'_k}\right)
<\frac{1}{2\pi}\left(\log(16)+\frac{\log(1/\epsilon)+O(\epsilon)}{\th_1}\right)
\end{align*}
and so $\dis\Ext_{\gamma_k}(\dot{S}_k)\geq \frac{2\pi\th_1}{\th_1\log(16)+\log(1/\epsilon)+O(\epsilon)}$.
The last assertion is a straightforward calculation.
\end{proof}

\subsubsection{Estimate for the extremal systole of $\dot{S}_k$}

Now we inductively show that the extremal systole 
of $\dot{S}_k$ is realized at the closed curve $\gamma_k$.

\begin{lemma}[Extremal systole of $\dot{S}_k$]\label{lemma:ext-sys-gamma}
Suppose that
$\e<\exp\left[-8(4N+1)^2\right]$.
Then the extremal systole of the punctured surface $\dot{S}_k$
is achieved at $\gamma_k$ only.
\end{lemma}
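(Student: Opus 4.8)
The plan is to prove this by induction on $k$, starting from the base case $k=1$ and, at each inductive step, showing that no essential simple closed curve in $\dot S_k$ other than $\gamma_k$ can have extremal length $<2$. Since we already know from Lemma~\ref{lemma:ext-gamma} that $\Ext_{\gamma_k}(\dot S_k)<\frac{2\pi\th_1}{\log(1/\epsilon)}$, which is extremely small once $\epsilon$ is tiny, the real content is the lower bound $\Ext_\beta(\dot S_k)\geq 2$ (in fact $>\Ext_{\gamma_k}(\dot S_k)$, with a definite gap) for every other $\beta$. By Lemma~\ref{systolfromcylinder} applied on the \emph{spherical} surface $S$, it suffices to produce, for each such $\beta$, a spherical geodesic representative $\check\beta\subset\dot S_k$ (equivalently, a short curve in $\dot S$ avoiding the filled-in punctures) whose length is bounded below by something comparable to $\sqrt{2\pi\|\bm\th\|_1\cdot 2}$; since $\Area(S)=2\pi(\chi(\dot S_0)+\|\bm\th\|_1)$ is controlled and we are in the regime $N\gg m$, a curve that is forced to traverse the ``thick'' double triangle has length of order $N$, hence huge.

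\medskip

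\textbf{Setup of the induction.} For the base case, $\dot S_1$ is a sphere with the four punctures $x_1,y_1,x_0=y_0$ collapsed appropriately — more precisely $\dot S_1=S\setminus\{x_1,x_2,x_3,y_0,y_1\}$ with $y_2,\dots,y_m$ filled. One uses Lemma~\ref{lemma:M04-extsys}-type reasoning after a further fill, or directly: any essential curve $\beta\not\simeq\gamma_1$ must separate the punctures in a way that forces its geodesic representative to cross the arc $\alpha_{23}$, which passes through the bulk of the doubled triangle of angle $\approx 2\pi\cdot 2N$, giving $\ell(\check\beta)\gtrsim 1$ (in fact bounded below independent of $\epsilon$), and then $\Ext_\beta(\dot S_1)\geq \ell(\check\beta)^2/\Area(S)\geq 2$ provided $\epsilon$ is small (which makes $\Area(S)$... no: $\Area(S)$ does not depend on $\epsilon$, so this is where the hypothesis $N\gg m$ and $\epsilon<\exp[-8(4N+1)^2]$ enter: they guarantee the numerator beats $2\cdot\Area(S)=4\pi(\chi(\dot S_0)+\|\bm\th\|_1)\approx 4\pi\cdot 2N$, i.e.\ the geodesic through the triangle has length $\gtrsim N$). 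For the inductive step, suppose the claim holds for $\dot S_{k-1}$. A curve $\beta$ in $\dot S_k$ either does not separate $y_k$ from the rest — then it is (homotopic to) a curve in $\dot S_{k-1}$, and by the conformal embedding $\dot S_k\hookrightarrow\dot S_{k-1}$ together with Remark~\ref{modulevialength} we get $\Ext_\beta(\dot S_k)\geq\Ext_\beta(\dot S_{k-1})\geq 2$ unless $\beta\simeq\gamma_{k-1}$; in the latter case one checks directly that $\Ext_{\gamma_{k-1}}(\dot S_k)>\Ext_{\gamma_k}(\dot S_k)$, e.g.\ via the explicit planar annular domains $\Omega_k\subset\Omega_{k-1}$ of Lemma~\ref{lemma:ext-gamma} and subadditivity of modulus (Lemma~\ref{subadditivity}) — the cylinder realizing $\gamma_k$ is strictly shorter in the $\q_k$-metric. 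Or $\beta$ does separate $y_k$ from the rest but is not homotopic to $\gamma_k$: then $\beta$ must cross the arc $\alpha_{12}$ at least twice between consecutive marked points $y_j$, and a spherical geodesic representative inherits length $\gtrsim\epsilon^{j}$ from pieces near $x_1$ plus, crucially, length $\gtrsim 1$ from a subarc that must go around $x_2$ or $x_3$ and hence through the doubled-triangle region; again Lemma~\ref{systolfromcylinder} gives $\Ext_\beta>2>\Ext_{\gamma_k}$.

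\medskip

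\textbf{Main obstacle.} The delicate point is the case analysis for curves $\beta$ that \emph{do} separate $\{x_1,y_m,\dots,y_k\}$ in the ``expected'' way but are still not homotopic to $\gamma_k$ — for instance curves that wind around $x_1$ together with only some of the $y_j$'s, or that enclose $\{x_1,\dots,y_k\}$ but also $x_2$. One must show every such curve either is forced through the heavy-angle part of the surface (controlled by $N\gg m$ and the explicit spherical geometry of the doubled triangle near $x_2,x_3$), or is homotopic to one of $\gamma_1,\dots,\gamma_{k-1}$ (handled by the inductive hypothesis and the conformal-embedding monotonicity). Organizing this cleanly — probably by cutting $\dot S_k$ along $\gamma_k$ into the ``thin'' annular part (the nested necks around $x_1$) and the ``thick'' part (essentially the once-punctured doubled triangle), and arguing separately on each piece using that $\Ext_\beta$ in the thick part is bounded below by its area lower bound $\sim N$ via Lemma~\ref{systolfromcylinder} — is the crux, and the quantitative threshold $\e<\exp[-8(4N+1)^2]$ is exactly what makes the thin-part curves $\gamma_j$ ($j\le k$) have extremal length below $2$ while all genuinely ``thick'' curves stay above it.
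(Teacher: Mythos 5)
Your proposal correctly identifies the overall inductive strategy, but the quantitative mechanism you propose is off in two places that make the argument fail.

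\textbf{The threshold $2$ is wrong.} You aim to show that every $\beta\not\simeq\gamma_k$ in $\dot S_k$ satisfies $\Ext_\beta(\dot S_k)\geq 2$. This is false: for any $1\leq j<k$ the curve $\gamma_j$ (which in $\dot S_k$ separates $\{x_1,y_k,\dots,y_j\}$ from the rest) encircles the spherical annulus $\{\epsilon^{j}<\mathrm{dist}(\cdot,x_1)<\epsilon^{j-1}\}$, which is puncture-free and has modulus $>\frac{1}{2\pi\th_1}\log(1/\epsilon)$, so $\Ext_{\gamma_j}(\dot S_k)<\frac{2\pi\th_1}{\log(1/\epsilon)}\ll 2$. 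The correct threshold is the tiny quantity $\frac{2\pi\th_1}{\log(1/\epsilon)}$ from Lemma~\ref{lemma:ext-gamma}, and the paper's claim is precisely that every $\gamma\not\simeq\gamma_k$ has $\Ext_\gamma(\dot S_k)\geq\frac{2\pi\th_1}{\log(1/\epsilon)}$, not $\geq 2$.

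\textbf{The spherical-length estimate for the ``thick'' curves is wrong.} You say a curve forced through the doubled triangle has length $\gtrsim N$, hence $\Ext\geq\ell^2/\Area(S)\geq 2$. But $S$ has spherical diameter $\leq\pi$, so any geodesic loop has length $O(1)$; the bounds the paper actually uses for curves in the ``thick'' region $\dot S_k^-$ are of order $\ell\geq\pi-2$, giving $\Ext\geq\frac{(\pi-2)^2}{2\pi\th_1}\sim\frac{1}{N}$, which is small, not $\geq 2$. The reason this still beats the threshold is the hypothesis $\e<\exp[-8(4N+1)^2]$: it forces $\log(1/\epsilon)>\big(\frac{2\pi\th_1}{\pi-2}\big)^2$, hence $\frac{2\pi\th_1}{\log(1/\epsilon)}<\frac{(\pi-2)^2}{2\pi\th_1}$. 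You had the role of the hypothesis reversed — it does not make the curves long, it makes the threshold small. Relatedly, Proposition~\ref{systolfromcylinder} bounds $\ell$ from \emph{above} by $\Ext$; to bound $\Ext$ from below via length you just use Definition~\ref{defofextermal} directly with the spherical (for curves in $\dot S_k^-$) or the flat $|\q_k|$ (for curves crossing both halves) metric, which is what the paper does.

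\textbf{The decomposition is not the paper's.} The paper does not cut along $\gamma_k$ into thin/thick pieces; it works in the flat doubled rectangle $(S,|\q_k|)$ of Lemma~\ref{lemma:q_m} and splits it into halves $\dot S_k^\pm$ of height $\frac{1}{2}$. The key mechanism is then: curves not homotopic into either half must cross both short boundary arcs $x_1y_k$ and $y_{k-1}x_3$ and so have $|\q_k|$-length $\geq 2$, which with $\Area(|\q_k|)=\Ext_{\gamma_k}$ gives $\Ext_\gamma\geq 4/\Ext_{\gamma_k}$, comfortably above the threshold; curves in $\dot S_k^-$ are curves of $\dot S_{k-1}$, handled by conformal monotonicity and the inductive hypothesis. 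Your remark that the $\beta\simeq\gamma_{k-1}$ case is the delicate one is apt — this is indeed the subtle point in the induction — but your suggested fix via $\Omega_k\subset\Omega_{k-1}$ and subadditivity is not carried out, and the literal claim ``$\Ext_\beta\geq 2$ unless $\beta\simeq\gamma_{k-1}$'' you lean on is false as noted above.
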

\begin{proof}
Preliminarly observe that the total area of $S$ for the spherical metric is $2\pi\th_1$
and that our assumption on $\e$
implies that $\log\left(\frac{1}{\epsilon}\right)>8(4N+1)^2>
\left(\frac{2\pi\th_1}{\pi-2}\right)^2$.

Now, 
$\Ext_{\gamma_k}(\dot{S}_k)<\frac{2\pi\th_1}{\log(1/\epsilon)}$
by Lemma \ref{lemma:ext-gamma}.
We claim that, if $\gamma\subset\dot{S}_k$
is a simple closed curve not homotopic to $\gamma_k$, then
$\Ext_\gamma(\dot{S}_k)\geq \frac{2\pi\th_1}{\log(1/\epsilon)}$.
It will follow that $\Ext\sys(\dot{S}_k)$ is attained at $\gamma_k$ only.

Denote by $\dot{S}^+_k$ the region of $\dot{S}_k$ consisting of points at $|\q_k|$-distance less than $\frac{1}{2}$ from the segment $x_1 y_k$, by $\dot{S}^-_k$ the region of points at $|\q_k|$-distance less than $\frac{1}{2}$ from the segment $y_{k-1}x_3$ (see Figure \ref{fig:example1} on the right in the case $k=m$).

Let us prove the above claim by induction on $k\geq 1$.

Consider the case $k=1$.
The surface $\dot{S}_1$ has $5$ punctures
$x_1,x_2,x_3,y_0,y_1$
and the only closed curve in $\dot{S}^+_1$ is $\gamma_1$. 
A closed curve $\gamma\subset\dot{S}_1$ that cannot be deformed inside $\dot{S}^+_1$ or $\dot{S}^-_1$
must cross both segments $y_1 x_1$ and $y_0 x_3$, and so it must have
$|\q_1|$-length at least $2$.
Hence, 
\[
\Ext_\gamma(\dot{S}_1)\geq
\frac{4}{\Ext_{\gamma_1}(\dot{S}_1)}> 
\frac{2}{\pi\th_1}\log(1/\epsilon)
\geq \frac{2\pi\th_1}{\log(1/\epsilon)}
\]
where the first inequality on the left
follows from the very definition of extremal length using the metric $|\q_1|$,
the second inequality
relies on Lemma \ref{lemma:ext-gamma}
and the third inequality
is a rephrasing of $\epsilon<\exp(-\th_1\pi)$,
which follows from our assumption on $\e$.

Finally, a closed curve $\gamma$ contained
in $\dot{S}^-_1$ must be homotopic either
to $\gamma_{23}$ that separates $x_2,x_3$, or to $\gamma_{0,2}$
that separates $y_0,x_2$
or to $\gamma_{0,3}$ that separates $y_0,x_3$
from the other points.

Any curve homotopic to $\gamma_{2,3}$ 
(resp. $\gamma_{0,2}$, $\gamma_{0,3}$)
has spherical length at least $2$
(resp. at least $\pi-2$, at least $\pi$).
Hence, $\Ext_\gamma(\dot{S}_1)\geq \frac{(\pi-2)^2}{2\pi\th_1}$,
which is larger than $
\frac{2\pi\th_1}{\log(1/\epsilon)}$
by our observation at the very beginning of the proof.

%

Consider now the case $k>1$.
Again, every simple closed curve
in $\dot{S}^+_k$ is homotopic to $\gamma_k$.
Analogously to the case $k=1$,
a simple closed curve $\gamma\subset\dot{S}_k$
that is not homotopic to a curve in $\dot{S}^+_k$
or in $\dot{S}^-_k$
must have $|\q_k|$-length at least $2$
and so $\Ext_\gamma(\dot{S}_k)>\frac{2\pi\th_1}{\log(1/\epsilon)}$.
Finally, if $\gamma\subset\dot{S}^-_k$ is a simple closed curve, then
$\Ext_{\gamma}(\dot{S}_k)\geq \Ext_{\gamma}(\dot{S}_{k-1})\geq \frac{2\pi\th_1}{\log(1/\epsilon)}$ by induction.  
\end{proof}

We can now prove the main result of this section.

\begin{proof}[Proof of Lemma \ref{lemma:example}]
First note that $\Ext\sys(\dot{S})=\Ext_{\gamma_m}(\dot{S})$
by Lemma \ref{lemma:ext-sys-gamma}, because $\dot{S}=\dot{S}_m$.
By taking $N/m$ large enough, the ratio $\frac{\|\bm{\th}\|_1}{\th_1}>1$
can be made as close to $1$ as desired.
Moreover,
our assumption on $\e$ implies that
$\frac{\th_1}{\log(1/\epsilon)}<\frac{1}{2(4N+1)}$
and so this ratio can be made as small as desired by taking $N$ large enough.
Lemma \ref{lemma:ext-gamma} then ensures that $\Ext_{\gamma_m}(\dot{S})$ is
as close to $\frac{2\pi\th_1}{\log(1/\epsilon)}$,
and so to $\frac{2\pi \|\bm{\th}\|_1}{\log(1/\epsilon)}$,
as we wish.
\end{proof}


%
%
%
%




\setcounter{secnumdepth}{-1}




\bibliographystyle{amsplain}
\bibliography{non-existence-PART1-biblio}

\end{document}